\newcommand{\moins}{\mathbin{\fgebackslash}}
\let\cal\mathcal
\newtheorem{theorem}[equation]{Theorem}
 \newtheorem{lemma}[equation]{Lemma}
 \newtheorem{proposition}[equation]{Proposition}
 \newtheorem{corollary}[equation]{Corollary}  
\newtheorem{conjecture}[equation]{Conjecture}
\theoremstyle{definition}
\newtheorem{definition}[equation]{Definition}
\newtheorem{remark}[equation]{Remark}
\theoremstyle{remark}
\newtheorem*{acknowledgments}{Acknowledgments}
\def\jcdot{\scriptscriptstyle\bullet}
\def\invlim{\mathop{\vtop{\ialign{##\crcr$\hfill{\lim}\hfil$\crcr
\noalign{\kern1pt\nointerlineskip}\leftarrowfill\crcr\noalign
{\kern -3pt}}}}\limits}
\def\dirlim{\mathop{\vtop{\ialign{##\crcr$\hfill{\lim}\hfil$\crcr
\noalign{\kern1pt\nointerlineskip}\rightarrowfill\crcr\noalign
{\kern -3pt}}}}\limits} 
\def\lomapr#1{\smash{\mathop{\relbar\joinrel\longrightarrow}\limits^{#1}}}
 \def\verylomapr#1{\smash{\mathop{\relbar\joinrel\relbar\joinrel\relbar\joinrel\longrightarrow}\limits^{#1}}}
\def\phi{\varphi} 
\def\epsilon{\varepsilon}
\newcommand{\ovk}{\overline{K} }
\newcommand{\dr}{\operatorname{dR} } 
  \newcommand{\syn}{\operatorname{syn} }
 \newcommand{\colim}{\operatorname{colim} }
 \newcommand{\proeet}{\operatorname{pro\acute{e}t} } 
 \newcommand{\eet}{\operatorname{\acute{e}t} }
  \newcommand{\Spa}{\operatorname{Spa} }
 \newcommand{\Hom}{{\rm{Hom}} }
  \newcommand{\uHom}{\underline{{\rm{Hom}} }}
  \newcommand{\Hhom}{{\cal Hom}} 
 \newcommand{\Ext}{\operatorname{Ext} }
\newcommand{\Gal}{\operatorname{Gal} }
\newcommand{\can}{ \operatorname{can} }
\newcommand{\synt}{ \operatorname{syn} }
\newcommand{\st}{\operatorname{st} }
 \newcommand{\coker}{\operatorname{coker} }  
 \newcommand{\crr}{\operatorname{cr} }
  \newcommand{\hk}{\operatorname{HK} }
   \newcommand{\LL}{\operatorname{L} }
     \newcommand{\FF}{\operatorname{FF} }
 \newcommand{\sff}{{\mathcal{F}}}
 \newcommand{\sg}{{\mathcal{G}}}
 \newcommand{\scc}{{\mathcal{C}}}
 \newcommand{\so}{{\mathcal O}}
 \newcommand{\se}{{\mathcal{E}}}
\newcommand{\sd}{{\mathcal{D}}}
 \newcommand{\wh}{\widehat}
   \numberwithin{equation}{section}
\def\R{{\mathrm R}}
\def\O{{\cal O}}
  \def\B{{\bf B}}
\def\Q{{\bf Q}} \def\Z{{\bf Z}}
\def\N{{\bf N}}
\def\O{{\cal O}}
\def\bdr{{\bf B}_{{\rm dR}}}
\def\epsilon{\varepsilon}
\def\EEE{${\cal E}_{\dr}$, ${\cal E}_{\rm FF}$, ${\cal E}_{\hk}$, ${\cal E}_{\proeet}$, ${\cal E}_{\syn}$}
\def\FFF{$X_{\rm FF}$, $X_{{\rm FF},S}$, $Y_{\rm FF}$, $Y_{{\rm FF},S}$, $Y_{{\rm FF},S}^I$, $Y_{{\rm FF},S}^{[u,v]}$}
\def\SYN{$\R\Gamma^{\B_{\crr}^+}_{\synt,?}$, $\R\Gamma^{\B^{[u,v]}}_{\synt,?}$, $\R\Gamma^{\FF}_{\synt}$, $\R\Gamma^{\B}_{\synt,?}$, $\R\Gamma^{[u,v]}_{\synt,?}$}
\def\HK{$\R\Gamma_{\hk}$, $\R\Gamma_{\hk}(-)\{r\}$, $\R\Gamma_{\hk}^I(-,r)$, $\R\Gamma_{\hk}^{[u,v]}(-,r)$, $\R\Gamma_{\hk}^{\B}(-,r)$, $\R\Gamma^{\mathbb B}_{\hk}(-,r)$}
\def\HKi{$\iota_{\hk}$}
\def\DR{$F^r\R\Gamma_{\dr,?}$, $F^r\R\Gamma_{\dr,?}(-/\B^+_{\dr})$, $F^i\R\Gamma_{\dr,?}(X_S/\B^{\prime})$, $\R\Gamma_{\dr,?}(-,r)$, $\R\Gamma^{{[u,v]}}_{\dr,?}(-,r)$, $\R\Gamma^{{\B}}_{\dr,?}(-,r)$, $\R\Gamma^{\mathbb B}_{\dr}(-,r)$ }
\def\ETA{${\R\Gamma}_{\proeet}$, ${\R\Gamma}_{{\proeet},c}$}
\def\ETB{${\mathbb R}_{\proeet}$, ${\mathbb R}_{{\proeet},c}$, ${\mathbb R}_{{\proeet},?}^{\rm alg}$, ${\mathbb H}^\bullet_{\proeet}$, ${\mathbb H}^\bullet_{\proeet,c}$}
\def\TVS{TVS, VS, BC}
\def\rtau{${\rm R}\tau_*$, ${\rm R}\tau'_*$, ${\rm R}\eta_*$, ${\rm R}\pi_*$}
\def\bD{${\mathbb D}$}
\def\CC{$C_{\Q_p}$, $C_K$}
\def\PERF{${\rm Perf}_C$, ${\rm Perf}_S$, ${\rm sPerf}_C$}
\def\AN{$-_{\rm an}$}
\def\BRING{$\B_{\crr}$, $\B_{\st}$, $\B_{\dr}$, $\B^I$, $\B^{I,+}$, $\B_{S}^I$, $\B^{I,+}_{S}$, $\B_S$, $\B_{S,\log}$, $\B_{S,\log}^I$, $\B_{S}^{[u,v]}$, $\B^{\FF}_{S}$, ${\mathbb B}^{[u,v]}$, ${\mathbb B}^{I}$, ${\mathbb B}$, ${\mathbb B}_{\rm dR}$, ${\mathbb B}_{\rm log}$}
\def\DER{$\sd(-_{\Box})$, $ \sd(\B^{\FF}_{S,\Box})$, $ \sd({\B}^{\FF}_{S})^{\phi}$}
\def\TRACE{${\rm Tr}_{\rm coh}$, ${\rm Tr}_{\rm dR}$, ${\rm Tr}_{\bdr^+}$, ${\rm Tr}_X$, ${\rm Tr}_X^{[u,v]}$}
\def\KAPPA{$\kappa$, $\kappa_{\rm HT}$, $\kappa_{\rm dR}$, $\kappa^{[u,v]}$}
\def\BORD{$\partial D_C$, $\partial X$}
\def\Cbreve{$C$, $\breve{C}$}
\def\rhom{$\R\uHom$}
\let\@wraptoccontribs\wraptoccontribs
\numberwithin{equation}{section}
\begin{document}
\title[Duality  for  $p$-adic geometric pro-\'etale cohomology]
 {Duality  for $p$-adic geometric pro-\'etale cohomology}
 \author{Pierre Colmez} 
\address{CNRS, IMJ-PRG, Sorbonne Universit\'e, 4 place Jussieu, 75005 Paris, France}
\email{pierre.colmez@imj-prg.fr} 
\author{Sally Gilles}
\address{Universit\"at Duisburg-Essen,  Fakult\"at f\"ur Mathematik, Thea-Leymann-Str. 9, 45127 Essen, Deutschland}
\email{sally.gilles@uni-due.de}
\author{Wies{\l}awa Nizio{\l}}
\address{CNRS, IMJ-PRG, Sorbonne Universit\'e, 4 place Jussieu, 75005 Paris, France}
\email{wieslawa.niziol@imj-prg.fr}
 \date{\today}
\thanks{P.C. and W.N.'s research was supported in part by the grant NR-19-CE40-0015-02 COLOSS and by the Simons Foundation. S.G.'s research was partially supported by the National Science Foundation under Grant No. DMS-1926686. 
}
\maketitle
 \begin{abstract} We prove that $p$-adic geometric pro-\'etale cohomology of smooth partially proper rigid analytic varieties over $p$-adic fields seen in the category of Topological Vector Spaces satisfies a Poincar\'e duality as we have conjectured. This duality descends, via fully-faithfulness results of Colmez-Nizio{\l},  from a Poincar\'e duality for 
 solid quasi-coherent sheaves on the Fargues-Fontaine curve representing this cohomology. The latter duality is proved by passing, via comparison theorems, to analogous sheaves representing syntomic cohomology and then reducing to Poincar\'e duality for $\B^+_{\st}$-twisted Hyodo-Kato and filtered  $\B^+_{\dr}$-cohomologies that, in turn, reduce to Serre duality for smooth Stein varieties -- a classical result. 
\end{abstract}

\tableofcontents
\section{Introduction}
 Let $p$ be a prime and let $K$ be a discrete $p$-adic field
with  perfect residue field. 
Let $\ovk$ be an algebraic closure of $K$ and \index{C@\Cbreve}let $C=\wh{\ovk}$ be the $p$-adic completion of $\ovk$.  
 \subsection{Duality Conjecture}
 In \cite{CDN1,CDN5}, the authors showed that (a part of) the $p$-adic local Langlands correspondence for ${\rm GL}_2(\Q_p)$ can be realized in the $p$-adic (pro-)\'etale cohomology of the Drinfeld 
tower in dimension $1$. It involved the duals of the unitary representations of ${\rm GL}_2(\Q_p)$
provided by the $p$-adic local Langlands correspondence and not the representations themselves.
In contrast,
the classical local Langlands correspondence (not its dual)
is realized in the compactly supported $\ell$-adic cohomology rather than the usual one; 
this brought us to consider a possibility of duality for $p$-adic pro-\'etale cohomology. 
 
 The reason the authors  of loc. cit. 
worked with the usual cohomology was mainly due to their newly acquired competence
with comparison theorems for analytic varieties~\cite{CN1}. 
In retrospect, this was a lucky choice since the description of
geometric pro-\'etale cohomology with compact support is not as transparent as that
of usual cohomology. The compactly supported cohomology (even in dimension~$1$)
was not defined at that time, but we did some heuristic computations
assuming that it could be defined similarly
to de Rham cohomology with compact support~\cite{vdP}, and that (derived) 
comparison theorems would work for it as expected. 
The first results did not look very promising.

Here is the simplest of them. 
Let $D$ be the open disc of   dimension~$1$ over $K$.   Using syntomic methods,
we obtain that the only nontrivial cohomology groups are as \index{BORD@\BORD}follows:
\begin{align}\label{sing4}
& H^0_{\proeet}(D_C,\Q_p(1))\simeq \Q_p(1),\quad H^1_{\proeet}(D_C,\Q_p(1))\simeq \so(D_C)/C,\\
&  H^2_{{\proeet},c}(D_C,\Q_p(1))\simeq \Q_p\oplus \so(\partial D_C)/\so(D_C),\notag
\end{align}
where $\partial D_C$ denotes the "boundary of $D_C$". Since we have the isomorphisms 
(of topological $C$-vector spaces)
$$\so(D_C)/C\stackrel{\sim}{\to} \Omega^1(D_C),\quad \so(\partial D_C)/\so(D_C)\stackrel{\sim}{\to} H^1_c(D_C,\so), 
$$we see in \eqref{sing4} a Serre duality (of topological $C$-vector spaces)
$$
 \Omega^1(D_C)\simeq H^1_c(D_C,\so)^*
$$
as well as a simple $\Q_p$-duality (between $H^0_{\proeet}(D_C,\Q_p(1))$ and the $\Q_p$ appearing in 
$H^2_{{\proeet},c}(D_C,\Q_p(1))$) but they do not fit together into a simple  
Poincar\'e duality (since the degrees do not match, and the $C$-duality cannot be turned into
a $\Q_p$-duality as $[C:\Q_p]=\infty$). 

However, we realized (after quite a while) that, if we couple the above with the following 
computations\footnote{In \eqref{BC}  
the last nontrivial $\Ext$ group is generated by the fundamental exact sequence
 $$0\to {\Q}_p(1)\to {\mathbb B}^{+,\phi=p}_{\crr}\to {\mathbb G}_a\to 0,$$
where ${\mathbb B}^{+,\phi=p}_{\crr}$ is the  {\rm TVS} corresponding to  ${\B}^{+,\phi=p}_{\crr}$.}
 in the category of \index{TVS@\TVS}Banach-Colmez spaces ($\mathrm{BC}$'s  for short) and 
 assume that the results remain valid
in the bigger category of Topological Vector Spaces (${\rm TVS}$'s for short):
 \begin{align}\label{BC}
&  \uHom_{\rm TVS}({\Q}_p,\Q_p(1))\simeq {\Q}_p(1),\quad \underline{\Ext}^1_{\rm TVS}({\Q}_p,{\Q}_p(1))=0,\\
&    \uHom_{\rm TVS}({\mathbb G}_a,{\Q}_p(1))=0,\quad \underline{\Ext}^1_{\rm TVS}({\mathbb G}_a,{\Q}_p(1))\simeq {\mathbb G}_a,\notag\\
&  \underline{\Ext}^i_{\rm TVS}(M,N)=0, \quad i\geq 2,\ M,N \in \mathrm{BC},\notag
 \end{align}
as well as that ${\rm Ext}$-groups of tensor products with constant objects behave as if
everything was finite dimensional, we get abstract isomorphisms
(with $H^i_{?}:=H^i_{?}(D_C,\Q_p(1))$)
\begin{equation}\label{dua1}
 H^0_{\proeet}\simeq {\rm Hom}_{\rm TVS}(H^2_{\proeet,c},{\Q}_p(1))
\quad
 H^1_{\proeet}\simeq \Ext^1_{\rm TVS}(H^2_{\proeet,c},{\Q}_p(1)) 
\end{equation}
and an abstract exact sequence
\begin{equation}\label{dua2}
0\to \Ext^1_{\rm TVS}(H^1_{\proeet},\Q_p(1))\to
H^2_{\proeet,c}\to {\rm Hom}_{\rm TVS}(H^0_{\proeet},\Q_p(1))\to 0,
\end{equation}
which suggest a (derived) duality, both ways.
  
These examples  brought us to formulate the following conjecture (see \cite{COB}, \cite{CGN}):
 \begin{conjecture}\label{conj-main} 
Let $X$ be a smooth partially proper rigid analytic variety over $K$ of dimension~$d$. 
In the category of Topological Vector Spaces we have a natural \index{ETB@\ETB}quasi-isomorphism
 $$
 {\mathbb R}_{\proeet}(X_C,\Q_p)\simeq \R\Hhom_{\rm TVS}({\mathbb R}_{\proeet,c}(X_C,\Q_p(d))[2d],\Q_p).
 $$
 \end{conjecture}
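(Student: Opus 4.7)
The plan is to follow the strategy announced in the abstract: lift the duality to the Fargues--Fontaine curve and then reduce, via comparison theorems, to classical Serre duality. First, attach to each side of the conjecture a solid quasi-coherent sheaf on $X_{\FF}$: let $\se_{\proeet}(X_C)$ and $\se_{\proeet,c}(X_C)$ be the sheaves on $X_{\FF}$ that represent ${\mathbb R}_{\proeet}(X_C,\Q_p)$ and ${\mathbb R}_{\proeet,c}(X_C,\Q_p)$. Using the fully-faithful embedding of TVS into the derived category of solid quasi-coherent sheaves on $X_{\FF}$ (Colmez--Nizio\l), the duality in the conjecture will follow from a sheaf-level Poincar\'e duality
$$\se_{\proeet}(X_C)\simeq \R\Hhom\bigl(\se_{\proeet,c}(X_C)(d)[2d],\;\so_{X_{\FF}}\bigr).$$

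To prove this sheaf duality, I replace pro-\'etale by syntomic cohomology on both sides using the pro-\'etale--syntomic comparison together with its compactly supported analogue, obtaining sheaves $\se_{\synt}^{\FF}(X_C)$ and $\se_{\synt,c}^{\FF}(X_C)$. These syntomic sheaves fit into fundamental distinguished triangles that express them as a fiber of a map between a Hyodo--Kato piece $\R\Gamma_{\hk}^{\B}(-,d)$ (a $\B^+_{\st}$-twisted theory equipped with Frobenius and monodromy) and a filtered de Rham piece $F^d\R\Gamma_{\dr}^{\B}(-,d)$ (defined over $\B^+_{\dr}$). It then suffices to establish compatible Poincar\'e dualities for each of these two auxiliary theories; piecing them together through the syntomic triangle yields the sheaf duality on $X_{\FF}$ and hence, by fully-faithfulness, the original duality in TVS.

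Each of the Hyodo--Kato and filtered de Rham dualities ultimately reduces to classical Serre duality. For a smooth Stein variety of dimension $d$ the pairings
$$H^i(X,\Omega^j)\otimes H^{d-i}_c(X,\Omega^{d-j})\to K$$
are topologically perfect by Serre duality on coherent sheaves; base-changing to $\B^+_{\st}$ and $\B^+_{\dr}$, while keeping track of the Frobenius, monodromy, and Hodge filtration structures, transports this to the required dualities for $\R\Gamma_{\hk}^{\B}$ and $F^{\bullet}\R\Gamma_{\dr}^{\B}$. A partially proper $X$ admits an admissible Stein exhaustion, so the general case follows by a limit argument compatible with the $c$-versions.

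The main obstacle is to set up a well-behaved formalism of compactly supported pro-\'etale and syntomic theories in the partially proper setting, together with their solid sheaves on $X_{\FF}$, so that the syntomic distinguished triangles, trace maps, and cup-product pairings all extend coherently to the $c$-versions and remain compatible under the Hyodo--Kato and de Rham dualities. Concretely, one must verify (i) that the connecting maps of the syntomic triangle intertwine the Hyodo--Kato and filtered de Rham dualities as required to glue them into a sheaf-level duality, and (ii) that the Colmez--Nizio\l\ fully-faithfulness applies simultaneously to both sides so that $\R\uHom_{\rm TVS}$ actually computes the $\R\Hhom$ of the corresponding solid sheaves on $X_{\FF}$. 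It is exactly the partially proper hypothesis that makes the compactly supported theories geometrically sensible and the Stein-exhaustion reduction possible.
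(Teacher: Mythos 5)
Your overall strategy---lift to the Fargues--Fontaine curve, compare pro-\'etale to syntomic, decompose into Hyodo--Kato and filtered de Rham pieces, and reduce to Serre duality---is the paper's. But the proposal glosses over two genuine difficulties that the paper has to confront with specific technical input, and one of them is a real gap.

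The descent step is mischaracterized, and this is where the gap lies. There is no ``fully-faithful embedding of TVS into the derived category of solid quasi-coherent sheaves on $X_{\FF}$''; the relevant functor goes the other way, $\R\tau_*:{\rm QCoh}(X_{\FF})\to{\rm TVS}$, and its fully-faithfulness (Proposition~\ref{recall1}) is known \emph{only on perfect complexes}. The pro-\'etale/syntomic sheaf $\se_{\synt,c}(X_C,\Q_p(r'))$ is not perfect in general: its de Rham part $\se_{\dr,c}(X_C,r')=i_{\infty,*}\R\Gamma_{\dr,c}(X_C/\B^+_{\dr})/F^{r'}$ is a skyscraper at $\infty$ with infinite-dimensional fibre. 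So ``verifying (ii) that fully-faithfulness applies'' is not a verification but a nontrivial piece of the proof, and it is precisely \emph{because} of this that the decomposition into Hyodo--Kato and de Rham pieces is structurally necessary rather than cosmetic: the Hyodo--Kato piece becomes perfect after reducing to Stein varieties with finite-rank cohomology, so the quoted fully-faithfulness applies, while the de Rham piece requires a separate functional-analytic argument (reducing to Smith spaces over $\Q_p$, using their internal projectivity in solid $\Q_p$-modules, and the computation $\R\Hhom_{\rm TVS}({\mathbb G}_a,\Q_p)\simeq{\mathbb G}_a(-1)[-1]$). Your proposal supplies none of this, and without it the descent does not go through.

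On the sheaf-level duality itself: the defining triangle on $X_{\FF}$ puts the \emph{quotient} $\R\Gamma_{\dr,c}/F^{r'}$ at the de Rham vertex, not the subobject $F^{r'}\R\Gamma_{\dr,c}$ that you write, so the naive triangle is not self-dual against the triangle for usual cohomology and ``piecing together'' the HK and dR dualities through it is not immediate. The paper's main proof avoids the triangle entirely: it works with $\phi$-modules on the chart $Y_{\FF}^{[u,v]}$ and checks the duality map after base change to $\B^{[u,v]}_{C^{\flat}}[1/t]$ (Hyodo--Kato duality) and to $\B^{[u,v]}_{C^{\flat}}/t$ (Serre duality). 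The triangle-gluing you propose corresponds to the paper's sketched alternative argument, which requires constructing a dual-modification triangle $\se_{\hk,c}\otimes\so(0,s)\to\se_{\synt,c}\to i_{\infty,*}F^{r'}\R\Gamma_{\dr,c}/t^s$ and is explicitly left conditional on an unchecked compatibility (Lemma~\ref{tedious1}); if you want to run the triangle route you need that twisted triangle and the compatibility, not the defining one.
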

 Here:
 \begin{enumerate}
 \item {\rm TVS}'s are $\underline{\Q}_p$-modules in the category of  topologically enriched\footnote{For the sake of the introduction we invite the reader to ignore the issue of enrichment and think of {\rm TVS}'s as just topological presheaves. See \cite{TVS} for precise definitions.} presheaves on  strictly totally disconnected spaces over $C$, 
\index{PERF@\PERF}denoted  ${\rm sPerf}_C$,  with values in solid abelian groups.
 \item  ${\mathbb R}_{\proeet}(X_C,\Q_p)$ is  \index{ETA@\ETA}a TVS defined by  $S\mapsto \R\Gamma_{\proeet}(X_S,\Q_p)$;  the topology on $\R\Gamma_{\proeet}(X_S,\Q_p)$  is canonically  inherited from the pro-\'etale site. 
 \item Pro-\'etale cohomology with compact support ${\mathbb R}_{\proeet,c}(X_C,\Q_p)$ is defined by $S\mapsto \R\Gamma_{\proeet,c}(X_S,\Q_p)$, where, 
 for an exhaustive covering $\{U_n\}$, $U_n\Subset U_{n+1}$, by quasi-compact open spaces, we \index{BORD@\BORD}set
 \begin{align*}
 \R\Gamma_{\proeet,c}(X_S,\Q_p) & :=[\R\Gamma_{\proeet}(X_S,\Q_p)\to \R\Gamma_{\proeet}(\partial X_S,\Q_p)],\\
 \R\Gamma_{\proeet}(\partial X_S,\Q_p) &:= \colim_n\R\Gamma_{\proeet}(X_S\moins U_{n,S},\Q_p),
 \end{align*}
 with the induced topology. 
  By \cite{AGN}, we have $\R\Gamma_{\proeet,c}(X_C,\Q_p):=\R\Gamma_{\eet,c,{\rm Hu}}(X_C,\Q_p)$, the Huber $p$-adic \'etale cohomology.
 \end{enumerate}

\begin{remark}
We state the duality only one way because of potential pathologies for extensions of Fr\'echet
(or Banach) spaces in the condensed world, but (\ref{dua1}) and (\ref{dua2}) suggest
 strongly that there should be
a duality both ways, at least in reasonnable cases.
\end{remark}
 
 \subsection{The main result}  The main result of this paper is the following: 
 \begin{theorem} \label{main-1}Conjecture \ref{conj-main} holds.
 \end{theorem}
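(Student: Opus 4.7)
The plan is to follow the strategy signaled in the abstract: lift the conjectured TVS duality to a duality between solid quasi-coherent sheaves on the Fargues--Fontaine curve, use the syntomic--pro-\'etale comparison to transport it to a statement about Hyodo--Kato and filtered de Rham sheaves, and finally deduce it from classical Serre duality for smooth Stein varieties.

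\emph{Step 1: Lift to sheaves on the Fargues--Fontaine curve.} By the Colmez--Nizio\l\ fully-faithfulness results, both sides of the conjectured quasi-isomorphism come from solid quasi-coherent sheaves ${\cal E}_{\proeet}(X)$ and ${\cal E}_{{\proeet},c}(X)$ on $X_{\rm FF}$ whose TVS realizations recover ${\mathbb R}_{\proeet}(X_C,\Q_p)$ and ${\mathbb R}_{{\proeet},c}(X_C,\Q_p(d))$. After checking that the TVS realization functor intertwines internal $\R\Hhom$ into ${\cal E}_{\rm FF}$ with $\R\uHom_{\rm TVS}(-,\Q_p)$, it suffices to construct a natural equivalence
$$
{\cal E}_{\proeet}(X)\simeq \R\Hhom\bigl({\cal E}_{{\proeet},c}(X)[2d],{\cal E}_{\rm FF}\bigr)
$$
in an appropriate derived category of solid $\phi$-equivariant sheaves on $X_{\rm FF}$.

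\emph{Step 2: Syntomic description, then Hyodo--Kato and de Rham dualities.} Via the comparison theorem, ${\cal E}_{\proeet}$ is identified with the syntomic sheaf ${\cal E}_{\syn}$, built as the homotopy fiber of a map between a $\B_{\st}^+$-twisted Hyodo--Kato piece $\R\Gamma_{\hk}^{\B}(X,r)$ (with its $\phi$, $N$ and $G_K$-action) and a filtered de Rham piece $F^r\R\Gamma_{\dr}(X/\B_{\dr}^+)$, glued by the Hyodo--Kato morphism $\iota_{\hk}$. The compactly supported variant ${\cal E}_{\proeet,c}$ has the same shape. Thus the required duality splits into: a Poincar\'e duality for the $\B_{\st}^+$-twisted Hyodo--Kato complex pairing $\R\Gamma_{\hk}^{\B}(X,r)$ with $\R\Gamma_{\hk,c}^{\B}(X,d-r)$, compatible with $\phi$ and $N$; and a filtered Poincar\'e duality pairing $F^r\R\Gamma_{\dr}(X/\B_{\dr}^+)$ with $F^{d-r}\R\Gamma_{\dr,c}(X/\B_{\dr}^+)$. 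Each is obtained by base change from the corresponding pairing over the residue field and over $\B_{\dr}^+$ respectively, and by flat base change is induced from the pairing of $\R\Gamma(X,\Omega^{\bullet})$ with $\R\Gamma_c(X,\Omega^{\bullet})$ on the exhaustion $\{U_n\}$, $U_n\Subset U_{n+1}$. For smooth Stein $X$ this is classical Serre duality; partial properness and the exhaustion by relatively compact opens allow us to reduce to this case.

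\emph{Main obstacle.} The real work is in assembling the two pairings into a single duality compatible with the syntomic fiber sequence: one must check that $\iota_{\hk}$ intertwines the Hyodo--Kato and de Rham pairings up to the expected Tate twist, and that $\phi$ is an exact self-dual operation, uniformly in the solid structure on $X_{\rm FF}$. Equally delicate is the verification that the colimit defining ${\mathbb R}_{\proeet,c}$ is compatible with internal $\R\Hhom$ at the level of TVS's, since, as flagged in the remark following Conjecture \ref{conj-main}, Fr\'echet--Banach extensions in the condensed world are pathological; here the $U_n\Subset U_{n+1}$ exhaustion and the finite-dimensionality of the individual cohomology groups on each $U_n$ are what make the limits well behaved. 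Once these compatibilities are in place, the Fargues--Fontaine-curve duality of Step 1 assembles formally, and Theorem \ref{main-1} follows by descending through the fully-faithfulness of Colmez--Nizio\l.
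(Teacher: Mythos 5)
Your high-level strategy matches the paper's: represent both cohomologies by solid quasi-coherent sheaves on $X_{\rm FF}$, identify the pro-\'etale sheaf with the syntomic one via a comparison theorem in a stable range, split the curve-level duality into Hyodo--Kato and filtered $\B^+_{\dr}$ pieces (ultimately Serre duality), and descend via fully-faithfulness of the projection $\R\tau_*$ to TVS's. That is the architecture of Sections 3--6. However, your account misplaces the weight of the argument, and one claim in your ``main obstacle'' paragraph is incorrect.

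First, you treat the statement that $\R\tau_*$ intertwines the internal $\R\Hhom$ on ${\rm QCoh}(X_{\rm FF})$ with $\R\Hhom_{\rm TVS}(-,\Q_p)$ as a preliminary to be ``checked,'' but this is in fact the hardest step, occupying all of Section~6. It is \emph{not} a formal adjunction: the pro-\'etale sheaf is nuclear but far from perfect, so the known fully-faithfulness (Proposition~\ref{recall1}(2)) does not apply directly. The paper gets around this by applying $\R\tau_* {\mathbb D}(-,\so(s,s))$ to the defining triangle $\se_{\synt,c}\to\se_{\hk,c}\to\se_{\dr,c}$ and proving commutation with $\R\Hhom$ term by term: the Hyodo--Kato term, after reduction to finite-rank groups, is a perfect complex so fully-faithfulness for perfect complexes applies; the de~Rham term, being $i_{\infty,*}$ of a complex built from spaces of compact type, requires Lemma~\ref{frank2} and a delicate Smith-space computation culminating in $\R\Hhom_{\rm TVS}(W\otimes^\Box_K{\mathbb G}_a,\Q_p)\simeq W^*\otimes^\Box_K{\mathbb G}_a(-1)[-1]$. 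That functional-analytic bottleneck, not the colimit over $U_n\Subset U_{n+1}$, is where the Fr\'echet/Banach pathologies flagged in Remark after Conjecture~\ref{conj-main} actually have to be controlled.

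Second, your assertion that ``the finite-dimensionality of the individual cohomology groups on each $U_n$'' is what makes the limits well-behaved is false: the cohomology groups $\Omega^i(U_n)$, $H^d_c(U_n,\Omega^j)$ and their $\B^+_{\dr}$-tensors are infinite-dimensional Fr\'echet/compact-type spaces (only the Hyodo--Kato groups are finite rank, and only over $\breve C$ for overconvergent quasi-compact opens). The correct statement uses nuclearity/compact-type structure and the Mittag--Leffler property of the pro-system $\{\Omega^j(X_n)\}$, as in the proof of Corollary~\ref{rainy-day}. Finally, the curve-level duality itself is not proved by directly gluing the Hyodo--Kato and de~Rham pairings along $\iota_{\hk}$; instead the duality map on the chart $\B'=\B^{[u,v]}_{S^\flat}$ is checked after base change to $\B'[1/t]$ (which kills the de~Rham contribution and leaves pure Hyodo--Kato duality) and to $\B'/t$ (which kills the Hyodo--Kato/filtration twist and leaves pure Serre duality), a cleaner separation than your sketch suggests. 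Your outline is consistent with the paper's approach, but it does not yet contain the mechanism that makes the TVS descent work.
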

    Our strategy for the proof of Theorem \ref{main-1} follows the heuristic computations we have done on examples.
 The  foundational results  needed to do that mentioned in the previous section
   were proven in a series of papers by the authors and Piotr Achinger: Hyodo-Kato cohomology of rigid analytic and dagger varieties was defined  and studied in \cite{CN4}, $p$-adic comparison theorems were proven in \cite{CN4}, \cite{CN5}, \cite{SG}, compactly supported $p$-adic pro-\'etale cohomology and Hyodo-Kato dualities were studied in \cite{AGN}, the  properties of Topological Vector Spaces were studied in \cite{TVS} (that they satisfy the expected duality for BC's was derived there from a result of the same  type due to Ansch\"utz-Le Bras in the category of Vector Spaces\footnote{That is $\infty$-derived category of $\underline{\Q}_p$-modules on the site of perfectoid affinoids over $C$ equipped with pro-\'etale topology.} \cite{ALB}). 
   
    To prove Theorem \ref{main-1}, we start with passing from pro-\'etale cohomology to syntomic cohomology.
 Recall that the latter is built from the Hyodo-Kato part, that records the mod $p$ information, and the de filtered Rham part that records the characteristic zero information; the two parts are connected via the Hyodo-Kato morphism.
 It is part of the standard yoga of $p$-adic Hodge theory for algebraic varieties that, when   dealing with syntomic cohomology one should work as long as possible separately with the Hyodo-Kato and the de Rham parts and glue them together only at the last moment.\footnote{A technique inherited from Beilinson-Deligne cohomology (see \cite{Bei}). For how this works for syntomic cohomology in the algebraic setting  see \cite{Ban}, \cite{DN}.} In the perfectoid world this separation can be obtained geometrically by representing syntomic cohomology by a quasi-coherent sheaf on the Fargues-Fontaine curve.
 This sheaf will have (completed) stalks equal to twisted Hyodo-Kato cohomology at all points outside of $\infty$ and at $\infty$ it will be equal to the filtered $\B^+_{\dr}$-cohomology.
 Now Hyodo-Kato duality (inherited itself from de Rham duality) and filtered de Rham duality (see \cite{AGN})  yield a duality on the Fargues-Fontaine curve. Taking derived global sections of this duality, via fully-faithfulness results from \cite{TVS},   yields a duality on the level of {\rm TVS}'s.  
   \subsection{A Corollary} \label{before} Before reviewing our proof of Theorem \ref{main-1} we will state an implication and sketch its proof. It contains many of the essential elements of the proof of the main theorem.  
 \begin{corollary} \label{main-cor} 
Let $i\geq 0$. There is a natural short exact sequence \index{ETB@\ETB}of {\rm TVS}'s
$$ 0\to {\mathcal Ext}^1_{\rm TVS}({\mathbb H}^{2d-i+1}_{\proeet,c}(X_C,\Q_p(d)),\Q_p)\to {\mathbb H}^i_{\proeet}(X_C,\Q_p)\to{\mathcal Hom}_{\rm TVS} ({\mathbb H}^{2d-i}_{\proeet,c}(X_C,\Q_p(d)),\Q_p)\to 0$$
 \end{corollary}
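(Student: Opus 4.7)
The plan is to derive the corollary directly from Theorem \ref{main-1} via a hyper-Ext spectral sequence. Writing $M := {\mathbb R}_{\proeet,c}(X_C,\Q_p(d))$, Theorem \ref{main-1} identifies ${\mathbb H}^i_{\proeet}(X_C,\Q_p)$ with the $(i-2d)$-th cohomology of $\R\Hhom_{\rm TVS}(M,\Q_p)$. Filtering $M$ by its canonical truncations then yields a convergent spectral sequence
$$E_2^{p,q} = {\mathcal Ext}^p_{\rm TVS}\big({\mathbb H}^{-q}_{\proeet,c}(X_C,\Q_p(d)),\,\Q_p\big) \;\Longrightarrow\; H^{p+q}\big(\R\Hhom_{\rm TVS}(M,\Q_p)\big),$$
so the $E_2$-terms contributing at total degree $i-2d$ are exactly ${\mathcal Ext}^p_{\rm TVS}\big({\mathbb H}^{2d-i+p}_{\proeet,c}(X_C,\Q_p(d)),\Q_p\big)$ for $p\geq 0$.

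The next task is to show that only the columns $p=0$ and $p=1$ are non-zero. The heuristic in (\ref{BC}) predicts $\underline{\Ext}^p_{\rm TVS}(-,\Q_p)=0$ for $p\geq 2$ on Banach--Colmez spaces, and this vanishing is promoted from BC's to TVS's in \cite{TVS} (following \cite{ALB}). What remains to check is the structural input that each ${\mathbb H}^j_{\proeet,c}(X_C,\Q_p(d))$ lies in the BC-like subcategory where this vanishing applies: this is supplied by the syntomic description of ${\mathbb R}_{\proeet,c}$ developed in \cite{CN4,CN5,SG,AGN}, which after transfer to the Fargues--Fontaine curve expresses the cohomology as an extension of twisted Hyodo--Kato and filtered $\B^+_{\dr}$-pieces -- each manifestly of BC-type.

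Granted this Ext-vanishing, the spectral sequence is concentrated in the two columns $p=0,1$, every $E_2$-differential targets a zero group, and the sequence degenerates. The resulting two-step filtration on the abutment realizes precisely the short exact sequence of the corollary, with ${\mathcal Ext}^1_{\rm TVS}({\mathbb H}^{2d-i+1}_{\proeet,c}(X_C,\Q_p(d)),\Q_p)$ as the subobject and ${\mathcal Hom}_{\rm TVS}({\mathbb H}^{2d-i}_{\proeet,c}(X_C,\Q_p(d)),\Q_p)$ as the quotient. The main obstacle is the Ext-vanishing input; once it is in hand, the deduction from Theorem \ref{main-1} is entirely formal, which is presumably why this statement is recorded as a corollary rather than a theorem.
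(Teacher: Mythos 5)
Your overall strategy matches the paper exactly: dualize via Theorem \ref{main-1}, filter by canonical truncations to get the internal-$\R\Hhom$ spectral sequence, and prove it collapses to two columns by showing ${\mathcal Ext}^a_{\rm TVS}({\mathbb H}^{b}_{\proeet,c},\Q_p)=0$ for $a\geq 2$. That reduction is correct and is precisely how the paper proceeds (see the sketch in \S\ref{before} and the detailed argument in Corollary \ref{rainy-day}).

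However, the way you dispose of the Ext-vanishing is not sound as stated: the syntomic decomposition does \emph{not} express ${\mathbb H}^{b}_{\proeet,c}$ as an extension of pieces that are ``manifestly of BC-type.'' Two genuine gaps. First, the twisted Hyodo--Kato piece $(H^b_{\hk,c}(X_C)\otimes^{\LL_\Box}_{\breve{C}}{\mathbb B}^+_{\crr})^{\phi=p^r}$ is a BC only when $H^b_{\hk,c}(X_C)$ has finite rank; in general one must first run a limit argument over an exhaustion by overconvergent quasi-compact opens (and then control a $\R^1\lim$) before the BC-vanishing applies. Second, and more seriously, the filtered $\B^+_{\dr}$-piece contributes terms of the shape $W\otimes^{\Box}_{K}{\mathbb G}_a$ with $W=H^d_c(X,\Omega^j)/\ker d$ an infinite-dimensional compact-type $K$-vector space; this is emphatically not a BC, and its $\Ext$-vanishing is where all the functional-analytic work lives. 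The paper handles it by using that $W$ is of compact type, writing it as a compact colimit of Smith spaces, base-changing Smith spaces from $\Q_p$, invoking the projectivity of Smith spaces to pull $W^*$ out of the internal ${\mathcal Ext}$, and then appealing to ${\mathcal Ext}^a_{\rm TVS}(\mathbb{G}_a,\Q_p)=0$ for $a\geq2$ together with Mittag--Leffler/$\R^1\lim$ vanishing. Your argument, as written, would need exactly this extra input; declaring the pieces ``manifestly BC'' silently assumes away the hard part of the corollary. (There is also a small devissage subtlety the paper flags as a ``boundary case'': the long exact sequence only identifies ${\mathbb H}^b_{\proeet,c}$ up to extension by the kernel/cokernel of the Hyodo--Kato map, so the reduction to the two pieces requires the further splitting in diagram \eqref{air1} rather than a clean two-term decomposition.)
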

 We note that the term on right is almost constant (see the computations \eqref{BC}). 
   Since we have  the internal $\R\Hom$ spectral sequence, it suffices to show  that
   \begin{equation}\label{vanishing-0}
   {\mathcal Ext}^a_{\rm TVS}({\mathbb H}^{b}_{\proeet, c}(X_C,\Q_p),\Q_p)=0,\quad a\geq 2. 
   \end{equation}
   Using  syntomic comparison theorems we can trivialize the {\rm TVS}-structure on pro-\'etale cohomology. That is, for $r\geq 2d$, we have a long exact sequence of {\rm TVS}'s
   $$
   \cdots\to \mathbb{DR}^{b-1}_c(X_C,r)\to {\mathbb H}^b_{\proeet,c}(X_C,\Q_p(r))\to \mathbb{HK}^b_c(X_C,r)\to \mathbb{DR}^b_c(X_C,r)\to\cdots,
   $$
   where we set
   \begin{align}\label{hk-dr}
   \mathbb{HK}^b_c(X_C,r) & := (H^b_{\hk,c}(X_C)\otimes^{\LL_{\Box}}_{\breve{C}}{\mathbb B}^+_{\crr})^{\phi=p^r}\\
   \mathbb{DR}^b_c(X_C,r) & :=\lim (\cdots\to H^d_c(X,\Omega^j)\otimes^{\LL_{\Box}}_K({\mathbb B}^+_{\dr}/t^r)\to H^d_c(X,\Omega^{j+1})\otimes^{\LL_{\Box}}_K({\mathbb B}^+_{\dr}/t^{r-1})\to \cdots)[-d]\notag
   \end{align}
  \index{C@\Cbreve}Here   $\breve{C}={\rm Frac}(W(\overline{k}))$, where $k$ is the residue field of $K$. 
We note that the {\rm TVS}'s structure in \eqref{hk-dr} comes solely from the period presheaves.  
  
   To show \eqref{vanishing-0}, modulo a boundary case, it suffices to show it for  
the Hyodo-Kato and de Rham parts separately.
 For the Hyodo-Kato part, passing via a limit argument to overconvergent quasi-compact opens, we may assume that the ranks of  Hyodo-Kato groups are finite.
  But then $(H^b_{\hk,c}(X_C)\otimes^{\LL_{\Box}}_{\breve{C}}{\mathbb B}^+_{\crr})^{\phi=p^r}$ is a BC and we know that the $\Ext$-groups for those vanish in degrees higher than 2.
 So far there was no functional analytic difficulties. They appear when we need to show that 
   $$
   {\mathcal Ext}^a_{\rm TVS}(H^d_c(X,\Omega^i)\otimes^{\Box}_K{\mathbb G}_a,\Q_p)=0,\quad a\geq 2.
   $$
   But now the space $H^d_c(X,\Omega^i)$ is of compact type thus, assuming $K=\Q_p$ for simplicity, we have
   $$
   {\mathcal Ext}^a_{\rm TVS}(H^d_c(X,\Omega^i)\otimes^{\Box}_K{\mathbb G}_a,\Q_p)\simeq H^d_c(X,\Omega^i)^*\otimes^{\Box}_{\Q_p}{\mathcal Ext}^a_{\rm TVS}({\mathbb G}_a,\Q_p),
   $$
   which vanishes in the required range (by the BC computations), as wanted. 
 \subsection{Duality for $p$-adic geometric pro-\'etale cohomology on the  Fargues-Fontaine curve}
 We  show in this paper  that  the $p$-adic geometric pro-\'etale cohomology  seen  as living on the Fargues-Fontaine curve satisfies a Poincar\'e duality. Recall that the $p$-adic geometric pro-\'etale cohomology of a smooth partially proper rigid analytic variety $X$  over $K$  can be represented 
by  a solid quasi-coherent sheaf on the Fargues-Fontaine curve, i.e.,
  the pro-\'etale cohomology can be computed \index{E@\EEE}as
 $$\R\Gamma_{\proeet}(X_C,\Q_p)\simeq \R\Gamma(X_{\FF},\se_{\proeet}(X_C, \Q_p)), $$  
for a (nuclear) solid quasi-coherent  sheaf $\se_{\proeet}(X_C,\Q_p)$ on the Fargues-Fontaine 
\index{FFF@\FFF}curve $X_{\FF}:=X_{\FF,C^{\flat}}$ defined using relative period sheaves. 
Similarly,  geometric compactly supported pro-\'etale cohomology 
 $\R\Gamma_{\proeet,c}(X_C,\Q_p)$ can be represented by solid quasi-coherent sheaf $\se_{\proeet,c}(X_C, \Q_p)$ on $X_{\FF}$. See Section \ref{sing1}  for the definitions. 
 
  Via comparison theorems, we see that, if $?\in\{- ,c\}$,
 $$\se_{\proeet,?}(X_C, \Q_p(r))\simeq \se_{\synt,?}(X_C, \Q_p(r)),\quad  r\geq 2d,$$
   where $d$ is the dimension of $X$ and  $\se_{\synt,?}(X_C, \Q_p(r))$ 
  is  the syntomic cohomology sheaf (a solid quasi-coherent sheaf on the Fargues-Fontaine curve representing syntomic cohomology; see  Section \ref{sing2} for a definition).
 This is equivalent to proving a comparison theorem between corresponding  Frobenius equivariant sheaves on the Fargues-Fontaine curve $Y_{\FF}$, which amounts to untwisting Frobenius from classical comparison theorems.
 Luckily for us, the proofs of comparison theorems in \cite{CN4} and \cite{AGN}  do actually (implicitly) prove the untwisted versions we want (see Corollary \ref{hot1K} for the notation): 
  \begin{theorem}{\rm (${\mathbb B}^I$-comparison theorem)} 
Let $X$ be a smooth partially proper variety over $K$.  Let $r\geq 0$ and 
 let $I\subset (0,\infty)$ be a compact interval with rational endpoints\footnote{Containing the fixed intervals from Section \ref{fixed}}.  We have  a natural, functorial in $S$, compatible with Frobenius,
  quasi-isomorphism 
in $\sd(\B^I_{S^{\flat},\Box})$: 
\begin{align*}\label{kin1K}
\tau_{\leq r}\R\Gamma_{\proeet,?}(X_S,{\mathbb B}^{I})(r) & \simeq \tau_{\leq r}[\R\Gamma^{{I}}_{\hk,?}(X_S,r)\lomapr{\iota^I_{\hk}} \R\Gamma_{\dr,?}^{I}(X_S,r)].
\end{align*}
\end{theorem}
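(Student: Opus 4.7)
The plan is to extract the stated $\B^I$-level identity from the proofs of the classical Frobenius-equivariant syntomic comparison theorems in \cite{CN4} and \cite{AGN}. Those proofs, when unwound, proceed locally on the $Y_{\FF}$-curve over compact intervals $I$, and only impose the Frobenius invariance at the final gluing step that recovers $\Q_p(r)$. The statement we want is precisely the output at this intermediate stage: a comparison at the $\B^I$-level, functorial in $S$ and compatible with the Frobenius transition maps, \emph{before} any $\phi = p^r$ identification is made.

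Concretely, I would proceed as follows. First, reduce to a local situation via a cover of $X$ by small affinoids $U$ admitting suitable semistable formal models with a Frobenius lift. On each such chart, the proof of \cite{CN4, AGN} constructs, at the level of solid $\B^I_{S^\flat,\Box}$-modules, a quasi-isomorphism between $\tau_{\leq r}\R\Gamma_{\proeet,?}(U_S, {\mathbb B}^I(r))$ and the truncated mapping fiber of $\iota^I_{\hk}\colon \R\Gamma^{I}_{\hk,?}(U_S,r) \to \R\Gamma^{I}_{\dr,?}(U_S,r)$; this rests on the $\B^I$-variant of the filtered Poincar\'e lemma and the local construction of the Hyodo-Kato morphism. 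Second, glue these local quasi-isomorphisms to a global statement on $X$ using pro-\'etale descent and the functoriality of the Hyodo-Kato morphism. Finally, verify functoriality in $S \in {\rm Perf}_C$ (which is automatic from the construction of the relative period sheaves of \cite{CN4, AGN}) and compatibility with the Frobenius transition on the $\B^I$'s (which follows from the analogous compatibility on the syntomic side, by the general principle that the untwisted version refines the Frobenius-equivariant one).

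The main obstacle is the bookkeeping needed to isolate the $\B^I$-level statement from the full Frobenius-invariant syntomic comparison. In \cite{CN4, AGN}, the truncation $\tau_{\leq r}$, the Frobenius twist $(r)$, and the Poincar\'e lemma for $\B^I$ are introduced together, and disentangling them requires care: one must check that every intermediate step -- the filtered Poincar\'e lemma, the crystalline--de Rham identification, the construction of $\iota^I_{\hk}$, and the pro-\'etale descent -- is genuinely a statement of solid $\B^I_{S^\flat,\Box}$-modules functorial in $S$, and that the gluings respect this structure. Once this is in place, the quasi-isomorphism follows: imposing $\phi = p^r$ on both sides recovers the original syntomic comparison, and the truncation $\tau_{\leq r}$ ensures that all higher obstruction terms vanish, so the $\B^I$-level comparison holds without any Frobenius invariance.
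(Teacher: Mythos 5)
Your proposal is essentially the paper's own argument: both revisit the proofs of the syntomic comparison in \cite{CN4,AGN}, observe that the construction there is carried out locally at the level of solid $\B^I$-period modules and that the passage to Frobenius eigenspaces is only imposed at the very last gluing step, and conclude that dropping that step yields the desired $\B^I$-level, Frobenius-compatible quasi-isomorphism (with compact support handled by colimits and finite limits). The paper is merely more telegraphic, pointing directly to the relevant diagrams (7.16), (7.31), (7.36) of \cite{CN4} and to the argument of Theorem~\ref{hot1} for the reduction from a general compact $I$ to the fixed $[u,v]$; your final sentence about ``imposing $\phi=p^r$ recovers the original comparison'' is a consistency check rather than a logical deduction, but the substantive argument in the preceding sentences is the right one.
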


  Recall that classical syntomic cohomology is built from $(\phi,N)$-eigenspaces of $\B^+_{\st}$-twisted Hyodo-Kato cohomology and from filtered $\B^+_{\dr}$-cohomology. Representing it (in a stable range) by the sheaf $\se_{\synt,?}(X_C, \Q_p(r))$ on the Fargues-Fontaine curve separates these terms: heuristically speaking,  the (completed) stalks of $\se_{\synt,?}(X_C, \Q_p(r))$ at points outside $\infty$ are $N$-eigenspaces of $\B^+_{\st}$-twisted Hyodo-Kato cohomology and the (completed) stalk at $\infty$ is the $r$-th filtration level of 
 $\B^+_{\dr}$-cohomology. 
 
 Now, the   stalk cohomology sheaves   satisfy Poincar\'e duality: Poincar\'e duality for Hyodo-Kato cohomology reduces, via the Hyodo-Kato isomorphism, to that for de Rham cohomology and Poincar\'e duality for filtered de Rham cohomology, in turn, reduces to Serre duality for smooth Stein varieties -- a classical result (see \cite{AGN} and Section \ref{sing3} for details).
  These dualities are  inherited by the sheaves $\se_{\synt,?}(X_C, \Q_p(r))$, for $r\geq 2d$,  and then by the  sheaves $\se_{\proeet,?}(X_C, \Q_p(r))$ yielding  the second main result of this paper:
 \begin{theorem} 
{\rm (Poincar\'e duality for pro-\'etale sheaves)} \label{first1}  
We have a natural, Galois equivariant,  quasi-isomorphim in ${\rm QCoh}(X_{\FF})$
 \begin{equation}\label{first}
 \se_{\proeet}(X_C, \Q_p)\stackrel{\sim}{\to} \R\Hhom_{{\rm QCoh}(X_{\FF})}(\se_{\proeet,c}(X_C, \Q_p(d))[2d],\so).
 \end{equation}
 \end{theorem}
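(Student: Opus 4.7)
The strategy follows the blueprint laid out in the introduction: transport the problem from pro-\'etale to syntomic cohomology sheaves on the Fargues--Fontaine curve, decompose the syntomic sheaves into a Hyodo--Kato and a filtered de~Rham contribution, and prove duality for each separately before gluing them along the Hyodo--Kato morphism.

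First, I would fix an auxiliary Tate twist $r\geq 2d$ and invoke the $\mathbb{B}^I$-comparison theorem together with its compactly supported analogue to replace $\se_{\proeet,?}(X_C,\Q_p(r))$ by $\se_{\synt,?}(X_C,\Q_p(r))$ in ${\rm QCoh}(X_{\FF})$. Since the Tate twist on $X_{\FF}$ is realized by a line bundle, it can be absorbed on either side of an internal $\R\Hhom$, so the conjectured duality \eqref{first} is equivalent to the analogous duality between $\se_{\synt}(X_C,\Q_p(r))$ and $\R\Hhom_{{\rm QCoh}(X_{\FF})}(\se_{\synt,c}(X_C,\Q_p(r+d))[2d],\so)$ for some $r\geq 2d$. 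This step is essentially formal once the comparison is known, but requires checking that the comparison isomorphism is $\so_{X_{\FF}}$-linear so that it passes through internal $\R\Hhom$.

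Second, I would work stalk-wise on $X_{\FF}$. The completed stalks of $\se_{\synt,?}(X_C,\Q_p(r))$ at the classical points $y\neq\infty$ compute the $(\phi,N)$-eigenspaces of the $\mathbf{B}_{\st}^+$-twisted Hyodo--Kato cohomology, while the completed stalk at $\infty$ computes the $r$-th filtration level of the $\mathbf{B}_{\dr}^+$-cohomology. On each of these local models, one already has a Poincar\'e duality: for the Hyodo--Kato part it is inherited, via the Hyodo--Kato isomorphism $\iota_{\hk}$, from de~Rham duality over $\breve{C}$; for the filtered de~Rham part it is obtained, by descending the Hodge filtration, from Serre duality for the smooth Stein variety $X$, which is classical and is exactly the input provided by \cite{AGN} and Section \ref{sing3}. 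I would assemble these into a duality morphism of sheaves on $X_{\FF}$ by globalising the trace map (coming from the $d$-th relative differentials) to a trace $\R\Gamma(X_{\FF},\se_{\synt,c}(X_C,\Q_p(r+d))[2d])\to\so$, and reading off the adjoint pairing.

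The main obstacle will be the gluing. Concretely, one must verify that the two local dualities are compatible with the Hyodo--Kato morphism $\iota_{\hk}$, i.e.\ that the square relating the Hyodo--Kato dual pairing to the filtered de~Rham dual pairing via $\iota_{\hk}$ commutes up to coherent homotopy. Once this compatibility is established, the fibre sequence built from the Hyodo--Kato and filtered de~Rham presentations of $\se_{\synt,?}$ transports into its expected dual, yielding Poincar\'e duality for the syntomic sheaf; descending through the comparison theorem of the first step then gives \eqref{first}. Two auxiliary technicalities remain: keeping track of Galois equivariance, which reduces to the observation that each ingredient (comparison, $\iota_{\hk}$, Serre trace) is already Galois equivariant as set up earlier; and handling the partially proper but non-quasi-compact geometry of $X$ via the exhaustion $U_n\Subset U_{n+1}$ used to define compactly supported cohomology, where the limit/colimit interchange with $\R\Hhom$ must be controlled using the nuclearity of the relevant solid quasi-coherent sheaves on $X_{\FF}$.
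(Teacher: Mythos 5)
Your high-level strategy—pass from pro-\'etale to syntomic sheaves for $r\ge 2d$ via the comparison theorem, and then reduce the duality on $\se_{\synt,?}$ to Hyodo--Kato duality (away from $\infty$) and filtered $\B^+_{\dr}$-duality (at $\infty$)—is the same blueprint the paper lays out in the introduction. But the concrete middle step you propose does not work, and the paper warns explicitly about this: there is no good notion of ``completed stalk'' of a solid quasi-coherent sheaf on $X_{\FF}$ in the Andreychev formalism, so you cannot ``work stalk-wise on $X_{\FF}$'' and then glue the local dualities. That is precisely the difficulty that forces a different implementation.

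What the paper actually does (Theorem \ref{curve-duality}) is work with $\phi$-modules over the affinoid chart $\B' := \B^{[u,v]}_{S^\flat}$ of the $Y_{\FF}$-curve, where the duality map $\gamma^{[u,v]}_{X_S}$ is a concrete morphism in $\sd(\B'_\Box)$. One then shows it is an equivalence by checking the two base changes $(-)\otimes^{\LL_\Box}_{\B'}\B'[1/t]$ and $(-)\otimes^{\LL_\Box}_{\B'}\B'/t$; the first recovers Hyodo--Kato duality (using that syntomic cohomology $[1/t]$ is Hyodo--Kato cohomology $[1/t]$ and Lemma \ref{kolobrzeg4a}), the second recovers a direct sum of Serre dualities (using the splitting \eqref{morning2} after reduction mod $t$). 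The devissage from these two base changes back to $\B'$ is elementary (a short exact sequence $0\to\B'\to\B'[1/t]\to\B'[1/t]/\B'\to 0$ plus a filtered-colimit argument for the torsion quotient). This replaces your ``stalk and glue'' by ``one chart, two base changes, conclude on the nose''—in particular no homotopy-coherent compatibility square between the two local dualities along $\iota_{\hk}$ has to be established. Your alternative gluing argument via the fiber sequence \eqref{def1} and its dual is actually the paper's \emph{secondary}, conditional proof (Section 5.3): it is sketched but left depending on the unchecked compatibilities of Lemma \ref{tedious1}, exactly because the coherence you flag as ``the main obstacle'' is genuinely painful to verify. So you have correctly identified the pressure point, but the route you chose to relieve it is either ill-defined (stalks) or incomplete (the fiber-sequence gluing); you need the base-change-to-$\B'[1/t]$-and-$\B'/t$ device to close the argument.
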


 The proof of the theorem does not proceed as sketched above though,
 due to the difficulties of passing to stalks in the theory of solid quasi-coherent sheaves.
 Instead we argue in a similar vein with $\phi$-modules on the $Y_{\FF}$-curve.
 In a side remark,
 we sketch an alternative proof of Theorem \ref{first1}
 that, instead of passing to the $Y_{\FF}$-curve, uses dual modifications.
 
  Analogous argument, with splitting into Hyodo-Kato and de Rham terms, yields a K\"unneth formula:
 \begin{theorem}{\rm(K\"unneth formula)} 
\label{first2} Let $X,Y$ be smooth partially proper varieties over $K$. 
 Then the  canonical  \index{KAPPA@\KAPPA}map 
 $$
\kappa:\quad   \se_{\proeet}(X_C,\Q_p)\otimes^{\LL}_{\so} \se_{\proeet}(Y_C,\Q_p) \to \se_{\proeet}((X\times_K Y)_C,\Q_p)
 $$
 is a quasi-isomorphism in ${\rm QCoh}(X_{\FF})$. 
 \end{theorem}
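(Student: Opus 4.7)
The plan is to mirror the proof strategy of Theorem~\ref{first1}: trivialize the pro-\'etale sheaves via the syntomic comparison on the Fargues--Fontaine curve, split the resulting syntomic sheaf into its Hyodo--Kato and filtered de Rham components, and reduce to classical K\"unneth formulas for each component. Concretely, choose integers $r_1 \geq 2\dim X$ and $r_2 \geq 2 \dim Y$ so that the ${\mathbb B}^I$-comparison theorem applies to $X$, $Y$, and $X\times_K Y$ simultaneously, and consider the twisted K\"unneth map
$$
\se_{\proeet}(X_C,\Q_p(r_1)) \otimes^{\LL}_{\so} \se_{\proeet}(Y_C,\Q_p(r_2)) \to \se_{\proeet}((X \times_K Y)_C, \Q_p(r_1+r_2)).
$$
Since $\Q_p(r)$ is invertible on $X_{\FF}$, K\"unneth in this twisted form is equivalent to the untwisted statement. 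The comparison theorem then identifies both sides with the corresponding syntomic sheaves, so we are reduced to proving a K\"unneth quasi-isomorphism for $\se_{\synt}$. As in the proof of Theorem~\ref{first1}, this reduction is performed at the level of $\phi$-equivariant sheaves on $Y_{\FF}$ in order to bypass the difficulties of taking stalks in the solid quasi-coherent setting.

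The syntomic sheaf is a homotopy fibre of $\iota_{\hk}^I\colon \R\Gamma^{I}_{\hk}(-,r) \to \R\Gamma^{I}_{\dr}(-,r)$, and since $\otimes^{\LL}_{\so}$ preserves cofibre sequences in each variable, the K\"unneth statement for $\se_{\synt}$ reduces to three sub-statements: a K\"unneth quasi-isomorphism for the ${\mathbb B}^I$-twisted Hyodo--Kato sheaf, a K\"unneth quasi-isomorphism for the filtered de Rham sheaf, and a compatibility between these two via $\iota_{\hk}^I$. The Hyodo--Kato case reduces through the Hyodo--Kato isomorphism to the classical de Rham K\"unneth, combined with the fact that the solid tensor product over $\breve{C}$ with the period ring ${\mathbb B}^I$ is compatible with the external product of Hyodo--Kato cohomologies (flatness/nuclearity of ${\mathbb B}^I$ from \cite{TVS}). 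The filtered de Rham case reduces similarly to the classical K\"unneth for coherent cohomology of smooth partially proper varieties -- itself a consequence of Serre's K\"unneth for Stein spaces -- after commuting the defining limit past the derived solid tensor product with the period quotients ${\mathbb B}^{I,+}/t^k$. Compatibility with $\iota_{\hk}^I$ is then formal from the multiplicativity of the Hyodo--Kato morphism.

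The main obstacle will be the functional-analytic bookkeeping in the filtered de Rham step: justifying the interchange of the inverse limit defining $\R\Gamma^{I}_{\dr}$ with the derived solid tensor $\otimes^{\LL_{\Box}}_K$, and verifying that the classical K\"unneth formula for the compact-type spaces $H^\bullet(-,\Omega^j)$ survives tensoring with the period quotients ${\mathbb B}^{I,+}/t^k$. Once the nuclearity estimates of \cite{TVS} are invoked to handle these commutations, and once the K\"unneth map is checked to respect the Frobenius, monodromy and filtration structures on $Y_{\FF}$, the argument assembles into the desired quasi-isomorphism in ${\rm QCoh}(X_{\FF})$.
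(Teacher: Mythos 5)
Your opening move --- trivializing the pro-\'etale sheaves via Proposition~\ref{china1} and working with $\phi$-modules on the $[u,v]$-chart of $Y_{\FF}$ --- matches the paper's strategy, but the reduction step that follows has a genuine gap. You assert that, because $\se_{\synt}$ sits in a fibre sequence $\se_{\synt}\to\se_{\hk}\to\se_{\dr}$ and $\otimes^{\LL}_{\so}$ is exact in each variable, the syntomic K\"unneth reduces to K\"unneth for $\se_{\hk}$, K\"unneth for $\se_{\dr}$, and compatibility via $\iota_{\hk}$. This inference is not valid: the tensor product $\se_{\synt}(X)\otimes^{\LL}_{\so}\se_{\synt}(Y)$ of two fibres is \emph{not} the fibre of $\se_{\hk}(X)\otimes\se_{\hk}(Y)\to\se_{\dr}(X)\otimes\se_{\dr}(Y)$ (one gets a $3\times 3$ grid whose total fibre does not factor through that map), so it is not comparable to the defining fibre sequence of $\se_{\synt}((X\times Y)_C,\Q_p(r+r'))$ in the way your argument requires. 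Moreover, the putative K\"unneth for $\se_{\dr}$ is simply false: $\se_{\dr,?}(-,r)\simeq i_{\infty,*}\R\Gamma_{\dr,?}(-,r)$ is a skyscraper, and the derived tensor of two skyscrapers at $x_\infty$ over $\so_{\FF}$ produces extra $\mathrm{Tor}$ terms (already $\so/t\otimes^{\LL}_{\so}\so/t$ has nontrivial $\mathrm{Tor}_1$), so $\se_{\dr}(X)\otimes^{\LL}_{\so}\se_{\dr}(Y)\not\simeq\se_{\dr}(X\times_KY)$. The same $\mathrm{Tor}$ issue is hidden in the compatibility claim.

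The paper circumvents this by proving the K\"unneth map $\kappa^{[u,v]}$ is a quasi-isomorphism after two jointly conservative base changes over $\B'=\B^{[u,v]}$, namely $-\otimes^{\LL_{\Box}}_{\B'}\B'[1/t]$ and $-\otimes^{\LL_{\Box}}_{\B'}\B'/t$ (conservativity via $0\to\B'\to\B'[1/t]\to\B'[1/t]/\B'\to 0$ and devissage in the $t$-torsion). After inverting $t$ the de Rham term dies, $\se_{\synt}$ identifies with $\se_{\hk}$, and the statement reduces to the Hyodo--Kato K\"unneth (Lemma~\ref{Singapur1}, via the exponential map $\exp(NU)$ to strip off the $N=0$ condition). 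After reducing mod $t$ one uses the nontrivial identifications \eqref{morning1}--\eqref{morning2}, which replace the syntomic complex by a shifted direct sum of $\R\Gamma(X,\Omega^i)\otimes^{\LL_{\Box}}_K R$, reducing the question to the coherent K\"unneth \eqref{tu1}. Your draft recognizes that HK and dR dualities are the ingredients, but without the localization argument (and the two identifications above) the fiber-sequence decomposition you propose does not close up.
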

\begin{remark} In Theorem \ref{first1} and Theorem \ref{first2}, we can replace $C$, functorially, with any affinoid perfectoid over $C$. 
\end{remark}
\subsection{Descend to {\rm TVS}'s}
Finally, to prove Theorem \ref{main-1},  we need to  descend the duality \eqref{first} to the "real" world, which for us is the world of Topological Vector Spaces. 
We apply the projection \index{rtau@\rtau}functor $$\R\tau_*: {\rm QCoh}(X_{\FF})\to {\rm TVS},$$ the derived global section functor from \cite{TVS},  to the duality on the Fargues-Fontaine curve \eqref{first}  and, since
$$
\R\tau_*\se_{\proeet,?}(X_C,\Q_p)\simeq {\mathbb R}_{\proeet,?}(X_C,\Q_p),\quad \R\tau_*\so\simeq\Q_p,
$$
we reduce to showing that the canonical map
$$
\R\tau_*\R\Hhom_{\rm QCoh(X_{\FF})}(\se_{\proeet,c}(X_C,\Q_p),\so) {\to}\R\Hhom_{\rm TVS}(\R\tau_*\se_{\proeet,c},\R\tau_*\so).
$$
is a quasi-isomorphism. 
But this  fully-faithfulness result can be reduced by an argument similar to the one used in the proof of the Corollary \ref{main-cor} to fully-faithfulness for perfect complexes on the Fargues-Fontaine curve and this was proven in \cite{TVS}. 
\begin{remark} \label{rks}(1)({\em Algebraic Poincar\'e Duality}) The duality in Conjecture \eqref{conj-main} has an algebraic version in the category of Vector Spaces (see Corollary \ref{VSS-duality} for the statement). It is deduced from Theorem \ref{main-1}  via fully-faithfulness results from \cite{TVS}. \vskip2mm

 (2)  ({\em Arithmetic Duality})  Our Conjecture \ref{conj-main} has an arithmetic version, i.e.,  for 
$p$-adic arithmetic pro-\'etale cohomology (see \cite{COB}, \cite{CGN}). The statement is  much simpler: it is a  Poincar\'e duality in the category of topological spaces that yields a nonderived version working in "both directions". 
In~\cite[Th. 1.1]{CGN}, we have proved this conjecture for dagger curves over~$K$ via relatively down to earth techniques ($p$-adic comparison theorems, Serre duality, reciprocity laws via $(\phi, \Gamma)$-modules). The general case was derived from Theorem \ref{first1} via Galois descent by Zhenghui Li in \cite{ZL}. 
 \end{remark}


\subsection{The story of this paper and related work}  
 The proof of duality presented in this paper is quite simple.
 But a lot of foundational work went into setting up the right formalism for this to be the case.
 Here is the story how this developed. 

We started working on this project in the spring of 2020 when the computations for the open ball suggested
that there could be a Poincar\'e duality for $p$-adic geometric pro-\'etale cohomology
of rigid analytic spaces provided one could mix
the degrees of cohomology and combine $\Q_p$-duality and $C$-duality.
  This suggested that there could
be a duality in the $\mathrm{BC}$-category, by using ${\rm RHom}(-,\Q_p)$ instead of ${\rm Hom}(-,\Q_p)$; for this to
work one needed the vanishing of ${\rm Ext}^i$'s for $i\geq 2$ in the $\mathrm{BC}$-category (and in fact in
a bigger category containing the {\rm TVS}'s appearing in our comparison theorems with syntomic cohomology).
We discussed  this with Fargues and Le Bras in Oberwolfach at the first post-covid conference in
July 2020.
  The required  vanishing in the {\rm VS}-category were proved the following year  by Ansch\"utz and Le Bras~\cite{ALB}
 by a reduction to a theorem of  Breen
(we realized, more recently, that these vanishings are elementary in the $ \mathrm{BC}$-category itself).
 We needed a version of this result for {\rm TVS}'s but it was a strong indication that what we wanted could be true. 

We were at the time in the middle of writing~\cite{CN4,CN5} which contained part
of the foundational tools needed for a Poincar\'e duality 
(definition of geometric Hyodo-Kato cohomology, geometrization 
of $p$-adic comparison theorems for usual cohomology), and
we started considering their compact support avatars~\cite{AGN}. 
Concurrently, we did some extra computations for analytifications of
algebraic curves, which again pointed strongly towards the existence of a Poincar\'e duality
in the geometric and arithmetic cases (the computations were more involved in the arithmetic case,
but the groups that were appearing looked much more manageable, belonging to the usual
world of $\Q_p$-topological vector spaces).
By the time of the workshop "Non-Archimedean Geometry and Applications" of February 2022, in Oberwolfach, we had a conjecture~\cite{COB} and a strategy
that seemed to work well in examples, starting from our geometrized comparison theorem, and reinterpreting
syntomic cohomology as quasi-coherent $\varphi$-equivariant cohomology on the Fargues-Fontaine $Y$-curve.
We presented these results at the workshop and were quite excited to discover that there were
 two other talks dealing with Poincar\'e
duality: one by Zavyalov establishing~\cite{Zav} 
Poincar\'e duality for proper analytic varieties over $C$, and
one by Mann, developing~\cite{Mann} a 6-functors formalism for $\O^+/p$-local systems from which he
could also deduce Poincar\'e duality for proper analytic varieties over $C$ (note that, for
proper varieties, the pro-\'etale cohomology groups -- for constant coefficients --
are finite dimensional $\Q_p$-vector spaces, and there
is no need to consider $\mathrm{BC}$-duality). 

Since arithmetic duality only involved familiar objects, we decided that it would be wise to start
by the proof of our conjecture for arithmetic duality for curves~\cite{CGN}, but in the end we had to use
the condensed formalism to handle functional analytic questions arising in topological dualities.
 For the geometric duality almost all the tools we needed were at hand: luckily for us, Andreychev \cite{And21} developed the theory of solid quasi-coherent sheaves that created a framework in which to express the duality on the Fargues-Fontaine curve that we envisaged.
That was sufficient to prove a duality on the Fargues-Fontaine curve.
 What was missing to carry out our strategy in full was a condensed version of {\rm TVS}'s from \cite{CN5} 
and the $\Ext$-vanishing we mentioned above.
 That took us longer than expected mostly because we have experimented with various possible definitions settling in the end on almost verbatim translation.
 And, finally, in \cite{TVS}, we deduced the vanishing in {\rm TVS}'s from the one in {\rm VS}'s via a fully-faithfulness result. 
In the meantime, Zhenghui Li~\cite{ZL} proved in his thesis, much to our surprise\footnote{We were rather expecting a Galois descent from {\rm TVS}'s.}, that 
the duality at the level of the Fargues-Fontaine curve was enough to deduce the existence
of an arithmetic duality in general, which prompted us to put out~\cite{CGN2}, a preliminary version of this paper.

\subsubsection{Related work}  In a related work,  Ansch\"utz, Le Bras, and  Mann (see \cite{ALBM}) followed a different path towards the
proof of Conjecture \ref{conj-main} in the {\rm VS}-form (see Remark \ref{rks}).
 They also proceed in two steps: the first step is a duality on the Fargues-Fontaine curve, which is a byproduct of the  6-functor formalism for solid quasi-coherent sheaves on the Fargues-Fontaine curve they have developed; the second step is a descend to the world of {\rm VS}'s.
 Their first step is very different in nature and techniques from ours and includes coefficients.
 On the other hand, the second step, while it passes through solid sheaves of Fargues-Scholze instead 
of our {\rm TVS}'s,  is very similar to ours: it relies on the Hyodo-Kato comparison theorems -- the deepest part of the $p$-adic comparison theorems 
via syntomic cohomology of~\cite{CN1,CDN3,CN4,CN5} (or their versions using cohomology of period sheaves
of~\cite{GB2}) --
for   dagger varieties and properties of their Hyodo-Kato cohomology (and filtered $\B^+_{\dr}$-cohomology) to control the functional analytic properties of pro-\'etale cohomology sheaves on the Fargues-Fontaine curve to be able to apply fully-faithfulness results (akin to  the arguments sketched  in  Section \ref{before}). 

  Finally, we would like to mention a different approach to duality theorems developed recently by  Shizhang Li, Reinecke,  and Zavyalov (see \cite{LRZ}), which works for proper smooth rigid analytic varieties (and their relative incarnations) and, after some modifications, can be transferred to the Fargues-Fontaine curve yielding, after descending to {\rm VS}'s,  a version of duality allowing $\Q_p$-local systems (see \cite{LNRZ}).

\begin{acknowledgments}
We have profited from mathematical  generosity of many of our colleagues while writing this paper.
 In particular we would like to thank 
Piotr Achinger, Johannes Ansch\"utz, Guido Bosco,   Dustin Clausen, Gabriel Dospinescu, David Hansen, Shizhang Li, Zhenghui Li, Lucas Mann, Akhil Mathew, Juan Esteban Rodriguez Camargo,  Peter Scholze, Bogdan Zavyalov, and Mingjia Zhang  for many helpful comments and discussions concerning the content of this paper and for listening patiently to our expositions of the work in progress. 

Parts of this paper were written during the first and  third authors' stay at the Hausdorff Research Institute for Mathematics in Bonn, in the Summer of 2023, and at IAS at Princeton, in the Spring 2024. We would like to thank these institutes for their support and hospitality. The second author would like to thank the MPIM of Bonn and the IAS of Princeton for their support and hospitality during the academic years 2022-2023 and 2023-2024, respectively. 
\end{acknowledgments}

 \subsubsection*{Notation and conventions.}\label{Notation}
 Let $p$ be a prime and let $K$ be a complete discrete valuation field with a perfect residue field, of mixed characteristic. 
 Let $\so_K$ be the ring of integers in~$K$, and $k$ be its
residue field. 
Let $W(k)$ be the ring of Witt vectors of $k$ and let $F$ be its
fraction field (i.e., $W(k)=\so_F$).   

Let $\ovk$ be an algebraic closure of $K$ and let $\so_{\ovk}$ denote the integral closure of $\so_K$ in $\ovk$. Let $C=\wh{\ovk}$ be the $p$-adic completion of $\ovk$.  
Set $\sg_K=\Gal(\overline {K}/K)$ and 
let $\phi$ be the absolute
Frobenius on $W(\overline {k})$. \index{C@\Cbreve}Let $\breve{C}={\rm Frac}(W(\overline{k}))$. 
 
  We will denote \index{BRING@\BRING}by $ \B_{\crr}, \B_{\st},\B_{\dr}$ the crystalline, semistable, and  de Rham period rings of Fontaine. 

 All rigid analytic spaces and dagger spaces considered will be over $K$ or $C$;  we assume that they are separated, taut, and countable at infinity.  Huber pairs will always be sheafy. The category of affinoid perfectoid spaces over an affinoid perfectoid space $S$ over $C$ will be denoted 
\index{PERF@\PERF}by  ${\rm Perf}_S$. 

We will use condensed mathematics as developed in  \cite{Sch19}, \cite{Sch20}.   
 We fix an implicit cut-off cardinal $\kappa$ (in the sense of \cite[Sec.\,4]{SchD}), 
and assume all our perfectoid spaces, and condensed sets to be $\kappa$-small. 

If $L=\Q_p,K,C$, we will denote by  \index{CC@\CC}$C_L$ the category of locally convex topological vector spaces over $L$.
 
 We will use the bracket notation for certain limits:
  $[C_1\stackrel{f}{\to} C_2]$ denotes the mapping fiber of $f$ and we set
   \[ \left[\vcenter{\xymatrix @C=1cm@R=4mm{ 
C_1\ar[r]^-{f_1} \ar[d] & K_1 \ar[d] \\
C_2 \ar[r]^-{f_2} & K_2}} \right] := 
\big[[C_1 \xrightarrow{f_1} K_1] \to [C_2 \xrightarrow{f_2} K_2]\big]. \]

 \section{Quasi-coherent sheaves on the Fargues-Fontaine curve} 
Here, we will review  briefly basic facts concerning quasi-coherent sheaves on the Fargues-Fontaine curve. This is  partly based on \cite{And21}, \cite{And23},  and \cite[Sec.\,6.2]{GB2}.

 \subsection{Fargues-Fontaine curve} 
Recall the definition of the relative Fargues-Fontaine curve (see \cite[Lecture 12]{SW}). Let  $S = {\rm Spa}(R,R^+)$ be an affinoid perfectoid space over the finite field ${\mathbf F}_p$.  \index{FFF@\FFF}Let
$$Y_{\FF,S} := {\rm Spa}(W(R^+),W(R^+)) \moins V (p[p^{\flat}])
 $$
 be the relative mixed characteristic punctured unit disc. 
It is an analytic adic space over $\Q_p$. The
Frobenius on $R^+$ induces the Witt vector Frobenius and hence  a Frobenius  $\phi$ on $Y_{\FF,S}$ with  free and
totally discontinuous action. The Fargues-Fontaine curve relative to $S$
(and $\Q_p$) is defined \index{FFF@\FFF}as 
$$X_{\FF,S} := Y_{\FF,S}/\phi^{\Z}.
 $$
 
   For an interval $I = [s, r]\subset  (0,\infty)$ with rational endpoints, we have the open \index{FFF@\FFF}subset
$$Y_{\FF,S}^I := \{|\cdot|: |p|^r \leq |[p^{\flat}]| \leq  |p|^s\} \subset  Y_{\FF,S}. 
 $$
  It is a rational open subset of ${\rm Spa}(W(R^+),W(R^+))$ hence 
an affinoid \index{BRING@\BRING}space,
$$
Y_{\FF,S}^I:={\rm Spa}(\B_{S}^I,\B^{I,+}_{S}).
$$ One can form $X_{\FF,S}$ as the quotient of $Y_{\FF,S}^{[1,p]}$ via the identification 
$\phi: Y_{\FF,S}^{[1,1]}\stackrel{\sim}{\to} Y_{\FF,S}^{[p,p]}$. 
      If $S={\rm Spa}(C^{\flat},\so_{C^{\flat}})$,  we will \index{BRING@\BRING}\index{FFF@\FFF}write $Y_{\FF},X_{\FF}, Y_{\FF}^I,
 \B^{I}, \B^{I,+}$. 
         
 We will denote by $x_{\infty}$ the $(C,\so_C)$-point of the curve $X_{\FF}$ corresponding to Fontaine's map
$\theta : W(\so_C) \to  \so_C $, by $y_{\infty}$ -- the corresponding point on $Y_{\FF}$,  and by
$\iota_{\infty} : {\rm Spa}(C,\so_C) \to  T_{\FF}$, $T=X,Y$, the corresponding  closed immersions. More generally,  if  $S$ is the tilt of a perfectoid
space $S^{\sharp}$ over ${\rm Spa} (\Q_p)$, there is an induced closed immersion $\theta : S^{\sharp} \hookrightarrow Y_{\FF,S}$ which
is locally given by Fontaine's map $ \theta : W (R^+) \to  R^{\sharp, +}$. We will denote by  $\iota_{\infty}: S^{\sharp}\stackrel{\theta}{\to} T_{FF,S}$  the induced 
 closed immersions and by $y_{\infty}, x_{\infty}$, the corresponding divisors.

    We \index{BRING@\BRING}set 
   $$
   \B_S:=\lim_{I\subset (0,\infty)}\B_{S}^I,
   $$
   where $I$ varies  over all the compact intervals of $(0,\infty)$ with rational endpoints. We will denote \index{BRING@\BRING}by 
   $\B_{S,\log}$  the log-crystalline period ring (see \cite[Sec.\,10.3.1]{FF18}). We have $\B_S[U]\stackrel{\sim}{\to}\B_{S,\log}, $ $U\mapsto \log([p^{\flat}]/p)$, with
   $\phi(U)=pU, \sigma(U)=U+\log[\sigma(p^{\flat})/p^{\flat}]$, for $\sigma\in\sg_K$, and $N=-d/dU$. We \index{BRING@\BRING}define $\B_{S,\log}^I$ in a similar manner.

 \subsection{Quasi-coherent sheaves on the Farges-Fontaine curve} We will present now quasi-coherent sheaves on $X_{\FF}$ as $\phi$-modules on a convenient chart of $Y_{\FF}$. 
  \subsubsection{Solid quasi-coherent sheaves} 
We start with a brief survey of solid quasi-coherent sheaves. 
Let  $Y$ be an analytic adic space over $\Q_p$. We denote \index{Qcoh@${\rm QCoh}(-)$}by ${\rm QCoh}(Y )$ the $\infty$-category of solid quasi-coherent sheaves
on $Y$, and \index{Nuc@${\rm Nuc}(-)$}by  ${\rm Nuc}(Y )$ the full $\infty$-subcategory of solid nuclear sheaves on $Y$.
  See \cite{And21}, \cite{And23} for the  definitions of these categories and their basic properties.
 We will  often drop the word "solid" if this does not cause confusion.
 If $Y={\rm Spa}(R,R^+)$, then we have an \index{DER@\DER}equivalence \cite[Th.\,1.6]{And21}
 \begin{equation}\label{eq1}
  {\rm QCoh}(Y)\simeq \sd((R,R^+)_{\Box}),
  \end{equation}
  where the latter is the derived category of solid $(R,R^+)$-modules, i.e., modules over the analytic ring $(R,R^+)_{\Box}$.  In what follows, if this does not confusion, 
  we will \index{AN@\AN}write 
$$R_{\rm an}:=(R,R^+)_{\Box}$$
 For a general $Y$, the category $ {\rm QCoh}(Y)$ is obtained by  gluing the categories $\sd((R,R^+)_{\Box})$
  in the analytic topology.

   \index{Perf@${\rm Perf}(-)$}By ${\rm Perf}(Y )$, we denote
{\it the full $\infty$-subcategory of perfect sheaves on $Y$}; 
that is,  complexes which locally for the analytic topology are quasi-isomorphic to a bounded complex of finite, locally free $\so_Y$-modules. If $Y={\rm Spa}(R,R^+)$ is affinoid, then the natural functor 
$$
{\rm Perf}(R )\to {\rm Perf}(Y )
$$
is an equivalence, where the left-hand side denotes the $\infty$-category of perfect complexes of $R$-modules (i.e., bounded complexes of finite projective $R$-modules).

   The categories ${\rm QCoh}(Y )$, ${\rm Nuc}(Y )$, and $ {\rm Perf}(Y )$ are (compatibly) symmetric monoidal. In the definition of the $\infty$-category ${\rm QCoh}(Y )$ we will bound everything by a fixed uncountable cardinal  so that the category is presentable; it is then also
closed symmetric monoidal. The $\infty$-category ${\rm Nuc}(Y )$ is as well presentable and closed symmetric monoidal.  Similarly for the $\infty$-category
${\rm Perf}(Y )$.

 \begin{remark}The categories ${\rm QCoh}(Y )$, ${\rm Nuc}(Y )$, and $ {\rm Perf}(Y )$ can be defined in a more general setting, where $Y={\rm Spa}(R,R^+)$ is a pair such that $R$ is a complete Huber ring and $R^+\subset R^0$ is an arbitrary subring (see \cite[Sec.\,3.3]{And21} for details). We will most often use the case when $R^+=\Z$. 
 \end{remark}

   \subsubsection{Quasi-coherent $\phi$-sheaves on $Y_{\FF}$}\label{fixed}
The $\infty$-category of quasi-coherent $\phi$-equivariant sheaves  over $Y_{\FF,S}$ (in short: $\phi$-sheaves  over $Y_{\FF,S}$) is   the equalizer
$$
{\rm QCoh}(Y_{\FF,S})^{\phi} := {\rm eq}\big(\xymatrix{{\rm QCoh}(Y_{\FF,S})\ar@<-1mm>[r]_-{\rm Id}\ar@<1mm>[r]^{\phi^*} & {\rm QCoh}(Y_{\FF,S})}\big).
 $$
It is  the $\infty$-category of  pairs $(\se, \phi_{\se })$, where $\se$ is a quasi-coherent sheaf on $Y_{\FF,S}$ and
$\phi_{\se} : \phi^*\se \stackrel{\sim}{\to}  \se $ is a quasi-isomorphism\footnote{We will call isomorphisms in the $\infty$-categories  ${\rm QCoh}(-)$ quasi-isomorphisms to be compatible with more classical set-ups.}.
The category  ${\rm Nuc}(Y_{\FF,S})^{\phi}$ (resp. ${\rm Perf}(Y_{\FF,S})^{\phi})$ is the full $\infty$-subcategory of ${\rm QCoh}(Y_{\FF,S})^{\phi}$ spanned by
the pairs $(\se, \phi_{\se} )$, where $\se$ is a nuclear (resp. perfect) sheaf on $Y_{\FF,S}$. 

    In what follows we will set 
$$ {\text{$u=(p-1)/p$, $v=p-1$ if $p\neq 2$;}}\quad{\text{ for $p=2$ we take $u=3/4$, $v=3/2$.}} $$  
If $S$ is the tilt of a perfectoid space $S^{\natural}$ over ${\rm Spa}(\Q_p)$, 
this choice of $u,v$ ensures that the divisor on $Y_{{\rm FF},S}^{[u,v]}$ associated to $t$ 
is $y_{\infty}$  and $t$ is a unit \index{BRING@\BRING}in $\B_{S}^{[u,v/p]}$, i.e., if 
$S^\natural={\rm Spa}(R,R^+)$, then $\B_{S}^{[u,v]}/t=R$ and $\B_{S}^{[u,v/p]}/t=0$.

Via analytic descent, 
we like to describe the above categories of $\phi$-equivariant sheaves using  the \index{FFF@\FFF}chart $Y_{{\rm FF},S}^{[u,v]}$ (via Frobenius we have $\phi: Y_{{\rm FF},S}^{[u/p,v/p]}\stackrel{\sim}{\to}Y_{{\rm FF},S}^{[u,v]}$):
$$
{\rm QCoh}(Y_{\FF,S})^{\phi} \simeq  {\rm eq}\big(\xymatrix{{\rm QCoh}(Y_{\FF,S}^{[u,v]})\ar@<-1mm>[r]_-{j^*}\ar@<1mm>[r]^{\phi^*} & {\rm QCoh}(Y_{\FF,S}^{[u,v/p]}})\big).
 $$
 We wrote here $\phi,j$ for the Frobenius and the open embedding maps from $Y_{\FF,S}^{[u,v/p]}$ to $Y_{\FF,S}^{[u,v]}$, respectively. 
 That is, ${\rm QCoh}(Y_{\FF,S})^{\phi}$ is  the $\infty$-category of  pairs $(\se, \phi_{\se })$, where $\se$ is a quasi-coherent sheaf on $Y_{\FF,S}^{[u,v]}$ and
$\phi_{\se} : \phi^*\se \stackrel{\sim}{\to}  j^*\se $ is a quasi-isomorphism. The categories  ${\rm Nuc}(Y_{\FF,S})^{\phi}$, ${\rm Perf}(Y_{\FF,S})^{\phi}$can be described in an analogous way. 

     We note that, since we have the equivalence \eqref{eq1}, we can also \index{DER@\DER}\index{BRING@\BRING}write\footnote{We stress here that $ \sd(\B^{\FF}_{S,\Box})$ and $ \B^{\FF}_{S,\Box}$ is just a notation; the ring $ \B^{\FF}_{S}$ does not exist.}
 $$
{\rm QCoh}(Y_{\FF,S})^{\phi} \simeq  \sd(\B^{\FF}_{S,\Box})^{\phi} 
:={\rm eq}\big(\xymatrix{\sd(\B^{[u,v]}_{S,{\rm an}})\ar@<-1mm>[r]_-{j^*}\ar@<1mm>[r]^{\phi^*} 
& \sd(\B^{[u,v/p]}_{S,{\rm an}})}\big).
 $$
 (Frobenius $\phi$ maps $\B^{[u,v]}_{S,{\rm an}}$ to $\B^{[u,v/p]}_{S,{\rm an}}$.)  It is  the $\infty$-category of  pairs 
$M=(M^{[u,v]},  \phi_{M })$, where $M^{[u,v]}$ is a   complex of  $\B^{[u,v]}_{S,{\rm an}}$-modules and 
the Frobenius $\phi_M$ is a quasi-isomorphism of complexes of  $\B^{[u,v/p]}_{S,{\rm an}}$-modules
$$\phi_{M} : \phi^*M^{[u,v]}\stackrel{\sim}{\to} M^{[u,v/p]}:= M^{[u,v]}\otimes^{\LL}_{\B^{[u,v]}_{S,{\rm an}}}\B^{[u,v/p]}_{S,{\rm an}}.$$  

\begin{remark}\label{52}
 In what follows it will be convenient 
to consider the following variant $\sd({\B}^{\FF}_{S})^{\phi}$ (in which we drop the ``an'', i.e., we
consider $(R,\Z)_\Box$ instead of $(R,R^+)_\Box$)
of the $\infty$-category $ \sd({\B}^{\FF}_{S,\Box})^{\phi}$:
 $$
 \sd({\B}^{\FF}_{S})^{\phi} :={\rm eq}\big(\xymatrix{\sd({\B}^{[u,v]}_{S,\Box})
\ar@<-1mm>[r]_-{j^*}\ar@<1mm>[r]^{\phi^*} & \sd({\B}^{[u,v/p]}_{S,\Box})}\big).
 $$
It is  the $\infty$-category of  pairs $M=(M^{[u,v]},  \phi_{M })$, where $M^{[u,v]}$ is a   complex of  solid $\B_{S}^{[u,v]}$-modules and 
the Frobenius $\phi_M$ is a quasi-isomorphism of complexes of  solid $\B_{S}^{[u,v/p]}$-modules
$$\phi_{M} : \phi^*M^{[u,v]}\stackrel{\sim}{\to} M^{[u,v/p]}
:= M^{[u,v]}\otimes^{\LL}_{\B^{[u,v]}_{S,{\rm an}}}\B_{S}^{[u,v/p]}.$$  

We \index{DER@\DER}call $ \sd({\B}^{\FF}_{S})^{\phi}$ {\it the category of $\phi$-complexes of $\B^{\rm FF}_S$-modules}.
 Since  we have the equivalences of symmetric monoidal categories 
$\sd(({\B}_{S}^{I},\Z)_{\Box})=\sd(\B^I_{S,{\rm an}})$  
(see \cite[Lemma A.16]{GB1}), this corresponds to using the analytic structure with respect to $\Z$ in place of ${\B}^{I,+}_{S}$.
 In particular, we have a canonical monoidal functor $ \sd({\B}^{\FF}_{S})^{\phi} \to \sd({\B}^{\FF}_{S,\Box})^{\phi}$. 
\end{remark}
\subsubsection{Monoidal structure on quasi-coherent sheaves on $Y_{\FF}$}\label{monoidal}
 The category  ${\rm QCoh}(Y_{\FF,S})^{\phi}$ is closed symmetric monoidal. We will now present how the closed symmetric monoidal structure can be seen on the level of the category $\sd(\B^{\FF}_{S,\Box})^{\phi}$. In what follows we have set 
 $\B_1:=\B^{[u,v]}_{S,{\rm an}}$, $\B_{2}:=\B^{[u,v/p]}_{S,{\rm an}}$. 
  
   The (derived) tensor product in  ${\sd}(\B^{\rm FF}_{S,\Box})^{\phi}$, 
     denoted by $(-)\otimes^{\LL}_{\B^{\FF}_{S,\Box}}(-)$,  is  inherited from the one of the 
    category   $\sd(\B_1)$. More precisely,
 for $(M,\phi_M), (N,\phi_N)\in {\sd}(\B^{\rm FF}_{S,\Box})^{\phi} $, their  tensor product is defined by: 
    \begin{align*}
    & M\otimes^{\LL}_{\B^{\FF}_{S,\Box}}N:=(M^{[u,v]}\otimes^{\LL}_{\B_1}N^{[u,v]}, \phi_{M\otimes N}), \\
     & \phi_{M\otimes N}=\phi_M\otimes\phi_N: (M^{[u,v]}\otimes^{\LL}_{\B_1}N^{[u,v]})\otimes^{\LL}_{\B_1,\phi}\B_2
       \to    (M^{[u,v]}\otimes^{\LL}_{\B_1}N^{[u,v]})\otimes^{\LL}_{{\B_1}}\B_2
     =   (M^{[u,v/p]}\otimes^{\LL}_{\B_{2}}N^{[u,v/p]}).
    \end{align*}
 Frobenius $\phi_{M\otimes N}$ is a quasi-isomorphism because so are Frobeniuses $\phi_M$ and $\phi_N$. 
   
    The  internal $\R\Hom$,   denoted \index{rhom@\rhom}by $\R\uHom_{\B^{\FF}_{S,\Box}}(-,-)$, in 
 the category  ${\sd}(\B^{\rm FF}_{S,\Box})^{\phi}$  is defined by: 
    \begin{align*}
    &\R\uHom_{\B^{\FF}_{S,\Box}}(M,N):=( \R\uHom_{\B_1}(M^{[u,v]},N^{[u,v]}),\phi_{M,N}),\\
    & \phi_{M,N}:= (\phi_{M}^{-1},\phi_N):\quad \R\uHom_{\B_1}(M^{[u,v]},N^{[u,v]}) \otimes^{\LL}_{\B_1,\phi}\B_2\to 
    \R\uHom_{\B_1}(M^{[u,v]},N^{[u,v]})\otimes^{\LL}_{\B_1}\B_2.
    \end{align*}
    In the definition of Frobenius $\phi_{M,N}$ we have used the following (non-obvious\footnote{One
usually needs some finiteness condition for this kind of statements to hold.}) 
fact:
    \begin{lemma}\label{fun1-kol}
The canonical maps 
    \begin{align*}
    \R\uHom_{\B_1}(M^{[u,v]},N^{[u,v]}) \otimes^{\LL}_{\B_1,\phi}\B_2 & \to
    \R\uHom_{\B_2}(M^{[u,v]} \otimes^{\LL}_{\B_1,\phi}\B_2,N^{[u,v]} \otimes^{\LL}_{\B_1,\phi}\B_2),\\
    \R\uHom_{\B_1}(M^{[u,v]},N^{[u,v]})\otimes^{\LL}_{\B_1}\B_2 & \to \R\uHom_{\B_2}(M^{[u,v/p]},N^{[u,v/p]})
    \end{align*}
    are quasi-isomorphisms. 
    \end{lemma}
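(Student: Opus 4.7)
The plan is to reduce both assertions to a single base change statement for internal Hom along an open immersion of rational subsets of $Y_{\FF,S}$, and then to deduce that statement from the idempotent (``steady'') nature of such localizations in the solid framework.

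The second map in the lemma is the restriction along the open immersion $j: Y^{[u,v/p]}_{\FF,S} \hookrightarrow Y^{[u,v]}_{\FF,S}$, so its assertion already has the desired form. For the first map I factor the Frobenius as
\[
\B_1 = \B^{[u,v]} \xrightarrow{\;\phi,\,\sim\;} \B^{[u/p,v/p]} \xrightarrow{\;\text{restr}\;} \B^{[u,v/p]} = \B_2,
\]
where the first arrow is the isomorphism induced by $\phi: Y^{[u/p,v/p]}_{\FF,S} \stackrel{\sim}{\to} Y^{[u,v]}_{\FF,S}$ and the second is restriction to a rational open subset (valid because $u/p < u$). Since isomorphisms preserve $\otimes^{\LL}$ and $\R\uHom$, both claims reduce to the corresponding statement for the open immersion $j$, namely that the canonical map
\[
\R\uHom_{\B_1}(M,N) \otimes^{\LL}_{\B_1} \B_2 \lra \R\uHom_{\B_2}(M \otimes^{\LL}_{\B_1} \B_2,\, N \otimes^{\LL}_{\B_1} \B_2)
\]
is a quasi-isomorphism.

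The key input is that, in Andreychev's framework for solid quasi-coherent sheaves on analytic adic spaces, a rational open immersion is a \emph{steady} morphism of analytic rings; in particular $\B_2$ is an idempotent $\B_1$-algebra in the solid sense, i.e., $\B_2 \otimes^{\LL}_{\B_1} \B_2 \stackrel{\sim}{\to} \B_2$. Taking this as given, I would use the tensor-hom adjunction for extension of scalars $\sd(\B_1) \to \sd(\B_2)$ to rewrite the right-hand side above as $\R\uHom_{\B_1}(M, N \otimes^{\LL}_{\B_1} \B_2)$, and then appeal to steadiness to commute the base change past the internal Hom, identifying it with $\R\uHom_{\B_1}(M,N) \otimes^{\LL}_{\B_1} \B_2$ as required.

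The main obstacle, and the reason for the footnote's warning that a finiteness condition is usually needed, is this final commutation. For a general ring map one must restrict to compact $M$, but tensoring with an idempotent algebra is a smashing Bousfield localization of $\sd(\B_1)$, which commutes with $\R\uHom$ in the target variable without any finiteness on $M$ or $N$. The point to verify carefully is therefore the steadiness of rational open immersions in the solid framework of Andreychev; once that is invoked, both parts of the lemma follow formally from the reduction above.
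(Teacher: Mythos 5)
The reduction step (factoring the Frobenius map through the isomorphism $\phi:\B^{[u,v]}_{S,\rm an}\to\B^{[u/p,v/p]}_{S,\rm an}$ composed with the restriction $\B^{[u/p,v/p]}_{S,\rm an}\to\B^{[u,v/p]}_{S,\rm an}$) is exactly what the paper does, and correctly reduces both assertions to the base change along the open immersion $j$.

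The gap is in the final step. You claim that since $(-)\otimes^{\LL}_{\B_1}\B_2$ is a smashing Bousfield localization (an idempotent $\B_1$-algebra), it ``commutes with $\R\uHom$ in the target variable without any finiteness on $M$ or $N$.'' This is false as a general principle. Already over ordinary rings: take $\Z\to\Q$, $M=\bigoplus_n\Z/n$, $N=\Z$. Then $\R\Hom_\Z(M,\Z)\otimes_\Z\Q\simeq\big(\prod_n\Z/n\big)\otimes\Q[-1]$, which is nonzero (it contains the image of the element $(1,1,\dots)$, which has infinite order), while $\R\Hom_\Z(M,\Z\otimes\Q)=\prod_n\R\Hom(\Z/n,\Q)=0$. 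So idempotency alone does not give the commutation; the obstruction is precisely that $\otimes\Q$ does not commute with the infinite product hidden in $\R\Hom(\bigoplus M_i,-)=\prod\R\Hom(M_i,-)$.

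What the paper actually does is finer. It writes $M^{[u,v]}=\colim_i M_i$ with $M_i=\B_1[T_i]$ compact projective, converting $\R\uHom(M,N)$ into an $\R\lim$. For the \emph{compact} pieces it invokes a genuine input, \cite[Prop.\,5.38]{And21}, to get $\R\uHom_{\B_1}(M_i,N)\otimes^{\LL}_{\B_1}\B_2\simeq\R\uHom_{\B_2}(M_i^{[u,v/p]},N^{[u,v/p]})$. Then, and this is the heart of the matter, it proves that $(-)\otimes^{\LL}_{\B_1}\B_2$ commutes with \emph{derived limits} by exhibiting it, via \cite[Props.\,3.12, 4.11]{And21}, as $\R\uHom_R(R_\infty/R,-)[1]$ with $R=\Z[T]$, $R_\infty=\Z((T^{-1}))$; this is a right-adjoint-type construction and therefore limit-preserving. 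Your intuition about ``steadiness'' is gesturing at exactly this limit-preservation, which is a special feature of rational localizations in the solid theory, not a formal consequence of being an idempotent algebra. But even granting limit-preservation, you still need the compact case \cite[Prop.\,5.38]{And21}, which your argument omits. To repair your proof you should (i) drop the claim that smashing localizations commute with $\R\uHom$ and (ii) make the two-step structure (reduction to compact objects, then commutation of base change with $\R\lim$) explicit, citing the relevant results of Andreychev for both.
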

    \begin{proof}To start, note that, since the first map is induced by the composition of the maps 
$$\phi: \B^{[u,v]}_{S,{\rm an}}{\to }\B^{[u/p,v/p]}_{S,{\rm an}},
\quad \can: \B^{[u/p,v/p]}_{S,{\rm an}}\to \B^{[u,v/p]}_{S,{\rm an}}$$ 
where the first map is an isomorphism, it suffices to argue for the second quasi-isomorphism in the lemma.
   
      Write $M^{[u,v]}=\colim_{i\in I} M_{i}^{[u,v]}$ as a  colimit of compact 
projective objects $\{M_{i}^{[u,v]}=\B_1[T_i]\}$, $i\in I$, for extremally disconnected sets $T_i$'s. Then
     \begin{align*}
      \R\uHom_{\B_1}(M^{[u,v]},N^{[u,v]}) &=  
\R\uHom_{\B_1}(\colim_{i\in I} M_{i}^{[u,v]}, N^{[u,v]})\\
       & \simeq {\rm Rlim}_I\R\uHom_{\B_1}( M_{i}^{[u,v]}, N^{[u,v]})
     \end{align*}
     and similarly for $[u,v/p]$ (we set $M_{i}^{[u,v/p]}:=M_{i}^{[u,v]}\otimes^{\LL}_{\B_1}\B_2\simeq \B_2[T_i]$). It follows that it suffices to show that 
     $$
    ( {\rm Rlim}_I\R\uHom_{\B_1}( M_{i}^{[u,v]}, N^{[u,v]}))\otimes^{\LL}_{\B_1}\B_2
\stackrel{\sim}{\to} {\rm Rlim}_I\R\uHom_{\B_2}( M_{i}^{[u,v/p]}, N^{[u,v/p]}).
     $$
     But, by \cite[Prop.\,5.38]{And21}, we have 
     $$
      \R\uHom_{\B_1}( M_{i}^{[u,v]}, N^{[u,v]})\otimes^{\LL}_{\B_1}\B_2
\stackrel{\sim}{\to}\R\uHom_{\B_2}( M_{i}^{[u,v/p]}, N^{[u,v/p]}).
 $$
 Hence it suffices to show that
 $$
     ( {\rm Rlim}_I\R\uHom_{\B_1}( M_{i}^{[u,v]}, N^{[u,v]}))\otimes^{\LL}_{\B_1}\B_2\stackrel{\sim}{\to}
        {\rm Rlim}_I(\R\uHom_{\B_1}( M_{i}^{[u,v]}, N^{[u,v]})\otimes^{\LL}_{\B_1}\B_2).
 $$
 That is, that the functor $(-)\otimes^{\LL}_{\B_1}\B_2$ commutes with derived limits. 
 
   To show this write $\B_{S}^{[u,v/p]}=\B_{S}^{[u,v]}\langle f\rangle$, where   $f=(p/[p^{\flat}]^{p/v})\in \B_{S}^{[u,v]}$.
By \cite[Prop.\,4.11]{And21}, we have 
 \begin{align}\label{fun1}
 (-)\otimes^{\LL}_{\B_1}\B_2\simeq (-)\otimes^{\LL}_{(\Z[T],\Z)_{\Box}}(\Z[T],\Z[T])_{\Box},
 \end{align}
 where the map $(\Z[T],\Z)_{\Box}\to (\B_{S}^{[u,v]},\B^{[u,v],+}_{S})_{\Box}$ 
is induced by $T\mapsto f$. But, by \cite[Prop.\,3.12]{And21}, for $M\in \sd((\Z[T],\Z)_{\Box})$, we have 
 $$
 M\otimes^{\LL}_{(\Z[T],\Z)_{\Box}}(\Z[T],\Z[T])_{\Box}\simeq \R\uHom_{R}(R_{\infty}/R,M)[1],
 $$ where $R=\Z[T]$, $R_{\infty}=\Z((T^{-1}))$. It follows that the functor $(-)\otimes^{\LL}_{\B_1}\B_2$ commutes with derived limits, as wanted.
    \end{proof}

      Finally, we note that Frobenius $\phi_{M, N}$ is a quasi-isomorphism because so are  Frobeniuses $\phi_M$ and $\phi_N$.
 \begin{remark}\label{zet1}
(1)  The tensor product computations above are valid     for the category $ \sd({\B}^{\FF}_{S})^{\phi}$. For the internal $\Hom$, they go through as well if one assumes that $(M,\phi_M), (N,\phi_N)$ are nuclear and so is the internal $\Hom$  between them (see \cite[Lemma 4.7]{ZL} for a proof of an analog of Lemma \ref{fun1-kol} in this setting). In this paper we will always be in this setting. 

(2) Let $M,N\in \sd((R,\Z)_{\Box})$. We note that the natural map 
$$
\R\uHom_{(R,\Z)_{\Box}}(M,N)\otimes^{\LL}_{(R,\Z)_{\Box}}(R,R^+)_{\Box}\to \R\uHom_{(R,R^+)_{\Box}}(M\otimes^{\LL}_{(R,\Z)_{\Box}}(R,R^+)_{\Box},N\otimes^{\LL}_{(R,\Z)_{\Box}}(R,R^+)_{\Box})
$$
is a quasi-isomorphism in the case $N$ is $(R,R^+)_{\Box}$-complete. It follows that  $\R\uHom_{(R,\Z)_{\Box}}(M,N)$ is then also  $(R,R^+)_{\Box}$-complete. For example, this is the case when $N$ is nuclear. 
 \end{remark}
      \subsubsection{Quasi-coherent $\phi$-sheaves on $Y_{\FF}$ and $\phi$-modules}
We will  now describe the categories ${\rm Nuc}(Y_{\FF,S})^{\phi}$ and ${\rm Perf}(Y_{\FF,S})^{\phi}$ using complexes of (usual) solid modules. 

  Recall that the natural maps of analytic rings
$(\B_{S}^{I },\Z)_{\Box}\to   (\B_{S}^{I} ,\B^{I,+}_{S} )_{\Box}$
induce base change
functors 
\begin{equation}\label{mpr1}
(-) \otimes^{\LL} _{(\B_{S}^{I },\Z)_{\Box}} (\B_{S}^{I },\B^{I,+}_{S})_{\Box} :\quad  
\sd((\B_{S}^{I }, \Z)_{\Box}) \to  \sd((\B_{S}^{I} ,\B^{I,+}_{S})_{\Box}).
 \end{equation}
 By \cite[(6.13)]{GB2}, the functors \eqref{mpr1} induce equivalences on the full subcategories of nuclear and perfect complexes:
 \begin{align}\label{kawa1}
   {\rm Nuc}(\B_{S}^{I }):= {\rm Nuc}((\B_{S}^{I }, \Z)_{\Box})  & \stackrel{\sim}{\to} {\rm Nuc}((\B_{S}^{I} ,\B^{I,+}_{S})_{\Box}),\\
      {\rm Perf}(\B_{S}^{I })\simeq  {\rm Perf}((\B_{S}^{I }, \Z)_{\Box}) & \stackrel{\sim}{\to} {\rm Perf}((\B_{S}^{I} ,\B^{I,+}_{S})_{\Box}).\notag
 \end{align}

  We define the category  ${\rm Nuc}(\B^{\rm FF}_{S})^{\phi}$ (resp. ${\rm Perf}(\B^{\rm FF}_{S})^{\phi})$ as the full $\infty$-subcategory of $\sd(\B^{\rm FF}_{S})^{\phi}$ spanned by
the pairs  $( M^{[u,v]},  \phi_{M })$, where $M^{[u,v]} $ is a nuclear (resp.~perfect) 
complex over $\B_{S}^{[u,v]}$. 
That is,  the $\infty$-category ${\rm Nuc}(\B^{\rm FF}_{S})^{\phi}$ of nuclear $\phi$-complexes of $\B^{\rm FF}_S$-modules, is defined as the equalizer:
$$
{\rm Nuc}(\B^{\rm FF}_{S})^{\phi}:= {\rm eq}\big( \xymatrix{{\rm Nuc}(\B_{S}^{[u,v]})\ar@<-1mm>[r]_-{\can}\ar@<1mm>[r]^{\phi^*}  &
{\rm Nuc}(\B_{S}^{[u,v/p]})}\big).
$$
Similarly, for the category ${\rm Perf}(\B^{\rm FF}_{S})^{\phi}$ of $\phi$-complexes of perfect $\B^{\rm FF}_S$-modules.
 
 We have the following simple fact:
\begin{lemma}\label{simple1}
 The canonical   functor
$$
\sd(\B^{\rm FF}_{S})^{\phi}\to {\rm QCoh}(Y_{\FF,S})
$$
 induces equivalences of $\infty$-categories:
\begin{equation}\label{mor1}
{\rm Nuc}(\B^{\rm FF}_{S})^{\phi}\stackrel{\sim}{\to} {\rm Nuc}(Y_{\FF,S})^{\phi},\quad {\rm Perf}(\B^{\rm FF}_{S})^{\phi}\stackrel{\sim}{\to }{\rm Perf}(Y_{\FF,S})^{\phi}.
\end{equation}
\end{lemma}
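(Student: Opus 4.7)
The plan is to reduce both equivalences to the equivalences \eqref{kawa1}, applied chart by chart in the equalizer description of $\phi$-objects.

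First, I would combine the two equalizer descriptions given just above the statement. On the ${\rm QCoh}$ side, the text already explains
$$
{\rm QCoh}(Y_{\FF,S})^{\phi} \simeq {\rm eq}\big(\sd(\B_{S,{\rm an}}^{[u,v]}) \rightrightarrows \sd(\B_{S,{\rm an}}^{[u,v/p]})\big),
$$
with the parallel arrows being $j^*$ and $\phi^*$, while by definition
$$
\sd(\B_S^{\FF})^{\phi} = {\rm eq}\big(\sd((\B_S^{[u,v]},\Z)_{\Box}) \rightrightarrows \sd((\B_S^{[u,v/p]},\Z)_{\Box})\big).
$$
The canonical functor in the statement is induced by applying the base change functor \eqref{mpr1}, $F_I:=(-)\otimes^{\LL}_{(\B_S^I,\Z)_{\Box}}(\B_S^I,\B_S^{I,+})_{\Box}$, termwise. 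To see that this is a well-defined functor between equalizers, I would check that $F_I$ is compatible both with the restriction $j^*$ and with the Frobenius pullback $\phi^*$; both statements are formal from the fact that $F_I$ is itself a base change along a morphism of analytic rings, and base change commutes with base change.

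Next, I would restrict to the nuclear and perfect subcategories on each side. By \eqref{kawa1}, the functors $F_{[u,v]}$ and $F_{[u,v/p]}$ induce equivalences
$$
{\rm Nuc}(\B_S^{[u,v]}) \xrightarrow{\sim} {\rm Nuc}((\B_S^{[u,v]},\B_S^{[u,v],+})_{\Box}),\qquad {\rm Perf}(\B_S^{[u,v]}) \xrightarrow{\sim} {\rm Perf}((\B_S^{[u,v]},\B_S^{[u,v],+})_{\Box}),
$$
and similarly for the interval $[u,v/p]$. Nuclearity and perfectness are preserved by both $j^*$ and $\phi^*$ (they are stable under base change along maps of analytic rings), so both equalizer descriptions restrict to equalizers of nuclear, respectively perfect, objects along the same diagrams. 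The equalizer of a diagram of equivalences of $\infty$-categories is an equivalence, which yields the two equivalences \eqref{mor1}.

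The main point requiring care, and which I expect to be the only non-formal step, is verifying the compatibility of $F_I$ with $\phi^*$ as an equivalence of $\infty$-functors: concretely, that the natural map
$$
F_{[u,v/p]}(\phi^*M^{[u,v]}) \xrightarrow{\sim} \phi^*(F_{[u,v]}M^{[u,v]})
$$
in $\sd((\B_S^{[u,v/p]},\B_S^{[u,v/p],+})_{\Box})$ is an equivalence on nuclear and perfect objects. This follows from associativity of base change once one unwinds that both sides compute $M^{[u,v]}\otimes^{\LL}_{(\B_S^{[u,v]},\Z)_{\Box}}(\B_S^{[u,v/p]},\B_S^{[u,v/p],+})_{\Box}$. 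Given this, the equalizer of equivalences argument closes the proof.
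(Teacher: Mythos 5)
Your proof is correct and takes essentially the same route as the paper's: the paper's proof simply states that the claim follows from the equivalences \eqref{kawa1}, and what you have written out is precisely the chart-by-chart equalizer argument making that reduction explicit, including the routine compatibility of base change with $j^*$ and $\phi^*$.
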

\begin{proof}
Our claim follows from equivalences \eqref{kawa1}. 
\end{proof}
   
    The categories   ${\rm Nuc}(\B^{\rm FF}_{S})^{\phi}$, and $ {\rm Perf}(\B^{\rm FF}_{S})^{\phi}$ are  symmetric monoidal: the (derived) tensor products 
     (denoted by $(-)\otimes^{\LL}_{\B^{\FF}_{S}}(-)$) are inherited from the ones of the 
    categories   ${\rm Nuc}(\B_{S}^{[u,v]})$, and ${\rm Perf}(\B_{S}^{[u,v]})$, respectively. The canonical functor to the category ${\sd}(\B^{\rm FF}_{S})^{\phi}$ is  symmetric monoidal. 
      The functors in Lemma \ref{simple1} are compatible with these structures. 
   
\subsubsection{Quasi-coherent sheaves on $X_{\FF}$}
The action of $\phi$ on $Y_{\FF,S}$ being free and totally discontinuous,  by the analytic descent for solid quasi-coherent sheaves, 
we obtain an equivalence \index{E@\EEE}of $\infty$-categories
$$\se_{\FF,S}:\quad {\rm QCoh}(Y_{\FF,S})^{\phi} \stackrel{\sim}{\to} {\rm QCoh}(Y_{\FF,S}/\phi^{\Z})={\rm QCoh}(X_{\FF,S}).
 $$
 Similarly, we get equivalences of closed symmetric monoidal $\infty$-categories
 \begin{equation}\label{mor2}
 {\rm Nuc}(Y_{\FF,S})^{\phi} \stackrel{\sim}{\to} {\rm Nuc}(X_{\FF,S}),\quad {\rm Perf}(Y_{\FF,S})^{\phi} \stackrel{\sim}{\to} {\rm Perf}(X_{\FF,S}).
 \end{equation}
By Lemma \ref{simple1},  this yields a functor
\begin{equation}\label{functor1}
 \se_{\FF,S}: \sd(\B^{\rm FF}_{S})^{\phi} \to {\rm QCoh}(X_{\FF,S}). 
 \end{equation}
We will often skip the subscript $S$ from $ \se_{\FF,S}$ if this does not cause confusion. Restricting to  nuclear or perfect complexes  we get the following result (see  \cite[Th.\,6.8]{GB2} for a similar statement):
\begin{proposition}\label{leje1}
{\rm (1)} The functor $\se_{\FF,S}$, from  \eqref{functor1}, induces  equivalences of $\infty$-categories
\begin{equation}\label{nuceq}
 {\rm Nuc}(\B^{\rm FF}_S)^{\phi}\stackrel{\sim}{\to} {\rm Nuc}(X_{\FF,S}), \quad 
 {\rm Perf}(\B_S)^{\phi}\stackrel{\sim}{\to} {\rm Perf}(X_{\FF,S}).
 \end{equation}
{\rm (2)} Let  $\se\in{\rm  Nuc}(X_{\FF,S})$.  Let $(M (\se)^{[u,v]}, \phi_M)$ be the  nuclear $\phi$-complex of $\B^{\rm FF}_S$-modules
corresponding to $\se$ via \eqref{nuceq}. Then, there
is a natural quasi-isomorphism in $\sd(\Q_{p}(S)_{\Box})$ 
$$\R\Gamma(X_{\FF,S}, \se) \simeq [M(\se)^{[u,v]}\lomapr{\phi-1} M(\se)^{[u,v/p]}].
 $$
 \end{proposition}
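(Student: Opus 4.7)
The plan for part~(1) is to assemble equivalences already present in the section. Because $\phi$ acts freely and totally discontinuously on $Y_{\FF,S}$, analytic descent for solid quasi-coherent sheaves yields the equivalence $\se_{\FF,S}\colon {\rm QCoh}(Y_{\FF,S})^{\phi} \stackrel{\sim}{\to} {\rm QCoh}(X_{\FF,S})$ noted just above \eqref{functor1}, which restricts to \eqref{mor2} on the nuclear and perfect subcategories. Composing with Lemma \ref{simple1} (identifying ${\rm Nuc}(\B^{\rm FF}_S)^{\phi} \stackrel{\sim}{\to} {\rm Nuc}(Y_{\FF,S})^{\phi}$ and ${\rm Perf}(\B^{\rm FF}_S)^{\phi} \stackrel{\sim}{\to} {\rm Perf}(Y_{\FF,S})^{\phi}$) produces the claimed equivalences \eqref{nuceq}.

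For part~(2), the strategy is to exploit the explicit presentation of $X_{\FF,S}$ as the quotient of a single chart, recalled in Section~2.1. Since $v=up$ (so $Y^{[u,v/p]}_{\FF,S} = Y^{[u,u]}_{\FF,S}$), this realizes $X_{\FF,S}$ as the coequalizer in the analytic site of two maps
\[
 Y^{[u,v/p]}_{\FF,S} \rightrightarrows Y^{[u,v]}_{\FF,S},
\]
namely the obvious inclusion $\iota$ and the composite of the Frobenius isomorphism $\phi\colon Y^{[u,v/p]}_{\FF,S} \stackrel{\sim}{\to} Y^{[v,v]}_{\FF,S}$ with the inclusion $Y^{[v,v]}_{\FF,S} \hookrightarrow Y^{[u,v]}_{\FF,S}$. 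Applying $\R\Gamma(-,\se)$ turns this coequalizer into a homotopy equalizer. Using \eqref{eq1} on the affinoids $Y^{I}_{\FF,S}$ to identify $\R\Gamma(Y^{I}_{\FF,S}, \se) \simeq M(\se)^{I}$, and the Frobenius datum $\phi_M \colon \phi^{*} M(\se)^{[u,v]} \stackrel{\sim}{\to} M(\se)^{[u,v/p]}$ to rewrite the $\phi$-twisted restriction as the Frobenius of $M(\se)$, one arrives at
\[
\R\Gamma(X_{\FF,S}, \se) \simeq \bigl[M(\se)^{[u,v]} \xrightarrow{\phi - 1} M(\se)^{[u,v/p]}\bigr].
\]

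The main subtleties I anticipate are twofold. First, one must verify that the two-term coequalizer above is sufficient, rather than the full \v{C}ech nerve of $Y^{[u,v]}_{\FF,S} \to X_{\FF,S}$; this is exactly what the explicit two-chart gluing presentation of $X_{\FF,S}$ encodes, so it is bookkeeping rather than a deep point. Second, one must reconcile the two analytic structures $(\B^{I}_{S}, \Z)_{\Box}$ and $(\B^{I}_{S}, \B^{I,+}_{S})_{\Box}$ that enter the definition of $\sd(\B^{\rm FF}_{S})^{\phi}$ versus ${\rm QCoh}(Y_{\FF,S})$; this is where the nuclearity hypothesis is essential, since the equivalence ${\rm Nuc}(\B^{I}_{S}) \stackrel{\sim}{\to} {\rm Nuc}((\B^{I}_{S}, \B^{I,+}_{S})_{\Box})$ of \eqref{kawa1} is precisely what guarantees an unambiguous identification of $M(\se)^{I}$ with the value of the quasi-coherent sheaf on the affinoid $Y^{I}_{\FF,S}$. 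Once these points are handled the argument is formal.
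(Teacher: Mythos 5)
Your part~(1) is exactly the paper's proof: Lemma \ref{simple1} composed with the descent equivalences \eqref{mor2}, nothing more. Your part~(2) lands on the same two-term complex, but the paper organizes the descent differently. Rather than presenting $X_{\FF,S}$ directly as a coequalizer of $Y^{[u,v/p]}_{\FF,S}\rightrightarrows Y^{[u,v]}_{\FF,S}$ and then worrying about whether the \v{C}ech nerve of $Y^{[u,v]}_{\FF,S}\to X_{\FF,S}$ collapses to two terms, the paper first passes to the full $Y$-curve,
\[
\R\Gamma(X_{\FF,S},\se)\simeq \R\Gamma\bigl(\phi^{\Z},\,\R\Gamma(Y_{\FF,S},\se|_{Y_{\FF,S}})\bigr),
\]
and only afterwards collapses to the chart $Y^{[u,v]}_{\FF,S}$. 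The advantage is that $\R\Gamma(\phi^{\Z},-)$ for the discrete group $\Z$ is tautologically a two-term fiber $[M\xrightarrow{\phi-1}M]$, so the truncation issue you flag as ``bookkeeping'' never arises: it is absorbed into the statement that $\Z$ has cohomological dimension one. Your direct coequalizer route is equally valid and arguably more geometric, but you do then owe the reader a short argument (split cover, telescoping of the \v{C}ech complex) for why the higher simplices contribute nothing; in the paper this is pre-packaged in the equalizer presentation of ${\rm QCoh}(Y_{\FF,S})^{\phi}$ given in Section~\ref{fixed}, which is the form the authors actually invoke. Your second flagged point, that the two analytic structures $(\B^I_S,\Z)_{\Box}$ and $(\B^I_S,\B^{I,+}_S)_{\Box}$ are reconciled only on nuclear/perfect complexes via \eqref{kawa1}, is exactly right and is why the proposition is stated for ${\rm Nuc}$ and ${\rm Perf}$ rather than all of ${\rm QCoh}$.
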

 \begin{proof}The first claim is a combination of \eqref{mor1} and \eqref{mor2}. 
 For the second claim, we compute
 \begin{align*}
 \R\Gamma(X_{\FF,S}, \se)  & \simeq \R\Gamma(\phi^{\Z}, \R\Gamma(Y_{\rm FF},\se_{|Y_{\rm FF}}))
 \simeq [\Gamma(Y_{{\rm FF},S}^{[u,v]},\se_{|Y_{\rm FF}})\lomapr{\phi-1} \Gamma(Y_{{\rm FF},S}^{[u,v/p]},
\se_{|Y_{\rm FF}})] \\ &\simeq [M(\se)^{[u,v]}\lomapr{\phi-1} M(\se)^{[u,v/p]}].
 \end{align*}
 \end{proof}

 \section{Syntomic complexes on the Fargues-Fontaine curve} 
In this section we define quasi-coherent sheaves on the Fargues-Fontaine curve 
representing various cohomologies of smooth partially proper rigid analytic varieties:
de Rham (Proposition~\ref{nyc1}), Hyodo-Kato (Proposition~\ref{sobota12}),
and syntomic (Formula~\ref{def1}) and Proposition~\ref{comp2}).
We will do the same for pro-\'etale cohomology in the next chapter (Proposition~\ref{comp2b}).

 \subsection{de Rham  cohomology} 
We start with  the cohomologies of de Rham type.  
We use \cite[Sec.\,3, Sec.\,4, Sec.\,5]{AGN} as the basic reference.
  \subsubsection{$\B^+_{\dr}$-cohomology} 
Let $X$ be a partially proper   rigid analytic variety over $K$. We have the (filtered) de Rham complexes in $\sd(K_{\Box})$  and (filtered)  $\B^+_{\dr}$-cohomology complexes in $\sd(\B^+_{\dr,\Box})$, \index{DR@\DR}respectively: 
$$
  F^r\R\Gamma_{\dr,?}(X),\quad 
  F^r\R\Gamma_{\dr,?}(X_C/\B^+_{\dr}),\quad r\in\N,\ ?=-,c
  $$
 as well as  the \index{DR@\DR}quotients
  $$
  \R\Gamma_{\dr,?}(X_C,r)  :=\R\Gamma_{\dr,?}(X/\B^+_{\dr})/F^r. 
  $$

   The latter complexes can be represented by   quasi-coherent sheaves on $X_{\rm FF}$. For $r\in \N$, we define the {\em de Rham \index{DR@\DR}modules}
$$
\R\Gamma^{{[u,v]}}_{\dr,?}(X_C,r):=  \R\Gamma_{\dr,?}(X_C,r).
$$
Since $\B_{[u,v]}/t^i=\B^+_{\dr}/t^i$, these are $\B_{[u,v]}$-modules. They are nuclear: for the usual cohomology, in the Stein case this follows from Section \ref{kwak-kwak} below; in general case -- by the fact that nuclearity is preserved by countable products. For the cohomology with compact support: we use the Stein case again and then pass to a colimit which preserves nuclearity. Since $ \R\Gamma^{{[u,v]}}_{\dr,?}(X_C,r)\otimes^{\LL_{\Box}}_{\B_{[u,v]}}\B_{[u,v/p]}=0$ (recall that $t$ is invertible in $\B_{[u,v/p]}$),
these complexes taken as \index{DR@\DR}pairs 
$$\R\Gamma^{\B}_{\dr,?}(X_C,r)=( \R\Gamma^{{[u,v]}}_{\dr,?}(X_C,r),0)$$
 define nuclear $\phi$-complexes over $\B^{\rm FF}$ 
(see Remark~\ref{52}).

 We denote \index{E@\EEE}by
$$
\se_{\dr,?}(X_C,r):=\se_{\FF}(\R\Gamma^{\B}_{\dr,?}(X_C,r))
$$
the corresponding nuclear quasi-coherent sheaves on $X_{\FF}$. We will call them {\em de Rham sheaves}. We record the following simple fact:
\begin{proposition} \label{nyc1}
 Let $r\in\N$. We have a natural quasi-isomorphism in ${\rm QCoh}(X_{\FF})$
$$\se_{\dr,?}(X_C,r) \simeq i_{\infty,*}\R\Gamma_{\dr,?}(X_C,r).$$
\end{proposition}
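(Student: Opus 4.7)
The plan is to unpack the definition of $\se_{\dr,?}(X_C,r)$ from the equivalence $\se_{\FF}$ of Proposition~\ref{leje1} and directly identify the resulting sheaf with the pushforward along $i_\infty$. Writing $M := \R\Gamma^{[u,v]}_{\dr,?}(X_C,r)$, I aim to verify that $\se_{\FF}(M,0)$ agrees with $i_{\infty,*}\R\Gamma_{\dr,?}(X_C,r)$ by showing that $M$ is concentrated on the thickening of $y_\infty$ in $Y_{\FF}^{[u,v]}$ and that the trivial $\phi$-structure descends tautologically to the pushforward at $x_\infty$.

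First, I would record the key algebraic fact: by construction $M = \R\Gamma_{\dr,?}(X_C/\B^+_{\dr})/F^r$ is annihilated by $t^r$ (since $F^r\B^+_{\dr} = t^r\B^+_{\dr}$), hence is a module over $\B^+_{\dr}/t^r$; and by the choice of $u,v$ in Section~\ref{fixed}, the natural map $\B^{[u,v]}/t^r \to \B^+_{\dr}/t^r$ is an isomorphism, making $M$ naturally a $\B^{[u,v]}$-module killed by $t^r$. Since $t$ is a unit in $\B^{[u,v/p]}$, the base change $M \otimes^{\LL}_{\B^{[u,v]}} \B^{[u,v/p]}$ vanishes. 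This simultaneously confirms that the zero arrow is a legitimate Frobenius isomorphism $\phi^*M \stackrel{\sim}{\to} M^{[u,v/p]} = 0$ (so that $(M,0)$ is a well-defined object of $\sd(\B^{\FF})^{\phi}$), and shows that the quasi-coherent sheaf on $Y_{\FF}^{[u,v]}$ associated to $M$ is set-theoretically supported on the thickening $V(t^r) = r\cdot y_\infty$; along this thickening it coincides with the pushforward along the closed immersion $\theta$ of $M$ regarded as a $\B^+_{\dr}/t^r$-module.

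Next, I would apply the analytic $\phi$-descent $Y_{\FF} \to X_{\FF}$: the $\phi$-orbit of $y_\infty$ collapses onto the single point $x_\infty$, and the Frobenius gluing data is trivially the zero map between sheaves that both vanish on $Y_{\FF}^{[u,v/p]}$, so the descent is immediate and yields a sheaf on $X_{\FF}$ supported at $x_\infty$ whose completed stalk is $M$ as a $\B^+_{\dr}/t^r$-module, namely $i_{\infty,*}\R\Gamma_{\dr,?}(X_C,r)$. The one point I expect to require real care is showing that pushforward along the closed immersion commutes with $\phi$-descent in the solid quasi-coherent setting. My planned verification is sections-level via Proposition~\ref{leje1}(2): globally,
$$\R\Gamma(X_{\FF},\se_{\dr,?}(X_C,r)) \simeq [M \xrightarrow{\phi-1} 0] \simeq M \simeq \R\Gamma(X_{\FF},\, i_{\infty,*}\R\Gamma_{\dr,?}(X_C,r)),$$
and the identical computation on a basis of $\phi$-stable rational opens of $Y_{\FF}$ (where $M^{[u,v/p]}$ still vanishes and $M^{[u,v]}$ localizes compatibly with the analogous localization of $i_{\infty,*}M$) upgrades this to the claimed natural quasi-isomorphism of sheaves. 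Naturality in $X$ is inherited from the evident functoriality of $M$.
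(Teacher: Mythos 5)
The paper itself gives no proof of Proposition~\ref{nyc1}, presenting it only as a ``simple fact'' immediately after setting up exactly the ingredients you use (that $\R\Gamma^{[u,v]}_{\dr,?}(X_C,r)$ is a $\B^{[u,v]}/t^r=\B^+_{\dr}/t^r$-module and that its base change to $\B^{[u,v/p]}$ vanishes), so your approach is the intended one: read off from the definition of $\se_{\FF}$ that the $\phi$-module $(M,0)$ is supported on the thickened divisor $V(t^r)=r\cdot y_\infty$ in the chart and identify the descended sheaf with the pushforward from $x_\infty$.

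The one place I'd tighten the write-up is the final verification. Checking on global sections via Proposition~\ref{leje1}(2) establishes equality of $\R\Gamma$'s but not yet an isomorphism of sheaves, and the appeal to a ``basis of $\phi$-stable rational opens'' is vaguer than it needs to be (there are no nontrivial $\phi$-stable opens in $Y_{\FF}$ to speak of). The cleaner route, which avoids constructing a comparison map by hand, is to compute the $\phi$-module that $i_{\infty,*}\R\Gamma_{\dr,?}(X_C,r)$ corresponds to under the descent equivalence ${\rm QCoh}(X_{\FF})\simeq{\rm QCoh}(Y_{\FF})^\phi$: pull back along the pro-\'etale quotient $q\colon Y_{\FF}\to X_{\FF}$, use base change along $q$ to get $\bigoplus_{n\in\Z}\theta_{n,*}M$ (where $\theta_n$ is the closed immersion at $\phi^n(y_\infty)$), and then restrict to $Y_{\FF}^{[u,v]}$, where by the choice of $u,v$ only the $n=0$ term survives and yields $M$ as a $\B^{[u,v]}$-module, with trivial Frobenius since both $\phi^*M$ and $M^{[u,v/p]}$ vanish. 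This is literally the $\phi$-module $(M,0)$ defining $\se_{\dr,?}(X_C,r)$, so the identification is tautological. Your reasoning captures exactly this, but routing the final step through base change along $q$ replaces the informal sections-plus-opens argument with a one-line computation.
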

For $S\in{\rm Perf}_C$, 
\index{PERF@\PERF}by replacing $\B, \B^+_{\dr}, X_{\FF}$ with $\B_{S^{\flat}}, \B^+_{\dr}(S), X_{\FF,S^{\flat}}$ in the above, we obtain de Rham modules and sheaves on $X_{\FF,S^{\flat}}$: 
$\R\Gamma^{\B}_{\dr,?}(X_S,r), \se_{\dr,?}(X_S,r)$.  These are functors on ${\rm Perf}_C$.

  \subsubsection{Stein varieties} \label{kwak-kwak} 
Let $X$ be a smooth Stein rigid analytic variety over $K$. In this case the above cohomology complexes can be made more explicit. 

\vskip2mm
 ($\bullet$) {\em De Rham cohomology.} Let $r\in\N$. Since coherent cohomology of $X$ is trivial in nonzero degrees and we have Serre duality, the (filtered) de Rham cohomology of $X$ can be computed by the following complexes
 in $\sd(K_{\Box})$: 
\begin{align*}
F^r\R\Gamma_{\dr}(X) & \simeq (\Omega^r(X)\to\cdots\to \Omega^d(X))[-r],\\
F^r\R\Gamma_{\dr,c}(X) & \simeq (H^d_c(X,\Omega^r)\to H^d_c(X,\Omega^{r+1})\to\cdots \to H^d_c(X,\Omega^d))[-d-r].
\end{align*}
The second quasi-isomorphism follows from the fact that $H^i_c(X,\Omega^j)=0$,  for $i\neq d$. The terms of the first complex are nuclear Fr\'echet over $K$ 
and those of the second complex are of compact type over $K$ (in classical terminology).

\vskip2mm
($\bullet$) {\em $\B^+_{\dr}$-cohomology.} Let $r\in\N$. The (filtered) $\B^+_{\dr}$-cohomology of $X$ can be computed by the following complexes in $\sd(\B^+_{\dr,\Box})$: 
\begin{align}\label{kolobrzeg1}
F^r\R\Gamma_{\dr}(X_C/\B^+_{\dr}) & \simeq \so(X)\otimes^{\Box}_K t^r\B^+_{\dr}\to\Omega^1(X)\otimes^{\Box}_K t^{r-1}\B^+_{\dr}\to\cdots\to \Omega^d(X)\otimes^{\Box}_K t^{r-d}\B^+_{\dr},\\
F^r\R\Gamma_{\dr,c}(X_C/\B^+_{\dr}) & \simeq (H^d_c(X,\so)\otimes^{\Box}_K t^{r}\B^+_{\dr}\to H^d_c(X,\Omega^1)\otimes^{\Box}_K t^{r-1}\B^+_{\dr}\to\cdots \to H^d_c(X,\Omega^d)\otimes^{\Box}_K t^{r-d}\B^+_{\dr})[-d].\notag
\end{align}
The tensor products are actually  derived because $\B^+_{\dr}$ is Fr\'echet hence flat. 

  This yields  the quasi-isomorphisms in $\sd(\B^+_{\dr,\Box})$:
\begin{align}\label{kolobrzeg1a}
\R\Gamma_{\dr}(X_C,r) &  \simeq \so(X)\otimes^{\Box}_K (\B^+_{\dr}/t^r)\to\Omega^1(X)\otimes^{\Box}_K (\B^+_{\dr}/t^{r-1})\to\cdots\to \Omega^d(X)\otimes^{\Box}_K (\B^+_{\dr}/t^{r-d}),\\
\R\Gamma_{\dr,c}(X_C,r) & \simeq (H^d_c(X,\so)\otimes^{\Box}_K (\B^+_{\dr}/t^{r})\to H^d_c(X,\Omega^1)\otimes^{\Box}_K (\B^+_{\dr}/t^{r-1})\to\cdots \to H^d_c(X,\Omega^d)\otimes^{\Box}_K( \B^+_{\dr}/t^{r-d}))[-d].\notag
\end{align}
We will denote the respective cohomology groups by $ H^{i}_{\dr}(X,r) $ and $ H^{i}_{\dr,c}(X,r) $.

 For $i\geq 0$, we have short exact sequences in $\sd(\B^+_{\dr,\Box})$ (see \cite[Example 3.30]{CDN3}, \cite[Lemma 3.14]{AGN})
\begin{align}\label{kolo10}
  0 \to \Omega^{i}(X_C)/ {\rm Im}\, d \to  & H^{i}_{\dr}(X_C,r) \to H^{i}_{\dr}(X) \otimes^{\Box}_K  (\B^+_{\dr}/ F^{r-i-1}) \to 0,\\
0 \to (H^d_c(X, \Omega^{i-d})/ {\rm Im}\, d) \otimes^{\Box}_K  {\rm gr}^{r-i+d-1}_F\B^+_{\dr}\to  & H^{i}_{\dr,c}(X_C,r) \to H^{i}_{\dr,c}(X) \otimes^{\Box}_K  (\B^+_{\dr}/ F^{r-i+d-1}) \to 0.\notag
\end{align}

  \subsection{Hyodo-Kato cohomology} 
Let $X$ be a smooth rigid analytic variety over $C$.  
\index{HK@\HK}Let  $\R\Gamma_{\hk}(X)\in \sd_{\phi,N,\sg_K}(\breve{C}_{\Box})$ be the Hyodo-Kato cohomology defined in \cite[Sec.\,4]{CN4} (see also \cite[Sec.\,3]{GB2}).
Here $ \sd_{\phi,N,\sg_K}(\breve{C}_{\Box})$ is the derived $\infty$-category of solid $(\phi,N,\sg_K)$-modules over $\breve{C}$. 

  \subsubsection{Hyodo-Kato cohomology on the Fargues-Fontaine curve} 
  Let $r\in\Z$. Consider the twisted Hyodo-Kato cohomology in $ \sd_{\phi,\sg_K}(\breve{C}_{\Box})$
 \begin{align*}
\R\Gamma_{\hk}^{I}(X_C,r)  := [\R\Gamma_{\hk}(X_C)\{r\}\otimes^{\LL_{\Box}}_{\breve{C}}\B^{I}_{\log}]^{N=0},
 \end{align*}
 where the twist $\{r\}$ means Frobenius divided by $p^r$ and $I \subset (0,\infty)$ is a  compact interval with rational endpoints. We define $\R\Gamma_{\hk}^{\B}(X_C,r)$ in a similar way. We claim  that, for  compact intervals $I\subset J\subset (0,\infty)$ with rational endpoints, we have the canonical quasi-isomorphism
$$
\R\Gamma_{\hk}^{J}(X_C,r)\otimes^{\LL_{\Box}}_{\B^J}\B^I\stackrel{\sim}{\to} \R\Gamma_{\hk}^{I}(X_C,r).
$$
Indeed, for that, it suffices to show that the canonical map
$$
\R\Gamma_{\hk}(X_C)\{r\}\otimes^{\LL_{\Box}}_{\breve{C}}\B^{J}_{\log}\otimes^{\LL_{\Box}}_{\B^J}\B^I\to \R\Gamma_{\hk}(X_C)\{r\}\otimes^{\LL_{\Box}}_{\breve{C}}\B^{I}_{\log}
$$
is a quasi-isomorphism. But this is clear since the solid tensor product commutes with direct sums.

 We define the pair\footnote{There is a certain doubling of notation with the previous paragraph but we hope that this will not cause confusion in what follows.}
 $$
 \R\Gamma^{\B}_{\hk}(X_C,r):=( \R\Gamma_{\hk}^{{[u,v]}}(X_C,r),\phi),\quad \phi:  \R\Gamma_{\hk}^{{[u,v]}}(X_C,r)\to  \R\Gamma_{\hk}^{{[u,v/p]}}(X_C,r),
 $$
 where the Frobenius $\phi$ is induced from  the Hyodo-Kato Frobenius and the Frobenius $\phi: \B^{[u,v]}\to \B^{[u,v/p]}$. It
 yields a quasi-isomorphism in $\sd(\B^{[u,v/p]}_{\Box})$
 $$\phi:  \R\Gamma_{\hk}^{{[u,v]}}(X_C,r)\otimes^{\LL_{\Box}}_{\B^{[u,v]},\phi}\B^{[u,v/p]}\stackrel{\sim}{\to } \R\Gamma_{\hk}^{{[u,v/p]}}(X_C,r).$$
 The \index{HK@\HK}pair  $\R\Gamma^{\B}_{\hk}(X_C,r)$
  defines  a  nuclear $\phi$-complex (actually  $(\phi,\sg_K)$-complex) over $\B^{\rm FF}$, which we will call {\em Hyodo-Kato module}. 
 
  We define {\em Hyodo-Kato  sheaves} on $X_{\rm FF}$ \index{E@\EEE}as  
$$
 \se_{\hk}(X_C,r)  :=\se_{\FF}(\R\Gamma^{\B}_{\hk}(X_C,r)).
$$
By Proposition \ref{leje1}, these are  nuclear quasi-coherent sheaves on $X_{\FF}$.   If the  cohomology groups of $\R\Gamma_{\hk}(X_C)$ are of finite rank over $\breve{C}$ then the sheaf  $\se_{\hk}(X_C,r)$ is perfect. 
  By Proposition \ref{leje1} and  \cite[Th.\,6.3]{GB2}, we have  natural quasi-isomorphisms in $\sd(\Q_{p,\Box})$
 \begin{align}\label{sobota1}
 \R\Gamma(X_{\FF}, \se_{\hk}(X_C,r)) & \simeq  [\R\Gamma_{\hk}(X_C)\{r\}\otimes^{\LL_{\Box}}_{\breve{C}}
\B^{[u,v]}_{\log}]^{N=0,\phi=1}\\
  &\stackrel{\sim}{\leftarrow}  [\R\Gamma_{\hk}(X_C)\{r\}\otimes^{\LL_{\Box}}_{\breve{C}}\B_{\log}]^{N=0,\phi=1}\notag
 \end{align}
 where we set, for $M=\R\Gamma_{\hk}(X_C)\{r\}\otimes^{\LL_{\Box}}_{\breve{C}}\B^{[u,v]}_{\log}$ or
$\R\Gamma_{\hk}(X)\{r\}\otimes^{\LL_{\Box}}_{\breve{C}}\B_{\log}$,
$$[M]^{N=0,\phi=1} :=\left[\vcenter{\xymatrix @C=1cm@R=5mm{
M\ar[r]^-{\phi-1} \ar[d]^{N} & M \ar[d]^{N} \\
M \ar[r]^-{p\phi-1} & M}} \right]
$$

 
   For $S\in {\rm Perf}_C$, by changing $\B,\B^I,\B^{I}_{\log}$ 
to $\B_{S^{\flat}},\B^I_{S^{\flat}},\B^I_{S^{\flat},\log}$, we obtain  Hyodo-Kato modules and sheaves:
   $$
   \R\Gamma^{\B}_{\hk}(X_S,r),  \quad \se_{\hk}(X_S,r).
   $$
   These are functors on ${\rm Perf}_C$. 
     In the case $X$ is partially proper, we have analogs 
$\R\Gamma^{\B}_{\hk,c}(X_S,r)$,  $\se_{\hk,c}(X_S,r)$ for Hyodo-Kato cohomology with compact support\footnote{See \cite[Sec.\,3, Sec.\,4, Sec.\,5]{AGN} for the definition and basic properties of compactly supported Hyodo-Kato cohomology.} and the following analog of quasi-isomorphism \eqref{sobota1}:
   \begin{proposition}\label{sobota12}
Let $r\in\Z$. We have a natural quasi-isomorphism in $\sd(\Q_{p}(S)_{\Box})$
$$
\R\Gamma(X_{\FF,S^{\flat}},\se_{\hk,?}(X_S,r))\simeq [\R\Gamma_{\hk,?}^{\B}(X_S,r)]^{\phi=1}. 
$$
\end{proposition}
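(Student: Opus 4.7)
The plan is to deduce the statement directly from Proposition~\ref{leje1}(2), which already computes $\R\Gamma(X_{\FF,S^\flat},\se)$ as a fiber $[M(\se)^{[u,v]}\to M(\se)^{[u,v/p]}]$ of $\phi-1$ for any nuclear sheaf $\se$. By construction, $\se_{\hk,?}(X_S,r)=\se_{\FF}(\R\Gamma^{\B}_{\hk,?}(X_S,r))$, so that under the equivalence ${\rm Nuc}(\B^{\rm FF}_S)^{\phi}\stackrel{\sim}{\to}{\rm Nuc}(X_{\FF,S^\flat})$ of Proposition~\ref{leje1}(1), the nuclear $\phi$-complex associated to $\se_{\hk,?}(X_S,r)$ is precisely $\R\Gamma^{\B}_{\hk,?}(X_S,r)=(\R\Gamma^{[u,v]}_{\hk,?}(X_S,r),\phi)$. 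The notation $[\R\Gamma^{\B}_{\hk,?}(X_S,r)]^{\phi=1}$ is then interpreted as the fiber
$$[\R\Gamma^{[u,v]}_{\hk,?}(X_S,r)\lomapr{\phi-1}\R\Gamma^{[u,v/p]}_{\hk,?}(X_S,r)],$$
where $\phi$ denotes the composite $\R\Gamma^{[u,v]}_{\hk,?}(X_S,r)\to \phi^{*}\R\Gamma^{[u,v]}_{\hk,?}(X_S,r)\stackrel{\sim}{\to}\R\Gamma^{[u,v/p]}_{\hk,?}(X_S,r)$ and $1$ is the canonical restriction. With these identifications the proposition is simply an instance of Proposition~\ref{leje1}(2).

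What needs to be checked is that $\R\Gamma^{\B}_{\hk,?}(X_S,r)$ actually lies in ${\rm Nuc}(\B^{\rm FF}_S)^{\phi}$, i.e., that each complex $\R\Gamma^{[u,v]}_{\hk,?}(X_S,r)=[\R\Gamma_{\hk,?}(X_C)\{r\}\otimes^{\LL_{\Box}}_{\breve{C}}\B^{[u,v]}_{S^\flat,\log}]^{N=0}$ is nuclear over $\B^{[u,v]}_{S^\flat}$. For the usual cohomology this follows from the nuclearity of $\R\Gamma_{\hk}(X_C)$ established in the Stein case in \cite{CN4}, together with stability of nuclearity under solid tensor products with nuclear base rings such as $\B^{I}_{\log}$, under countable products (to pass from the Stein case to the partially proper case), and under taking kernels of the endomorphism $N$. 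For the compactly supported version, the same argument applies once one uses the colimit/fiber description of $\R\Gamma_{\hk,c}$ from \cite{AGN} and the fact that nuclearity is preserved under filtered colimits.

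It then remains to observe that the Frobenius $\phi_M$ on the pair $(\R\Gamma^{[u,v]}_{\hk,?}(X_S,r),\phi)$ is indeed a quasi-isomorphism after base change $(-)\otimes^{\LL_{\Box}}_{\B^{[u,v]}_{S^\flat},\phi}\B^{[u,v/p]}_{S^\flat}$. This is exactly the Frobenius compatibility statement used to define the Hyodo-Kato module $\R\Gamma^{\B}_{\hk}(X_C,r)$ just above Proposition~\ref{leje1}, which holds because the Hyodo-Kato Frobenius is an isomorphism after inverting $p$ and because of the compatibility of $\B_{\log}$ with $\phi$ on the relevant intervals; functoriality in $S$ follows from the functoriality of all the constructions involved.

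I do not anticipate a real obstacle: the content of the proposition is essentially bookkeeping once one has the equivalence of Proposition~\ref{leje1}. The only substantive point is the verification of nuclearity in the compact support case, which is where the nontrivial input from \cite{AGN} is used; everything else is a formal application of the analytic-descent computation $\R\Gamma(X_{\FF},\se)\simeq[M^{[u,v]}\lomapr{\phi-1}M^{[u,v/p]}]$ already established in Proposition~\ref{leje1}(2).
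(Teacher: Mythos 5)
The proposal is correct and takes essentially the same approach as the paper: the paper gives no explicit proof of Proposition~\ref{sobota12}, instead presenting it as the relative analog of \eqref{sobota1}, which was itself deduced by applying Proposition~\ref{leje1}(2) to the nuclear $\phi$-module $\R\Gamma^{\B}_{\hk}(X_C,r)$, and your argument unpacks exactly that chain of identifications (with $[\,\cdot\,]^{\phi=1}$ read as the fiber of $\phi-1$ from the $[u,v]$- to the $[u,v/p]$-chart). Your discussion of nuclearity of $\R\Gamma^{[u,v]}_{\hk,?}(X_S,r)$ is a reasonable elaboration of a hypothesis the paper asserts without comment.
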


\subsubsection{Hyodo-Kato map} Let $X$ be a smooth partially proper rigid analytic variety   over $K$. 
Recall that we have the natural Hyodo-Kato maps (see \cite[Sec.\,4]{CN4}) in $\sd(\breve{C}_{\Box})$ and $\sd(\B^+_{\dr,\Box})$, \index{HKi@\HKi}respectively:
$$
\iota_{\hk}: \R\Gamma_{\hk}(X_C) \to \R\Gamma_{\dr}(X_C/\B^+_{\dr}),\quad \iota_{\hk}: \R\Gamma_{\hk}(X_C)\otimes^{\LL_{\Box}}_{\breve{C}}\B^+_{\dr}\stackrel{\sim}{\to}  \R\Gamma_{\dr}(X_C/\B^+_{\dr}).
$$
Combined with the  canonical map
$\iota: \B^{[u,v]}_{\log}\to \B^{[u,v]}/t^i$,   it defines a map between complexes of solid
$\B^{[u,v]}$-modules:
\begin{equation}\label{sobota11}
\iota_{\hk}: \R\Gamma_{\hk}^{{[u,v]}}(X_C,r) =[ \R\Gamma_{\hk}(X_C)\{r\}\otimes^{\LL_{\Box}}_{\breve{C}}
\B^{[u,v]}_{\log}]^{N=0}\to \R\Gamma^{{[u,v]}}_{\dr}(X_C,r).
\end{equation}
Since we have a commutative diagram
\begin{equation}\label{tiger2}
\xymatrix@R=5mm{\R\Gamma_{\hk}^{{[u,v]}}(X_C,r)\ar[d]^{\iota_{\hk}}\ar[r]^-{\phi\otimes\phi} 
& \R\Gamma_{\hk}^{{[u,v/p]}}(X_C,r)\ar[d]\\
 \R\Gamma^{{[u,v]}}_{\dr}(X_C,r)\ar[r] &  0
}
\end{equation}
the map \eqref{sobota11} clearly lifts to a map of $\phi$-modules over $\B^{\rm FF}$:
$$
\iota_{\hk}: \quad\R\Gamma^{\B}_{\hk}(X_C,r) \to \R\Gamma^{\B}_{\dr}(X_C,r).
$$

  This  Hyodo-Kato map descends to the level of nuclear quasi-coherent sheaves on $X_{\FF}$:
$$\iota_{\hk}:\quad \se_{\hk}(X_C,r)\to  \se_{\dr}(X_C,r).
$$ Everything above has a version for compactly supported cohomologies (see \cite[Sec.\,3.2.2]{AGN} for Hyodo-Kato morphisms), as well as for $S$-cohomologies, for $S\in{\rm Perf}_C$ (varying functorially in $S$).

 \subsection{Syntomic cohomology}\label{sing2}
We pass now to  syntomic cohomology. 
 \subsubsection{Classical syntomic cohomology}Let $X$ be a smooth partially proper rigid analytic variety  over $K$. Let $r\in\N$.  Consider the classical syntomic cohomology \index{SYN@\SYN}(ala Bloch-Kato) 
(see \cite[Sec.\,5.4]{CN4})
 $$
 \R\Gamma^{\B_{\crr}^+}_{\synt,?}(X_C,\Q_p(r))
:=\big[[\R\Gamma_{\hk,?}(X_C)\otimes^{\LL_{\Box}}_{\breve{C}}\B^+_{\st}]^{N=0,\phi=p^r}\lomapr{\iota_{\hk}\otimes\iota}\R\Gamma_{\dr,?}(X_C/\B_{\dr}^+)/F^r\big].
 $$
 It satisfies the following comparison theorem:
  \begin{theorem} \label{comp0} 
{\rm(Period isomorphism, \cite[Th.\,6.9]{CN4})}

Let $r\in\N$. There is a natural  quasi-isomorphism in $\sd(\Q_{p,\Box})$
 \begin{equation}\label{tea1}
\alpha_r: \quad  \tau_{\leq r}\R\Gamma^{\B_{\crr}^+}_{\synt,?}(X_C,\Q_p(r))\simeq \tau_{\leq r}\R\Gamma_{\proeet,?}(X_C,\Q_p(r)).
 \end{equation}
 Moreover,  it yields   a natural quasi-isomorphism  in $\sd(\Q_{p,\Box})$
 \begin{align*}
  \alpha_r:\quad  \R\Gamma^{\B_{\crr}^+}_{\synt,?}(X_C,\Q_p(r)) & \simeq \R\Gamma_{\proeet,?}(X_C,\Q_p(r)),\quad r\geq 2d.
\end{align*}
 \end{theorem}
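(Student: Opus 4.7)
The strategy is to reduce \eqref{tea1} to two ``period sheaf'' comparison theorems glued together through Fontaine's fundamental exact sequence. On the pro-\'etale site of $X_C$ we have
$$0\to \Q_p(r)\to \mathbb{B}^{+,\phi=p^r}_{\crr}\to \mathbb{B}^+_{\dr}/F^r\to 0,$$
and applying $\R\Gamma_{\proeet,?}(X_C,-)$ produces a distinguished triangle whose fiber is $\R\Gamma_{\proeet,?}(X_C,\Q_p(r))$. The goal is to identify this triangle, after truncating at degree $r$, with the defining fiber sequence of $\R\Gamma^{\B_\crr^+}_{\synt,?}(X_C,\Q_p(r))$.

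The first ingredient is a crystalline/Hyodo-Kato comparison at the level of period sheaves,
$$\R\Gamma_{\proeet,?}(X_C,\mathbb{B}^+_{\crr})\simeq [\R\Gamma_{\hk,?}(X_C)\otimes^{\LL_{\Box}}_{\breve{C}}\B^+_{\st}]^{N=0},$$
compatibly with the Frobenius. Its heart is a local Poincar\'e-type resolution: over a small affinoid with a semistable model, the constant pro-\'etale sheaf $\mathbb{B}^+_{\crr}$ is resolved by the log-crystalline de Rham complex with $\mathbb{B}^+_{\crr}$-coefficients, identifying pro-\'etale $\mathbb{B}^+_{\crr}$-cohomology with the $N=0$ part of the $\B^+_{\st}$-twisted Hyodo-Kato complex. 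The second ingredient is the filtered de Rham comparison
$$\R\Gamma_{\proeet,?}(X_C,\mathbb{B}^+_{\dr})\simeq \R\Gamma_{\dr,?}(X_C/\B^+_{\dr}),$$
respecting the natural filtrations, which follows from the Poincar\'e lemma for $\mathbb{B}^+_{\dr}$. These are the geometrized forms of the comparison theorems of \cite{CN4,CN5,AGN}, and the Hyodo-Kato morphism combined with the inclusion $\B^+_{\crr}\hookrightarrow\B^+_{\dr}$ makes them compatible.

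Plugging these identifications into the distinguished triangle from the fundamental sequence and applying $\tau_{\leq r}$ identifies it with the fiber sequence defining $\tau_{\leq r}\R\Gamma^{\B^+_\crr}_{\synt,?}(X_C,\Q_p(r))$; this produces $\alpha_r$ and the first statement of the theorem. The truncation is necessary because the local $\mathbb{B}^+_{\crr}$-Poincar\'e resolution only yields the comparison up to degree $r$ (divided-power estimates start to fail beyond). For the full, untruncated statement when $r\geq 2d$, one uses that $X$ is smooth partially proper of dimension $d$: pro-\'etale cohomology of $X_C$ with $\Q_p$-coefficients vanishes above degree $2d$ (for $?=c$ by the compact-support dimension bound from \cite{AGN}), while on the syntomic side both the Hyodo-Kato and filtered de Rham contributions are concentrated in degrees $\leq 2d$, so $\tau_{\leq r}$ is the identity on both sides once $r\geq 2d$.

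The main obstacle is the local $\mathbb{B}^+_{\crr}$-Poincar\'e lemma underpinning the first comparison, together with its descent: one has to establish it with its full $(\phi,N,\sg_K)$-structure and then globalize via analytic cohomological descent to a partially proper $X$. For the compactly supported variant, one must additionally track the boundary complexes $\R\Gamma_{\proeet}(\partial X_S,-)$ compatibly with both the Hyodo-Kato and de Rham identifications; this functional analytic bookkeeping is essentially the content of \cite{AGN}. The degree bound from the Poincar\'e lemma is the recurring technical subtlety responsible for the condition $r\geq 2d$ in the clean form of the theorem.
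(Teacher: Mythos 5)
Your sketch of the first claim (the truncated comparison \eqref{tea1} via the fundamental exact sequence, a $\mathbb{B}^+_{\crr}$-Poincar\'e lemma, and the filtered $\mathbb{B}^+_{\dr}$-Poincar\'e lemma) is a fair summary of the content of \cite[Th.\,6.9]{CN4}, which the paper simply cites; the paper offers no independent argument there, so nothing to compare on that part.

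The gap is in your argument for the untruncated statement when $r\geq 2d$. You assert that ``pro-\'etale cohomology of $X_C$ with $\Q_p$-coefficients vanishes above degree $2d$'' as an ambient fact (citing \cite{AGN} only for compact support, nothing for usual cohomology). The paper deliberately does \emph{not} take this for granted: the $p$-cohomological dimension bound for $\Q_p$-pro-\'etale cohomology of non-proper rigid spaces is not a black box here, and the paper instead \emph{derives} it by a bootstrap. Concretely, for $X$ Stein it first establishes the much sharper bound $H^{\B^+_{\crr},i}_{\synt}(X_C,\Q_p(r))=0$ for $i\geq d+1$ from the explicit description \eqref{kolobrzeg1a}, then feeds this into the truncated comparison \eqref{tea1} for $r=d+j$ with $j$ increasing, and uses Tate twisting to propagate the vanishing $H^i_{\proeet}(X_C,\Q_p(r))=0$ for $i\geq d+1$ to all $r$; a (co-)\v{C}ech argument then promotes this to the $[0,2d]$-range for a general partially proper $X$, and the same pattern with \eqref{kolo10} handles compact support. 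Without this bootstrap your argument is circular or relies on an unreferenced input.

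A second, more minor issue: you conclude that the syntomic complex lies in degrees $\leq 2d$ because ``both the Hyodo-Kato and filtered de Rham contributions'' do. But the syntomic complex is a mapping fiber, and the fiber of a map between complexes concentrated in $[0,2d]$ can have cohomology in degree $2d+1$ (coming from the cokernel of the top-degree map). The paper avoids this by establishing the Stein-case bound in $[0,d]$ on the syntomic side rather than $[0,2d]$, so that the subsequent \v{C}ech increment by $d$ still lands inside $[0,2d]$. Your coarser bound, as stated, does not close the argument.
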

 \begin{proof}Only the second claim requires justification. For the usual cohomology,  this follows from quasi-isomorphism \eqref{tea1} and the fact that the complexes $ \R\Gamma^{\B_{\crr}^+}_{\synt}(X_C,\Q_p(r))$, $\R\Gamma_{\proeet}(X_C,\Q_p(r))$ live in the $[0,2d]$-range.  To see the latter fact in the case $X$ is Stein, note that  using \eqref{kolobrzeg1a} we get $H^{\B_{\crr}^+,i}_{\synt}(X_C,\Q_p(r))=0$, for $i\geq d+1$.  From this and  \eqref{tea1} we get that 
 $H^{i}_{\proeet}(X_C,\Q_p(d+j))=0$, for $d+j\geq i\geq d+1,j\geq 1$, and then, by twisting, that  $H^{i}_{\proeet}(X_C,\Q_p(r))=0$, for $i\geq d+1$, as wanted. Now, for a general partially proper $X$, we need to add $d$ for the analytic dimension of cohomology yielding the range $[0,2d]$, as wanted.
 
  For the cohomology with compact support,   we argue similarly but using \eqref{kolo10} instead of \eqref{kolobrzeg1a} in the case $X$ is Stein. The case of  partially proper $X$
  follows from that by a (co)-\v{C}ech argument. 
 \end{proof}
   
   The above has a version in families. Let $S\in{\rm Perf}_C$ and  let $r\in\N$.  We have  the classical (crystalline) syntomic cohomology in $\sd(\Q_{p}(S)_{\Box})$: 
 \begin{equation}\label{kawa2}
 \R\Gamma^{\B_{\crr}^+}_{\synt,?}(X_{S},\Q_p(r))
:=\big[[\R\Gamma_{\hk,?}(X_C)\otimes^{\LL_{\Box}}_{\breve{C}}\B^+_{\st}(S)]^{N=0,\phi=p^r}\lomapr{\iota_{\hk}\otimes\iota}\R\Gamma_{\dr,?}(X_C/\B_{\dr}^+(S))/F^r\big]
 \end{equation}
 It satisfies the following comparison theorem:
  \begin{theorem}\label{comp01}
{\rm(Period isomorphism in families, \cite[Cor. 7.37]{CN4}, \cite[Prop.\,6.16]{AGN})}

Let $r\in\N$. There is a natural, functorial in $S$,   quasi-isomorphism in $\sd(\Q_{p}(S)_{\Box})$
 \begin{equation}\label{tea2}
\alpha_r: \quad  \tau_{\leq r}\R\Gamma^{\B_{\crr}^+}_{\synt,?}(X_S,\Q_p(r))\simeq \tau_{\leq r}\R\Gamma_{\proeet,?}(X_S,\Q_p(r)).
 \end{equation}
 Moreover, it yields  a  natural, functorial in  $S$,   quasi-isomorphism in $\sd(\Q_{p}(S)_{\Box})$
 \begin{align*}
  \alpha_r:\quad  \R\Gamma^{\B_{\crr}^+}_{\synt,?}(X_S,\Q_p(r))  \simeq \R\Gamma_{\proeet,?}(X_S,\Q_p(r)),\quad r\geq 2d.
\end{align*}
 \end{theorem}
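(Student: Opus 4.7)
The plan is to follow the same strategy used for Theorem \ref{comp0}, now executed in families. The truncated quasi-isomorphism \eqref{tea2} is supplied by the cited references [CN4, Cor.~7.37] and [AGN, Prop.~6.16]; the only remaining task is to upgrade it to an honest quasi-isomorphism when $r \geq 2d$, which reduces to establishing that both complexes $\R\Gamma^{\B_{\crr}^+}_{\synt,?}(X_S, \Q_p(r))$ and $\R\Gamma_{\proeet,?}(X_S, \Q_p(r))$ are concentrated in cohomological degrees $[0, 2d]$, uniformly in $S$. Once this is in hand, the truncation $\tau_{\leq r}$ becomes the identity on both sides and \eqref{tea2} gives the desired untruncated comparison, with functoriality in $S$ inherited from that of the truncated version.

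First I would treat the Stein case. The explicit formula \eqref{kolobrzeg1a} exhibits $\R\Gamma_{\dr}(X_C, r)$ as a complex in degrees $[0, d]$ and $\R\Gamma_{\dr,c}(X_C, r)$ as a complex in degrees $[d, 2d]$; these presentations pass to families by replacing $\B^+_{\dr}$ with $\B^+_{\dr}(S)$, since $\B^+_{\dr}(S)$ is flat over $\B^+_{\dr}$ in $\sd(K_{\Box})$, so the relevant solid tensor products remain derived and no higher Tor terms appear. The Hyodo-Kato piece of \eqref{kawa2} is obtained from the absolute Hyodo-Kato cohomology by base change along $\B^+_{\st} \to \B^+_{\st}(S)$, so its range of concentration is unchanged. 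Consequently, $\R\Gamma^{\B_{\crr}^+}_{\synt}(X_S, \Q_p(r))$ (resp.\ $\R\Gamma^{\B_{\crr}^+}_{\synt, c}(X_S, \Q_p(r))$) is concentrated in degrees $[0, d]$ (resp.\ $[d, 2d]$). Applying \eqref{tea2} with varying twist $r + j$ for $j \geq 0$ and then untwisting by $\Q_p(j)$, this vanishing transfers to pro-\'etale cohomology in families, exactly as in the proof of Theorem \ref{comp0}.

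For a general partially proper $X$, I would invoke the (co-)\v{C}ech argument from the proof of Theorem \ref{comp0}: a countable Stein open cover (respectively, the exhaustion $U_n \Subset U_{n+1}$ by quasi-compact opens in the compact-support case) yields a spectral sequence converging to the cohomology in question, whose \v{C}ech direction contributes at most the analytic dimension $d$, placing both sides in $[0, 2d]$. All the constructions involved (Hyodo-Kato, de Rham, syntomic, pro-\'etale) commute with $S$-base change, so this holds uniformly in $S$.

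The main technical point I expect to have to verify carefully is that the solid derived tensor products in families truly preserve flatness and that the (co-)\v{C}ech spectral sequences converge in $\sd(\Q_p(S)_{\Box})$ — in particular that no spurious higher-degree terms are introduced by the passage to $S$. This is precisely the formalism already developed in \cite{CN4} and \cite{AGN}, so I expect the work to be a family-version bookkeeping of the concentration argument of Theorem \ref{comp0} rather than a conceptually new input.
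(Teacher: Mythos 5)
Your proposal is correct and follows exactly the approach the paper takes: the paper's proof of Theorem~\ref{comp01} is the single sentence "The argument is analogous to the one used in the proof of Theorem~\ref{comp0}", and what you have written is precisely the family version of that proof — the Stein concentration range from \eqref{kolobrzeg1a} (in degrees $[0,d]$ for $?=-$ and $[d,2d]$ for $?=c$), the twist-and-untwist transfer to pro-\'etale cohomology, and the (co)-\v{C}ech step for general partially proper~$X$. One small point of care (which the paper also leaves implicit): the assertion that the Bloch–Kato syntomic complex is concentrated in $[0,d]$ in the Stein case requires not only that the Hyodo–Kato eigenspace and the de~Rham quotient are in the expected ranges, but also that the map to de~Rham is surjective in top degree so the mapping fiber does not spill into degree $d+1$ — this is where the hypothesis $r\geq 2d$ (or at least $r\geq d$) is actually used; you gesture at this with "its range of concentration is unchanged" but it would be worth spelling out.
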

\begin{proof}  
The argument is analogous to the one used in the proof of Theorem \ref{comp0}.
\end{proof}
\subsubsection{Variants of syntomic cohomology}

We will need the  following \index{SYN@\SYN}variant of syntomic cohomology in $\sd(\Q_{p}(S)_{\Box})$:
    \begin{equation}\label{kawa11}
    \R\Gamma^{\B^{[u,v]}}_{\synt,?}(X_S,\Q_p(r))
:=\big[[\R\Gamma_{\hk,?}^{{[u,v]}}(X_S,r)]^{\phi=1}\lomapr{\iota_{\hk}}\R\Gamma^{{[u,v]}}_{\dr,?}(X_S,r)\big],\quad r\in\N.
    \end{equation}
    \begin{lemma}\label{grey1} 
Let $r\in\N$.  There is a  natural, functorial in $S$,   quasi-isomorphism in $\sd(\Q_{p}(S)_{\Box})$:
    \begin{align}\label{change1}\tau_{\leq r}\R\Gamma^{\B^{[u,v]}}_{\synt,?}(X_S,\Q_p(r)) &  \simeq\tau_{\leq r}\R\Gamma^{\B_{\crr}^+}_{\synt,?}(X_S,\Q_p(r)).
    \end{align}
    Moreover, it yields  a quasi-isomorphism in $\sd(\Q_{p}(S)_{\Box})$:
    \begin{align*}
 \R\Gamma^{\B^{[u,v]}}_{\synt,?}(X_S,\Q_p(r)) &  \simeq \R\Gamma^{\B_{\crr}^+}_{\synt,?}(X_S,\Q_p(r)),\quad r\geq d.
    \end{align*}
    \end{lemma}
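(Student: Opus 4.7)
The plan is to exhibit the comparison one half at a time, noting that the two syntomic complexes share the same de Rham half and differ only in how the Hyodo-Kato half is encoded. On the de Rham side there is nothing to do: by the choice of $u,v$ in Section~\ref{fixed}, the divisor of $t$ on $Y_{\FF,S}^{[u,v]}$ is exactly $y_\infty$, so $\B_S^{[u,v]}/t^i = \B^+_{\dr}(S)/t^i$ for every $i\geq 0$; hence
\[
\R\Gamma^{[u,v]}_{\dr,?}(X_S,r) \;=\; \R\Gamma_{\dr,?}(X_S,r) \;=\; \R\Gamma_{\dr,?}(X_S/\B^+_{\dr}(S))/F^r.
\]
The natural map on full syntomic complexes is then induced by the canonical ring map $\B^+_{\st}(S)\to \B^{[u,v]}_{\log}(S)$ on the Hyodo-Kato half and by the identity on the de Rham half; compatibility with $\iota_{\hk}$ is built into the constructions of Section~3.2.2.

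The heart of the proof is the comparison of Hyodo-Kato halves. Since $(\B^+_{\st})^{N=0}=\B^+_{\crr}$ and $(\B^{[u,v]}_{\log})^{N=0}=\B^{[u,v]}$, after unwinding the twist $\{r\}$ one must show that the natural map
\[
\bigl[\R\Gamma_{\hk,?}(X_S)\otimes^{\LL_{\Box}}_{\breve C}\B^+_{\crr}(S)\bigr]^{\phi=p^r}
\;\longrightarrow\;
\Bigl[\R\Gamma_{\hk,?}(X_S)\otimes^{\LL_{\Box}}_{\breve C}\B^{[u,v]}(S)\lomapr{\phi-p^r}\R\Gamma_{\hk,?}(X_S)\otimes^{\LL_{\Box}}_{\breve C}\B^{[u,v/p]}(S)\Bigr]
\]
is a quasi-isomorphism in degrees $\leq r$. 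By solid tensor--product compatibility, this reduces to the analogous statement for period rings alone,
\[
\bigl[\B^+_{\crr}(S)\lomapr{\phi-p^r}\B^+_{\crr}(S)\bigr] \;\stackrel{\sim}{\longrightarrow\;}\; \bigl[\B^{[u,v]}(S)\lomapr{\phi-p^r}\B^{[u,v/p]}(S)\bigr],
\]
for $r\geq 0$. This is a well-known description of $\R\Gamma(X_{\FF,S^\flat},\so(r))$: the right-hand side is the fundamental-domain computation from Proposition~\ref{leje1}, and the left-hand side is the classical Frobenius-eigenspace description on $\B^+_{\crr}(S)$; that they agree for $r\geq 0$ is standard for the relative Fargues-Fontaine curve.

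The truncation at $\tau_{\leq r}$ is required because the (solid, derived) tensor product with $\R\Gamma_{\hk,?}(X_S)$ may produce cofiber terms of $\phi-p^r$ in degrees $>r$ that the bare period-ring quasi-isomorphism does not control. For the ``moreover'' statement in the range $r\geq d$, the truncation becomes automatic: combining \eqref{kolobrzeg1a}, \eqref{kolo10} with the concentration of Hyodo-Kato cohomology in degrees $\leq 2d$, the classical syntomic complex $\R\Gamma^{\B_{\crr}^+}_{\synt,?}(X_S,\Q_p(r))$ lives in degrees $\leq r$ as soon as $r\geq d$, first in the Stein case and then for general partially proper $X$ by a (co-)\v{C}ech argument as in the proof of Theorem~\ref{comp0}; the same concentration transfers to the $\B^{[u,v]}$-variant through the comparison just established.

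The main obstacle is the last step: from the crisp quasi-isomorphism of two-term period complexes one must conclude a derived-tensor identity with the large, non-finite Hyodo-Kato module, and moreover verify that the resulting cofiber really vanishes below degree $r+1$ in the $r\geq d$ range. This is handled by exploiting, in Stein form, the concentration in degree $d$ of compactly supported coherent cohomology $H^d_c(X,\Omega^j)$ and the Fréchet--nuclear nature of $\Omega^j(X)$, so that the $\phi-p^r$ piece intervenes only through the Hyodo-Kato degrees and the residual $\B^+_{\dr}/F^r$ factor keeps everything within $[0,r]$; the non-Stein case follows by the standard co-\v{C}ech reduction.
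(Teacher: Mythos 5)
Your proposal has the right overall decomposition (de Rham halves agree on the nose; the work is on the Hyodo–Kato side) but the key step is incorrect, and the mechanism by which the truncation arises is misattributed.

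The central gap is the claim that the Hyodo–Kato comparison ``reduces by solid tensor–product compatibility'' to a quasi-isomorphism of two-term period-ring complexes $[\B^+_{\crr}(S)\xrightarrow{\phi-p^r}\B^+_{\crr}(S)]\to[\B^{[u,v]}_S\xrightarrow{\phi-p^r}\B^{[u,v/p]}_S]$. This does not reduce: the Frobenius-eigenspace $(M\otimes_{\breve C}B)^{\phi=p^r}$ for a nontrivial $\phi$-module $M$ is not a tensor of $M$ with $B^{\phi=p^r}$, and whether the two encodings agree depends on the slopes of $\phi$ on $M$, not merely on the period ring. The paper's actual argument passes to cohomology, reduces to finite-rank $\phi$-isocrystals, and interprets the $[u,v]$-chart complex as $\R\Gamma$ of a vector bundle $\se_i$ on $X_{\FF,S^\flat}$; it then uses the bound on the Frobenius slopes of $H^i_{\hk}(X_C)$ (slopes $\leq i$, hence $\se_i$ has slopes $\geq 0$ when $i\leq r$, so $H^1$ vanishes), and for compact support the range $[i-d,d]$ inferred from Hyodo–Kato duality. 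That slope control is what forces the truncation and the range $r\geq d$, not ``cofiber terms of $\phi-p^r$ in degrees $>r$'' as you suggest. Your explanation of the truncation is therefore wrong even at a heuristic level.

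Two further issues. First, your simplification $(\B^+_{\st})^{N=0}=\B^+_{\crr}$, applied to pass from the $\B^+_{\st}$-twisted complex to a $\B^+_{\crr}$-twisted one, is not available as stated because $N$ acts nontrivially on $\R\Gamma_{\hk}$; the paper acknowledges this and explicitly drops $N=0$ and $\log$ ``naively'', and elsewhere uses an $\exp(NU)$ devissage (Lemma~\ref{kolobrzeg4a}) to make such manipulations rigorous — this step needs justification you do not supply. Second, you omit the intermediate ring $\B^{[u,\infty]}_{S^\flat}$ that the paper introduces to split the comparison into $f_1:\B^+_{\crr}\to\B^{[u,\infty]}$ (a quasi-isomorphism unconditionally, since $\phi(\B^{[u,\infty]})\subset\B^+_{\crr}\subset\B^{[u,\infty]}$) and $f_2:\B^{[u,\infty]}\to\B^{[u,v]}$ (needing the slope argument plus \cite[Prop.~3.2]{Ber}); this factorization is what makes the eigenspace comparison tractable. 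Finally, your claim that the classical syntomic complex lives in degrees $\leq r$ once $r\geq d$ is not what the paper proves; the actual bound is concentration in $[0,2d]$, and the $r\geq d$ range for the ``moreover'' part comes out of the slope analysis for compactly supported cohomology.
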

    \begin{proof}
     Let  
   $\B_{S^{\flat}}^{[u,\infty]} := W(R^{\flat,+})\langle[p^{\flat}]/p^u\rangle[1/p].$  Define yet another variant of syntomic cohomology in $\sd(\Q_{p}(S)_{\Box})$:
    \begin{equation*}
    \R\Gamma^{\B^{[u,\infty]}}_{\synt,?}(X_S,\Q_p(r))
:=\big[[\R\Gamma_{\hk,?}^{{[u,\infty]}}(X_S,r)]^{\phi=1}\lomapr{\iota_{\hk}}\R\Gamma_{\dr,?}(X_S,r)\big].
    \end{equation*}
    The three different variants of syntomic cohomology introduced above are linked via maps
    $$
    \xymatrix{
    \R\Gamma^{\B_{\crr}^+}_{\synt,?}(X_S,\Q_p(r))\ar[r]^{f_1} &  \R\Gamma^{\B^{[u,\infty]}}_{\synt,?}(X_S,\Q_p(r)) \ar[r]^{f_2} &  \R\Gamma^{\B^{[u,v]}}_{\synt,?}(X_S,\Q_p(r))
    }
    $$
    induced by canonical maps $\B^+_{\crr}(S)\to \B_{S^{\flat}}^{[u,\infty]}$,  
$\B_{S^{\flat}}^{[u,\infty]}\to \B_{S^{\flat}}^{[u,v]}$, and $\B_{S^{\flat}}^{[u,\infty]}\to \B_{S^{\flat}}^{[u,v/p]}$ (see \cite[Sec.\,2.4.2]{CN1}). 
    We claim that the map $f_1$ is a quasi-isomorphism and the map $ f_2$ is a  quasi-isomorphism after truncation $\tau_{\leq r}$. To show that, it suffices to prove that the related maps 
  \begin{align*}
&  f^{\prime}_1:\quad [\R\Gamma_{\hk,?}(X_C)\otimes^{\LL_{\Box}}_{\breve{C}}\B^+_{\st}(S)]^{N=0,\phi=p^r}  \to 
  [\R\Gamma_{\hk,?}^{{[u,\infty]}}(X_S,r)]^{\phi=1},\\
&  f^{\prime}_2:\quad [\R\Gamma_{\hk,?}^{{[u,\infty]}}(X_S,r)]^{\phi=1} \to [\R\Gamma_{\hk,?}^{{[u,v]}}(X_S,r)]^{\phi=1}
  \end{align*}
  are quasi-isomorphisms in the wanted ranges. Or, first  dropping (naively)  $N=0$ and then $\log$ on both sides, that so are the maps 
\begin{align*}
&  f^{\prime}_1:\quad [\R\Gamma_{\hk,?}(X_C)\otimes^{\LL_{\Box}}_{\breve{C}}\B^+_{\crr}(S)]^{\phi=p^j}  \to 
  [\R\Gamma_{\hk,?}(X_C)\otimes^{\LL_{\Box}}_{\breve{C}}\B_{S^{\flat}}^{[u,\infty]}]^{\phi=p^j},\quad j\in\Z;\\
&  f^{\prime}_2:\quad\tau_{\leq r} [\R\Gamma_{\hk,?}(X_C)\otimes^{\LL_{\Box}}_{\breve{C}}\B_{S^{\flat}}^{[u,\infty]}]^{\phi=p^s} \to \tau_{\leq r}
 [\R\Gamma_{\hk,?}(X_C)\otimes^{\LL_{\Box}}_{\breve{C}}\B_{S^{\flat}}^{[u,v]}]^{\phi=p^s}, \quad s=r-1,r.
  \end{align*}
  
  Let us first look at the map $f^{\prime}_1$. Taking cohomologies  in degree $i\geq 0$, we get maps
  \begin{align*}
  f^{\prime}_1:\quad (H^i_{\hk,?}(X_C)\otimes^{\LL_{\Box}}_{\breve{C}}\B^+_{\crr}(S))^{\phi=p^j}  \to 
 (H^i_{\hk,?}(X_C)\otimes^{\LL_{\Box}}_{\breve{C}}\B_{S^{\flat}}^{[u,\infty]})^{\phi=p^j}.
 \end{align*}
 We used here \cite[Prop.\,5.8]{CN4}. Since $H^i_{\hk}(X_C)$ and $H^i_{\hk,c}(X_C)$ are a countable limit, resp. colimit, of finite rank $\phi$-isocrystals over $\breve{C}$, we may assume that  the Hyodo-Kato cohomology groups are finite rank. But then, since $\phi(\B_{S^{\flat}}^{[u,\infty]})\subset \B^+_{\crr}(S)\subset \B_{S^{\flat}}^{[u,\infty]}$, it is clear that $f_1^{\prime}$ is an isomorphism, as wanted.

   Concerning the map $f_2^{\prime}$, we first pass to cohomology in degree $i$ and then assume that the Hyodo-Kato cohomology has finite rank as above.  Let $j\in\N$. We then claim that the map
  \begin{equation}\label{deszcz1}
  H^i_{\hk}(X_C)\{j\}\otimes^{\LL_{\Box}}_{\breve{C}}\B_{S^{\flat}}^{[u,v]}\lomapr{1-\phi}H^i_{\hk}(X_C)\{j\}\otimes^{\LL_{\Box}}_{\breve{C}}\B_{S^{\flat}}^{[u,v/p]}
  \end{equation}
  is surjective for $i\leq j$.  Indeed, by Proposition \ref{leje1}, the complex \eqref{deszcz1} computes the cohomology of the vector bundle $\se_i$ on $X_{\FF,S^{\flat}}$ associated to $ H^i_{\hk}(X_C)\{j\}$. Our claim now follows from the fact that the slopes of Frobenius on $  H^i_{\hk}(X_C)$ are $\leq i$ (see \cite[proof of Prop.\,5.20]{CN4}) hence the slopes of $\se_i$ are $\geq 0$ and $H^1(X_{\FF,S^{\flat}},\se_i)=0$, as wanted. 
  
 Similarly, we see that  the map
    \begin{equation}\label{deszcz11}
  H^i_{\hk,c}(X_C)\{j\}\otimes^{\LL_{\Box}}_{\breve{C}}\B_{S^{\flat}}^{[u,v]}\lomapr{1-\phi}H^i_{\hk,c}(X_C)\{j\}\otimes^{\LL_{\Box}}_{\breve{C}}\B_{S^{\flat}}^{[u,v/p]}
  \end{equation}
  is surjective for $j\geq d$ using the fact that the slopes of Frobenius on $ H^i_{\hk,c}(X_C)$ are in the $[i-d,d]$ range (use Poincar\'e duality for  Hyodo-Kato cohomology to flip to the usual cohomology).
  
  Now, it suffices to show that, for $i\in\N, j\geq -1$,  the map 
 \begin{align*}
 f^{\prime}_2:\quad (H^i_{\hk,?}(X_C)\otimes^{\LL_{\Box}}_{\breve{C}}\B_{S^{\flat}}^{[u,\infty]})^{\phi=p^j}  \to 
   (H^i_{\hk,?}(X_C)\otimes^{\LL_{\Box}}_{\breve{C}}\B_{S^{\flat}}^{[u,v]})^{\phi=p^j}
  \end{align*}
  is an isomorphism. But in the case $S=C$ this follows  from \cite[Prop.\,3.2]{Ber} and the general case reduces to that one using the fact that all our algebras are spectral.
  
    The above arguments prove the quasi-isomorphism in \eqref{change1} for the usual cohomology and  we get the statement for the compactly supported cohomology from the case of usual cohomology  by a colim argument. 
  Concerning the last sentence of our lemma, the above argument shows the case of compactly supported cohomology.   For the usual cohomology, since the complex 
   $\R\Gamma^{\B_{\crr}^+}_{\synt}(X_S,\Q_p(r))$ lives in the $[0,2d]$ range (see the proof of Theorem \ref{comp0}) it suffices to show that 
   so does the complex $ \R\Gamma^{\B^{[u,v]}}_{\synt}(X_S,\Q_p(r))$. But here we can use the same argument as in the proof of Theorem \ref{comp0}. 
    \end{proof}
    \begin{remark}
 Bosco in \cite[Th.\,6.3]{GB2} considered the following  \index{SYN@\SYN}variant of syntomic cohomology in $\sd(\Q_{p}(S)_{\Box})$:
    \begin{equation*}
    \R\Gamma^{\FF}_{\synt}(X_S,\Q_p(r))
:=\big[[\R\Gamma_{\hk}^{\B}(X_S,r)]^{\phi=1}\lomapr{\iota_{\hk}}\R\Gamma_{\dr}(X_S,r)\big],\quad r\in\N.
    \end{equation*}
\begin{lemma}
The canonical map $\B_{S^{\flat}}\to \B_{S^\flat}^{[u,v]}$ induces a morphism in $\sd(\Q_{p}(S)_{\Box})$
$$
  \R\Gamma^{\FF}_{\synt}(X_S,\Q_p(r))\to   \R\Gamma^{\B^{[u,v]}}_{\synt}(X_S,\Q_p(r)).
$$
This is a quasi-isomorphism. 
\end{lemma}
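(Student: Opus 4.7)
My plan is to reduce the lemma to a statement about the Hyodo--Kato factors and then to identify both sides with the global sections of the Hyodo--Kato sheaf on the Fargues--Fontaine curve.

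First, by construction we have $\R\Gamma_{\dr}^{[u,v]}(X_S,r)=\R\Gamma_{\dr}(X_S,r)$, so taking fibers reduces the lemma to showing that the induced map
\[
[\R\Gamma_{\hk}^{\B}(X_S,r)]^{\phi=1} \longrightarrow [\R\Gamma_{\hk}^{[u,v]}(X_S,r)]^{\phi=1}
\]
is a quasi-isomorphism in $\sd(\Q_p(S)_{\Box})$. Here, in the sense of \cite[Th.~6.3]{GB2} (the first of the two uses of the notation $\R\Gamma_{\hk}^{\B}$), the symbol $\R\Gamma_{\hk}^{\B}(X_S,r)$ denotes the complex $[\R\Gamma_{\hk}(X_C)\{r\}\otimes^{\LL_\Box}_{\breve C}\B_{S^\flat,\log}]^{N=0}$ over $\B_{S^\flat}$.

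I then claim that both sides of this map compute $\R\Gamma(X_{\rm FF,S^\flat},\se_{\hk}(X_S,r))$. For the right-hand side this is immediate from Proposition~\ref{leje1}(2) applied to the nuclear $\phi$-complex $(\R\Gamma_{\hk}^{[u,v]}(X_S,r),\phi)$ representing $\se_{\hk}(X_S,r)$: the fiber of $\phi-1$ is precisely $[\R\Gamma_{\hk}^{[u,v]}(X_S,r)]^{\phi=1}$. For the left-hand side this is the families version of the second quasi-isomorphism in the chain \eqref{sobota1}, which identifies $[\R\Gamma_{\hk}(X_C)\{r\}\otimes^{\LL_\Box}_{\breve C}\B_{S^\flat,\log}]^{N=0,\phi=1}$ with the same Fargues--Fontaine cohomology; here the double bracket $[-]^{N=0,\phi=1}$ coincides with the iterated fiber $[[-]^{N=0}]^{\phi=1}$ because fibers commute.

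Finally, these two identifications are compatible with the restriction $\B_{S^\flat,\log}\to\B^{[u,v]}_{S^\flat,\log}$ that induces the map in the lemma, by naturality of the construction of $\se_{\hk}(X_S,r)$ and analytic descent on $X_{\rm FF,S^\flat}=Y_{\rm FF,S^\flat}/\phi^{\Z}$ (the two presentations just correspond to two ways of computing global sections). I do not foresee a genuine obstacle; the only mildly delicate point is verifying that the families version of \eqref{sobota1} goes through, which follows from the functoriality of the Hyodo--Kato sheaves in $S\in{\rm Perf}_C$ already recorded in the excerpt.
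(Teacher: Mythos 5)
Your proof is correct and follows essentially the same route as the paper's: reduce to the Hyodo--Kato factor (the de Rham factors coincide by definition), then identify both $\phi$-eigenspace fibers with $\R\Gamma(X_{\FF,S^\flat},\se_{\hk}(X_S,r))$, for which the key input is Proposition~\ref{sobota12} (the families version of the second quasi-isomorphism in \eqref{sobota1}) together with Proposition~\ref{leje1}(2). The only difference is cosmetic: the paper spells out the intermediate reduction ``as in the proof of Lemma~\ref{grey1}'' (drop $N=0$ and $\log$, pass to cohomology, assume $H^i_{\hk}(X_C)$ of finite rank) before invoking Proposition~\ref{sobota12}, whereas you apply the proposition at the level of the full complex, which is what its statement allows.
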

\begin{proof}
Arguing as in the proof of Lemma \ref{grey1}, it suffices to show that the induced morphism
$$
 [ H^i_{\hk}(X_C)\{r\}\otimes^{\LL_{\Box}}_{\breve{C}}\B_{S^{\flat}}\lomapr{1-\phi}H^i_{\hk}(X_C)\{r\}\otimes^{\LL_{\Box}}_{\breve{C}}\B_{S^{\flat}}]  \to 
  [H^i_{\hk}(X_C)\{r\}\otimes^{\LL_{\Box}}_{\breve{C}}\B_{S^{\flat}}^{[u,v]}\lomapr{1-\phi}H^i_{\hk}(X_C)\{r\}\otimes^{\LL_{\Box}}_{\breve{C}}\B_{S^{\flat}}^{[u,v/p]}]
$$
is a quasi-isomorphism in the case $ H^i_{\hk}(X_C)$ is of finite rank. But this follows from  Proposition \ref{sobota12}. 
\end{proof}
    \end{remark}

 \subsubsection{Syntomic $\phi$-modules over $\B^{\rm FF}$}\label{tiger1}
  Let $X$ be a smooth partially proper  rigid analytic variety over $K$.
 \begin{definition}Let $r\in\N$.   Let $S\in {\rm Perf}_C$. 
\begin{enumerate}
 \item \index{SYN@\SYN}Set
\begin{equation*}
 \R\Gamma^{\B}_{\synt,?}(X_S,\Q_p(r)):= [\R\Gamma^{\B}_{\hk,?}(X_S,r)\lomapr{\iota_{\hk}} \R\Gamma^{\B}_{\dr,?}(X_S,r)].
 \end{equation*}
 This is a nuclear  $\phi$-module over $\B^{\rm FF}_{S^{\flat}}$. We call it a {\em syntomic module}. We have 
 $$ \R\Gamma^{\B}_{\synt,?}(X_S,\Q_p(r))=( \R\Gamma^{[u,v]}_{\synt,?}(X_S,\Q_p(r)),\phi),$$ 
\index{SYN@\SYN}where
 $$
  \R\Gamma^{[u,v]}_{\synt,?}(X_S,\Q_p(r)):= [\R\Gamma^{[u,v]}_{\hk,?}(X_S,r)\lomapr{\iota_{\hk}} \R\Gamma^{[u,v]}_{\dr,?}(X_S,r)].
 $$
 \item  The (nuclear) {\em syntomic sheaves}  on $X_{\FF,S}$ are defined \index{E@\EEE}by 
 $$
 \se_{\synt,?}(X_S,\Q_p(r)):=\se_{\FF}(\R\Gamma^{\B}_{\synt,?}(X_S,\Q_p(r))).
 $$
 \end{enumerate}
\end{definition}
 We have a distinguished triangle in ${\rm QCoh}(X_{{\rm FF},S^{\flat}})$
 \begin{equation}\label{def1}
  \se_{\synt,?}(X_S,\Q_p(r))\to \se_{\hk,?}(X_S,r)\lomapr{\iota_{\hk}} \se_{\dr,?}(X_S,r).
 \end{equation}

   \begin{proposition}\label{comp2}
Let $r\geq 2d$.  We have  natural, functorial in $S$,  quasi-isomorphisms in $\sd(\Q_{p}(S)_{\Box})$:
\begin{align*}
 \R\Gamma(X_{\FF,S^{\flat}},\se_{\synt,?}(X_S,\Q_p(r))) & \simeq\R\Gamma^{[u,v]}_{\synt,?}(X_S,\Q_p(r)),\\
\R\Gamma(X_{\FF,S^{\flat}},\se_{\synt,?}(X_S,\Q_p(r))) & \simeq\R\Gamma_{\proeet,?}(X_S,\Q_p(r)).
\end{align*}
 \end{proposition}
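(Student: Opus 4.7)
The plan is to establish the first quasi-isomorphism by applying the global section functor to the distinguished triangle \eqref{def1}, and then obtain the second quasi-isomorphism by composing with two previously proven comparison results.

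For the first equivalence, I apply $\R\Gamma(X_{\FF,S^{\flat}}, -)$ to the fiber sequence
\[
\se_{\synt,?}(X_S,\Q_p(r))\to \se_{\hk,?}(X_S,r)\lomapr{\iota_{\hk}} \se_{\dr,?}(X_S,r)
\]
of \eqref{def1}; exactness of the functor preserves this triangle. For the Hyodo-Kato term, Proposition \ref{sobota12} gives $\R\Gamma(X_{\FF,S^{\flat}}, \se_{\hk,?}(X_S,r))\simeq [\R\Gamma^{\B}_{\hk,?}(X_S,r)]^{\phi=1}$, which by Proposition \ref{leje1}(2) unpacks to the fiber $[\R\Gamma^{[u,v]}_{\hk,?}(X_S,r)\lomapr{\phi-1}\R\Gamma^{[u,v/p]}_{\hk,?}(X_S,r)]$. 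For the de Rham term, I apply Proposition \ref{leje1}(2) to the $\phi$-module $(\R\Gamma^{[u,v]}_{\dr,?}(X_S,r),0)$; since $\R\Gamma^{[u,v]}_{\dr,?}\otimes^{\LL}_{\B^{[u,v]}}\B^{[u,v/p]}=0$ (the invertibility of $t$ in $\B^{[u,v/p]}$), the resulting complex collapses to $\R\Gamma^{[u,v]}_{\dr,?}(X_S,r)$. Taking the fiber of $\iota_{\hk}$ between these two identifications yields the total complex of the $2\times 2$ bicomplex whose horizontal rows are $\phi-1$ (and the trivial map to $0$) and whose vertical columns encode $\iota_{\hk}$; by formula \eqref{kawa11} this total complex is precisely $\R\Gamma^{\B^{[u,v]}}_{\synt,?}(X_S,\Q_p(r))$, which is the intended right-hand side of the first claim.

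For the second quasi-isomorphism, I chain together three equivalences: the one just obtained; then Lemma \ref{grey1}, which under the hypothesis $r\geq 2d\geq d$ supplies $\R\Gamma^{\B^{[u,v]}}_{\synt,?}(X_S,\Q_p(r))\simeq \R\Gamma^{\B_{\crr}^+}_{\synt,?}(X_S,\Q_p(r))$; and finally the period isomorphism in families (Theorem \ref{comp01}) which, under the hypothesis $r\geq 2d$, identifies the latter with $\R\Gamma_{\proeet,?}(X_S,\Q_p(r))$. Each step is natural and functorial in $S\in{\rm Perf}_C$, so the composition is as well.

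The main obstacle here is notational rather than analytical: all the genuinely deep content is packaged into the cited propositions and theorems — the period isomorphism of Theorem \ref{comp01} is the hard input, Lemma \ref{grey1} handles the passage between the different syntomic models, and Proposition \ref{leje1} provides the sheafification/global-sections dictionary on $X_{\FF,S^{\flat}}$. What remains is to track fiber sequences in the stable $\infty$-category $\sd(\Q_{p}(S)_{\Box})$ and verify that the natural bicomplex assembled from $\R\Gamma(X_{\FF,S^{\flat}},\se_{\hk,?})\to \R\Gamma(X_{\FF,S^{\flat}},\se_{\dr,?})$ recovers exactly the complex \eqref{kawa11}, paying attention to the compatibility between the zero Frobenius on the de Rham piece (which forces the $[u,v/p]$-component of the de Rham column to vanish) and the genuine Hyodo-Kato Frobenius. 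This is essentially formal once the definitions have been assembled correctly.
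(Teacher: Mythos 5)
Your proposal is correct and follows essentially the same route as the paper: the paper gets the first quasi-isomorphism by applying Proposition~\ref{leje1}(2) directly to the syntomic $\phi$-module, while you decompose via the triangle \eqref{def1} and reassemble, which is the same computation carried out termwise; the second quasi-isomorphism then goes through Lemma~\ref{grey1} and Theorem~\ref{comp01} exactly as in the paper. One small remark: you correctly identify the first right-hand side as $\R\Gamma^{\B^{[u,v]}}_{\synt,?}(X_S,\Q_p(r))$ of formula \eqref{kawa11}, which carries the $\phi=1$-eigenspace on the Hyodo--Kato piece, whereas the proposition as printed writes $\R\Gamma^{[u,v]}_{\synt,?}(X_S,\Q_p(r))$, the raw $\B^{[u,v]}$-module from Section~\ref{tiger1}; your reading is the one forced by Proposition~\ref{leje1}(2) and by the subsequent invocation of \eqref{change1}.
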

 \begin{proof}The first quasi-isomorphism follows from Proposition \ref{leje1}. The second quasi-isomorphism follows from the first one, the quasi-isomorphism \eqref{change1}, and Theorem \ref{comp01}.
 \end{proof}

 \section{Pro-\'etale  complexes on the Fargues-Fontaine curve}
In this section we define quasi-coherent sheaves on the Fargues-Fontaine curve representing
$p$-adic (geometric) pro-\'etale cohomology (with $\Q_p(r)$-coefficients)
of smooth partially proper rigid analytic varieties 
(Proposition~\ref{comp2b}) 
and prove a comparison theorem with the quasi-coherent sheaves representing syntomic cohomology
(Proposition~\ref{china1}).  This follows from a comparison theorem (Theorem~\ref{hot1} and Corollary~\ref{hot1K})
for the ${\mathbb B}$-period sheaf, which   amounts
to a comparison theorem on $Y_{\rm FF}$ instead of $X_{\rm FF}$, i.e., to untangling
comparison theorems from the action of $\phi$ (that this could be done with no much pain came to us as a surprise).

 \subsection{Definitions} We start with definitions. 
 
\subsubsection{Twisted coefficients} Let $S\in {\rm Perf}_C$. 
Let $n,k\geq 0$. Define the line bundle $\so(n,k)$ on $X_{\rm FF,S^{\flat}}$ by the exact sequence of $\so_{\FF, S^{\flat}}$-modules
$$
0\to \so(n,k)\to \so(n)\to i_{\infty, *}(\so/t^k)\to 0,
$$
where the first map is an inclusion. The sheaf $\so(n,n)$  will be the target of our trace maps. Note that  $\so(n,k)$ is just $\so(n-k)$ with (Galois-)Tate twist $k$; in particular, we have $H^0(X_{\rm FF,S^{\flat}},\so(n,n))=\underline{\Q_p}(S)(n)$. 

   On the level of $\phi$-modules over $\B^{\FF}_{S^{\flat}}$, the sheaf $\so(n,k)$ is the module $\B_{S^{\flat}}\{n,k\}$ represented by  the module  
$\B_{S^{\flat}}^{[u,v]}\{n,k\}$ defined by the exact sequence
\begin{equation}\label{alter3}
0\to \B_{S^{\flat}}^{[u,v]}\{n,k\}\to \B_{S^{\flat}}^{[u,v]}\{n\}\to \B_{S^{\flat}}^{[u,v]}\{n\}/t^k\to 0,
\end{equation}
where the first map is an inclusion. We have $ \B_{S^{\flat}}^{[u,v]}\{n,k\} \simeq \B_{S^{\flat}}^{[u,v]}\{n-k\}(k)$ as a Frobenius, Galois module. 
Note that the Frobenius map:
$$
\phi:  \B_{S^{\flat}}^{[u,v]}\{n,k\} \otimes^{\LL_{\Box}}_{\B_{S^{\flat}}^{[u,v]},\phi}\B_{S^{\flat}}^{[u,v/p]}\to \B_{S^{\flat}}^{[u,v]}\{n,k\} \otimes^{\LL_{\Box}}_{\B_{S^{\flat}}^{[u,v]}}\B_{S^{\flat}}^{[u,v/p]}
$$
is an isomorphism because it is  isomorphic to the Frobenius on $\B_{S^{\flat}}^{[u,v/p]}\{n-k\}$.

\subsubsection{Pro-\'etale modules and sheaves} \label{sing1} 
Let $X$ be a smooth partially proper dagger variety over $K$. 
 For  $r\in\N$, $v^{\prime}=v,v/p$, and $S\in {\rm Perf}_C$, we set 
$$
\R\Gamma^{{[u,v^{\prime}]}}_{\proeet,?}(X_S,\Q_p(r)) :=\R\Gamma_{\proeet,?}(X_S,{\mathbb B}^{[u,v^{\prime}]})(r),
$$
\index{BRING@\BRING}where ${\mathbb B}^{[u,v^{\prime}]}$ denotes the relative period sheaf corresponding to ${\B}^{[u,v^{\prime}]}$ (see \cite[Sec.\,2.3.1]{GB2} for a description of condensed structure on these modules).  We will need the following fact. 
\begin{lemma}The canonical map
$$\R\Gamma_{\proeet,?}(X_S,{\mathbb B}^{[u,v]})\otimes^{\LL_{\Box}}_{\B_{S^{\flat}}^{[u,v]}}\B_{S^{\flat}}^{ [u,v/p]}{\to}\R\Gamma_{\proeet,?}(X_S,{\mathbb B}^{[u,v/p]})$$
is a quasi-isomorphism. 
\end{lemma}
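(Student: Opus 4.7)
The plan is to reduce to a purely local statement on the pro-\'etale site and then commute the base change $(-)\otimes^{\LL_{\Box}}_{\B^{[u,v]}_{S^{\flat}}}\B^{[u,v/p]}_{S^{\flat}}$ past the derived limit computing the cohomology. The crucial input will be that this base change is a \emph{principal rational localization}: as in the proof of Lemma \ref{fun1-kol}, we have $\B^{[u,v/p]}_{S^{\flat}}=\B^{[u,v]}_{S^{\flat}}\langle f\rangle$ with $f=p/[p^{\flat}]^{p/v}$, and the presentation \eqref{fun1} exhibits this base change as an internal $\R\uHom$ construction, hence it commutes with all derived limits in the solid world.

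First, I would compute both sides via a \v{C}ech resolution of $X_S$ by affinoid perfectoids in the pro-\'etale site (using the exhausting family $\{U_{n,S}\}$ in the case $?=c$); this expresses $\R\Gamma_{\proeet,?}(X_S,\mathbb{B}^{I})$ as a derived limit of objects of the form $\B^{I}_{U^{\flat}}$, for $U=\Spa(R,R^+)$ an affinoid perfectoid pro-\'etale over $X_S$, with the appropriate solid structure (as explained in \cite[Sec.\,2.3.1]{GB2}). Second, on each such $U$ the natural map
$$
\B^{[u,v]}_{U^{\flat}}\otimes^{\LL_{\Box}}_{\B^{[u,v]}_{S^{\flat}}}\B^{[u,v/p]}_{S^{\flat}}\longrightarrow \B^{[u,v/p]}_{U^{\flat}}
$$
is an isomorphism, because on the right one is performing the rational localization of $\B^{[u,v]}_{U^{\flat}}$ along the same function $f$, pulled back from $\B^{[u,v]}_{S^{\flat}}$, and this is a localization of analytic rings which is automatically underived in solid modules (the tensor product in question is just the analytic ring of the rational open).

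Third, I would assemble the two steps: since $(-)\otimes^{\LL_{\Box}}_{\B^{[u,v]}_{S^{\flat}}}\B^{[u,v/p]}_{S^{\flat}}$ commutes with derived limits (by the argument using \eqref{fun1} recalled above) and obviously with derived colimits (it is a left adjoint), we may pull it inside the \v{C}ech $\holim$ and, in the compactly supported case, inside the filtered $\colim$ over $n$ and the mapping fiber. Termwise we land precisely on the \v{C}ech computation of $\R\Gamma_{\proeet,?}(X_S,\mathbb{B}^{[u,v/p]})$, giving the required quasi-isomorphism.

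The main obstacle is Step 1, namely guaranteeing that the pro-\'etale cohomology of the solid sheaf $\mathbb{B}^{I}$ is actually computed by the \v{C}ech complex of a good affinoid perfectoid cover, with the correct analytic structure on each term. Once one grants the formalism of solid cohomology of period sheaves from \cite{GB2}, this is standard, but it is the only nontrivial point; the rest of the argument is purely formal, using the principal-localization nature of $\B^{[u,v]}_{S^{\flat}}\to \B^{[u,v/p]}_{S^{\flat}}$ exactly as in Lemma \ref{fun1-kol}.
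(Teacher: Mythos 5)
Your argument follows essentially the same route as the paper's proof: reduce by pro-étale descent to a termwise statement on affinoid perfectoids (your ``\v{C}ech resolution'' is the same reduction), commute the base change $(-)\otimes^{\LL_{\Box}}_{\B^{[u,v]}_{S^{\flat}}}\B^{[u,v/p]}_{S^{\flat}}$ past the derived limit using the principal-localization argument from Lemma \ref{fun1-kol}, and then identify the termwise base change with the rational localization via the $\Z[T]$-presentation of \eqref{fun1}. The only place you are a bit cavalier is the claim that the termwise base change ``is just the analytic ring of the rational open'' and hence underived: the paper instead routes this through $\B^{[u,v]}_{S^\flat_i}\otimes^{\LL}_{(\Z[T],\Z)_{\Box}}(\Z[T],\Z[T])_{\Box}$, which is the precise form one needs because the analytic structure in play is relative to $\Z$ rather than to a ring of integral elements; but the idea is the one you have in mind.
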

\begin{proof} By pro-\'etale descent, it suffices to show that, for a set of perfectoid affinoids $\{S_i\},$ $i\in I$, the canonical map
$$
(\prod_{I}\B_{S^{\flat}_i}^{{[u,v]}})\otimes^{\LL_{\Box}}_{\B_{S^{\flat}}^{[u,v]}}\B_{S^{\flat}}^{[u,v/p]}\to \prod_{I}\B_{S^{\flat}_i}^{{[u,v/p]}}
$$
is a quasi-isomorphism. (We used here the fact that $\R\Gamma_{\proeet,?}({S_i},{\mathbb B}^J)\simeq \B^J_{S^{\flat}}$.) But this follows from the fact that this tensor product commutes with derived limits (see the proof of Lemma \ref{fun1-kol}) and the canonical map
$$
\B_{S^{\flat}_i}^{{[u,v]}}\otimes^{\LL_{\Box}}_{\B_{S^{\flat}}^{[u,v]}}\B_{S^{\flat}}^{[u,v/p]}\to \B_{S^{\flat}_i}^{{[u,v/p]}}
$$
is an isomorphism.  To see the last claim,  we compute
$$
\B_{S^{\flat}_i}^{{[u,v]}}\otimes^{\LL_{\Box}}_{\B_{S^{\flat}}^{[u,v]}}\B_{S^{\flat}}^{[u,v/p]}\simeq \B_{S^{\flat}_i}^{{[u,v]}}\otimes^{\LL}_{(\Z[T],\Z)_{\Box}}(\Z[T],\Z[T])_{\Box}\simeq\B_{S^{\flat}_i}^{{[u,v/p]}},
$$
where we wrote 
$\B_{S^{\flat}}^{[u,v/p]}\simeq \B_{S^{\flat}}^{[u,v]}\langle f\rangle$ 
for $f=(p/[p^{\flat}]^{p/v})\in \B_{S^{\flat}}^{[u,v]}$.
\end{proof}

    We define the {\em pro-\'etale  modules} as the pairs
 \begin{align*}
&  \R\Gamma^{\B}_{\proeet,?}(X_S,\Q_p(r))  :=( \R\Gamma_{\proeet,?}^{{[u,v]}}(X_S,\Q_p(r)),\phi),\\
&   \phi:  \R\Gamma_{\proeet,?}^{{[u,v]}}(X_S,\Q_p(r))  \to  \R\Gamma_{\proeet,?}^{{[u,v/p]}}(X_S,\Q_p(r)),
 \end{align*}
 where the Frobenius $\phi$ is induced by the   Frobenius $\phi: {\mathbb B}^{[u,v]}\to {\mathbb B}^{[u,v/p]}$. It
 yields a quasi-isomorphism in $\sd(\B^{[u,v/p]}_{S^{\flat}, \Box})$
 $$\phi:  \R\Gamma_{\proeet,?}^{{[u,v]}}(X_S,\Q_p(r))\otimes^{\LL_{\Box}}_{\B_{S^{\flat}}^{[u,v]},\phi}\B_{S^{\flat}}^{[u,v/p]}\stackrel{\sim}{\to } \R\Gamma_{\proeet,?}^{{[u,v/p]}}(X_S,\Q_p(r)).$$
 Indeed, it suffices  to show that the Frobenius map
 $$
  \phi:  \R\Gamma_{\proeet,?}(X_S,{\mathbb B}^{[u,v]})\otimes^{\LL_{\Box}}_{\B_{S^{\flat}}^{[u,v]},\phi}\B_{S^{\flat}}^{[u,v/p]}\stackrel{\sim}{\to } \R\Gamma_{\proeet,?}(X_S, {\mathbb B}^{[u,v^{\prime}]})
$$
 is a quasi-isomorphism. But this follows directly from \cite[Lemma 4.8]{GB2}.

 The pairs  $\R\Gamma^{\B}_{\proeet,?}(X_S,\Q_p(r))$
  defines    nuclear $\phi$-complexes (actually  $(\phi,\sg_K)$-complexes) over $\B^{\rm FF}_{S^{\flat}}$, which we will call  {\em pro-\'etale  modules}. For the nuclear property use \cite[Lemma 6.15]{GB2} plus preservation of nuclearity by countable products and finite limits for the usual cohomology.
  The case of  cohomology with compact support follows since  colimits preserve nuclearity.    We will denote \index{E@\EEE}by
$$
\se_{\proeet,?}(X_S,\Q_p(r)):=\se_{\FF}(\R\Gamma^{\B}_{\proeet,?}(X_S,\Q_p(r)))
$$
the corresponding nuclear quasi-coherent sheaves on $X_{\FF,S^{\flat}}$. We will call them {\em pro-\'etale  sheaves}. 
Pro-\'etale  modules and sheaves  are functors on ${\rm Perf}_C$.

 \begin{proposition}\label{comp2b}
  We have  a natural, functorial in $S$,  quasi-isomorphism in $\sd(\Q_{p}(S)_{\Box})$:
$$\R\Gamma(X_{\FF,S^{\flat}},\se_{\proeet,?}(X_S,\Q_p(r)))  \simeq\R\Gamma_{\proeet,?}(X_S,\Q_p(r)).
$$
 \end{proposition}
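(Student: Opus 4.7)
The plan is to reduce the statement to the fundamental exact sequence of period sheaves and to compute $\R\Gamma(X_{\FF,S^{\flat}},-)$ via Proposition~\ref{leje1}(2). Unpacking the definitions,
$$
\se_{\proeet,?}(X_S,\Q_p(r))=\se_{\FF}\bigl(\R\Gamma^{\B}_{\proeet,?}(X_S,\Q_p(r))\bigr)
$$
is the nuclear quasi-coherent sheaf on $X_{\FF,S^\flat}$ associated to the $\phi$-module
$(\R\Gamma_{\proeet,?}^{[u,v]}(X_S,\Q_p(r)),\phi)$. Since this module is nuclear (as noted in Section \ref{sing1}), Proposition~\ref{leje1}(2) applies and yields a natural quasi-isomorphism in $\sd(\Q_p(S)_\Box)$
$$
\R\Gamma(X_{\FF,S^\flat},\se_{\proeet,?}(X_S,\Q_p(r)))\simeq
\bigl[\R\Gamma_{\proeet,?}(X_S,{\mathbb B}^{[u,v]})(r)\xrightarrow{\phi-1}\R\Gamma_{\proeet,?}(X_S,{\mathbb B}^{[u,v/p]})(r)\bigr].
$$

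The next step is to identify the right-hand side with $\R\Gamma_{\proeet,?}(X_S,\Q_p(r))$. For this I would invoke the fundamental exact sequence of relative period sheaves on the pro-\'etale site
$$
0\longrightarrow \Q_p\longrightarrow {\mathbb B}^{[u,v]}\xrightarrow{\;\phi-1\;}{\mathbb B}^{[u,v/p]}\longrightarrow 0,
$$
which is standard (on stalks, this is the classical short exact sequence $0\to\Q_p\to \B^{[u,v]}\xrightarrow{\phi-1}\B^{[u,v/p]}\to 0$ known for affinoid perfectoids; its exactness on the pro-\'etale site of $X_S$ reduces to the affinoid perfectoid case by descent). Applying $\R\Gamma_{\proeet,?}(X_S,-)$ to this triangle, twisting by $r$, and using that the map $\phi-1$ appearing on the right of the resulting distinguished triangle agrees with the one produced by the Frobenius on the $\phi$-module structure, produces exactly the asserted mapping-fiber description.

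For the usual cohomology, applying $\R\Gamma_{\proeet}(X_S,-)$ to the fundamental short exact sequence of pro-\'etale sheaves is automatic; the only thing to verify is that it commutes with the Tate twist and respects the condensed/solid structure, which follows because the twist $(r)$ is an invertible operation on coefficients and the structure on each term of the fiber sequence is the one inherited from the period sheaves. For the compactly supported version, one needs to know that the construction $\R\Gamma_{\proeet,c}(X_S,-)$ sends the fundamental short exact sequence of sheaves to a distinguished triangle; this follows from the definition of $\R\Gamma_{\proeet,c}$ as the fiber of $\R\Gamma_{\proeet}(X_S,-)\to \R\Gamma_{\proeet}(\partial X_S,-)$ together with the fact that the filtered colimit involved in $\R\Gamma_{\proeet}(\partial X_S,-)$ preserves exact triangles.

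The main potential obstacle is checking that the Frobenius operator coming from the $\phi$-module structure on $\R\Gamma^{\B}_{\proeet,?}(X_S,\Q_p(r))$ and the one induced by the sheaf map $\phi\colon {\mathbb B}^{[u,v]}\to {\mathbb B}^{[u,v/p]}$ do coincide under the identifications; this is essentially tautological from the definition in Section~\ref{sing1}, where the Frobenius on the pro-\'etale module is defined precisely as the one induced by the Frobenius of period sheaves (the quasi-isomorphism $\phi^*\R\Gamma_{\proeet,?}^{[u,v]}\simeq \R\Gamma_{\proeet,?}^{[u,v/p]}$ was verified in Section~\ref{sing1} using \cite[Lemma 4.8]{GB2}). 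Functoriality in $S\in{\rm Perf}_C$ is then automatic since every ingredient (the $\phi$-module structure, the functor $\se_{\FF,S^\flat}$, Proposition~\ref{leje1}(2), and the fundamental sequence) is functorial in~$S$.
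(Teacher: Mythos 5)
Your proposal is correct and follows the same approach as the paper: apply Proposition~\ref{leje1}(2) to express $\R\Gamma(X_{\FF,S^\flat},\se_{\proeet,?}(X_S,\Q_p(r)))$ as a $\phi$-mapping fiber, then identify the result with $\R\Gamma_{\proeet,?}(X_S,\Q_p(r))$ via the fundamental exact sequence $0\to\Q_p\to{\mathbb B}^{[u,v]}\xrightarrow{\phi-1}{\mathbb B}^{[u,v/p]}\to 0$ (which the paper attributes to \cite[Lemma 2.23]{CN1}). Your extra remarks on Frobenius compatibility, the compactly supported case, and functoriality in $S$ are reasonable elaborations of points the paper leaves implicit.
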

 \begin{proof}By  Proposition \ref{leje1} we have natural, functorial in $S$, quasi-isomorphisms
 \begin{align*}
 \R\Gamma(X_{\FF,S^{\flat}},\se_{\proeet,?}(X_S,\Q_p(r)))   & \simeq [\R\Gamma_{\proeet,?}^{{[u,v]}}(X_S,\Q_p(r))\lomapr{\phi-1}\R\Gamma_{\proeet,?}^{{[u,v/p]}}(X_S,\Q_p(r))]\\
 & \simeq [\R\Gamma_{\proeet,?}(X_S,{\mathbb B}^{[u,v]})(r)\lomapr{\phi-1}\R\Gamma_{\proeet,?}(X_S,{\mathbb B}^{[u,v/p]})(r)]\\
  & \stackrel{\sim}{\leftarrow}\R\Gamma_{\proeet,?}(X_S,\Q_p(r)).
 \end{align*}
 Here, in the last quasi-isomorphism, we have used the exact sequence (see \cite[Lemma 2.23]{CN1})
 $$
 \phantom{XXXXXXXXXXXXXXX}0\to\Q_p\to {\mathbb B}^{[u,v]}\lomapr{\phi-1}{\mathbb B}^{[u,v/p]}\to 0\phantom{XXXXXXXXXXXXX} \qedhere
 $$
 \end{proof}

 \subsection{Comparison theorems on the Fargues-Fontaine curve} 
We move now to the comparison theorems on the two curves of Fargues-Fontaine. 
\subsubsection{Comparison theorem on the $X_{\rm FF}$-curve} W start with the "bottom" curve. Let $X$ be a smooth partially proper variety over $K$, of dimension $d$. 
 \begin{proposition} \label{china1}
Let $r\geq 2d$.
There is a   natural, functorial in $S$,  quasi-isomorphism in ${\rm QCoh}(X_{\FF,S^{\flat}})$:
\begin{equation}\label{hot11}
\alpha_r:\quad \se_{\synt,?}(X_S,\Q_p(r)) \simeq \se_{\proeet,?}(X_S,\Q_p(r)).
\end{equation}
 \end{proposition}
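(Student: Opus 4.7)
The plan is to construct the comparison at the level of $\phi$-complexes over $\B^{\FF}_{S^\flat}$: by Proposition \ref{leje1} and the very definition of the two sheaves as images under $\se_{\FF,S^\flat}$, it suffices to produce a natural, functorial-in-$S$ quasi-isomorphism
$$\alpha_r:\quad \R\Gamma^{\B}_{\synt,?}(X_S,\Q_p(r))\stackrel{\sim}{\to} \R\Gamma^{\B}_{\proeet,?}(X_S,\Q_p(r))$$
in $\sd(\B^{\FF}_{S^\flat})^\phi$. Unwinding this category, what is required is a pair of quasi-isomorphisms of the underlying complexes in $\sd(\B^{[u,v]}_{S^\flat,\Box})$ and $\sd(\B^{[u,v/p]}_{S^\flat,\Box})$ that intertwine the two Frobenius structures linking them.

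These are supplied by the $\mathbb{B}^I$-comparison theorem, Corollary \ref{hot1K} from the introduction. Applied to each of $I=[u,v]$ and $I=[u,v/p]$ (both belonging to the family of fixed intervals from Section \ref{fixed}), it yields Frobenius-equivariant, $S$-functorial quasi-isomorphisms
$$\tau_{\leq r}\R\Gamma_{\proeet,?}(X_S,\mathbb{B}^I)(r)\simeq \tau_{\leq r}\bigl[\R\Gamma^I_{\hk,?}(X_S,r)\lomapr{\iota^I_{\hk}}\R\Gamma^I_{\dr,?}(X_S,r)\bigr]$$
in $\sd(\B^I_{S^\flat,\Box})$. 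The right-hand side is precisely $\tau_{\leq r}\R\Gamma^I_{\synt,?}(X_S,\Q_p(r))$ by definition \eqref{kawa11}, while the left-hand side is $\tau_{\leq r}\R\Gamma^{I}_{\proeet,?}(X_S,\Q_p(r))$. The built-in Frobenius-equivariance guarantees that these two isomorphisms (for $I=[u,v]$ and $I=[u,v/p]$) glue into a quasi-isomorphism of $\phi$-complexes in the truncated range.

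It remains to drop the truncations under the hypothesis $r\geq 2d$. The syntomic side $\R\Gamma^{[u,v^\prime]}_{\synt,?}(X_S,\Q_p(r))$ is concentrated in degrees $[0,2d]$: this follows from \eqref{kawa11} together with the degree bounds on $\R\Gamma_{\hk,?}$ and $\R\Gamma_{\dr,?}$, obtained in the Stein case from the explicit Koszul-type descriptions (cf.\ \eqref{kolobrzeg1a}) and extended to the partially proper case by a \v{C}ech argument, exactly as in the proofs of Theorem \ref{comp0} and Lemma \ref{grey1}. The truncated comparison then forces the pro-\'etale side into the same range, so for $r\geq 2d$ both sides coincide with their $\tau_{\leq r}$-truncations and $\alpha_r$ is a genuine quasi-isomorphism of $\phi$-complexes; applying $\se_{\FF,S^\flat}$ yields the proposition. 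The main obstacle is therefore the input itself --- Corollary \ref{hot1K}, which is the Frobenius-untangled form of the classical $p$-adic comparison theorem, stated on $Y_{\FF,S^\flat}$ rather than on $X_{\FF,S^\flat}$; granting that, the present proof amounts essentially to bookkeeping on $\phi$-modules.
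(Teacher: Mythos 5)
Your proposal takes essentially the same route as the paper: reduce to a quasi-isomorphism of $\phi$-modules over $\B^{\FF}_{S^{\flat}}$, supply it from the Frobenius-untwisted comparison on the $Y_{\FF}$-curve, and drop truncations using the $[0,2d]$ degree bounds. Two citations, however, are off and should be repaired.

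First, you invoke Corollary \ref{hot1K} for both $I=[u,v]$ and $I=[u,v/p]$, claiming both lie in the admissible family. But Corollary \ref{hot1K}(1) is stated for intervals $I$ that \emph{contain} the fixed $[u,v]$ of Section \ref{fixed}, and $[u,v/p]$ is a strict subinterval of $[u,v]$, so the hypothesis fails. The paper instead uses Theorem \ref{hot1}, which gives exactly the two pieces of data needed for a $\phi$-module: the truncated comparison on the $[u,v]$-chart, and on the $[u,v/p]$-chart the \emph{untruncated} identification $\R\Gamma_{\proeet,?}(X_S,{\mathbb B}^{[u,v/p]})(r)\simeq \R\Gamma^{[u,v/p]}_{\hk,?}(X_S,r)$ with the de Rham term dropped entirely (since $t$ is invertible on $[u,v/p]$, $\R\Gamma^{[u,v/p]}_{\dr,?}(X_S,r)=0$). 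This distinction is not cosmetic: the $[u,v/p]$-chart comparison has no de Rham part and needs no truncation, which is what makes the Frobenius-compatibility a clean statement. You should cite Theorem \ref{hot1} directly.

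Second, you write that the mapping fiber $[\R\Gamma^{I}_{\hk,?}\to\R\Gamma^{I}_{\dr,?}]$ is $\tau_{\leq r}\R\Gamma^I_{\synt,?}$ ``by definition \eqref{kawa11}''. But \eqref{kawa11} defines $\R\Gamma^{\B^{[u,v]}}_{\synt,?}$, which already has the $\phi=1$ eigenspace built into the Hyodo--Kato term; the $\phi$-module chart $\R\Gamma^{[u,v]}_{\synt,?}(X_S,\Q_p(r))$ you actually need is the unconditional mapping fiber $[\R\Gamma^{[u,v]}_{\hk,?}(X_S,r)\to \R\Gamma^{[u,v]}_{\dr,?}(X_S,r)]$ from the definition in Section \ref{tiger1}. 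With these two corrections your argument matches the paper's proof.
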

\begin{proof}  It suffices  to construct a natural quasi-isomorphism of $\phi$-modules over $\B^{\FF}_{S^{\flat}}$
$$
\R\Gamma^{\B}_{\synt,?}(X_S,\Q_p(r))\simeq \R\Gamma^{\B}_{\proeet,?}(X_S,\Q_p(r)).
$$
That is, a natural quasi-isomorphism of pairs
$$
(\R\Gamma^{{[u,v]}}_{\synt,?}(X_S,\Q_p(r)),\phi)\simeq  (\R\Gamma^{{[u,v]}}_{\proeet,?}(X_S,\Q_p(r)),\phi).
$$
 But this  follows 
   from a "Frobenius untwisted" version of  Theorem \ref{comp01} presented in Theorem~\ref{hot1} below. We just have to argue that we can drop truncations in \ref{kin1}: but this follows from the fact that both sides live in degrees $[0,2d]$, which can be seen   as in the proof of Theorem \eqref{comp0}).    \end{proof}
   \begin{remark}
   We did not list the truncated version of Theorem \ref{comp01} in Proposition \ref{china1} because the issue of truncation vis a vis localization is a subtle one.
   \end{remark}
   \subsubsection{Comparison theorem on the $Y_{\rm FF}$-curve} We pass now to the "top" curve. 
\begin{theorem}{\rm (Comparison theorem on the $Y_{\FF}$-curve)} \label{hot1} 
Let $X$ be a smooth partially proper variety over $K$.  Let $r\geq 0$. We have  natural, functorial in $S$, and compatible with Frobenius  quasi-isomorphisms in $\sd(\B^{[u,v]}_{S^{\flat},\Box})$ 
and $\sd(\B^{[u,v/p]}_{S^{\flat},\Box})$, respectively:
\begin{align}\label{kin1}
\tau_{\leq r}\R\Gamma_{\proeet,?}(X_S,{\mathbb B}^{[u,v]})(r) & \simeq \tau_{\leq r}[\R\Gamma^{{[u,v]}}_{\hk,?}(X_S,r)\lomapr{\iota_{\hk}} \R\Gamma_{\dr,?}^{{[u,v]}}(X_S,r)],\\
\R\Gamma_{\proeet,?}(X_S,{\mathbb B}^{[u,v/p]})(r) & \simeq \R\Gamma^{{[u,v/p]}}_{\hk,?}(X_S,r).\notag
\end{align}
\end{theorem}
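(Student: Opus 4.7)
The plan is to extract Theorem \ref{hot1} from the proof of the classical syntomic--pro-\'etale comparison (Theorem \ref{comp01}, combined with Lemma \ref{grey1}), the key point being that the $\Q_p$-comparison arises by taking Frobenius invariants of a finer comparison at the level of period sheaves. Indeed, the exact sequence
$$0\to\Q_p\to{\mathbb B}^{[u,v]}\xrightarrow{\phi-1}{\mathbb B}^{[u,v/p]}\to 0$$
identifies $\R\Gamma_{\proeet,?}(X_S,\Q_p(r))$ with the fiber of $\phi-1$ between the two ${\mathbb B}$-cohomologies appearing in the theorem, while $\R\Gamma^{\B^{[u,v]}}_{\synt,?}(X_S,\Q_p(r))$ of \eqref{kawa11} is the fiber of $\phi-1$ applied to the mapping cone of $\iota_{\hk}: \R\Gamma^{[u,v]}_{\hk,?}\to\R\Gamma^{[u,v]}_{\dr,?}$. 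So the two quasi-isomorphisms of the theorem, together with Frobenius compatibility, should be viewed as the ``pre-Frobenius'' form of Theorem \ref{comp01}+Lemma \ref{grey1}: once they are established, taking fibers of $\phi-1$ on both sides recovers the known $\Q_p$-comparison.

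To establish the first quasi-isomorphism in (\ref{kin1}), I would revisit the construction of the period map in \cite{CN4}, and of its compactly supported variant in \cite{AGN}. There the period map is built locally as a morphism of complexes of period sheaves, and the argument that it induces a $\tau_{\leq r}$-quasi-isomorphism proceeds in two independent stages: first, a comparison of the ${\mathbb B}^{[u,v]}$-valued pro-\'etale cohomology with a complex that, after Koszul resolution and analytic descent, computes precisely the mapping cone of $\iota_{\hk}:\R\Gamma^{[u,v]}_{\hk,?}\to\R\Gamma^{[u,v]}_{\dr,?}$; and only then the passage to $\phi$-invariants. Extracting the first stage gives (\ref{kin1}) in the required truncated range, with functoriality in $S$ inherited from the functoriality of the period sheaves in the perfectoid base (which is precisely what we need later for the descent).

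For the second quasi-isomorphism, the decisive fact is that $t$ is a unit in $\B^{[u,v/p]}$ by the choice of $u,v$ in Section \ref{fixed}. Since the terms of $\R\Gamma^{[u,v]}_{\dr,?}(X_S,r)$ are $t^k$-torsion $\B^+_{\dr}$-modules for $k\leq r$ (by \eqref{kolobrzeg1a} and its compact-support analogue), their base change to $\B^{[u,v/p]}$ vanishes, and the same vanishing applied to the period sheaves kills the de Rham contribution on the pro-\'etale side. Hence both sides reduce to the Hyodo-Kato term and no truncation is needed. The Frobenius compatibility between the two quasi-isomorphisms follows from the commutativity of the Hyodo-Kato Frobenius with $\phi:\B^{[u,v]}\to\B^{[u,v/p]}$ (diagram \eqref{tiger2}) and of the pro-\'etale Frobenius with $\phi:{\mathbb B}^{[u,v]}\to{\mathbb B}^{[u,v/p]}$, together with the vanishing on the de Rham side. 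The main obstacle will be the bookkeeping needed to extract the pre-Frobenius comparison cleanly from the existing proofs: one must verify that the intermediate period complexes lie in the correct derived categories of solid $\B^{I}$-modules with their natural Frobenius structure, and handle the boundary colimits in the compactly supported case without breaking functoriality in $S$.
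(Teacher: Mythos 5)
Your proposal is correct and takes essentially the same route as the paper: both proofs rest on the observation that the construction of the period isomorphism in \cite[Sec.\,7]{CN4} (and its compactly supported variant in \cite{AGN}) is actually carried out at the level of ${\mathbb B}^{[u,v]}$-period sheaves, with the passage to Frobenius eigenspaces being a separable final step, so the ``pre-Frobenius'' statement of Theorem \ref{hot1} can be extracted by simply not performing that last step. You also correctly identify the reason the second quasi-isomorphism holds without truncation ($t$ being a unit in $\B^{[u,v/p]}$ kills the de Rham contribution, hence also the filtration mismatch that forces the truncation), and the mechanism for Frobenius compatibility (diagram \eqref{tiger2} together with the vanishing of the de Rham side), as well as the colimit/finite-limit argument for the compactly supported case. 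Your level of detail is comparable to the paper's own proof, which likewise defers the bookkeeping to specific diagrams in \cite{CN4}.
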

\begin{proof} For $v^{\prime}=v,v/p$, we define $F^r{\mathbb B}^{[u,v^{\prime}]}:=t^r{\mathbb B}^{[u,v^{\prime}]}$. 
We clearly have the isomorphism
$
t^r:  {\mathbb B}^{[u,v^{\prime}]}(r)\stackrel{\sim}{\to} F^r{\mathbb B}^{[u,v^{\prime}]}\{r\}.
$
We want to construct natural, functorial in $S$ and compatible with Frobenius,
 quasi-isomorphisms in $\sd(\B^{[u,v]}_{S^{\flat},\Box})$ and $\sd(\B^{[u,v/p]}_{S^{\flat},\Box})$, respectively:
\begin{align*}
\tau_{\leq r}\R\Gamma_{\proeet,?}(X_S, F^r{\mathbb B}^{[u,v]})\{r\} & \simeq \tau_{\leq r}[\R\Gamma^{{[u,v]}}_{\hk,?}(X_S,r)\to \R\Gamma_{\dr,?}^{{[u,v]}}(X_S,r)],\\
\R\Gamma_{\proeet,?}(X_S, F^r{\mathbb B}^{[u,v/p]})\{r\} & \simeq \R\Gamma^{{[u,v/p]}}_{\hk,?}(X_S,r).
\end{align*}
 
  For the usual cohomology, these quasi-isomorphisms were constructed  in \cite[Sec.\,7]{CN4}. They are  not explicitly stated there because we almost always carry through the constructions the eigenspaces of Frobenius but, in fact, the latter   can be dropped
 as they are only used to pass between various period rings and here we work with one fixed period ring.  For the gist of the construction the interested reader should consult  the diagram (7.16) (with the top row moved a step lower and with  added  $[u,v]$-decoration), its refinement (7.31), Section 7.4 in general, and diagram (7.36) (with decoration changed again to $[u,v]$) in particular.

  The case of   compactly supported cohomology follows now easily from the case of usual cohomology by taking  colimits and finite limits. 
\end{proof}
 
  The following result follows easily from the above theorem though it will not be used in this paper. 
\begin{corollary}{\rm \index{BRING@\BRING}(${\mathbb B}$-comparison theorem)} \label{hot1K} 
Let $X$ be a smooth partially proper variety over $K$.  Let $r\geq 0$ .
\begin{enumerate}
\item Let $I=[u,v]\subset (0,\infty)$ be a compact interval with rational endpoints containing the fixed intervals from Section \ref{fixed}.  We have  a natural, functorial in $S$,   quasi-isomorphism 
in $\sd(\B^I_{S^{\flat},\Box})$: 
\begin{align*}\label{kin1K}
\tau_{\leq r}\R\Gamma_{\proeet,?}(X_S,{\mathbb B}^{I})(r) & \simeq \tau_{\leq r}[\R\Gamma^{{I}}_{\hk,?}(X_S,r)\lomapr{\iota^I_{\hk}} \R\Gamma_{\dr,?}^{I}(X_S,r)].
\end{align*}
This quasi-isomorpism is  also compatible with Frobenius, i.e., the following diagram commutes (we set $u^{\prime}=u/p, v^{\prime}=v/p$)
$$
\xymatrix@R=5mm{
\tau_{\leq r}\R\Gamma_{\proeet,?}(X_S,{\mathbb B}^{[u,v]})(r) \ar[r]^-{\sim}\ar[d]^{\phi}_{\wr}
&  \tau_{\leq r}[\R\Gamma^{{[u,v]}}_{\hk,?}(X_S,r)\lomapr{\iota^{[u,v]}_{\hk}} \R\Gamma_{\dr,?}^{[u,v]}(X_S,r)]\ar[d]^{\phi}_{\wr}\\
\tau_{\leq r}\R\Gamma_{\proeet,?}(X_S,{\mathbb B}^{[u^{\prime},v^{\prime}]})(r) \ar[r]^-{\sim}
&  \tau_{\leq r}[\R\Gamma^{{[u^{\prime},v^{\prime}]}}_{\hk,?}(X_S,r)\lomapr{\iota^{[u^{\prime},v^{\prime}]}_{\hk}} \R\Gamma_{\dr,?}^{[u^{\prime},v^{\prime}]}(X_S,r)].
}
$$
\item We have  a natural, functorial in $S$, and compatible with Frobenius  quasi-isomorphism 
\index{HK@\HK}\index{DR@\DR}in  $\sd(\B_{S^{\flat},\Box})$: 
\begin{align*}
\tau_{\leq r}\R\Gamma_{\proeet}(X_S,{\mathbb B})(r) & \simeq \tau_{\leq r}[\R\Gamma^{\mathbb B}_{\hk}(X_S,r)\lomapr{\iota^{\mathbb B}_{\hk}} \R\Gamma^{\mathbb B}_{\dr}(X_S,r)]
\end{align*}
For $r\geq 2d$, this yields a quasi-isomorphism
\begin{equation}\label{bosco-thesis}
\R\Gamma_{\proeet}(X_S,{\mathbb B})(r)  \simeq [\R\Gamma^{\mathbb B}_{\hk}(X_S,r)\lomapr{\iota^{\mathbb B}_{\hk}} \prod_{I}\R\Gamma_{\dr}^{\mathbb B}(X_S,r)].
\end{equation}
\end{enumerate}
\end{corollary}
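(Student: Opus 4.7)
The plan is to bootstrap Theorem \ref{hot1}---which establishes the comparison for the specific charts $\B^{[u,v]}$ and $\B^{[u,v/p]}$ and gives its Frobenius equivariance---to an arbitrary compact interval $I\supset [u,v]$ (part (1)), and then to pass to the limit over all such $I$ (part (2)).

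For part (1), I would first extend the quasi-isomorphism to all intervals of the form $\phi^{-n}([u,v])$ and $\phi^{-n}([u,v/p])$, $n\ge 0$. Since Theorem \ref{hot1} is Frobenius equivariant and $\phi$ identifies $Y_{\FF,S}^{[u/p,v/p]}$ with $Y_{\FF,S}^{[u,v]}$, applying the $n$-th iterate of Frobenius to the base case yields the analogous quasi-isomorphism on each translate. These translates form an open cover of $Y_{\FF,S}$, and in particular finitely many of them cover any given compact $I\supset [u,v]$. To glue, I would invoke analytic descent for the three functors involved. Pro-\'etale cohomology with coefficients ${\mathbb B}^I$ is compatible with the rational localization $\B^J\to \B^I$ by an argument parallel to the one for $\B^{[u,v]}\to\B^{[u,v/p]}$ (ultimately reducing to the $(\Z[T],\Z)_\Box \to (\Z[T],\Z[T])_\Box$ base change of Lemma \ref{fun1-kol}); the Hyodo-Kato complexes $\R\Gamma_{\hk}^I$, being defined by tensoring with $\B_{\log}^I$ over $\breve{C}$, satisfy the analogous base change shown at the start of the definition of $\R\Gamma_{\hk}^I$; and the filtered de Rham modules $\R\Gamma_{\dr}^I$ are supported at $y_\infty$, so on translates not containing $y_\infty$ the target of $\iota_{\hk}^I$ is zero and the comparison reduces to the second line of Theorem \ref{hot1}. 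These compatibilities allow descent of the local quasi-isomorphisms to a global quasi-isomorphism on $\B^I$, and the Frobenius compatibility diagram in part (1) is inherited from the one in Theorem \ref{hot1}.

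For part (2), I would take the (derived) limit of part (1) over all compact intervals $I$ with rational endpoints. Since $\B_S = \lim_I \B_S^I$ and the relative period sheaf ${\mathbb B}$ satisfies $\R\Gamma_{\proeet,?}(X_S, {\mathbb B}) = \lim_I \R\Gamma_{\proeet,?}(X_S, {\mathbb B}^I)$, and since both the Hyodo-Kato and the de Rham modules are, by construction, compatible with this limit (the tensor products are completed, and $\lim_I$ commutes with the constructions by the argument used in Lemma \ref{fun1-kol} to swap $\otimes^{\LL}_{\B_1}\B_2$ with $\mathrm{Rlim}$), the limit assembles into the asserted quasi-isomorphism on $\B$. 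In the stable range $r\ge 2d$, the truncation $\tau_{\le r}$ can be removed by the same range estimate as in the proof of Theorem \ref{comp0}, since both sides are concentrated in degrees $[0,2d]$.

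The main obstacle will be checking that the quasi-isomorphism of Theorem \ref{hot1}, constructed in \cite[Sec.\,7]{CN4} specifically on the chart $[u,v]$, is sufficiently functorial in the interval to allow gluing along the Frobenius cover---that is, that replacing $[u,v]$ by an arbitrary Frobenius translate produces compatible maps at the level of period rings and Hyodo-Kato/de Rham modules, so that the localization squares commute. Once this functoriality is extracted from the construction in loc.\ cit., the remainder is essentially formal analytic descent plus the standard range estimate for dropping truncations.
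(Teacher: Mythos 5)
For part (1), you take a genuinely different route from the paper. The paper simply reruns the argument of Theorem \ref{hot1} (i.e., the adaptation of \cite[Sec.\,7]{CN4}) directly on the chart $Y^I_{\FF,S}$, whereas you propose to translate Theorem \ref{hot1} along powers of Frobenius and glue by analytic descent. Your plan is workable, but your self-identified ``main obstacle'' (extracting interval-functoriality from the CN4 construction) is less of an obstacle than you think: the descent datum on the overlaps $Y^{[p^n u, p^n v]}\cap Y^{[p^{n+1}u, p^{n+1}v]}$ is exactly the Frobenius compatibility already asserted in Theorem \ref{hot1}, pulled back by $\phi^n$; you never need to compare the CN4 construction on $[u,v]$ with a hypothetical direct construction on a translate. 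What you \emph{do} need to be careful about is base change to subintervals when restricting the glued data to $Y^I$ itself, and that the resulting module agrees with the intrinsic $\R\Gamma^I_{\hk,?}$ and $\R\Gamma^I_{\dr,?}$; you address this correctly via the base-change isomorphism for $\R\Gamma^I_{\hk}$ and the observation that $\R\Gamma^I_{\dr,?}$ is $i_\infty$-supported at the Frobenius translates of $y_\infty$ lying in $Y^I$.

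For part (2), however, your argument has a genuine gap. Passing to the limit over $I$ requires two inputs that the paper explicitly invokes and that you do not supply. First, one must know that $\R^i\lim_{I}{\mathbb B}^I(S)=0$ for $i>0$ (equivalently, that $\lim_I\R\Gamma_{\proeet,?}(X_S,{\mathbb B}^I)\simeq \R\Gamma_{\proeet,?}(X_S,{\mathbb B})$); the paper cites \cite[proof of Lemma 2.41]{GB2}, and your assertion ``$\R\Gamma_{\proeet,?}(X_S,{\mathbb B})=\lim_I\R\Gamma_{\proeet,?}(X_S,{\mathbb B}^I)$'' is precisely this nontrivial statement, not a definition. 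Second, and more seriously, the commutation of $\lim_I$ with the Hyodo--Kato term $[\R\Gamma_{\hk}(X_C)\{r\}\otimes^{\LL_\Box}_{\breve{C}}\B^I_{\log}]^{N=0}$ is not automatic: solid tensor products do not commute with derived limits in general, and the paper stresses that one must first reduce to the case where the Hyodo--Kato cohomology groups are of finite rank over $\breve{C}$. You omit this reduction entirely. Moreover, the lemma you invoke to justify the swap (Lemma \ref{fun1-kol}) proves that the specific base-change functor $(-)\otimes^{\LL}_{\B_1}\B_2$ for the rational open $\B^{[u,v]}\to\B^{[u,v/p]}$ commutes with $\mathrm{Rlim}$ --- via the presentation $M\otimes^{\LL}_{(\Z[T],\Z)_\Box}(\Z[T],\Z[T])_\Box\simeq\R\uHom_R(R_\infty/R,M)[1]$ --- which has nothing to do with commuting $\lim_I$ with the $\breve{C}$-linear tensor product against $\B^I_{\log}$. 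As written, this step would fail; you need both the $\R^1\lim$-vanishing for $\B^I(S)$ and the finite-rank reduction for Hyodo--Kato cohomology (together with the observation that $\R\Gamma_{\hk}(X_C)$ is built from finite-rank $(\phi,N)$-modules by countable limits/colimits).
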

\begin{remark} (1)  In claim (1)  above,  we have set 
$$\R\Gamma_{\dr,?}^{I}(X_S,r):=\bigoplus_{\Z(I)}(\R\Gamma_{\dr,?}(X)\otimes^{\LL_{\Box}}_K{\mathbb B}^+_{\dr}(S))/F^r,$$
where $\Z(I):=\{n\in\Z| \phi^n(y_{\infty})\in Y^I_{\FF,S^{\flat}}\}$. 
 The Hyodo-Kato morphism $\iota^{I}_{\hk}$ in degree $n\in\Z(I)$ is defined by precomposing the usual Hyodo-Kato morphism $\iota_{\hk}$ with $\phi^{-n}$. It is 
 ${\mathbb B}^{[u,v]}(S)$-linear via the composition 
 $${\mathbb B}^{[u,v]}(S)\lomapr{\phi^{-n}}{\mathbb B}^{[p^nu,p^nv]}(S)\to {\mathbb B}^{[p^nu,p^nv]}(S)/\xi_{\infty}^r\simeq {\mathbb B}^+_{\dr}(S)/t^r,$$
 where $\xi_{\infty}$ is a generator of the ideal defining $y_{\infty}$.
 
  (2) In claim (2)  above,  we have \index{BRING@\BRING}set
\begin{align*}
\R\Gamma_{\dr}^{{\mathbb B}}(X_S,r) & :=\prod_{\Z}((\R\Gamma_{\dr,?}(X)\otimes^{\LL_{\Box}}_K{\mathbb B}^+_{\dr}(S))/F^r),\\
\R\Gamma^{{\mathbb B}}_{\hk}(X_S,r) &:=[\R\Gamma_{\hk}(X_C)\{r\}\otimes^{\LL_{\Box}}_{\breve{C}}{\mathbb B}_{\log}(S)]^{N=0}.
\end{align*}
 \end{remark}
\begin{proof} The first quasi-isomorphism is proven in the same way as Theorem \ref{hot1}. The second one follows from the first one by passing to limits (we use here heavily that we may assume the Hyodo-Kato cohomology to be of finite rank) once we know that $\R^i\lim_{I\subset (0,\infty)}{\mathbb B}^{I}(S)=0, i>0$, where the limit is taken over compact intervals $I$ with rational endpoints (recall that we have  $\lim_{I\subset (0,\infty)}{\mathbb B}^{I}(S)\simeq {\mathbb B}(S)$). But this was checked in \cite[proof of Lemma 2.41]{GB2}. 
\end{proof}
\begin{remark} (1) For $r\geq 2d$, as an immediate consequence of \eqref{bosco-thesis}, we get the quasi-isomorphism :
$$
\R\Gamma_{\proeet}(X_S,{\mathbb B})(r)[1/t]  \simeq \R\Gamma^{\mathbb B}_{\hk}(X_S,r)[1/t].
$$

 (2)   Bosco in \cite{GB2} proved a  version of the comparison quasi-isomorphism \eqref{bosco-thesis}, where the torsion on the right-hand side is incorporated to the left-hand side via the $\LL\eta_t$ operator. 
\end{remark}

\section{Poincar\'e dualities on the Fargues-Fontaine curve} 
We are now ready to state and prove pro-\'etale  duality on the Fargues-Fontaine curve. The same techniques allow us to prove also pro-\'etale  K\"unneth formula. 

\subsection{Hyodo-Kato and de Rham dualities} \label{sing3} Let $X$ be a  smooth partially proper rigid analytic variety over $K$, of dimension $d$. 
\subsubsection{De Rham dualities} 
 Recall the following dualities (see \cite[Cor.\,5.18, Th.\,5.23, Cor.\,5.26]{AGN}).
 \begin{proposition}\label{derhamduality} Let $L=K,C$.
 \begin{enumerate}
 \item  {\em (Serre duality)}     There is a   \index{TRACE@\TRACE}trace map of solid $L$-modules
$${\rm Tr}_{\rm coh}: \R\Gamma_c(X_L,\Omega^d)[d]\to L. $$
 The pairing
$$
\R\Gamma(X_L,\Omega^j)\otimes^{\LL_{\Box}}_L\R\Gamma_c(X_L,\Omega^{d-j})[d]\to \R\Gamma_c(X_L,\Omega^{d})[d]\lomapr{{\rm Tr}_{\rm coh}} L
$$
is perfect, i.e., it yields the quasi-isomorphism in $\sd(L_{\Box})$:
 $$
 \R\Gamma(X_L,\Omega^j)\simeq \R\uHom_{L_{\Box}}(\R\Gamma_c(X_L,\Omega^{d-j})[d],L).
 $$

 \item {\em (Filtered de Rham duality)}
There are natural \index{TRACE@\TRACE}trace maps in $\sd(L_{\Box})$ and $L_{\Box}$, respectively:
$${\rm Tr}_{\dr}: \, \R\Gamma_{\dr,c}(X_L)[2d]\to L,\quad {\rm Tr}_{\dr}: \, H^{2d}_{\dr,c}(X_L)\to L. 
$$
\begin{enumerate}
\item
The  pairing in $\sd(L_{\Box})$
$$
\R\Gamma_{\dr}(X_L)\otimes^{\LL_{\Box}}_L \R\Gamma_{\dr,c}(X_L)[2d]\to \R\Gamma_{\dr,c}(X_L)[2d]\xrightarrow{{\rm Tr}_{\dr}} L
$$
is a perfect duality, i.e., we have induced quasi-isomorphism in $\sd(L_{\Box})$
\begin{align*}
& \R\Gamma_{\dr}(X_L)\stackrel{\sim}{\to} \R\uHom_{L_{\Box}} (\R\Gamma_{\dr,c}(X_L)[2d],L).
\end{align*}
\item More generally, let $r,r^{\prime}\in\N, r+r^{\prime}=d$. The  pairing in $\sd(L_{\Box})$
$$
(\R\Gamma_{\dr}(X_L)/F^{r^{\prime}+1})\otimes^{\LL_{\Box}}_L F^r\R\Gamma_{\dr,c}(X_L)[2d]\to \R\Gamma_{\dr,c}(X_L)[2d]\xrightarrow{{\rm Tr}_{\dr}} L
$$
is a perfect duality, i.e., we have induced quasi-isomorphisms in $\sd(L_{\Box})$
\begin{align*}
& \R\Gamma_{\dr}(X_L)/F^{r^{\prime}+1}\stackrel{\sim}{\to} \R\uHom_{L_{\Box}} (F^{r}\R\Gamma_{\dr,c}(X)[2d],L),\\
& F^{r^{\prime}+1}\R\Gamma_{\dr}(X_L)\stackrel{\sim}{\to} \R\uHom_{L_{\Box}} (\R\Gamma_{\dr,c}(X)/F^{r}[2d],L).
\end{align*}
  \end{enumerate}
  \end{enumerate}
 \end{proposition}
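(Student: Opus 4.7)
The plan is to deduce both Serre duality (part (1)) and filtered de Rham duality (part (2)) from the same source --- classical Serre duality for coherent sheaves on smooth Stein varieties --- and to bootstrap from the Stein case to general partially proper $X$ via the exhaustion $X=\colim_n U_n$ by quasi-compact Stein opens with $U_n\Subset U_{n+1}$. On both sides of each pairing, compactly supported cohomology is a filtered colimit while usual cohomology is a limit; since both operations preserve nuclearity and the compact-type property, and internal $\R\uHom$ into $L$ converts one into the other, it suffices to work with smooth Stein $X$.

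For part (1), when $X$ is smooth Stein, $H^i(X,\Omega^j)$ vanishes for $i>0$, so $\R\Gamma(X_L,\Omega^j)\simeq \Omega^j(X_L)$ (nuclear Fr\'echet); dually, $H^i_c(X_L,\Omega^j)=0$ for $i\neq d$ and $H^d_c(X_L,\Omega^j)$ is of compact type. The trace $\mathrm{Tr}_{\mathrm{coh}}$ and the perfect pairing come from classical rigid-analytic Serre duality (van der Put, Beyer), established first on a polydisc by an explicit residue pairing and then globalized along a countable affinoid exhaustion. The main subtlety here is to upgrade the topological duality to a quasi-isomorphism of internal $\R\uHom$ in $\sd(L_\Box)$; this reduces to the identification, inside the solid framework, of the strong dual of a nuclear Fr\'echet space with its compact-type counterpart.

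For part (2a), on a smooth Stein variety use the explicit representatives
\[ \R\Gamma_{\dr}(X_L) \simeq \bigl(\Omega^0(X_L)\to\cdots\to \Omega^d(X_L)\bigr), \qquad \R\Gamma_{\dr,c}(X_L) \simeq \bigl(H^d_c(X_L,\Omega^0)\to\cdots\to H^d_c(X_L,\Omega^d)\bigr)[-d]. \]
Define $\mathrm{Tr}_{\dr}$ as the composition $\R\Gamma_{\dr,c}(X_L)[2d]\twoheadrightarrow H^d_c(X_L,\Omega^d)\xrightarrow{\mathrm{Tr}_{\mathrm{coh}}} L$. Term by term the pairing is the Serre pairing from part (1); the de Rham differentials are, up to sign, mutually adjoint under those pairings by a formal Stokes-type argument, so applying $\R\uHom_{L_\Box}(-,L)$ to the de Rham complex of $X_L$ returns, as a complex, the compactly supported de Rham complex shifted by $[2d]$. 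This produces the asserted perfect duality as a genuine quasi-isomorphism, and by the first paragraph this extends to the partially proper case.

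For part (2b) the Hodge filtration gives $F^r\R\Gamma_{\dr}(X_L)\simeq (\Omega^r(X_L)\to\cdots\to \Omega^d(X_L))[-r]$, while the quotient $\R\Gamma_{\dr}(X_L)/F^{r^{\prime}+1}$ is $(\Omega^0\to\cdots\to\Omega^{r^{\prime}})$. Because the Serre pairing of part (1) exchanges $\Omega^j$ with $H^d_c(X,\Omega^{d-j})$, applying $\R\uHom_{L_\Box}(-,L)$ termwise swaps the Hodge filtration into its complementary filtration on the compactly supported side, yielding the two filtered quasi-isomorphisms. The main obstacle throughout is the functional-analytic book-keeping: one must know that internal $\R\uHom$ in the solid formalism computes the strong dual correctly on the nuclear Fr\'echet / compact-type objects that appear, and that this compatibility is preserved under the $\colim_n$ of the exhaustion $\{U_n\}$; this is precisely where the nuclearity-preservation results used elsewhere in the paper (and underlying the previously cited statements of \cite{AGN}) are indispensable.
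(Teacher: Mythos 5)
The paper does not actually prove Proposition~\ref{derhamduality}: it is recalled directly from \cite[Cor.~5.18, Th.~5.23, Cor.~5.26]{AGN}, so there is no in-paper argument for you to match. That said, your reconstruction is broadly along the lines of what \cite{AGN} does (bootstrapping from classical rigid-analytic Serre duality on Stein spaces, with the solid/condensed functional analysis handling the nuclear Fr\'echet versus compact-type duality), and the key inputs you identify --- that $H^d_c(X,\Omega^j)$ is an internally projective Smith colimit so that $\R\uHom$ is underived there, and that $\uHom_{L_\Box}(W,L)$ recovers the classical strong dual on these objects --- are indeed the crucial facts.

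Two points in your sketch deserve more care. First, the reduction from partially proper to Stein is not by a nested exhaustion $X=\colim_n U_n$ with $U_n\Subset U_{n+1}$ ``quasi-compact Stein'': a quasi-compact open subset of a rigid space is not Stein in the relevant sense, and the $U_n$ in the definition of $\R\Gamma_c$ are merely quasi-compact. The mechanism used in this paper (cf.\ the proof of Lemma~\ref{Singapur1} and the end of the proof of Theorem~\ref{VS-duality}) is a \emph{countable \v{C}ech hypercover by Stein opens}, together with the observation that $\R\Gamma_{\dr}$ is an $\operatorname{Rlim}$ of bounded complexes of Fr\'echet spaces and $\R\Gamma_{\dr,c}$ a filtered colimit, so $\R\uHom(-,L)$ interchanges the two. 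Second, calling the mutual adjointness of $d$ under the Serre pairing a ``formal Stokes-type argument'' hides a genuine compatibility: the statement that the coherent trace $\mathrm{Tr}_{\mathrm{coh}}$ kills exact top-degree forms (equivalently, that $\mathrm{Tr}_{\dr}$ is well-defined on $H^{2d}_{\dr,c}$) is a nontrivial classical theorem in rigid-analytic Serre duality (van der Put), not a formality, and it is exactly what makes the term-by-term Serre pairing assemble into a pairing of de~Rham complexes. Otherwise your structure of the argument, including the filtration swap in (2b), is the expected one.
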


\subsubsection{$\B^+_{\dr}$-dualities} 
 The duality for $\B^+_{\dr}$-cohomology has a slightly different form. For  $r\geq d$, a natural trace map in $\sd(\B^+_{\dr,\Box})$ can be defined by the \index{TRACE@\TRACE}composition
 $$
 {\rm Tr}_{\B^+_{\dr}}:\quad F^{r}\R\Gamma_{\dr}(X_C/\B^+_{\dr})[2d]\to \R\Gamma_c(X,\Omega^d)\otimes_K^{\LL_{\Box}}F^{r-d}\B^+_{\dr}\verylomapr{{\rm Tr}_{\rm coh}\otimes{\rm Id}} F^{r-d}\B^+_{\dr}.
 $$

 \begin{corollary} {\rm (Filtered $\B^+_{\dr}$-duality \cite[Cor. 5.27]{AGN})}\label{bdrduality} 
Let $r,r^{\prime}\geq d, s=r+r^{\prime}-d$.  The  pairing in $\sd(\B^+_{\dr,\Box})$
$$
F^{r^{\prime}}\R\Gamma_{\dr}(X_C/\B^+_{\dr})\otimes^{\LL_{\Box}}_{\B^+_{\dr}} F^r\R\Gamma_{\dr,c}(X_C/\B^+_{\dr})[2d]\to F^{r^{\prime}+r}\R\Gamma_{\dr,c}(X_C/\B^+_{\dr})[2d]\xrightarrow{{\rm Tr}_{\B^+_{\dr}}} F^s\B^+_{\dr}
$$
is a perfect duality, i.e., we have an  induced quasi-isomorphism in $\sd(\B^+_{\dr,\Box})$
\begin{align*}
F^{r^{\prime}} \R\Gamma_{\dr}(X_C/\B^+_{\dr})\stackrel{\sim}{\to} \R\uHom_{\B^+_{\dr,\Box}} (F^{r}\R\Gamma_{\dr,c}(X_C/\B^+_{\dr})[2d], F^s\B^+_{\dr}).
\end{align*}
\end{corollary}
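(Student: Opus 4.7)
The plan is to reduce Filtered $\B^+_{\dr}$-duality to a combination of Serre duality on the coherent side (Proposition~\ref{derhamduality}(1)) and the trivial self-duality of the $t$-adic filtration on $\B^+_{\dr}$. First, using an exhaustion of partially proper $X$ by Stein opens $U_n\Subset U_{n+1}$ and the fact that $\R\uHom$ sends colimits in the first variable to limits, I would reduce the statement to the case when $X$ is Stein; there both filtered $\B^+_{\dr}$-cohomologies admit the explicit $t$-adic descriptions of \eqref{kolobrzeg1}.

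Assume then $X$ is Stein. In degree $j$, $F^{r'}\R\Gamma_{\dr}(X_C/\B^+_{\dr})$ is $\Omega^j(X)\otimes^{\Box}_K t^{r'-j}\B^+_{\dr}$, and in degree $k-d$ (after the shift $[2d]$), $F^r\R\Gamma_{\dr,c}(X_C/\B^+_{\dr})[2d]$ is $H^d_c(X,\Omega^k)\otimes^{\Box}_K t^{r-k}\B^+_{\dr}$. I would build the pairing into $t^s\B^+_{\dr}$ in degree $0$ by coupling these terms with $k=d-j$, combining the exterior product $\Omega^j(X)\otimes^{\Box}_K H^d_c(X,\Omega^{d-j})\to H^d_c(X,\Omega^d)$ followed by ${\rm Tr}_{\rm coh}$, together with the multiplication $t^{r'-j}\B^+_{\dr}\otimes_{\B^+_{\dr}} t^{r-(d-j)}\B^+_{\dr}\simeq t^s\B^+_{\dr}$. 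Since ${\rm Tr}_{\B^+_{\dr}}$ is built from ${\rm Tr}_{\rm coh}$, this realizes the pairing of the statement.

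The heart of the argument is the perfectness of this pairing in each bidegree. Modulo checking that the de Rham differentials are adjoint with respect to the trace (an integration-by-parts identity built into the algebraic formulation of Serre duality), I would need to show
$$\Omega^j(X)\otimes^{\Box}_K t^{r'-j}\B^+_{\dr}\;\stackrel{\sim}{\to}\;\R\uHom_{\B^+_{\dr,\Box}}\bigl(H^d_c(X,\Omega^{d-j})\otimes^{\Box}_K t^{r-(d-j)}\B^+_{\dr},\,t^s\B^+_{\dr}\bigr).$$
By tensor-$\uHom$ adjunction, the right-hand side equals $\R\uHom_{K_{\Box}}(H^d_c(X,\Omega^{d-j}),t^{r'-j}\B^+_{\dr})$. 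I expect the main obstacle to be the base-change identity
$$\R\uHom_{K_{\Box}}\bigl(H^d_c(X,\Omega^{d-j}),K\bigr)\otimes^{\Box}_K t^{r'-j}\B^+_{\dr}\;\stackrel{\sim}{\to}\;\R\uHom_{K_{\Box}}\bigl(H^d_c(X,\Omega^{d-j}),t^{r'-j}\B^+_{\dr}\bigr),$$
which uses that $H^d_c(X,\Omega^{d-j})$ is of compact type over $K$ (see Section~\ref{kwak-kwak}), so that its strong $K$-dual is nuclear Fr\'echet, together with the Fr\'echet-flatness of $\B^+_{\dr}$ over $K$. Composed with the Serre duality isomorphism $\R\uHom_{K_{\Box}}(H^d_c(X,\Omega^{d-j}),K)\simeq \Omega^j(X)$ of Proposition~\ref{derhamduality}(1), this closes the loop and yields the required quasi-isomorphism of complexes.
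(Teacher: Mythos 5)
The paper does not actually prove Corollary~\ref{bdrduality}; it imports it verbatim from~\cite[Cor.~5.27]{AGN}, so there is no in-text argument to compare against. That said, your sketch is a correct self-contained derivation and it is in the spirit of what the paper does elsewhere: the bidegree-by-bidegree pairing using~\eqref{kolobrzeg1}, the tensor-$\uHom$ adjunction over~$\B^+_{\dr}$, and the base-change identity $W^*\otimes^{\Box}_KV\stackrel{\sim}{\to}\R\uHom_{K_\Box}(W,V)$ for $W$ of compact type and $V$ Fr\'echet (via \cite[Th.~3.40]{RR1}) are precisely the moves the paper makes in the proof of Lemma~\ref{kolobrzeg4a} and in the $\B'/t$ base change in the proof of Theorem~\ref{curve-duality}. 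Two small adjustments: (i) the reduction from partially proper to Stein should be via a countable Stein covering and the associated \v{C}ech hypercover (as in the final step of Theorem~\ref{VS-duality}) rather than a nested Stein exhaustion $U_n\Subset U_{n+1}$, which need not exist for a general partially proper~$X$; you still only need compatibility of the pairings with (co)limits and the fact that $\R\uHom$ turns the relevant colimit in the first slot into a limit. (ii) You should record that each term $W=H^d_c(X,\Omega^{d-j})$ is internally a compact colimit of Smith (internally projective) objects, so that $\R\uHom_{K_\Box}(W,-)$ has no higher derived terms against Fr\'echet targets and the termwise isomorphism really does give an isomorphism of Hom-complexes; together with the Leibniz compatibility you already flagged, this closes the argument.
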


    We will need a variant of  the above result. To state it, take $r,r^{\prime}\geq d, s=r+r^{\prime}-d$ and consider the  pairing in $\sd(\B^+_{\dr,\Box})$
   \begin{equation}\label{ias223}
(\R\Gamma_{\dr}(X_C/\B^+_{\dr})  /F^{r^{\prime}})\otimes^{\LL_{\Box}}_{\B^+_{\dr}} (F^r\R\Gamma_{\dr,c}(X_C/\B^+_{\dr})/t^s)[2d-1] \to  F^{s}\B^+_{\dr}
\end{equation}
defined as the composition
\begin{align*}
(\R\Gamma_{\dr}(X_C/\B^+_{\dr}) & /F^{r^{\prime}})\otimes^{\LL_{\Box}}_{\B^+_{\dr}} (F^r\R\Gamma_{\dr,c}(X_C/\B^+_{\dr})/t^s)[2d-1] \stackrel{\cup}{ \to} F^r\R\Gamma_{\dr,c}(X_C/\B^+_{\dr})/t^s[2d-1]\\
 & \to \R\Gamma_c(X_C,\Omega^d)\otimes_K^{\LL_{\Box}}(F^{r-d}\B^+_{\dr}/t^s)[-1]\stackrel{\partial}{\to }\R\Gamma_c(X_C,\Omega^d)\otimes_K^{\LL_{\Box}}F^s\B^+_{\dr}
\verylomapr{{\rm Tr}_{\rm coh}\otimes{\rm Id}} F^{s}\B^+_{\dr}
\end{align*}
Here the third morphism is the boundary map induced by the exact sequence
$$
0\to F^{s}\B^+_{\dr}\lomapr{\can}F^{r-d}\B^+_{\dr}\to F^{r-d}\B^+_{\dr}/t^s\to 0
$$

 \begin{corollary} \label{bdrduality1} 
  The  pairing \eqref{ias223} is a perfect duality, i.e., we have an  induced quasi-isomorphism in $\sd(\B^+_{\dr,\Box})$
\begin{align}\label{niedziela1}
\gamma:\quad \R\Gamma_{\dr}(X_C/\B^+_{\dr})/F^{r^{\prime}}\stackrel{\sim}{\to} \R\uHom_{\B^+_{\dr,\Box}} (F^{r}\R\Gamma_{\dr,c}(X_C/\B^+_{\dr})/t^s[2d-1],F^s\B^+_{\dr}).
\end{align}
\end{corollary}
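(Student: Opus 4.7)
The plan is to deduce the duality \eqref{niedziela1} from Corollary \ref{bdrduality} by a standard cofiber manipulation along two natural fiber sequences. The first step will be to establish the auxiliary ``unfiltered'' $\B^+_{\dr}$-duality
$$\R\Gamma_{\dr}(X_C/\B^+_{\dr}) \stackrel{\sim}{\to} \R\uHom_{\B^+_{\dr,\Box}}(F^r\R\Gamma_{\dr,c}(X_C/\B^+_{\dr})[2d], F^{r-d}\B^+_{\dr}),$$
which is essentially the $r'=0$ boundary case of Corollary \ref{bdrduality} (after absorbing the $t^{r-d}$-twist via the identification $F^r\R\Gamma_{\dr,c}=t^{r-d}F^d\R\Gamma_{\dr,c}$, so that both sides become $\R\uHom_{\B^+_{\dr,\Box}}(F^d\R\Gamma_{\dr,c}[2d], \B^+_{\dr})$). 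This boundary case is treated in the same way as Corollary \ref{bdrduality} itself: one reduces via (co-)\v{C}ech descent to the Stein case, in which the explicit presentations in \eqref{kolobrzeg1a} let one read off the duality from Proposition \ref{derhamduality}(1)-(2) after base change along $K \to \B^+_{\dr}$.

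Next, I would apply the functor $\R\uHom_{\B^+_{\dr,\Box}}(F^r\R\Gamma_{\dr,c}(X_C/\B^+_{\dr})[2d],-)$ to the coefficient fiber sequence
$$F^s\B^+_{\dr} \to F^{r-d}\B^+_{\dr} \to F^{r-d}\B^+_{\dr}/t^s,$$
which is precisely the short exact sequence used to define the boundary map $\partial$ in the construction of the pairing \eqref{ias223}. By Corollary \ref{bdrduality} and the auxiliary duality above, the first two terms of the resulting fiber sequence are identified with $F^{r'}\R\Gamma_{\dr}(X_C/\B^+_{\dr})$ and $\R\Gamma_{\dr}(X_C/\B^+_{\dr})$, respectively, in a way compatible with the inclusion $F^{r'}\R\Gamma_{\dr} \hookrightarrow \R\Gamma_{\dr}$. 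Taking cofibers yields a natural quasi-isomorphism
$$\R\Gamma_{\dr}(X_C/\B^+_{\dr})/F^{r'} \stackrel{\sim}{\to} \R\uHom_{\B^+_{\dr,\Box}}(F^r\R\Gamma_{\dr,c}(X_C/\B^+_{\dr})[2d], F^{r-d}\B^+_{\dr}/t^s),$$
and a final fiber-cofiber rotation (using the source fiber sequence $F^{r+s}\R\Gamma_{\dr,c} \hookrightarrow F^r\R\Gamma_{\dr,c} \to F^r\R\Gamma_{\dr,c}/t^s$ together with the identification $F^{r+s}=t^sF^r$) rewrites the right-hand side as $\R\uHom_{\B^+_{\dr,\Box}}((F^r\R\Gamma_{\dr,c}/t^s)[2d-1], F^s\B^+_{\dr})$, yielding \eqref{niedziela1}.

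The main obstacle will be to verify that this formal identification coincides with the pairing actually described in \eqref{ias223}, i.e., that the composite map at the level of cofibers matches the explicit composition of cup product, projection onto the top Hodge piece, boundary $\partial$, and ${\rm Tr}_{\rm coh}\otimes{\rm Id}$. This is a diagram chase relying on compatibility of the cup product with the coefficient fiber sequence and with the trace map; it is cleanest in the Stein case, where the explicit presentations \eqref{kolobrzeg1a} reduce everything to successive finite-dimensional Serre dualities as in Proposition \ref{derhamduality}(1), and the statement then globalizes to partially proper $X$ by the same \v{C}ech descent used throughout.
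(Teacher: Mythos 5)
Your plan to deduce the corollary by comparing two fiber sequences is in the right spirit — the paper's own proof does exactly that — but you have set up the \emph{wrong} pair of sequences, and the crucial auxiliary step fails. Specifically, the claimed ``unfiltered'' duality
$$\R\Gamma_{\dr}(X_C/\B^+_{\dr}) \stackrel{\sim}{\to} \R\uHom_{\B^+_{\dr,\Box}}\bigl(F^r\R\Gamma_{\dr,c}(X_C/\B^+_{\dr})[2d], F^{r-d}\B^+_{\dr}\bigr)$$
is false. Using $F^r\R\Gamma_{\dr,c}=t^{r-d}F^d\R\Gamma_{\dr,c}$ together with Corollary \ref{bdrduality} at $r=r'=s=d$, the right-hand side is $t^{-d}F^d\R\Gamma_{\dr}(X_C/\B^+_{\dr})$, which for Stein $X$ is explicitly
$\so(X)\otimes\B^+_{\dr}\to\Omega^1(X)\otimes t^{-1}\B^+_{\dr}\to\cdots\to\Omega^d(X)\otimes t^{-d}\B^+_{\dr}$.
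This strictly contains $\R\Gamma_{\dr}(X_C/\B^+_{\dr})$ with a nonzero torsion quotient already for $d=1$. The root cause is that the Hodge filtration is a genuine filtration in degrees $\leq d$ and only becomes a $t$-twist for degrees $\geq d$; there is no ``$r'=0$ boundary case'' of Corollary \ref{bdrduality}, since the assumption $r'\geq d$ is exactly where the identity $F^{r'+j}=t^jF^{r'}$ kicks in. If you actually carried out the Stein computation you outline, you would see that the two sides do not match. For the same reason the ``final fiber-cofiber rotation'' also breaks: dualizing $t^sF^r\R\Gamma_{\dr,c}\to F^r\R\Gamma_{\dr,c}\to F^r\R\Gamma_{\dr,c}/t^s$ against $F^s\B^+_{\dr}$ produces the coefficient $t^{-s}F^s\B^+_{\dr}=\B^+_{\dr}$, not $F^{r-d}\B^+_{\dr}$, so $\R\uHom(F^r\R\Gamma_{\dr,c}[2d],F^{r-d}\B^+_{\dr}/t^s)$ and $\R\uHom(F^r\R\Gamma_{\dr,c}/t^s[2d-1],F^s\B^+_{\dr})$ genuinely differ when $r>d$.

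The fix — and the route the paper actually takes — is to keep the dualizing coefficient $F^s\B^+_{\dr}$ fixed and to vary the \emph{source} instead, using the triangle $t^s\R\Gamma_{\dr,c}\to F^r\R\Gamma_{\dr,c}\to F^r\R\Gamma_{\dr,c}/t^s$ on the compactly supported side. The middle horizontal comparison is then the \emph{fully unfiltered} $\B^+_{\dr}$-linearization of Proposition \ref{derhamduality}(2a), twisted by $t^s$: $\R\Gamma_{\dr}(X_C/\B^+_{\dr})\simeq\R\uHom_{\B^+_{\dr,\Box}}(t^s\R\Gamma_{\dr,c}(X_C/\B^+_{\dr})[2d],F^s\B^+_{\dr})$. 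Since $t^s$ just multiplies every term by the same power of $t$, this really is a scalar extension of de Rham duality and requires no illicit extension of the filtered statement. Substituting this middle row into your diagram, with the top row being Corollary \ref{bdrduality} and the vertical maps the evident inclusions, yields $\gamma$ as the induced map on cofibers, and the two-out-of-three argument closes the proof.
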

\begin{proof} Consider the following map of distinguished triangles
$$
\xymatrix@R=4mm@C=6mm{
F^{r^{\prime}}\R\Gamma_{\dr}(X_C/\B^+_{\dr})\ar[d] \ar[r]^-{\sim} & \R\uHom_{\B^+_{\dr,\Box}} (F^{r}\R\Gamma_{\dr,c}(X_C/\B^+_{\dr})[2d],F^s\B^+_{\dr})\ar[d]\\
\R\Gamma_{\dr}(X_C/\B^+_{\dr})\ar[d] \ar[r]^-{\sim} & \R\uHom_{\B^+_{\dr,\Box}} (t^s\R\Gamma_{\dr,c}(X_C/\B^+_{\dr})[2d],F^s\B^+_{\dr})\ar[d]\\
\R\Gamma_{\dr}(X_C/\B^+_{\dr})/F^{r^{\prime}}\ar[r]^-{\gamma} & \R\uHom_{\B^+_{\dr,\Box}} (F^{r}\R\Gamma_{\dr,c}(X_C/\B^+_{\dr})/t^s[2d-1],F^s\B^+_{\dr})
}
$$
where the middle arrow is the de Rham duality map ($\B^+_{\dr}$-linearized) and the top arrow is the $\B^+_{\dr}$-duality map from Corollary \ref{bdrduality}. Both are quasi-isomorphisms (see Proposition \ref{derhamduality}). Hence so is the bottom duality map, as wanted.
\end{proof}

   The duality map \eqref{niedziela1} can be lifted to the Fargues-Fontaine curve:  the pairing \eqref{ias223} induces a pairing of ${\B_{S^{\flat}}^{[u,v]}}$-modules 
 $$
 \R\Gamma^{{[u,v]}}_{\dr}(X_S,r)\otimes^{\LL_{\Box}}_{\B_{S^{\flat}}^{[u,v]}}(F^{r^{\prime}}\R\Gamma^{{[u,v]}}_{\dr,c}(X_S)/t^s)\to \B_{S^{\flat}}^{[u,v]}\{s,s\}[-2d+1], 
 $$
 which, in turn, induces a pairing of nuclear $\phi$-modules over $\B^{\FF}_{S^{\flat}}$
 $$
  \R\Gamma^{\B}_{\dr}(X_S,r)\otimes^{\LL}_{\B^{\FF}_{S^{\flat}}}(F^{r^{\prime}}\R\Gamma^{\B}_{\dr,c}(X_S)/t^s)\to\B_{S^{\flat}}\{s,s\}[-2d+1],
 $$
 where we set $F^{r^{\prime}}\R\Gamma^{\B}_{\dr,c}(X_S)/t^s:=(F^{r^{\prime}}\R\Gamma^{{[u,v]}}_{\dr,c}(X_S)/t^s,0)$. 
This descends to a pairing on $X_{\FF,S^{\flat}}$:
 \begin{equation}\label{niedziela5}
 \se_{\dr}(X_S,r)\otimes^{\LL}_{\so}i_{\infty,*}(F^{r^{\prime}}\R\Gamma_{\dr,c}(X_S/\B^+_{\dr})/t^s) \to \so(s,s)[-2d+1],
 \end{equation}
  where we set $\R\Gamma_{\dr,c}(X_S/\B^+_{\dr}):=\R\Gamma_{\dr,c}(X_K)\otimes^{\LL_{\Box}}_K\B^+_{\dr}(S)$.
The pairing \eqref{niedziela5}  induces a duality map in ${\rm QCoh}(X_{\FF,S^{\flat}})$: 
 \begin{equation}\label{niedziela3}
 \gamma_{X_S}:\quad  \se_{\dr}(X_S,r){\to} {\mathbb D}(i_{\infty,*}(F^{r^{\prime}}\R\Gamma_{\dr,c}(X_S/\B^+_{\dr})/t^s)[2d-1],\so(s,s)),
 \end{equation}
 where we \index{D@\bD}set 
$${\mathbb D}(-,-):=\R\Hhom_{{\rm QCoh}(X_{\FF,S^{\flat}})}(-,-).$$ 
 
\begin{lemma}\label{kolobrzeg5a}The duality map \eqref{niedziela3} is a quasi-isomorphism. 
 \end{lemma}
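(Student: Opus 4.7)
The plan is to reduce the duality on the Fargues-Fontaine curve to the $\B^+_{\dr}$-duality of Corollary \ref{bdrduality1} by exploiting that both sides of \eqref{niedziela3} are nuclear sheaves supported on the divisor $x_{\infty}$.

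First, by Proposition \ref{leje1}, the functor $\se_{\FF}$ is an equivalence on nuclear $\phi$-complexes, so it suffices to check the assertion at the level of nuclear $\phi$-modules over $\B^{\FF}_{S^{\flat}}$. Since both $\R\Gamma_{\dr}^{[u,v]}(X_S,r)$ and $F^{r^{\prime}}\R\Gamma_{\dr,c}^{[u,v]}(X_S)/t^s$ are killed by a power of $t$, they vanish on $[u,v/p]$ (where $t$ is a unit); in particular their $[u,v/p]$-components are zero, and Frobenius compatibility is automatic. Thus it is enough to show that the underlying map of solid $\B^{[u,v]}_{S^{\flat}}$-modules is a quasi-isomorphism.

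Second, I will identify $\R\uHom_{\B^{[u,v]}}(-,\B^{[u,v]}\{s,s\})$ on $\B^+_{\dr}/t^s$-modules with a shift of the $\B^+_{\dr}$-linear dual. Using the resolution $0\to \B^{[u,v]}\stackrel{t^s}{\to} \B^{[u,v]}\to \B^+_{\dr}/t^s\to 0$ (recall $\B^{[u,v]}/t^s=\B^+_{\dr}/t^s$), injectivity of $t$ on the line bundle $\B^{[u,v]}\{s,s\}$, and the identification of the cokernel $\B^{[u,v]}\{s,s\}/t^s\simeq F^s\B^+_{\dr}/t^{2s}$ (supported at $y_{\infty}$), combined with the change-of-rings adjunction, yields
\begin{equation*}
\R\uHom_{\B^{[u,v]}}(N,\B^{[u,v]}\{s,s\})\simeq \R\uHom_{\B^+_{\dr}/t^s}(N,F^s\B^+_{\dr}/t^{2s})[-1]
\end{equation*}
for any $t^s$-torsion solid module $N$. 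The same adjunction applied to $\B^+_{\dr}\twoheadrightarrow \B^+_{\dr}/t^s$ gives
\begin{equation*}
\R\uHom_{\B^+_{\dr}}(N,F^s\B^+_{\dr})\simeq \R\uHom_{\B^+_{\dr}/t^s}(N,F^s\B^+_{\dr}/t^{2s})[-1].
\end{equation*}
These identifications hold in the solid setting because all the modules in play are nuclear and $\R\uHom$ behaves well under base change (cf.\ Lemma \ref{fun1-kol} and \cite[Prop.\,5.38]{And21}).

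Third, applying both identifications to $N=F^{r^{\prime}}\R\Gamma_{\dr,c}^{[u,v]}(X_S)/t^s$ and accounting for the shift $[2d-1]$ appearing in the target of $\gamma_{X_S}$, the right-hand side of \eqref{niedziela3} becomes canonically quasi-isomorphic to $\R\uHom_{\B^+_{\dr}}(F^{r^{\prime}}\R\Gamma_{\dr,c}(X_S/\B^+_{\dr})/t^s[2d-1],F^s\B^+_{\dr})$. Tracing through the construction of $\gamma_{X_S}$ -- i.e., the cup product, the coherent Serre trace, and the boundary morphism associated to $0\to F^s\B^+_{\dr}\to F^{r-d}\B^+_{\dr}\to F^{r-d}\B^+_{\dr}/t^s\to 0$ -- one sees that, under the above identifications, $\gamma_{X_S}$ is transported precisely to the $\B^+_{\dr}$-duality map $\gamma$ of Corollary \ref{bdrduality1}. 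Since $\gamma$ is a quasi-isomorphism, so is $\gamma_{X_S}$.

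The main obstacle I anticipate is not conceptual but bookkeeping: one must carefully track the Tate twists and homological shifts, and verify that the algebraic adjunctions used in the reduction carry through in the solid/nuclear framework, as well as check the compatibility of the cup-product/boundary construction of $\gamma_{X_S}$ with the pairing defining $\gamma$ in Corollary \ref{bdrduality1}. Both rely on standard functorial properties of the Serre trace and connecting morphisms, but the identification of twists via the exact sequence \eqref{alter3} is where the heart of the computation lies.
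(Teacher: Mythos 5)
Your proposal is correct and follows essentially the same path as the paper: pass to $\phi$-complexes, observe that the $[u,v/p]$-components vanish so one reduces to a statement over $\B^{\prime}=\B_{S^{\flat}}^{[u,v]}$, and then identify the resulting pairing with the filtered $\B^+_{\dr}$-duality of Corollary~\ref{bdrduality1}. The paper is in fact more terse than you: after passing to the $[u,v]$-chart it simply says ``But this is Corollary~\ref{bdrduality1} (strictly speaking, its $S$-version but it holds by the same arguments),'' leaving the change-of-rings from $\R\uHom_{\B^{\prime}_{\Box}}$ to $\R\uHom_{\B^+_{\dr,\Box}}$ implicit. Your Koszul/base-change identification
\[
\R\uHom_{\B^{\prime}}(N,\B^{\prime})\simeq \R\uHom_{\B^{\prime}/t^s}(N,\B^{\prime}/t^s)[-1]\simeq\R\uHom_{\B^+_{\dr}}(N,\B^+_{\dr})
\]
for $t^s$-torsion $N$ (using $\B^{\prime}/t^s=\B^+_{\dr}/t^s$) is exactly the step being elided, and writing it out is a genuine improvement in rigor; the only thing to watch, as you note, is the Tate-twist bookkeeping in the cokernel identifications, which does go through.
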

 \begin{proof}We need to show that the duality map
$$
\gamma^{\FF}_{X_S}: \R\Gamma^{\B}_{\dr}(X_S,r){\to} \R\uHom_{\B^{\FF}_{S^{\flat}}}(F^{r^{\prime}}\R\Gamma^{\B}_{\dr,c}(X_S)/t^s[2d-1],\B_{S^{\flat}}\{s,s\})
$$
is a quasi-isomorphism in $\sd(\B^{\FF}_{S^{\flat}})$. Or, passing to solid $\B^{\prime}:=\B_{S^{\flat}}^{[u,v]}$-modules, that the duality map
$$
\gamma_{X_S}:  \R\Gamma^{{[u,v]}}_{\dr}(X_S,r){\to} \R\uHom_{\B^{\prime}_{\Box}}(F^{r^{\prime}}\R\Gamma^{{[u,v]}}_{\dr,c}(X_S)/t^s[2d-1],\B^{\prime})
$$
is a quasi-isomorphism in $\sd(\B^{\prime}_{\Box})$. But this is Corollary \ref{bdrduality1} (strictly speaking, its $S$-version but it holds by the same arguments).
\end{proof}
\subsubsection{Hyodo-Kato duality} This is based on \cite[Sec.\,5.4]{AGN}.  
There exists a natural \index{TRACE@\TRACE}trace map in $\sd_{\phi,N,\sg_K}(\breve{C}_{\Box})$:
$$
{\rm Tr}_X: \R\Gamma_{\hk,c}(X_C)\to \breve{C}\{-d\}[-2d].
$$
The pairing  in $\sd_{\phi,N,\sg_K}(\breve{C}_{\Box})$ ($s=r+r^{\prime}-d$)
\begin{equation}\label{ias22}
\R\Gamma_{\hk}(X_C)\{r\}\otimes^{\LL_{\Box}}_{\breve{C}}\R\Gamma_{\hk,c}(X_C)\{r^{\prime}\}\to \R\Gamma_{\hk,c}(X_C)\{r+r^{\prime}\}\lomapr{{\rm Tr}_X}\breve{C}\{s\}[-2d]
\end{equation}
is perfect, i.e., 
it induces a quasi-isomorphism  in $\sd_{\phi,N,\sg_K}(\breve{C}_{\Box})$
\begin{equation}\label{HKduality}
\R\Gamma_{\hk}(X_C)\{r\}\simeq \R\Hhom_{\sd_{\phi,N,\sg_K}(\breve{C}_{\Box})}(\R\Gamma_{\hk,c}(X_C)\{r^{\prime}\},\breve{C}\{s\}[-2d]),
\end{equation}
where the internal $\Hom$ is just $\R\Hhom_{\breve{C}_{\Box}}(\R\Gamma_{\hk,c}(X_C),\breve{C}[-2d])$ -- the internal $\Hom$ in $\sd(\breve{C}_{\Box})$ -- equipped with 
$(\phi,N,\sg_K)$-actions via  $\R\Gamma_{\hk,c}(X_C)\{r^{\prime}-s\}$. 

 The above duality can be lifted to the Fargues-Fontaine curve: the pairing \eqref{ias22} induces a pairing of ${\B_{S^{\flat}}^{[u,v]}}$-modules
 $$
 \R\Gamma^{{[u,v]}}_{\hk}(X_S,r)\otimes^{\LL_{\Box}}_{\B_{S^{\flat}}^{[u,v]}}\R\Gamma^{{[u,v]}}_{\hk,c}(X_S,r^{\prime})\to \breve{C}\{s\}\otimes_{\breve{C}}^{\LL_{\Box}}\B_{S^{\flat}}^{[u,v]}[-2d], 
 $$
 which, in turn, induces a pairing of nuclear $\phi$-modules over $\B^{\FF}_{S^{\flat}}$
 $$
  \R\Gamma^{\B}_{\hk}(X_S,r)\otimes^{\LL}_{\B^{\FF}_{S^{\flat}}}\R\Gamma^{\B}_{\hk,c}(X_S,r^{\prime})\to\B_{S^{\flat}}\{s\}[-2d].
 $$
This descends to a pairing on $X_{\FF,S^{\flat}}$:
 $$
 \se_{\hk}(X_S,r)\otimes^{\LL}_{\so}\se_{\hk}(X_S,r^{\prime})\to \so(s)[-2d],
 $$
 which induces a duality map in ${\rm QCoh}(X_{\FF,S^{\flat}})$: 
 \begin{equation}\label{niedziela11}
 \gamma_{X_S}:\quad  \se_{\hk}(X_S,r){\to} {\mathbb D}(\se_{\hk,c}(X_S,r^{\prime})[2d],\so(s)).
 \end{equation}

 \begin{lemma}\label{kolobrzeg4a}
The map $ \gamma_{X_S}$ above is a quasi-isomorphism in ${\rm QCoh}(X_{\FF,S^{\flat}})$. 
\end{lemma}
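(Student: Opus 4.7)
The plan is to mirror the structure of the proof of Lemma \ref{kolobrzeg5a}, reducing the claim on $X_{\FF,S^{\flat}}$ successively to a statement on $\phi$-modules over $\B^{\FF}_{S^{\flat}}$, then to a statement on solid $\B^{[u,v]}_{S^{\flat}}$-modules, and finally to the abstract Hyodo--Kato duality \eqref{HKduality} over $\breve{C}$ together with a base change argument. More precisely, by Proposition \ref{leje1} (applied in the nuclear variant) it suffices to show that
$$\gamma^{\FF}_{X_S}\colon \R\Gamma^{\B}_{\hk}(X_S,r)\to \R\uHom_{\B^{\FF}_{S^{\flat}}}(\R\Gamma^{\B}_{\hk,c}(X_S,r^{\prime})[2d],\B_{S^{\flat}}\{s\})$$
is a quasi-isomorphism in ${\sd}(\B^{\FF}_{S^{\flat}})^{\phi}$. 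Using the description of the internal $\R\uHom$ over $\B^{\FF}_{S^{\flat}}$ from Section \ref{monoidal} together with Remark~\ref{zet1}(1) (nuclearity is preserved here by our standing finiteness assumptions), this is reduced to verifying that the $[u,v]$-component
$$\gamma^{[u,v]}_{X_S}\colon \R\Gamma^{[u,v]}_{\hk}(X_S,r)\to \R\uHom_{\B^{[u,v]}_{S^{\flat}}}(\R\Gamma^{[u,v]}_{\hk,c}(X_S,r^{\prime})[2d],\B^{[u,v]}_{S^{\flat}}\{s\})$$
is a quasi-isomorphism in $\sd(\B^{[u,v]}_{S^{\flat},\Box})$.

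Now I would unfold the definitions: by construction
$$\R\Gamma^{[u,v]}_{\hk,?}(X_S,r)=[\R\Gamma_{\hk,?}(X_C)\{r\}\otimes^{\LL_{\Box}}_{\breve{C}}\B^{[u,v]}_{S^{\flat},\log}]^{N=0}.$$
Following the template used in the proof of Lemma \ref{grey1}, I would first reduce to the case in which the Hyodo--Kato cohomology groups $H^i_{\hk,?}(X_C)$ are of finite rank over $\breve{C}$: both $\R\Gamma_{\hk}(X_C)$ and $\R\Gamma_{\hk,c}(X_C)$ are a countable limit, respectively colimit, of finite rank $(\phi,N)$-isocrystals over $\breve{C}$, and since solid tensor products, $N=0$-fixed parts, and internal $\R\uHom$ all commute appropriately with these (co)limits in the nuclear setting (using also that $\B^{[u,v]}_{S^{\flat},\log}$ is free of rank one over $\B^{[u,v]}_{S^{\flat}}$ after choosing the variable $U=\log([p^{\flat}]/p)$), the general case reduces to the finite rank case.

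In the finite rank case, the Hyodo--Kato duality \eqref{HKduality} furnishes a quasi-isomorphism in $\sd_{\phi,N,\sg_K}(\breve{C}_{\Box})$
$$\R\Gamma_{\hk}(X_C)\{r\}\xrightarrow{\sim}\R\uHom_{\breve{C}_{\Box}}(\R\Gamma_{\hk,c}(X_C)\{r^{\prime}\},\breve{C}\{s\}[-2d]).$$
I would then base-change along $\breve{C}\to\B^{[u,v]}_{S^{\flat},\log}$: by \cite[Prop.\,5.38]{And21} the map $(-)\otimes^{\LL_{\Box}}_{\breve{C}}\B^{[u,v]}_{S^{\flat},\log}$ commutes with $\R\uHom$ on finite rank source, yielding a quasi-isomorphism of $(\phi,N,\sg_K)$-equivariant complexes of $\B^{[u,v]}_{S^{\flat},\log}$-modules. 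Taking $N=0$-fixed parts (which, since $\B^{[u,v]}_{S^{\flat},\log}=\B^{[u,v]}_{S^{\flat}}[U]$ with $N=-d/dU$, is computed by the mapping fiber $[\,\cdot\,\xrightarrow{N}\,\cdot\,]$ and hence commutes with $\R\uHom$ into $N=0$-objects on finite rank side) then produces $\gamma^{[u,v]}_{X_S}$ as a quasi-isomorphism.

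The main obstacle I anticipate is the interaction between the $N=0$ functor and $\R\uHom$: while at the level of individual finite rank $(\phi,N)$-modules the identification $[M\otimes_{\breve{C}}\B^{[u,v]}_{S^{\flat},\log}]^{N=0}\simeq M^{[u,v]}$ is straightforward via the explicit splitting $U\mapsto \log([p^{\flat}]/p)$, one has to check that for a duality pairing valued in $\breve{C}\{s\}$ (with trivial $N$), the induced pairing on the $[u,v]$-components remains perfect after passing to $N=0$. This boils down to verifying that the composition $M^{[u,v]}\otimes_{\B^{[u,v]}_{S^{\flat}}} (M^{\vee})^{[u,v]}\to\B^{[u,v]}_{S^{\flat}}\{s\}$ is the duality pairing for the finite rank $\B^{[u,v]}_{S^{\flat}}$-module $M^{[u,v]}$, which is a formal consequence of the $N$-equivariance of the original pairing and the fact that $\B^{[u,v]}_{S^{\flat}}\{s\}\hookrightarrow \B^{[u,v]}_{S^{\flat},\log}\{s\}$ is precisely the $N=0$-part.
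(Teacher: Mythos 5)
Your overall reduction chain (from ${\rm QCoh}(X_{\FF,S^{\flat}})$ to $\phi$-modules over $\B^{\FF}_{S^{\flat}}$ to solid $\B^{[u,v]}_{S^{\flat}}$-modules) matches the paper's, and your treatment of the $N=0$-part via the explicit splitting $U\mapsto\log([p^{\flat}]/p)$ is the same $\exp(NU)$ trick the paper uses. But the middle step diverges, and that is where you have a real gap.

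You propose to reduce to the case of finite-rank Hyodo--Kato cohomology by writing $\R\Gamma_{\hk}(X_C)$ as a countable limit (and $\R\Gamma_{\hk,c}(X_C)$ as a countable colimit) of finite-rank $(\phi,N)$-isocrystals, and then asserting that solid tensor products, $N=0$-parts, and internal $\R\uHom$ ``commute appropriately'' with these (co)limits. The colimit side is fine (tensors commute with colimits; $\R\uHom$ out of a colimit is a limit of $\R\uHom$'s), but on the usual-cohomology side you need $(\R\lim_n M_n)\otimes^{\LL_\Box}_{\breve{C}}\B^{[u,v]}_{S^{\flat},\log}\simeq \R\lim_n(M_n\otimes^{\LL_\Box}_{\breve{C}}\B^{[u,v]}_{S^{\flat},\log})$, and solid tensor products do \emph{not} commute with derived limits in general — the paper goes to some trouble elsewhere (Lemma~\ref{fun1-kol}) to prove exactly such a commutation in a specific case. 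You cannot just invoke nuclearity here. This is the step that is missing.

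The paper's actual proof avoids this reduction entirely. After passing to solid $\B^{\prime}=\B^{[u,v]}_{S^{\flat}}$-modules, it passes to cohomology groups, observing first that $H^i_{\hk,c}(X_C)$ is a \emph{direct sum} of copies of $\breve{C}$ (not merely a colimit of finite-rank objects); this makes the complex $\R\Gamma^{[u,v]}_{\hk,c}(X_S,r')$ degree-wise ``projective'' enough that $H^j(\R\uHom(-,\B'))$ is computed by $\uHom$ of the cohomology groups. It then untwists $N$ by $\exp(NU)$ (as you do), identifies $\gamma^j_{X_S}$ with the Hyodo--Kato pairing
$$H^j_{\hk}(X_C)\otimes^{\Box}_{\breve{C}}\B' \to \uHom_{\B'_\Box}(H^{2d-j}_{\hk,c}(X_C)\otimes^{\Box}_{\breve{C}}\B',\B'),$$
and finally reduces to the perfectness of \eqref{HKduality} by showing that
$$\uHom_{\breve{C}_\Box}(H^{2d-j}_{\hk,c}(X_C),\breve{C})\otimes^{\Box}_{\breve{C}}\B'\to \uHom_{\breve{C}_\Box}(H^{2d-j}_{\hk,c}(X_C),\B')$$
is an isomorphism — citing \cite[Th.\,3.40]{RR1}, which uses that $\B'$ is Banach over $\breve{C}$. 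That citation is precisely the functional-analytic input that plays the role of the ``tensor commutes with limits'' commutation you are missing. So either adopt the paper's degree-by-degree strategy, or supply the commutation of $\otimes^{\LL_\Box}_{\breve{C}}\B'_{\log}$ with the specific countable limits presenting $\R\Gamma_{\hk}(X_C)$ — it is not free.
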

\begin{proof} Since $\B_{S^{\flat}}^{[u,v]}$ is $\B^{[u,v]}_{S^{\flat},{\rm an}}$-complete (see \cite[Lemma 3.24]{And21}), by Remark \ref{zet1}, 
we may pass from $\B^{[u,v],+}_{S^{\flat}}$ to $\Z$, i.e.,  to $\B^{\FF}_{S^{\flat}}$-modules.  Hence we need to show that the duality map
$$
\gamma^{\FF}_{X_S}: \R\Gamma^{\B}_{\hk}(X_S,r){\to} \R\uHom_{\B^{\FF}_{S^{\flat}}}(\R\Gamma^{\B}_{\hk,c}(X_S,r^{\prime})[2d],\B_{S^{\flat}}\{s\})
$$
is a quasi-isomorphism in $\sd(\B^{\FF}_{S^{\flat}})$. Or, passing to solid 
$\B^{\prime}:=\B_{S^{\flat}}^{[u,v]}$-modules, that the duality map
\begin{equation}\label{ias23}
\gamma_{X_S}:  \R\Gamma^{{[u,v]}}_{\hk}(X_S,r){\to} \R\uHom_{\B^{\prime}_{\Box}}(\R\Gamma^{{[u,v]}}_{\hk,c}(X_S,r^{\prime})[2d],\B^{\prime})
\end{equation}
is a quasi-isomorphism in $\sd(\B^{\prime}_{\Box})$. We claim that, for that,
 it suffices to check that, for $j\in\N$, the duality map on cohomology groups level
\begin{equation}\label{ias231}
\gamma^j_{X_S}: H^{{[u,v]},j}_{\hk}(X_S,r){\to} \uHom_{\B^{\prime}_{\Box}}(H^{{[u,v]},2d-j}_{\hk,c}(X_S,r^{\prime}),\B^{\prime})
\end{equation}
is an isomorphism in $\B^{\prime}_{\Box}$. 
Indeed, passing to cohomology in \eqref{ias23}, we need to check that the duality map
$$
\gamma^j_{X_S}: H^{{[u,v]},j}_{\hk}(X_S,r){\to} H^j(\R\uHom_{\B^{\prime}_{\Box}}(\R\Gamma^{{[u,v]}}_{\hk,c}(X_S,r^{\prime})[2d],\B^{\prime}))
$$
is an isomorphism in $\B^{\prime}_{\Box}$. But  $H^i_{\hk,c}(X_C)$ is a direct sum of copies of $\breve{C}$  hence we have 
$$
H^j(\R\uHom_{\B^{\prime}_{\Box}}(\R\Gamma^{{[u,v]}}_{\hk,c}(X_S,r^{\prime})[2d],\B^{\prime}))\simeq \uHom_{\B^{\prime}_{\Box}}(H^{{[u,v]},2d-j}_{\hk,c}(X_S,r^{\prime}),\B^{\prime}),
$$
as wanted.

   To prove \eqref{ias231}, we observe that, 
    for $i\in\Z$, we have the natural isomorphisms\footnote{We can ignore the Galois action here.}
\begin{equation}\label{evening1}
H^{{[u,v]},j}_{\hk,?}(X_S,i)  \simeq (H^j_{\hk,?}(X_C)\{i\}\otimes^{{\Box}}_{\breve{C}}\B^{\prime}_{\log})^{N=0}\stackrel{\sim}{\leftarrow} H^j_{\hk,?}(X_C)\{i\}\otimes^{{\Box}}_{\breve{C}}\B^{\prime}.
\end{equation}
Here the second quasi-isomorphism is defined by the map $\exp(NU)$ (this makes sense because the monodromy operator on the Hyodo-Kato cohomology 
$ H^j_{\hk,?}(X_C)$ is  nilpotent). For the first quasi-isomorphism 
$$
H^{{[u,v]},j}_{\hk,?}(X_S,i) =H^j([\R\Gamma_{\hk,?}(X_C)\{i\}\otimes^{\LL_{\Box}}_{\breve{C}}\B^{\prime}_{\log}]^{N=0}) 
\simeq (H^j_{\hk,?}(X_C)\{i\}\otimes^{{\Box}}_{\breve{C}}\B^{\prime}_{\log})^{N=0}
$$
we used the fact that 
$$
H^j(\R\Gamma^{{[u,v]}}_{\hk,?}(X_C)\{i\}\otimes^{\LL_{\Box}}_{\breve{C}}\B^{\prime}_{\log}) \simeq H^{{[u,v]},j}_{\hk,?}(X_C)\{i\}\otimes^{{\Box}}_{\breve{C}}\B^{\prime}_{\log},
$$
that $N$ is nilpotent on $H^{{[u,v]},j}_{\hk,?}(X_C)$ (so we can do devissage by the kernels of the action of $N$), and  that $\B^{\prime}\stackrel{\sim}{\to}[\B^{\prime}_{\log}]^{N=0}$.

  It is easy to check that the maps in \eqref{evening1} are compatible with products. Hence we can write the duality map \eqref{ias231} as the Hyodo-Kato pairing 
$$
\gamma^j_{X_S}: H^j_{\hk}(X_C)\otimes^{{\Box}}_{\breve{C}}\B^{\prime}{\to} \uHom_{\B^{\prime}_{\Box}}( H^{2d-j}_{\hk,c}(X_C)\otimes^{{\Box}}_{\breve{C}}\B^{\prime},\B^{\prime}).
$$
To show that it is an isomorphism in $\B^{\prime}_{\Box}$ it suffices thus to evoke the Hyodo-Kato duality \eqref{HKduality} and to show that  the natural map
\begin{align*}
\uHom_{\breve{C}_{\Box}}( H^{2d-j}_{\hk,c}(X_C),\breve{C})\otimes^{{\Box}}_{\breve{C}}\B^{\prime} & {\to }\uHom_{\breve{C}_{\Box}}( H^{2d-j}_{\hk,c}(X_C), \B^{\prime})
\end{align*}
is an isomorphism  in $\B_{\Box}^{\prime}$. 
But this is an isomorphism by \cite[Th.\,3.40]{RR1}
since $\B^{\prime} $ is a Banach space over $\breve{C}$. 
\end{proof}

\subsection{Syntomic  duality}Let $X$ be a smooth partially proper rigid analytic variety over $K$ of dimension $d$. Let $S\in {\rm Perf}_C$. 
Recall that syntomic $\phi$-modules over $\B^{\FF}_{S^{\flat}}$ are defined as (see Sec. \ref{tiger1})
\begin{equation*}
 \R\Gamma^{\B}_{\synt,?}(X_S,\Q_p(r)):= [\R\Gamma^{\B}_{\hk,?}(X_S,r)\lomapr{\iota_{\hk}} \R\Gamma^{\B}_{\dr,?}(X_S,r)],
 \end{equation*}
 where the Hyodo-Kato map is described by diagram \eqref{tiger2}.  The Hyodo-Kato and de Rham cup products are compatible with this diagram hence yield a cup product on the syntomic
$\phi$-modules: 
$$
  \R\Gamma^{\B}_{\synt}(X_S,\Q_p(r))  \otimes^{\LL_{\Box}}_{\B^{\FF}_{S^{\flat}}}\R\Gamma^{\B}_{\synt,c}(X_S,\Q_p(r^{\prime}))  \to \R\Gamma^{\B}_{\synt,c}(X_S,\Q_p(r+r^{\prime})).
$$
  This product can be described by an analogous product on the $\B^{\prime}:=\B_{S^{\flat}}^{[u,v]}$-chart:
\begin{equation}\label{cup11}
  \R\Gamma^{{[u,v]}}_{\synt}(X_S,\Q_p(r))  \otimes^{\LL_{\Box}}_{\B^{\prime}}\R\Gamma^{{[u,v]}}_{\synt,c}(X_S,\Q_p(r^{\prime}))  \to \R\Gamma^{{[u,v]}}_{\synt,c}(X_S,\Q_p(r+r^{\prime})).
\end{equation}
  It is compatible with the products on $\R\Gamma^{{[u,v]}}_{\hk,?}(X_S,i)$ and $ F^i\R\Gamma^{\B}_{\dr,?}(X_S/\B^{\prime})$.
   Here we \index{DR@\DR}defined $F^i\R\Gamma_{\dr,?}(X_S/\B^{\prime})$ as $F^i\R\Gamma_{\dr,?}(X_C/\B^+_{\dr})$ with $\B^+_{\dr}$ replaced by $\B^{\prime}$.

  Let $s\geq d$. There is a trace map
 $$
 {\rm Tr}_X: \quad   \R\Gamma^{\B}_{\synt,c}(X_S,\Q_p(s))\to \B^{\FF}_{S^{\flat}}\{s-d,s-d\}[-2d]
$$
 defined on the $\B^{\prime}$-chart via the \index{TRACE@\TRACE}trace map
 \begin{equation}\label{trace11}
  {\rm Tr}^{[u,v]}_X: \quad   \R\Gamma^{{[u,v]}}_{\synt,c}(X_S,\Q_p(s))\to \B^{\prime}\{s-d,s-d\}[-2d],
 \end{equation}
which is compatible with the Hyodo-Kato and de Rham trace maps. 
The map $  {\rm Tr}^{[u,v]}_X$ is defined using the exact sequence
$$
 H^{{[u,v]},2d}_{\synt,c}(X_S,\Q_p(s))\to H^{2d}_{\hk,c}(X_S,s)\lomapr{\iota_{\hk}} H^{2d}_{\dr,c}(X_S,s),
$$
which can be written more explicitly as the exact sequence
\begin{equation}\label{lunch1}
 H^{{[u,v]},2d}_{\synt,c}(X_S,\Q_p(s))\to (H^{2d}_{\hk,c}(X_C)\{s\}\otimes^{\LL_{\Box}}_{\breve{C}}\B^{\prime}_{\log})^{N=0}\lomapr{\iota_{\hk}} H^{2d}_{\dr,c}(X)\otimes_K^{\LL_{\Box}}(\B^{\prime}\{s-d\}/F^{s-d}).
\end{equation}
Using the (compatible) Hyodo-Kato and de Rham trace maps 
$$
{\rm Tr}_X: H^{2d}_{\hk,c}(X_C)\{s\}\stackrel{\sim}{\to}\breve{C}\{s-d\},\quad {\rm Tr}_X: H^{2d}_{\dr,c}(X)\stackrel{\sim}{\to}K,
$$
\eqref{lunch1} yields  a map 
$$
 H^{{[u,v]},2d}_{\synt,c}(X_S,\Q_p(s-d))\to {\rm Ker}(\B^{\prime}\{s-d\}\to \B^{\prime}\{s-d\}/F^{s-d})=\B^{\prime}\{s-d,s-d\},
$$
hence the trace \eqref{trace11}, as wanted. 

 For  $ s:=r+r^{\prime}-d$, the above can be lifted to the Fargues-Fontaine curve: the cup product \eqref{cup11} and trace map \eqref{trace11}  induce a pairing of $\B^{\prime}$-modules
 $$
 \R\Gamma^{{[u,v]}}_{\synt}(X_S,\Q_p(r))\otimes^{\LL_{\Box}}_{\B^{\prime}}\R\Gamma^{{[u,v]}}_{\synt,c}(X_S,\Q_p(r^{\prime}))\lomapr{\cup}\R\Gamma^{{[u,v]}}_{\synt,c}(X_S,\Q_p(r+r^{\prime})) \lomapr{{\rm Tr}^{[u,v]}_X} \B^{\prime}\{s,s\}[-2d], 
 $$
 which, in turn, induces a pairing of nuclear $\phi$-modules over $\B^{\FF}_{S^{\flat}}$
 $$
  \R\Gamma^{\B}_{\synt}(X_S,\Q_p(r))\otimes^{\LL}_{\B^{\FF}_{S^{\flat}}}\R\Gamma^{\B}_{\synt,c}(X_S,\Q_p(r^{\prime}))\lomapr{\cup} \R\Gamma^{\B}_{\synt,c}(X_S,\Q_p(r+r^{\prime}))\lomapr{{\rm Tr}_X}\B_{S^{\flat}}\{s,s\}[-2d].
 $$
 This descends to a pairing  in ${\rm QCoh}(X_{\FF,S^{\flat}})$:
$$
\se_{\synt}(X_S,\Q_p(r))\otimes^{\LL}_{\so}\se_{\synt,c}(X_S,\Q_p(r^{\prime}))\lomapr{\cup} \se_{\synt,c}(X_S,\Q_p(r+r^{\prime}))\lomapr{{\rm Tr}_X}\so(s,s)[-2d],
$$
which induces a natural map ${\rm QCoh}(X_{\FF,S^{\flat}})$
\begin{equation}\label{niedziela10}
\gamma_{X_S}:\quad  \se_{\synt}(X_S,\Q_p(r))\to  {\mathbb D}(\se_{\synt,c}(X_S,\Q_p(r^{\prime}))[2d],\so(s,s)).
\end{equation}
\begin{theorem}{\rm (Syntomic Poincar\'e duality on the Fargues-Fontaine curve)}\label{curve-duality} 

Let $r,r^{\prime}\geq 2d, s:=r+r^{\prime}-d$. 
The map $\gamma_{X_S}$ is a quasi-isomorphism in ${\rm QCoh}(X_{\FF,S^{\flat}})$. 
\end{theorem}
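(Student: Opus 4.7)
The plan is to derive Theorem \ref{curve-duality} by combining the Hyodo-Kato Poincar\'e duality (Lemma \ref{kolobrzeg4a}) and the filtered de Rham Poincar\'e duality (Lemma \ref{kolobrzeg5a}) through the defining triangle of syntomic sheaves, and then concluding via a triangulated five-lemma argument.

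First I would pass to the chart $Y^{[u,v]}_{\FF}$ via Proposition \ref{leje1} (and Remark \ref{zet1}), so that it suffices to prove the duality on the level of $\phi$-complexes of nuclear solid $\B_{S^{\flat}}^{[u,v]}$-modules. The crucial observation is that both the cup product \eqref{cup11} and the trace \eqref{trace11} defining the syntomic pairing are by construction compatible, term-by-term, with the Hyodo-Kato and de Rham cup products and traces. Indeed, the trace \eqref{trace11} is built from the Hyodo-Kato and de Rham traces via \eqref{lunch1}, and the target $\so(s,s)$ arises from the exact sequence \eqref{alter3}
\[ 0 \to \so(s,s) \to \so(s) \to i_{\infty,*}(\so/t^s) \to 0, \]
which is the bridge that glues the Hyodo-Kato duality (targeting $\so(s)$) with the filtered de Rham duality (operating at $\infty$) into a single Poincar\'e duality statement on the Fargues-Fontaine curve.

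Second I would assemble a morphism of distinguished triangles whose top row is the defining syntomic triangle $\se_{\synt}(X_S,\Q_p(r)) \to \se_{\hk}(X_S,r) \to \se_{\dr}(X_S,r)$, and whose bottom row is obtained by dualizing (via $\R\Hhom(-[2d],\so(s,s))$) the corresponding triangle for $\se_{\synt,c}(X_S,\Q_p(r'))$. The vertical arrows are induced by the cup product and the trace, and the commutativity of the resulting squares reduces to the compatibility of these pairings with the syntomic triangle. In this setup, the two outer vertical arrows are quasi-isomorphisms: the Hyodo-Kato vertical via Lemma \ref{kolobrzeg4a} (after splicing with \eqref{alter3}), and the de Rham vertical via Lemma \ref{kolobrzeg5a}, which is formulated with precisely the source $F^{r'}\R\Gamma_{\dr,c}/t^s$ and target $\so(s,s)$ that appear here. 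The triangulated five-lemma then forces the middle vertical arrow---the syntomic duality map $\gamma_{X_S}$---to be a quasi-isomorphism.

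The main obstacle is the delicate bookkeeping in the second step: one must identify the dual of each piece of the compactly supported syntomic triangle with the appropriate cohomology sheaf, keeping track of the internal shift by $[2d]$, the Tate twists, and the interplay between the three target sheaves $\so(s)$, $\so(s,s)$, and $i_{\infty,*}(\so/t^s)$ via \eqref{alter3}. The specific formulation of Lemma \ref{kolobrzeg5a}---with its source $F^{r'}\R\Gamma_{\dr,c}/t^s$ rather than $\se_{\dr,c}(r') = i_{\infty,*}(\R\Gamma_{\dr,c}/F^{r'})$---signals that a nontrivial identification, arising naturally from the dualization, has to be invoked at this step.
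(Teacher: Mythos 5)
Your approach follows the paper's \emph{secondary} argument (the subsection ``Syntomic duality: an alternative argument''), not its primary proof. The primary proof is quite different: after reducing to the chart $\B':=\B^{[u,v]}_{S^{\flat}}$, it checks that the base changes of $\gamma^{[u,v]}_{X_S}$ to $\B'[1/t]$ and to $\B'/t$ are quasi-isomorphisms; the first becomes the Hyodo-Kato duality from the proof of Lemma \ref{kolobrzeg4a}, and the second (using \eqref{morning1}, \eqref{morning2}) becomes Serre duality for coherent cohomology. This avoids the triangle bookkeeping entirely.

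Within the five-lemma strategy you are pursuing, there is a genuine gap: dualizing the defining compactly supported triangle $\se_{\synt,c}(r')\to\se_{\hk,c}(r')\to\se_{\dr,c}(r')$ produces $\mathbb D(\se_{\dr,c})\to\mathbb D(\se_{\hk,c})\to\mathbb D(\se_{\synt,c})$, and no rotation of this triangle aligns the three duality maps $\gamma^{\synt}$, $\gamma^{\hk}$, $\gamma^{\dr}$ slot-by-slot against $\se_{\synt}(r)\to\se_{\hk}(r)\to\se_{\dr}(r)$: the syntomic dual sits in the wrong position (cofiber rather than fiber), and $\mathbb D(\se_{\dr,c}(r')[2d],\so(s,s))$ is not $\se_{\dr}(r)$ --- Lemma \ref{kolobrzeg5a} dualizes the different object $F^{r'}\R\Gamma_{\dr,c}/t^s$. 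You observe this mismatch (``a nontrivial identification\dots has to be invoked'') but do not supply it. What is actually needed, and what the paper constructs, is the \emph{twisted} triangle \eqref{alter1}
\[ \se_{\hk,c}(X_S,r')\otimes^{\LL}_{\so}\so(0,s)\to\se_{\synt,c}(X_S,\Q_p(r'))\to i_{\infty,*}F^{r'}\R\Gamma_{\dr,c}(X_S/\B^+_{\dr})/t^s, \]
built from the map of triangles \eqref{alter2} by tensoring $\se_{\hk,c}$ against the sequence \eqref{alter3} and reorganizing, so that the syntomic term sits in the \emph{middle}; its dual then matches \eqref{alter4} after the identification $\so(0,s)\otimes\so(s)\simeq\so(s,s)$ feeding into Lemma \ref{kolobrzeg4a}, and with Lemma \ref{kolobrzeg5a} in the remaining slot. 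This construction is the missing step. Finally, note that even with this triangle in hand, the paper flags its alternative argument as conditional on the unverified compatibility Lemma \ref{tedious1}, precisely the check that \eqref{alter4} is an honest map of distinguished triangles --- which is why the base-change argument is the one the paper actually relies on.
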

\begin{proof}
It is enough  to show this in $\phi$-modules over $\B^{\rm FF}_{S^{\flat}}$ for the corresponding map  
\begin{equation}\label{kolobrzeg3}
\gamma_{X_S}:\quad \R\Gamma^{\B}_{\synt}(X_S,\Q_p(r))\to \R\uHom_{\B^{\rm FF}_{S^{\flat}}}(\R\Gamma^{\B}_{\synt,c}(X_S,\Q_p(r^{\prime}))[2d],\B_{S^{\flat}}\{s,s\}).
\end{equation}
Or in $\sd(\B^{[u,v]}_{S^{\flat},\Box})$ for the induced map
$$
\gamma^{[u,v]}_{X_S}:\quad \R\Gamma^{[u,v]}_{\synt}(X_S,\Q_p(r))\to  \R\uHom_{\B^{\prime}_{\Box}}(\R\Gamma^{[u,v]}_{\synt,c}(X_S,\Q_p(r^{\prime}))[2d],\B^{\prime}(s)).
$$
But for that,  it is enough to check 
that base changes of  $\gamma^{[u,v]}_{X_S}$ to both $\B^{\prime}[1/t]$ and $\B^{\prime}/t$ are quasi-isomorphisms in $\sd(\B^{\prime}_{\Box})$. This last claim requires a bit of justification. We have the exact sequence of solid $\B^{\prime}$-modules
$$
0\to \B^{\prime}\to \B^{\prime}[1/t]\to \B^{\prime}[1/t]/\B^{\prime}\to 0.
$$
Hence it suffices to check that base changes of  $\gamma^{[u,v]}_{X_S}$ to both $\B^{\prime}[1/t]$ and $\B^{\prime}[1/t]/\B^{\prime}$ are quasi-isomorphisms. Writing $\B^{\prime}[1/t]/\B^{\prime}=\colim_n(\B^{\prime}/t^n)$ and using the fact that the tensor products commute with filtered colimits, we see that  it suffices to check that base changes of  $\gamma^{[u,v]}_{X_S}$ to both $\B^{\prime}[1/t]$ and $\B^{\prime}[1/t]/t^i$ are quasi-isomorphisms. Finally, by devissage, we can drop $i$ to $1$, as wanted.

 For the first base change, we  have quasi-isomorphisms in $\sd(\B^{\prime}_{\Box})$
\begin{align*}
 \R\Gamma^{{[u,v]}}_{\synt}(X_S,\Q_p(r))[1/t] & \stackrel{\sim}{\to} \R\Gamma^{{[u,v]}}_{\hk}(X_S,r)[1/t],\\
\R\uHom_{\B^{\prime}_{\Box}}(\R\Gamma^{{[u,v]}}_{\synt,c}(X_S,\Q_p(r^{\prime}))[2d],\B^{\prime})[1/t] &  \stackrel{\sim}{\leftarrow} \R\uHom_{\B^{\prime}_{\Box}}(\R\Gamma^{{[u,v]}}_{\hk,c}(X_S,r^{\prime})[2d], 
\B^{\prime}(s))[1/t].
\end{align*}
And $\gamma^{[u,v]}_{X_S}$ is just the canonical map
$$
\gamma_{X_S}: \R\Gamma^{{[u,v]}}_{\hk}(X_S,r)[1/t] \to \R\uHom_{\B^{\prime}_{\Box}}(\R\Gamma^{{[u,v]}}_{\hk,c}(X_S,r^{\prime}), \B^{\prime})[1/t]
$$
induced by the Hyodo-Kato pairing \eqref{ias22}. Since it is compatible with $t$-action, it suffices to show that 
the canonical map
$$
\gamma_{X_S}: \R\Gamma^{{[u,v]}}_{\hk}(X_S,r) \to \R\uHom_{\B^{\prime}_{\Box}}(\R\Gamma^{{[u,v]}}_{\hk,c}(X_S,r^{\prime}),\B^{\prime})
$$
is a quasi-isomorphism in $\sd(\B^{\prime}_{\Box})$. But this was shown  in \eqref{ias23}, in  the proof of Lemma \ref{kolobrzeg4a}.

For the base change to $\B^{\prime}/t$, write $S={\rm Spa}(R,R^+)$; then $\B^{\prime}/t=R$. We
claim that we have a compatible with product quasi-isomorphism in $\sd(\B^{\prime}_{\Box})$
\begin{align}\label{morning1}
 \R\Gamma^{{[u,v]}}_{\synt,?}(X_S,  \Q_p(r))\otimes^{\LL_{\Box}}_{\B^{\prime}} R & 
 \simeq F^r\R\Gamma_{\dr,?}(X_S/\B^{\prime})\otimes^{\LL_{\Box}}_{\B^{\prime}} R.
 \end{align}
 To show 
 \eqref{morning1} we compute:
\begin{align*}
 \R\Gamma^{{[u,v]}}_{\synt,?}(X_S,  \Q_p(r))\otimes^{\LL_{\Box}}_{\B^{\prime}} R & 
 =[\R\Gamma^{{[u,v]}}_{\hk,?}(X_S,r)\lomapr{\iota_{\hk}}\R\Gamma^{{[u,v]}}_{\dr,?}(X_S,r)]\otimes^{\LL_{\Box}}_{\B^{\prime}} R\\
  &  \stackrel{\sim}{\to}[\R\Gamma^{{[u,v]}}_{\hk,?}(X_S,r)\otimes^{\LL_{\Box}}_{\B^{\prime}} R\verylomapr{\iota_{\hk}\otimes{\rm Id}}\R\Gamma^{{[u,v]}}_{\dr,?}(X_S,r)\otimes^{\LL_{\Box}}_{\B^{\prime}} R]
\end{align*}
Then we use the following commutative diagram
$$
\xymatrix@R=4mm{
\R\Gamma^{{[u,v]}}_{\hk,?}(X_S,r)\otimes^{\LL_{\Box}}_{\B^{\prime}} R\ar[r]\ar[d]^{\iota_{\hk}}_{\wr} & \R\Gamma^{{[u,v]}}_{\dr,?}(X_S,r)\otimes^{\LL_{\Box}}_{\B^{\prime}} R\ar@{=}[d]\\
 \R\Gamma_{\dr,?}(X_S/\B^{\prime})\otimes^{\LL_{\Box}}_{\B^{\prime}} R\ar[r] & \R\Gamma^{{[u,v]}}_{\dr,?}(X_S,r)\otimes^{\LL_{\Box}}_{\B^{\prime}} R\\
 F^r\R\Gamma_{\dr,?}(X_S/\B^{\prime})\otimes^{\LL_{\Box}}_{\B^{\prime}} R\ar[r] \ar[u]& 0\ar[u]
 }
$$
It defines quasi-isomorphisms between the mapping fibers of the rows yielding \eqref{morning1}. The quasi-isomorphism in the above diagram needs a justification: take the composition
$$
(\R\Gamma^{{[u,v]}}_{\hk,?}(X_C)\{s\}\otimes^{\LL_{\Box}}_{\breve{C}}\B^{\prime})\otimes^{\LL_{\Box}}_{\B^{\prime}}R \stackrel{\sim}{\to}\R\Gamma^{{[u,v]}}_{\hk,?}(X_S,r)\otimes^{\LL_{\Box}}_{\B^{\prime}} R  \lomapr{\iota_{\hk}}\R\Gamma_{\dr,?}(X_S/\B^{\prime})\otimes^{\LL_{\Box}}_{\B^{\prime}} R
$$
It is equal to $\iota_{\hk}$ hence a quasi-isomorphism, as wanted. 

  From \eqref{morning1}, we get the quasi-isomorphisms in $\sd(\B^{\prime}_{\Box})$
 \begin{align*}
  \R\uHom_{\B^{\prime}_{\Box}}(\R\Gamma^{{[u,v]}}_{\synt,c}(X_S,\Q_p(r^{\prime}))[2d],\B^{\prime})\otimes^{\LL_{\Box}}_{\B^{\prime}}R 
 & \simeq  \R\uHom_{\B^{\prime}_{\Box}}(\R\Gamma^{{[u,v]}}_{\synt,c}(X_S,\Q_p(r^{\prime}))[2d],R)\\
 &  \simeq  \R\uHom_{\B^{\prime}_{\Box} }( F^{r^{\prime}}\R\Gamma_{\dr,c}(X_S/\B^{\prime})[2d],R).
\end{align*}

   We  have
quasi-isomorphisms in $\sd(\B^{\prime}_{\Box})$ compatible with products
 (see \cite[Prop.\,3.6, Prop.\,3.10]{AGN})
  \begin{align}\label{morning2}
 F^r\R\Gamma_{\dr}(X_S/\B^{\prime})\otimes^{\LL_{\Box}}_{\B^{\prime}} R & \xrightarrow[\beta_X]{\sim}  \bigoplus_{i=0}^{d}\R\Gamma(X,\Omega^i)\otimes^{\LL_{\Box}}_KR(r-i)[-i],\\
  F^{r^{\prime}}\R\Gamma_{\dr,c}(X_S/\B^{\prime})\otimes^{\LL_{\Box}}_{\B^{\prime}} R & \xrightarrow[\beta_X]{\sim} \bigoplus_{i=0}^{d}\R\Gamma_c(X,\Omega^i)\otimes^{\LL_{\Box}}_{K}R(r^{\prime}-i)[-i].\notag
 \end{align}
Putting \eqref{morning1} and \eqref{morning2} together, 
we get quasi-isomorphisms in $\sd(\B^{\prime}_{\Box})$ compatible with products
 \begin{align*}
 \R\Gamma^{{[u,v]}}_{\synt}(X_S,\Q_p(r))\otimes^{\LL_{\Box}}_{\B^{\prime}}R  & \simeq  \bigoplus_{i=0}^{d}\R\Gamma(X,\Omega^i) \otimes^{\LL_{\Box}}_KR(r-i)[-i],\\
  \R\uHom_{\B^{\prime}_{\Box}}(\R\Gamma^{{[u,v]}}_{\synt,c}(X_S,\Q_p(r^{\prime}))[2d],\B^{\prime})\otimes^{\LL_{\Box}}_{\B^{\prime}}R &
   \simeq \R\uHom_{R_{\Box}}( \oplus_{i=0}^{d}\R\Gamma_c(X,\Omega^i)\otimes^{\LL_{\Box}}_{K}R(r^{\prime}-i)[2d-i],R).
\end{align*}
And our result follows from Serre duality\footnote{Apply it in degree $i$.} (see Proposition  \ref{derhamduality}) which yields the quasi-isomorphisms in $\sd(R_{\Box})$,
\begin{align*}
\R\Gamma(X,\Omega^i) \otimes^{\LL_{\Box}}_KR & \stackrel{\sim}{\to} \R\uHom_{K_{\Box}}( \R\Gamma_c(X,\Omega^{d-i})[d],K)\otimes^{\LL_{\Box}}_{K}R\\
 & \stackrel{\sim}{\to} \R\uHom_{R_{\Box}}( \R\Gamma_c(X,\Omega^{d-i})\otimes^{\LL_{\Box}}_{K}R[d],R).
\end{align*}
The second quasi-isomorphism holds by the same argument as the one  used at the end of the proof of Lemma \ref{kolobrzeg4a}. 
 \end{proof}

 \subsection{Syntomic duality: an alternative argument} 
We present here an alternative proof of Theorem \ref{curve-duality} (conditional on the unchecked tedious compatibilities in Lemma \ref{tedious1} below). 
It uses dual modifications to inverse the arrows in the defining syntomic distinguished triangles \eqref{def1}. 

 More precisely, let $j\geq i\geq 0$. We will construct a distinguished triangle in ${\rm QCoh}(X_{\FF,S^{\flat}})$
\begin{equation}\label{alter1}
 \se_{\hk,c}(X_S,i)\otimes^{\LL}_{\so}\so(0,j) \to \se_{\syn,c}(X_S,\Q_p(i))\to i_{\infty,*}F^i\R\Gamma_{\dr,c}(X_S/\B^+_{\dr})/t^j,
\end{equation}
which is a twisted version of \eqref{def1}. To do  that, consider the following map of distinguished triangles
\begin{equation}\label{alter2}
\xymatrix@R=4mm{
 \se_{\hk,c}(X_S,i)\otimes^{\LL}_{\so}\so(0,j) \ar[r] \ar@{-->}[d] & \se_{\hk,c}(X_S,i) \ar@{=}[d]  \ar[r]^-{\iota_{\hk}}& i_{\infty,*}\R\Gamma_{\dr,c}(X_S/\B^+_{\dr})/t^j\ar[d]^{\can}\\
\se_{\syn,c}(X_S,\Q_p(i))\ar[r] & \se_{\hk,c}(X_S,i) \ar[r]^{\iota_{\hk}}  & \se_{\dr,c}(X_S,i)
}
\end{equation}
Here,  the bottom distinguished  triangle is \eqref{def1}; the top one is induced from the distinguished triangle 
$$
 \R\Gamma_{\hk}^{{[u,v]}}(X_S,i)\otimes^{\LL_{\Box}}_{\B_{S^{\flat},[u,v]}}\B_{S^{\flat}}^{[u,v]}\{0,j\}\to \R\Gamma_{\hk}^{{[u,v]}}(X_S,i)\lomapr{\iota_{\hk}} \R\Gamma_{\dr}^{{[u,v]}}(X_S,i)/t^j
$$
obtained by tensoring the exact sequence \eqref{alter3} for $0,j$ with  $\R\Gamma_{\hk}^{{[u,v]}}(X_S,i)$.
(Recall that $\R\Gamma_{\hk}^{{[u,v]}}(X_C,r)=[\R\Gamma_{\hk}(X_C)\{r\}\otimes^{\LL_{\Box}}_{\breve{C}}
\B^{[u,v]}_{S^{\flat}, \log}]^{N=0}$). The dashed  arrow in  diagram \eqref{alter2} is defined to make the diagram  a map of distinguished triangles. 
The diagram yields   quasi-isomorphisms
\begin{align*}
[ \se_{\hk,c}(X_S,i)\otimes^{\LL}_{\so}\so(0,j) \to \se_{\syn,c}(X_S,\Q_p(i))][1] & \stackrel{\sim}{\leftarrow} [ i_{\infty,*}\R\Gamma_{\dr,c}(X_S/\B^+_{\dr})/t^j\to \se_{\dr,c}(X_S,i)]\\
 & \stackrel{\sim}{\to} i_{\infty,*}F^i\R\Gamma_{\dr,c}(X_S/\B^+_{\dr})/t^j.
\end{align*}
That is, we get a distinguished triangle \eqref{alter1}, as wanted.

    Now, let $r, r^{\prime}\geq 2d, s=r+r^{\prime}-d$. Consider   
the following diagram in ${\rm QCoh}(X_{\FF,S^{\flat}})$ (note that $s\geq r^{\prime}$)
whose columns are distinguished triangles
\begin{equation}\label{alter4}
\xymatrix@R=4mm{
\se_{\syn}(X_S,\Q_p(r))\ar[r]^-{\gamma^{\synt}_{X_S}}\ar[d] & {\mathbb D}(\se_{\syn,c}(X_S,\Q_p(r^{\prime}))[2d],\so(s,s))\ar[d]\\
 \se_{\hk}(X_S,r) \ar[d] \ar[r]^-{\gamma^{\hk}_{X_S}}_-{\sim} \ar[r] &{\mathbb D}( \se_{\hk,c}(X_S,r^{\prime})\otimes^{\LL}_{\so}\so(0,s)[2d],\so(s,s))   \ar[d]\\
 \se_{\dr}(X_S,r) \ar[r]^-{\gamma^{\dr}_{X_S}}_-{\sim} &{\mathbb D}( i_{\infty,*}F^{r^{\prime}}\R\Gamma_{\dr,c}(X_S/\B^+_{\dr})/t^s[2d-1], \so(s,s))
}
\end{equation}
where the horizontal maps are defined by the syntomic, Hyodo-Kato, and $\B^+_{\dr}$-pairings, 
respectively (see \eqref{niedziela10}, \eqref{niedziela11}, \eqref{niedziela3}).

 Let us assume Lemma \ref{tedious1} below. To prove that the top horizontal arrow  in diagram \ref{alter4} 
is a quasi-isomorphism it suffices to show that so are the two lower arrows. But this follows 
 from Lemma  \ref{kolobrzeg4a} (we used the isomorphism  $\so(0,s)\otimes^{\LL}_{\so}\so(s)\simeq \so(s,s)$) and  Lemma  \ref{kolobrzeg5a}. 
\begin{lemma}\label{tedious1}
Diagram \eqref{alter4} above is a map of distinguished triangles.
\end{lemma}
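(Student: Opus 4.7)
The plan is to descend the problem to the $[u,v]$-chart, where every object appearing in \eqref{alter4} is a concrete complex of solid modules and every map can be written down explicitly, and then to verify commutativity of the two squares by a direct comparison of cup products and trace maps. Throughout we use the equivalence ${\rm Nuc}(\B^{\rm FF}_{S^{\flat}})^{\phi}\simeq {\rm Nuc}(X_{\FF,S^{\flat}})$ from Proposition \ref{leje1}, so it suffices to check the analogous diagram of nuclear $\phi$-complexes over $\B^{\rm FF}_{S^{\flat}}$ (or equivalently of $\B':=\B^{[u,v]}_{S^{\flat}}$-modules carrying a Frobenius).

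First I would unwind the right-hand column. The twisted triangle \eqref{alter1} for $(i,j)=(r',s)$ is by construction obtained by applying the mapping-fiber functor to a map of triangles in which the top triangle comes from tensoring \eqref{alter3} with $\R\Gamma^{[u,v]}_{\hk,c}(X_S,r')$ and the bottom triangle is the defining triangle \eqref{def1} for compact support. Consequently, after applying ${\mathbb D}(-[2d],\so(s,s))$, the right-hand column of \eqref{alter4} is (up to the shift) the cone of the map
$${\mathbb D}(\se_{\dr,c}(X_S,r')[2d],\so(s,s))\to {\mathbb D}(i_{\infty,*}\R\Gamma_{\dr,c}(X_S/\B^+_{\dr})/t^s[2d],\so(s,s))$$
induced by the canonical surjection $i_{\infty,*}\R\Gamma_{\dr,c}(X_S/\B^+_{\dr})/t^s\twoheadrightarrow \se_{\dr,c}(X_S,r')$. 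Thus both columns are cones of the Hyodo-Kato morphism (once with coefficients, once after dualising), and the issue becomes whether the three pairings are compatible with the structural maps of these cones.

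For the top square one has to check that, after identifying $\se_{\syn}(X_S,\Q_p(r))$ with the shifted fiber $[\se_{\hk}(X_S,r)\xrightarrow{\iota_{\hk}}\se_{\dr}(X_S,r)]$ and writing $\gamma^{\rm synt}_{X_S}$ via the cup product \eqref{cup11} and the trace \eqref{trace11}, the restriction of $\gamma^{\rm synt}_{X_S}$ to the Hyodo-Kato summand coincides with $\gamma^{\rm hk}_{X_S}$ precomposed with the map $\iota_{\hk}\otimes{\rm can}$ from \eqref{alter1}. I would verify this by unwrapping the two cup products componentwise on the $\B'$-chart: by construction of the syntomic product both products are manufactured from the Hyodo-Kato cup product on the first factor and the de Rham cup product on the second, in a way that is strictly compatible with the map $\iota_{\hk}$; and the trace \eqref{trace11} is defined so that, under the boundary map in \eqref{lunch1}, its restriction to the $N=0$ piece is identified with the Hyodo-Kato trace tensored with the canonical $\B'\{s-d\}\to\B'\{s-d,s-d\}$ coming from \eqref{alter3}.

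For the bottom square one needs to identify the map
$${\mathbb D}(\se_{\hk,c}(X_S,r')\otimes\so(0,s)[2d],\so(s,s))\to {\mathbb D}(i_{\infty,*}F^{r'}\R\Gamma_{\dr,c}(X_S/\B^+_{\dr})/t^s[2d-1],\so(s,s))$$
with the de Rham duality map $\gamma^{\rm dr}_{X_S}$ precomposed with $\iota_{\hk}$. After writing this out on the $\B'$-chart, the assertion becomes exactly the definition of the pairing \eqref{ias223}: the boundary in the short exact sequence $0\to F^s\B^+_{\dr}\to F^{r-d}\B^+_{\dr}\to F^{r-d}\B^+_{\dr}/t^s\to 0$ used there is precisely the boundary built into \eqref{alter1} by twisting with $\so(0,s)$, and the trace ${\rm Tr}_{{\mathbb B}^+_{\dr}}$ in that pairing is the image of the Hyodo-Kato trace under $\iota_{\hk}$ via the Hyodo-Kato isomorphism $\iota_{\hk}:\R\Gamma_{\hk,c}(X_C)\otimes^{\LL_\Box}_{\breve C}\B^+_{\dr}\simeq\R\Gamma_{\dr,c}(X_C/\B^+_{\dr})$, cf.\ \cite[Sec.\,5.4]{AGN}. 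With these identifications the square commutes on the nose.

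The main obstacle, and the reason this lemma is labelled ``tedious'', is bookkeeping: one must carefully unwind, at the chain-level on the $[u,v]$-chart, the various cup products and boundary maps, keep the twists $\{r\}$, $\{0,s\}$ and $(r)$ straight, and check that the signs introduced by shifts and by the boundary maps of \eqref{alter1}--\eqref{alter2} and of \eqref{lunch1} match. Everything reduces to the compatibility of the Hyodo-Kato and de Rham cup-products with $\iota_{\hk}$ and of their trace maps under $\iota_{\hk}$, which is already encoded in \cite[Sec.\,5]{AGN} and in the construction of \eqref{trace11}; granting those inputs the diagram \eqref{alter4} is a map of distinguished triangles.
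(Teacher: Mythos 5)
The paper does not actually prove this lemma: the whole subsection it sits in is explicitly labelled ``an alternative proof of Theorem \ref{curve-duality} (conditional on the unchecked tedious compatibilities in Lemma \ref{tedious1} below)''. The main Theorem \ref{curve-duality} has a different, unconditional proof (base change to $\B'[1/t]$ and $\B'/t$), and Lemma \ref{tedious1} is left as a stated-but-unverified compatibility. So there is no paper proof to compare with.

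Your outline captures the right structure: working in $\phi$-complexes over $\B^{\FF}_{S^{\flat}}$ (equivalently on the $[u,v]$-chart) via Proposition \ref{leje1}; recognising that the right column of \eqref{alter4} is the rotation of ${\mathbb D}(-[2d],\so(s,s))$ applied to the twisted triangle \eqref{alter1}; and reducing commutativity of the two squares to (a) compatibility of the syntomic, Hyodo-Kato, and de Rham cup products with $\iota_{\hk}$ and the map of triangles \eqref{alter2}, and (b) compatibility of the traces \eqref{trace11}, the Hyodo-Kato trace, and the boundary trace appearing in \eqref{ias223}, where the boundary in the sequence $0\to F^s\B^+_{\dr}\to F^{r-d}\B^+_{\dr}\to F^{r-d}\B^+_{\dr}/t^s\to 0$ is precisely the $\so(0,s)$-twist in \eqref{alter1}. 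These are the correct things to check, and compatibility of the Hyodo-Kato and de Rham traces is indeed built into the construction of \eqref{trace11}.

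However, the proposal is a plan, not a proof. The final paragraph defers to exactly the bookkeeping (chain-level unwinding of the cup products and boundary maps on the $\B'$-chart, keeping the twists $\{r\}$, $\so(0,s)$, $(r)$ straight, checking signs from the $[2d]$ versus $[2d-1]$ shift and the rotation of the dualised triangle) that the paper itself declines to carry out and names as the content of the lemma. So the proposal does not close the gap that the authors flag: the step ``granting those inputs'' is itself the lemma. To actually prove Lemma \ref{tedious1} you would need to exhibit explicit, strictly commutative chain-level models for the cup products on $\R\Gamma^{[u,v]}_{\synt,?}$, $\R\Gamma^{[u,v]}_{\hk,?}$, and $F^{\bullet}\R\Gamma_{\dr,?}(X_S/\B')$, verify that the mapping-fiber structure and the dashed arrow in \eqref{alter2} are compatible with these products, and track the sign coming from identifying ${\mathbb D}$ of a rotated triangle with a rotation of ${\mathbb D}$ of the triangle; the proposal gestures at all of these but carries out none of them.
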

\subsection{Pro-\'etale duality}  Let $X$ be a smooth partially proper rigid analytic variety over $K$ of dimension $d$. Let $S\in {\rm Perf}_C$. 
We define  a cup product on the pro-\'etale
$\phi$-modules: 
\begin{equation}\label{cup11et}
  \R\Gamma^{\B}_{\proeet}(X_S,\Q_p(r))  \otimes^{\LL_{\Box}}_{\B^{\FF}_{S^{\flat}}}\R\Gamma^{\B}_{\proeet,c}(X_S,\Q_p(r^{\prime}))  \to \R\Gamma^{\B}_{\proeet,c}(X_S,\Q_p(r+r^{\prime}))
\end{equation}
via the cup product on the $\B^{\prime}:=\B_{S^{\flat}}^{[u,v]}$-charts:
$$
  \R\Gamma_{\proeet}(X_S,{\mathbb B}^{[u,v]}(r))  \otimes^{\LL_{\Box}}_{\B^{\prime}}\R\Gamma_{\proeet,c}(X_S,{\mathbb B}^{[u,v]}(r^{\prime}))  \to \R\Gamma_{\proeet,c}(X_S,{\mathbb B}^{[u,v]}(r+r^{\prime}))
$$
induced by the cup product on pro-\'etale cohomology. 
This product is compatible with the syntomic product  (via the comparison quasi-isomorphism from Theorem~\ref{hot1}): to see this it suffices to argue for the usual cohomology and locally, where the comparison map is known to be compatible with products.
  
     Let $s\geq 2d$. We define  a trace map
 \begin{equation}\label{trace11et}
 {\rm Tr}_X: \quad   \R\Gamma^{\B}_{\proeet,c}(X_S,\Q_p(s))\to \B^{\FF}_{S^{\flat}}\{s-d,s-d\}[-2d]
\end{equation}
  as the composition
 $$
  \R\Gamma^{\B}_{\proeet,c}(X_S,\Q_p(s)) \simeq  \R\Gamma^{\B}_{\synt,c}(X_S,\Q_p(s))\lomapr{{\rm Tr}_X} \B^{\FF}_{S^{\flat}}\{s-d,s-d\}[-2d].
 $$
By \cite[Prop.\,7.17]{AGN}, for $S=\Spa(C,\so_C)$, this map is compatible with  Huber's trace map. 

 For  $ r,r^{\prime}\geq d, s:=r+r^{\prime}-d$, the above can be lifted to the Fargues-Fontaine curve: the cup product \eqref{cup11et} and trace map \eqref{trace11et}  induce a pairing of  nuclear $\phi$-modules over $\B^{\FF}_{S^{\flat}}$
 $$
  \R\Gamma^{\B}_{\proeet}(X_S,\Q_p(r))\otimes^{\LL}_{\B^{\FF}_{S^{\flat}}}\R\Gamma^{\B}_{\proeet,c}(X_S,\Q_p(r^{\prime}))\lomapr{\cup} \R\Gamma^{\B}_{\proeet,c}(X_S,\Q_p(r+r^{\prime}))\lomapr{{\rm Tr}_X}\B_{S^{\flat}}\{s,s\}[-2d].
 $$
 This descends to a pairing  in ${\rm QCoh}(X_{\FF,S^{\flat}})$:
$$
\se_{\proeet}(X_S,\Q_p(r))\otimes^{\LL}_{\so}\se_{\proeet,c}(X_S,\Q_p(r^{\prime}))\lomapr{\cup} \se_{\proeet,c}(X_S,\Q_p(r+r^{\prime}))\lomapr{{\rm Tr}_X}\so(s,s)[-2d],
$$
which induces a natural map ${\rm QCoh}(X_{\FF,S^{\flat}})$
\begin{equation}\label{niedziela10-etale}
\gamma_{X_S}:\quad  \se_{\proeet}(X_S,\Q_p(r))\to  {\mathbb D}(\se_{\proeet,c}(X_S,\Q_p(r^{\prime}))[2d],\so(s,s)).
\end{equation}
By an abuse of notation, we will write
\begin{equation}\label{niedziela10-etale1}
\gamma_{X_S}:\quad  \se_{\proeet}(X_S,\Q_p)\to  {\mathbb D}(\se_{\proeet,c}(X_S,\Q_p(d))[2d],\so).
\end{equation}
for the Tate-untwisted version of the map \eqref{niedziela10-etale}.
\begin{corollary}\label{curve-duality-etale} 
{\rm (Pro-\'etale  Poincar\'e duality on the Fargues-Fontaine curve)}

The map $\gamma_{X_S}$ from \eqref{niedziela10-etale1}  is a quasi-isomorphism in ${\rm QCoh}(X_{\FF,S^{\flat}})$. 
\end{corollary}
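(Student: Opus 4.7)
The plan is to reduce Corollary \ref{curve-duality-etale} to the syntomic Poincar\'e duality of Theorem \ref{curve-duality} via the comparison isomorphism of Proposition \ref{china1}, followed by a Galois Tate twist. The key observation is that the pro-\'etale pairing and trace have been set up so as to be compatible with their syntomic counterparts, so once the syntomic duality is established, the pro-\'etale one follows in a range and is then propagated to the untwisted case by twisting.

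First, I claim that for $r, r' \geq 2d$ with $s := r + r' - d$, the twisted pro-\'etale duality map of \eqref{niedziela10-etale},
\[
\gamma_{X_S}^{\proeet}:\quad \se_{\proeet}(X_S, \Q_p(r)) \to \mathbb{D}(\se_{\proeet, c}(X_S, \Q_p(r'))[2d], \so(s, s)),
\]
is a quasi-isomorphism. Indeed, by construction the pro-\'etale cup product \eqref{cup11et} is compatible with the syntomic cup product via the comparison $\alpha$ of Proposition \ref{china1} (as indicated after \eqref{cup11et}, this reduces to a local statement on the usual cohomology coming from \cite{CN4}), and the pro-\'etale trace \eqref{trace11et} is by definition the syntomic trace transported via $\alpha_s$. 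Consequently, $\gamma_{X_S}^{\proeet}$ fits into a commutative square whose other row is the syntomic duality map and whose vertical arrows are given by $\alpha_r$ and (the dual of) $\alpha_{r'}$. Theorem \ref{curve-duality} asserts that the syntomic duality map is a quasi-isomorphism; hence so is $\gamma_{X_S}^{\proeet}$ for all $r, r' \geq 2d$.

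Second, I descend to the Tate-untwisted map \eqref{niedziela10-etale1}. Choose $r = 2d$ and $r' = 3d$, so that $s = 4d$, and tensor the resulting twisted duality with the Galois Tate-twist $\Q_p(-2d)$. Using the identifications
\[
\se_{\proeet, ?}(X_S, \Q_p(n)) \otimes \Q_p(-2d) \simeq \se_{\proeet, ?}(X_S, \Q_p(n - 2d)), \quad \so(n, n) \otimes \Q_p(-2d) \simeq \so(n - 2d, n - 2d),
\]
together with $\mathbb{D}(A, B \otimes L) \simeq \mathbb{D}(A \otimes L^{-1}, B)$ for an invertible object $L$, a naturality check identifies the resulting map with the untwisted map $\gamma_{X_S}$ of \eqref{niedziela10-etale1}. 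Since tensoring with an invertible object preserves quasi-isomorphisms, the conclusion follows.

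The main subtlety is the compatibility of the pro-\'etale pairing and trace with their syntomic counterparts through the comparison isomorphism $\alpha$. The trace compatibility is built into definition \eqref{trace11et}, but compatibility of cup products requires unwinding the construction of the ${\mathbb B}^{[u,v]}$-comparison (Theorem \ref{hot1}) and reducing to a local check for usual cohomology in \cite{CN4}. Once these compatibilities are granted, the remainder of the argument is a formal consequence of Theorem \ref{curve-duality} and standard bookkeeping with Tate twists.
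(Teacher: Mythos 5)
Your proof is correct and follows essentially the same route as the paper's: prove the twisted duality for $r,r'\ge 2d$ by transporting the syntomic duality of Theorem \ref{curve-duality} along the comparison of Proposition \ref{china1} (using the built-in compatibility of traces and cup products), then recover the untwisted statement by Tate-twisting, which is what the paper encodes via the ``abuse of notation'' defining \eqref{niedziela10-etale1}. Your version is more explicit about the commutative square and the bookkeeping with $\so(n,k)$ and $\Q_p(-2d)$, but there is no substantive difference in the argument.
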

\begin{proof}Choose  $r,r^{\prime}\geq 2d$ and set $s:=r+r^{\prime}-d$. It suffices to prove that the Tate twisted map \eqref{niedziela10-etale} is a quasi-isomorphism. 
This follows immediately from the syntomic duality from Theorem \ref{curve-duality} and the comparison result from  Proposition  \ref{china1}.
\end{proof}

 \subsection{K\"unneth formula} 
 Let $X,Y$ be smooth Stein rigid analytic varieties over $K$. The simple observation that we have a quasi-isomorphism in $\sd(K_{\Box})$
 \begin{equation}\label{tu1}
\big(\Omega(X)\otimes_{K}^{\LL_{\Box}}\so(Y)\big)\oplus 
\big(\so(X)\otimes^{\LL_{\Box}}_{K}\Omega(Y)\big)\stackrel{\sim}{\to}  \Omega(X\times_KY),
 \end{equation}
which implies  the  K\"unneth formula for de Rham cohomology
 $$
 \R\Gamma_{\dr}(X)\otimes^{\LL_{\Box}}_K \R\Gamma_{\dr}(Y) \stackrel{\sim}{\to}\R\Gamma_{\dr}(X\times_K Y)
 $$
 leads to the syntomic K\"unneth formula in ${\rm QCoh}(X_{\FF})$ and hence the pro-\'etale as well:
 
 \begin{theorem}{\rm(K\"unneth formula)} Let $X,Y$ be smooth partially proper rigid analytic varieties over $K$. Let $d$ be larger than the dimension of $X\times_KY$ and let $r,r^{\prime}\geq 2d$.  Let $S\in {\rm Perf}_C$.
 The natural \index{KAPPA@\KAPPA}maps 
 \begin{align*}
\kappa:\quad   \se_{\synt}(X_S,\Q_p(r))\otimes^{\LL}_{\so} \se_{\synt}(Y_S,\Q_p(r^{\prime}))  & \to \se_{\synt}((X\times_K Y)_S,\Q_p(r+r^{\prime})),\\
\kappa:\quad   \se_{\proeet}(X_S,\Q_p)\otimes^{\LL}_{\so} \se_{\proeet}(Y_S,\Q_p)  & \to \se_{\proeet}((X\times_K Y)_S,\Q_p)
 \end{align*}
 are quasi-isomorphisms in ${\rm QCoh}(X_{\FF,S^{\flat}})$. 
  \end{theorem}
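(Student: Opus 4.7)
The plan is to reduce the pro-\'etale K\"unneth formula to the syntomic one via the comparison quasi-isomorphism from Proposition \ref{china1}, and then to split the syntomic K\"unneth map into its Hyodo-Kato and de Rham constituents using the defining distinguished triangle \eqref{def1}. Concretely, since $\se_{\proeet,?}(X_S,\Q_p(r))\simeq \se_{\synt,?}(X_S,\Q_p(r))$ for $r\geq 2d$ and both sides of the pro-\'etale K\"unneth map are Tate twist invariant up to a shift of $r$, it suffices to prove the syntomic statement; moreover, that comparison is compatible with cup products since locally it is built from the period isomorphism, which is multiplicative.

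Using the distinguished triangle \eqref{def1} and the observation that $\kappa$ lifts to a map of distinguished triangles, it suffices to show that the induced Hyodo-Kato and de Rham K\"unneth maps
\[
\se_{\hk}(X_S,r)\otimes^{\LL}_{\so}\se_{\hk}(Y_S,r')\to \se_{\hk}((X\times_KY)_S,r+r'),\qquad \se_{\dr}(X_S,r)\otimes^{\LL}_{\so}\se_{\dr}(Y_S,r')\to \se_{\dr}((X\times_KY)_S,r+r')
\]
are quasi-isomorphisms in ${\rm QCoh}(X_{\FF,S^{\flat}})$. For the de Rham part, by Proposition \ref{nyc1} and the presentation \eqref{kolobrzeg1a} (applied in the Stein case and then glued for partially proper spaces via a \v{C}ech argument), this reduces to the classical de Rham K\"unneth formula over $K$, which in turn reduces to \eqref{tu1}; the relevant tensor products are solid-derived and the period ring $\B^+_{\dr}(S)$ is flat (Fr\'echet), so base-change from $K$ causes no trouble.

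For the Hyodo-Kato part, I would pass to the $[u,v]$-chart and use the description $\R\Gamma^{[u,v]}_{\hk}(X_S,r)=[\R\Gamma_{\hk}(X_C)\{r\}\otimes^{\LL_{\Box}}_{\breve{C}}\B^{[u,v]}_{S^{\flat},\log}]^{N=0}$. The key point is the K\"unneth formula for Hyodo-Kato cohomology over $\breve{C}$, which, via the Hyodo-Kato isomorphism $\iota_{\hk}: \R\Gamma_{\hk}(X_C)\otimes^{\LL_{\Box}}_{\breve{C}}\B^+_{\dr}\stackrel{\sim}{\to}\R\Gamma_{\dr}(X_C/\B^+_{\dr})$, reduces to the de Rham K\"unneth formula already established (compatibility of $\iota_{\hk}$ with products is standard). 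After twisting and taking $N=0$ parts against $\B^{[u,v]}_{\log}$, we obtain the desired quasi-isomorphism of nuclear $\phi$-modules over $\B^{\FF}_{S^{\flat}}$; functor $\se_{\FF}$ then translates this to a quasi-isomorphism of quasi-coherent sheaves on $X_{\FF,S^{\flat}}$.

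The hardest step, I expect, is handling the compact-support case in the partially proper (non-Stein) regime. For this one writes $X=\colim_n U_n$ with $U_n\Subset U_{n+1}$ quasi-compact opens and similarly for $Y$, uses the definition of compactly supported cohomology from the introduction, and exploits the fact that the solid tensor product and the functor $\se_{\FF}$ commute with colimits (and with the finite limits that define compactly supported cohomology via boundary terms). One then has to verify that the product structures are compatible with these colimit/fiber presentations; the needed compatibilities  are essentially those proved in \cite{AGN} for the classical K\"unneth formulas for Hyodo-Kato and de Rham cohomology with compact support, and they lift verbatim to the FF-curve along $\se_{\FF}$.
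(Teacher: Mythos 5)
Your reduction of the pro-\'etale statement to the syntomic one via Proposition \ref{china1}, the reduction of the Hyodo-Kato K\"unneth formula to the de Rham one via the Hyodo-Kato isomorphism $\iota_{\hk}$, and the appeal to \eqref{tu1} for coherent differentials all match the paper's argument (in particular Lemma \ref{Singapur1}). However, the step where you invoke the distinguished triangle \eqref{def1} to reduce the syntomic K\"unneth map to the Hyodo-Kato and de Rham ones contains a genuine gap. Tensoring the fiber sequence $\se_{\synt}(X)\to\se_{\hk}(X)\to\se_{\dr}(X)$ against the corresponding sequence for $Y$ does \emph{not} produce a fiber sequence
$\se_{\synt}(X)\otimes\se_{\synt}(Y)\to\se_{\hk}(X)\otimes\se_{\hk}(Y)\to\se_{\dr}(X)\otimes\se_{\dr}(Y)$: the cofiber of the first map has a two-step filtration with graded pieces of the form $\se_{\dr}(X)\otimes\se_{\synt}(Y)$ and $\se_{\hk}(X)\otimes\se_{\dr}(Y)$, not $\se_{\dr}(X)\otimes\se_{\dr}(Y)$. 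So even granting that the Hyodo-Kato and de Rham K\"unneth maps are quasi-isomorphisms, the two-out-of-three argument you have in mind does not apply, and the syntomic K\"unneth map does not follow.

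The paper avoids this by working on the chart $\B'=\B^{[u,v]}_{S^{\flat}}$ and testing $\kappa^{[u,v]}$ after the two base changes $(-)\otimes^{\LL}_{\B'}\B'[1/t]$ and $(-)\otimes^{\LL}_{\B'}\B'/t$, which jointly detect quasi-isomorphisms of solid $\B'$-modules. After inverting $t$ the de Rham module vanishes (it is $t$-power torsion) and $\R\Gamma^{[u,v]}_{\synt}[1/t]\simeq\R\Gamma^{[u,v]}_{\hk}[1/t]$, so one is reduced to the Hyodo-Kato K\"unneth formula; after base change to $\B'/t$ the syntomic module is identified, via \eqref{morning1}--\eqref{morning2}, with a sum of $\R\Gamma(T,\Omega^i)\otimes^{\LL_{\Box}}_K R(s-i)[-i]$, reducing to the K\"unneth formula for coherent differentials. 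This Beauville--Laszlo-style base-change decomposition is compatible with tensor products in a way that the naive triangle splitting is not, and is the tool that actually makes the reduction to the two constituents work. Finally, your concern about compactly supported cohomology is moot: the statement asserts the K\"unneth formula only for usual cohomology, so the colimit/boundary subtleties you raise do not arise.
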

 \begin{proof} The pro-\'etale case follows from the syntomic one via the comparison quasi-isomorphism from Proposition \ref{china1}.
 
  For the syntomic case,  it is enough to show that on the level of $\phi$-modules over  $\B^{\FF}_{S^{\flat}}$ the corresponding map
 $$
 \kappa:  \quad\R\Gamma_{\synt}^{\B}(X_S,\Q_p(r))\otimes^{\LL}_{\B^{\FF}_{S^{\flat}}} \R\Gamma_{\synt}^{\B}(Y_S,\Q_p(r^{\prime})) \to \R\Gamma_{\synt}^{\B}((X\times_K Y)_S,\Q_p(r+r^{\prime}))
 $$
 is a quasi-isomorphism.  Or that in $\sd(\B^{\prime}_{\Box})$, for  $\B^{\prime}:=\B_{S^{\flat}}^{[u,v]}$, the induced \index{KAPPA@\KAPPA}map
 $$
  \kappa^{[u,v]}: \quad  \R\Gamma_{\synt}^{{[u,v]}}(X_S,\Q_p(r))\otimes^{\LL_{\Box}}_{\B^{\prime}} \R\Gamma_{\synt}^{{[u,v]}}(Y_S,\Q_p(r^{\prime})) \to \R\Gamma_{\synt}^{{[u,v]}}((X\times_K Y)_S,\Q_p(r+r^{\prime}))
  $$
 is a quasi-isomorphism.   But for that, as in the proof of Theorem \ref{curve-duality},  it is enough to check that the base changes of $  \kappa^{[u,v]}$ to $\B^{\prime}[1/t]$ and to $\B^{\prime}/t$ are quasi-isomorphisms. 
 
  For the first base change, we use the quasi-isomorphism in $\sd(\B^{\prime}_{\Box})$
$$
 \R\Gamma^{{[u,v]}}_{\synt}(X_S,\Q_p(r))[1/t]  \stackrel{\sim}{\to} \R\Gamma^{{[u,v]}}_{\hk}(X_S,r)[1/t]
 $$
 to write
 $$
  \kappa^{[u,v]}[1/t]: \quad (\R\Gamma^{{[u,v]}}_{\hk}(X_S,r)\otimes^{\LL_{\Box}}_{\B^{\prime}}\R\Gamma^{{[u,v]}}_{\hk}(Y_S,r^{\prime}))[1/t]\to \R\Gamma^{{[u,v]}}_{\hk}((X\times_KY)_S,r+r^{\prime})[1/t].
  $$
 This map is induced by the Hyodo-Kato pairing
$$
  \kappa^{[u,v]}_{\hk}: \quad \R\Gamma^{{[u,v]}}_{\hk}(X_S,r)\otimes^{\LL_{\Box}}_{\B^{\prime}}\R\Gamma^{{[u,v]}}_{\hk}(Y_S,r^{\prime})\to \R\Gamma^{{[u,v]}}_{\hk}((X\times_KY)_S,r+r^{\prime}).
  $$
  To check that this is a quasi-isomorphism we may pass to cohomology. Since $H^j_{\hk}(X_C)$ is Fr\'echet (hence flat for the solid tensor product over $\breve{C}$), this reduces
  to checking that the pairing
  $$
  \bigoplus_{a=0}^b (H^a_{\hk}(X_C)\otimes^{\LL_{\Box}}_{\breve{C}}\B^{\prime}_{\log})^{N=0}\otimes^{\LL_{\Box}}_{\B^{\prime}} (H^{j-a}_{\hk}(Y_C)\otimes^{\LL_{\Box}}_{\breve{C}}\B^{\prime}_{\log})^{N=0}
  \to  (H^j_{\hk}((X\times_KY)_C)\otimes^{\LL_{\Box}}_{\breve{C}}\B^{\prime}_{\log})^{N=0}
  $$
  is an isomorphism in $\B^{\prime}_{\Box}$. 
  
    Now,
     using the exponential map as in the proof of Lemma \ref{kolobrzeg4a}, 
we can  reduce  to proving that the pairing
  $$
   \bigoplus_{a=0}^b (H^a_{\hk}(X_C)\otimes^{\LL_{\Box}}_{\breve{C}}\B^{\prime})
\otimes^{\LL_{\Box}}_{\B^{\prime}} (H^{j-a}_{\hk}(X_C)\otimes^{\LL_{\Box}}_{\breve{C}}\B^{\prime})
  \to  H^j_{\hk}(X_C\times_CY_C)\otimes^{\LL_{\Box}}_{\breve{C}}\B^{\prime}
  $$
  is an isomorphism in $\B^{\prime}_{\Box}$. Or,  that so is the pairing in $\breve{C}_{\Box}$
  $$
     \bigoplus_{a=0}^b H^a_{\hk}(X_C)\otimes^{\LL_{\Box}}_{\breve{C}}H^{j-a}_{\hk}(Y_C)
  \to  H^j_{\hk}(X_C\times_CY_C).
  $$
But this follows from the following:
 \begin{lemma}{\rm(Hyodo-Kato K\"unneth formula)}\label{Singapur1}  Let $X,Y$ be  smooth partially proper rigid analytic varieties over $C$. Then the canonical \index{KAPPA@\KAPPA}pairing
$$
  \kappa_{\hk}:\quad \R\Gamma_{\hk}(X)\otimes^{\LL_{\Box}}_{\breve{C}}\R\Gamma_{\hk}(Y)\to \R\Gamma_{\hk}(X\times_CY).
$$
is a quasi-isomorphism in $\sd_{\phi, N,\sg_K}(\breve{C}_{\Box})$. 
\end{lemma}
\begin{proof}
This follows from the  comparison (via the Hyodo-Kato morphism) with the K\"unneth formula for de Rham \index{KAPPA@\KAPPA}cohomology
$$
  \kappa_{\dr}: \R\Gamma_{\dr}(X)\otimes^{\LL_{\Box}}_{{C}}\R\Gamma_{\dr}(Y)\stackrel{\sim}{\to }\R\Gamma_{\dr}(X\times_CY).
$$
The latter clearly holds if both $X$ and $Y$ are Stein. For a general partially proper $X$ and $Y$, we use coverings by a countable number (!) of Stein varieties, the fact that all the complexes in sight are bounded complexes of Fr\'echet spaces, \cite[Prop.\,8.33]{GB1}, and the Stein case.
\end{proof}

  For the base change to $R=\B^{\prime}/t$, we get from the proof of Theorem \ref{curve-duality}, compatible with products,  quasi-isomorphisms in 
  $\sd(\B^{\prime}_{\Box})$ ($T=X,Y$, $s\geq 0$)
\begin{align*}
 \R\Gamma^{{[u,v]}}_{\synt}(T_S,  \Q_p(s))\otimes^{\LL_{\Box}}_{\B^{\prime}} R& \simeq F^s\R\Gamma_{\dr}(T_S/\B^{\prime})\otimes^{\LL_{\Box}}_{\B^{\prime}} R \\
 & \simeq  \bigoplus_{i=0}^{d_T}\R\Gamma(T, \Omega^i) \otimes^{\LL_{\Box}}_KR(s-i)[-i].
 \end{align*}
 And the map $ \kappa^{[u,v]}$ can be identified with the map
 \begin{align*}
 \Big(\bigoplus_{i=0}^{d_X}\R\Gamma(X,\Omega^i) \otimes^{\LL_{\Box}}_KR(r-i)[-i] \Big)
 & \otimes^{\LL_{\Box}}_{R}
\Big(\bigoplus_{i=0}^{d_Y}\R\Gamma(Y,\Omega^i) \otimes^{\LL_{\Box}}_KR(r^{\prime}-i)[-i] \Big)\\
  & \to \bigoplus_{i=0}^{d_{X}+d_{Y}}\R\Gamma(X\times_K Y, \Omega^i) \otimes^{\LL_{\Box}}_KR(r+r^{\prime}-i)[-i].
 \end{align*}
 If $X, Y$ are Stein, this map 
  in degree $i$ is represented by  the map
  \begin{align*}
 \bigoplus_{a=0}^{d_X+d_Y}\Big(\Omega^a(X) \otimes^{\LL_{\Box}}_KR(r-a)\Big)
\otimes^{\LL_{\Box}}_{R}
\Big(\Omega^{i-a}(Y) \otimes^{\LL_{\Box}}_KR(r^{\prime}-i+a)\Big)
 \to\Omega^i(X\times_K Y) \otimes^{\LL_{\Box}}_KR(r+r^{\prime}-i).
 \end{align*}
 And the latter  map is   a quasi-isomorphism in  $R_{\Box}$ by \eqref{tu1}. If $X, Y$ are general smooth partially proper rigid analytic varieties, we can reduce to the Stein case as in the proof of Lemma \ref{Singapur1}.
 \end{proof}
\section{Poincar\'e duality for $p$-adic  geometric pro-\'etale cohomology}
Finally, we are  ready to state and prove pro-\'etale duality on the level of Topological Vector Spaces
(Theorem~\ref{VS-duality}). We descend it from the analogous Poincar\'e duality on the Fargues-Fontaine curve (Corollary \ref{curve-duality-etale}). 
\subsection{Topological Vector Spaces}
 In this paper, the category of Topological Vector Spaces 
\index{TVS@\TVS}({\rm TVS}'s for short) is the $\infty$-category of $\underline{\Q}_p$-modules in the $\infty$-derived category 
 $\underline{\sd}({\rm Spa}(C),{\rm Solid})$ of topologically enriched presheaves 
\index{PERF@\PERF}on ${\rm sPerf}_C$ -- the category of strictly totally disconnected affinoids over $C$ -- with values in solid abelian groups.  We will denote it by  $\underline{\sd}({\rm Spa}(C),\Q_{p,\Box})$. This category was defined and studied in \cite{TVS}. We will denote by 
 $ {\sd}({\rm Spa}(C),\Q_{p,\Box})$ the corresponding $\infty$-category where we forget the enrichment; the objects of this category will be called "topological presheaves". 

    We list  the following properties: 
\begin{proposition}{\rm (\cite[Th. 1.1]{TVS})} \label{recall1}
\begin{enumerate}
\item {\rm(Enriched fully-faithfulness)}  The canonical functor from Vector Spaces\footnote{We call Vector Spaces  ({\rm VS}'s for short) the objects in the $\infty$-derived category 
$\sd(\Spa(C)_{\proeet},\Q_p)$ of $\underline{\Q}_p$-modules in the category of pro-\'etale sheaves on ${\rm Perf}_C$, the category of perfectoid affinoids over $C$.} to Topological Vector \index{rtau@\rtau}Spaces
$$
\R\pi_*: \sd({\rm Spa}(C)_{\proeet},\Q_p)\to \underline{\sd}({\rm Spa}(C),\Q_p)
$$
tends to be fully faithful. More precisely,  let  $\sff\in \sd^b({\rm Spa}(C)_{\proeet},\Q_p)$ be such that $\R\pi_*\sff\in \underline{\sd}^b({\rm Spa}(C),\Q_p)$ and 
let $\sg\in \sd^+({\rm Spa}(C)_{\proeet},\Q_p)$.  Then the canonical   morphism in $\underline{\sd}({\rm Spa}(C),\Q_p)$
$$
\R\pi_*\R\Hhom_{C}(\sff,\sg)\to \R\Hhom_{C^{\rm top}}(\R\pi_*\sff,\R\pi_*\sg)
$$
is a quasi-isomorphism. 
\item  {\rm(Fargues-Fontaine fully-faithfulness)}  The \index{rtau@\rtau}functor 
$$
\R\tau_*: {\rm QCoh}(X_{\FF, S^{\flat}})\to \underline{\sd}({\rm Spa}(C),\Q_{p})
$$
is fully faithful when restricted to perfect complexes. That is, for $\sff,\sg\in {\rm Perf}(X_{\FF, S^{\flat}})$, the natural map in ${\sd}({\rm Mod}^{\rm cond}_{\underline{\Q}_p(C)})$
$$\R\Hom_{{\rm QCoh}(X_{\FF,C^{\flat}})}(\sff,\sg)\to \R\uHom_{C,\Q_{p}}(\R\tau_*\sff,\R\tau_*\sg)
$$
is a quasi-isomorphism. 
\item  {\rm (Compatibility of the algebraic and topological projections)} The \index{rtau@\rtau}functor 
$$
\R\tau^{\prime}_*: {\rm QCoh}(X_{\FF, C^{\flat}})\to {\sd}({\rm Spa}(C)_{\proeet},\Q_{p})
$$
is compatible with the functor $\R\tau_*$ when restricted to nuclear sheaves. That is, the
 following diagram commutes
$$
\xymatrix{
 {\rm Nuc}(X_{\FF,C^{\flat}})\ar[r]^-{\R\tau^{\prime}_*}\ar[rd]^{\R\tau_*} &  \sd({\rm Spa}(C)_{\proeet},{\Q}_p)\ar[d]^{\R\pi_*}\\
  & \underline{\sd}({\rm Spa}(C),{\Q}_p).
  }
  $$
\end{enumerate}
\end{proposition}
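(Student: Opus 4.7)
This Proposition is essentially a recollection of the main results of \cite{TVS}, so a full proof would consist of importing those theorems. Still, let me sketch the architecture I would use to prove the three items, in the order of their dependencies.

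The first step is to set up (3), the compatibility of $\R\tau'_*$ and $\R\pi_*\circ\R\tau_*$ on nuclear sheaves, because it is both the easiest and used to reduce (1) to a statement about $p$-adic pro-\'etale cohomology of period sheaves. Working on the chart $Y^{[u,v]}_{\FF,S^{\flat}}$, both functors can be described via Proposition~\ref{leje1}: namely, for a nuclear $\phi$-complex $(M^{[u,v]},\phi_M)$ one computes
\[
\R\tau_*\se_{\FF}(M,\phi_M)\simeq [M^{[u,v]}\xrightarrow{\phi-1}M^{[u,v/p]}]
\]
(evaluated on strictly totally disconnected $S$), and similarly for $\R\tau'_*$, but regarded as a presheaf on ${\rm Perf}_C$. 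Hence the compatibility reduces to the fact that the enrichment and the underlying pro-\'etale sheaf structure on $\B^{[u,v]}_{S^{\flat}}$ are compatible, which is built into the definition of the relative period sheaves in \cite[Sec.\,2.3]{GB2}. Nuclearity is essential: it guarantees that the derived tensor products commute with passage between charts, so that passing to the $\phi$-equalizer is well-behaved in both formalisms.

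For (2), the Fargues-Fontaine fully-faithfulness on perfect complexes, I would first reduce to $\sg=\so$ by internal $\R\Hhom$ and the monoidal compatibility of $\R\tau_*$, and then reduce to the two specific generators $\so$ and $\so(1)$ (say) using the classification of vector bundles on the Fargues-Fontaine curve and dualizability of perfect complexes. The heart is then the computation $\R\tau_*\so\simeq\Q_p$, which comes from the exact sequence $0\to\Q_p\to{\mathbb B}^{[u,v]}\xrightarrow{\phi-1}{\mathbb B}^{[u,v/p]}\to 0$ (cf.~the proof of Proposition~\ref{comp2b}), combined with nuclearity of the ${\mathbb B}^I$ which makes this computation pass through $\R\tau_*$ on the level of TVS. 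A standard dualizability argument then upgrades this to the full statement for perfect complexes.

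Finally for (1), the enriched fully-faithfulness, the statement decouples into two pieces: a formal comparison of $\R\Hhom$'s, and a computational input saying that $\R\pi_*$ commutes with $\R\Hhom$ on the relevant objects. The natural map is constructed by adjunction. To check it is a quasi-isomorphism, I would dévissage in $\sff$: the bounded hypothesis lets one reduce to the case when $\sff$ is concentrated in one degree, then to $\sff=\underline{\Q}_p[T]$ for $T$ a profinite set, where both sides become computable via the explicit description of $\R\pi_*$ evaluated on strictly totally disconnected perfectoids. The boundedness of $\R\pi_*\sff$ is used to control the descent from pro-\'etale to the topological site. I expect this last step --- controlling the enriched derived Hom in the face of the subtleties of condensed mathematics and the distinction between the pro-\'etale and ``topological'' sites on ${\rm sPerf}_C$ vs ${\rm Perf}_C$ --- to be the principal technical obstacle, and it is exactly the place where the authors of \cite{TVS} had to do the most delicate work to settle on the right definitions.
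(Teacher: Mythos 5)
You correctly identify that the paper does not prove this proposition but cites it from~\cite{TVS}, so the paper's ``proof'' is a pointer, and your opening sentence matches that stance exactly. The sketch you append is necessarily speculative and cannot be checked against the paper; in broad architecture it is plausible, but two points merit caution. In item~(2), reducing to the ``two specific generators $\so$ and $\so(1)$'' underestimates the classification of vector bundles on $X_{\FF}$ -- the indecomposables are the $\so(\lambda)$ for all \emph{rational} slopes $\lambda$, not just integer twists -- and the actual engine behind fully-faithfulness on perfect complexes is the matching of $\R\Hom$'s between Banach--Colmez spaces, which the introduction of the present paper attributes to the $\Ext$-vanishing of Ansch\"utz--Le Bras~\cite{ALB}; your sketch omits the BC picture entirely at this step. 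In item~(1), the d\'evissage to $\sff=\underline{\Q}_p[T]$ is not obviously compatible with the running hypothesis that $\R\pi_*\sff$ be bounded, and a general bounded $\sff$ satisfying this need not admit a resolution by such free objects staying within the hypotheses. Since the paper provides no proof to compare against, these are flags on the sketch's internal coherence rather than demonstrated gaps relative to the source.
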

 \subsection{{\rm TVS}-version of  pro-\'etale cohomology presheaves} Let $X$ be a smooth partially proper  rigid analytic variety over $K$.  
For $?=-,c$, we define the presheaves on ${\rm sPerf}_C$ with values in $\sd({\rm Solid})$
$$
 {\mathbb R}_{\proeet,?}(X_C,\Q_p):\quad S\to \R\Gamma_{\proeet,?}(X_{S}, \Q_p).
 $$
The topology on pro-\'etale cohomology is induced via \v{C}ech procedure from $p$-adic topologies. Note that the values of these presheaves on $S$ are actually in $\sd(\Q_p(S)_{\Box})$. 
\begin{proposition} \label{comp-elgin} 
\begin{enumerate}
\item The presheaf ${\mathbb R}_{\proeet,?}(X_C,\Q_p)$ is naturally  enriched: 
$${\mathbb R}_{\proeet,?}(X_C,\Q_p)\in \underline{\sd}({\rm Spa}(C),\Q_{p,\Box}).
$$
\item There exists a natural quasi-isomorphism in $\underline{\sd}({\rm Spa}(C),\Q_{p,\Box})$
$$
{\mathbb R}_{\proeet,?}(X_C,\Q_p)\simeq \R\pi_*{\mathbb R}^{\rm alg}_{\proeet,?}(X_C,\Q_p),
$$
where  the sheaf $ {\mathbb R}^{\rm alg}_{\proeet,?}(X_C,\Q_p)$ is the
 \index{ETB@\ETB}algebraic version\footnote{Defined as ${\mathbb R}_{\proeet,?}(X_C,\Q_p)$ but with discrete topology objectwise.} of ${\mathbb R}_{\proeet,?}(X_C,\Q_p)$. 
\item   Let $X$ be a smooth Stein variety over $K$. There exists a natural quasi-isomorphism in $\underline{\sd}({\rm Spa}(C),\Q_{p,\Box})$
\begin{equation}\label{identity2}
\R\tau_*\se_{\proeet,?}(X_C,\Q_p) \simeq {\mathbb R}_{\proeet,?}(X_C,\Q_p),\quad ?=-,c.
\end{equation}
\end{enumerate}
\end{proposition}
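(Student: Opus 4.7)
The proof splits according to the three parts of the statement, each of which requires a different ingredient.

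For part (1), the plan is to unwind the construction of pro-\'etale cohomology via the \v{C}ech procedure with $p$-adic coefficients. Since $\Q_p$ carries its natural $p$-adic topology and the \v{C}ech complex for a pro-\'etale hypercover consists of sections of the constant sheaf, the resulting complex inherits a canonical topology at each $S \in {\rm sPerf}_C$; the values land in $\sd(\Q_p(S)_{\Box})$ as stated. Functoriality in $S$ is via pullback along morphisms of strictly totally disconnected affinoids, and these pullbacks are continuous for the induced topologies. This promotes the presheaf from an object of $\sd({\rm Spa}(C),\Q_{p,\Box})$ to the enriched $\infty$-category $\underline{\sd}({\rm Spa}(C),\Q_{p,\Box})$, giving claim (1).

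For part (2), the plan is to unfold the definition of $\R\pi_*$. By construction the algebraic presheaf ${\mathbb R}^{\rm alg}_{\proeet,?}(X_C,\Q_p)$ is the same \v{C}ech cohomology but with the discrete topology placed objectwise. Since each $S \in {\rm sPerf}_C$ is strictly totally disconnected, higher pro-\'etale cohomology of $S$ with $\Q_p$-coefficients vanishes, so the stalk computation of $\R\pi_*$ at $S$ agrees with the direct evaluation. The enrichment produced by $\R\pi_*$ equips the resulting complex with the topology inherited from the $p$-adic topology on $\Q_p$, and this matches the topology of part (1) since both are induced by the same \v{C}ech procedure applied to the pro-\'etale cover. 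A similar argument handles the compactly supported case via the colimit over $U_n \Subset U_{n+1}$, noting that colimits of topologically enriched presheaves commute with $\R\pi_*$.

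For part (3), the plan is to combine Proposition \ref{comp2b} with Proposition \ref{recall1}(3). On each $S \in {\rm sPerf}_C$, Proposition \ref{comp2b} provides a natural, functorial-in-$S$ quasi-isomorphism
\[
\R\Gamma(X_{\FF,S^{\flat}}, \se_{\proeet,?}(X_S,\Q_p)) \simeq \R\Gamma_{\proeet,?}(X_S,\Q_p),
\]
so evaluating $\R\tau_*\se_{\proeet,?}(X_C,\Q_p)$ at $S$ recovers the same underlying complex as ${\mathbb R}_{\proeet,?}(X_C,\Q_p)(S)$. The Stein hypothesis is used precisely to guarantee that $\se_{\proeet,?}(X_C,\Q_p)$ is nuclear (via the Hyodo-Kato and filtered $\B^+_{\dr}$-components being nuclear Fr\'echet, respectively of compact type, as in Section \ref{kwak-kwak}), so that Proposition \ref{recall1}(3) yields a commutative diagram comparing $\R\tau_*$ and $\R\tau'_*$. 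This reduces the problem to showing that the pro-\'etale projection $\R\pi_* \R\tau'_* \se_{\proeet,?}$ agrees with ${\mathbb R}_{\proeet,?}(X_C,\Q_p)$, which follows from part (2) once we identify $\R\tau'_*\se_{\proeet,?}$ with ${\mathbb R}^{\rm alg}_{\proeet,?}(X_C,\Q_p)$ using Proposition \ref{comp2b} on the algebraic side.

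The main obstacle is verifying the topological compatibility in part (3), i.e.\ that the topology obtained via the chart description of $\R\tau_*$ (using Proposition \ref{leje1}, presenting sections as the $\phi-1$ mapping fiber on $\B^{[u,v]}_{S^{\flat}}\to \B^{[u,v/p]}_{S^{\flat}}$) agrees with the \v{C}ech-induced topology on pro-\'etale cohomology. This requires tracing the period-sheaf topology through the chart description and comparing it to the topology coming from the comparison quasi-isomorphism of Theorem \ref{hot1}; here the nuclearity provided by the Stein hypothesis is essential for the relevant tensor products and limits to be computed correctly in the solid/enriched setting.
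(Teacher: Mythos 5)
The overall architecture of your proposal — construct the enrichment in (1), identify the two presheaves in (2), and combine Proposition~\ref{comp2b} with Proposition~\ref{recall1}(3) for (3) — matches the paper's. But there is a genuine gap in your argument for part (3), and a milder one in part (1).

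For part (3) the central point you gloss over is the following. By definition, $\R\tau'_*\se_{\proeet,?}(X_C,\Q_p)$ evaluated at $S\in{\rm sPerf}_C$ is
$$\R\Gamma\bigl(X_{\FF,S^\flat},\;\LL\hskip-.4mm f_S^*\,\se_{\proeet,?}(X_C,\Q_p)\bigr),$$
i.e.\ sections of the \emph{pullback} of the sheaf defined over ${\rm Spa}(C^\flat)$. Proposition~\ref{comp2b}, which you invoke, only identifies $\R\Gamma(X_{\FF,S^\flat},\se_{\proeet,?}(X_S,\Q_p))$ with $\R\Gamma_{\proeet,?}(X_S,\Q_p)$ — note the $S$ inside the sheaf, not a pullback of the $C$-sheaf. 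So you are silently assuming a base change quasi-isomorphism
$$\LL\hskip-.4mm f_S^*\,\se_{\proeet,?}(X_C,\Q_p)\;\simeq\;\se_{\proeet,?}(X_S,\Q_p)$$
of solid quasi-coherent sheaves on $X_{\FF,S^\flat}$. This is exactly Lemma~\ref{dziecko1} in the paper, and it is not formal: the proof routes through the comparison Theorem~\ref{hot1} on $Y_{\FF}$ and reduces to separate base change statements for twisted Hyodo--Kato and de Rham cohomology (the Hyodo--Kato one in turn uses flatness of the Fr\'echet $H^j_{\hk}(X_C)$ over $\breve C$ and the exponential map to drop the $N=0$ condition). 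Your ``main obstacle'' paragraph diagnoses the difficulty as topological compatibility of chart descriptions, but the topology is handled automatically once you have (2) and Proposition~\ref{recall1}; the actual content that is missing is this base change identity.

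For part (1) your argument asserts that the \v Cech construction ``inherits a canonical topology,'' but this is not an enrichment in the sense of $\underline{\sd}({\rm Spa}(C),\Q_{p,\Box})$: one must construct concrete structure maps $\R\Gamma_{\proeet}(X_{S\times T},\Q_p)\to\R\uHom_{\Q_{p,\Box}}(\Q_{p,\Box}[T],\R\Gamma_{\proeet}(X_S,\Q_p))$ for profinite $T$. The paper does this by pro\'etale descent, reducing to products of terms $\underline{\Q}_p(S_i\times T)$ and identifying the required map with the canonical isomorphism $\underline{\scc(|S_i\times T|,\Q_p)}\xrightarrow{\sim}\underline{\scc(T,\underline{\scc(|S_i\times T|,\Q_p)})}$. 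Without some construction of this kind part (1) is not proved, merely asserted.
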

\begin{proof}   For claim (1), consider first the usual cohomology. 
We want to define (a straighten version of) structure maps
$$
\R\Gamma_{\proeet}(X_{S\times T},\Q_p)\to \R\uHom_{\Q_{p,\Box}}(\Q_{p,\Box}[T],\R\Gamma_{\proeet}(X_{S},\Q_p)),
$$
for $S\in {\rm sPerf}_C$ and a profinite set $T$.  By pro-\'etale descent, it suffices to construct, for a set $\{S_i\}, i\in I$, $S_i\in {\rm sPerf}_C$, functorial structure  maps
$$
\prod_I\underline{\Q}_p(S_i\times T)\to \uHom_{\Q_{p,\Box}}(\Q_{p,\Box}[T], \prod_I\underline{\Q}_p(S_i))
$$
or functorial maps
$$
\underline{\Q}_p(S_i\times T)\to \uHom_{\Q_{p,\Box}}(\Q_{p,\Box}[T], \underline{\Q}_p(S_i))
$$
But these maps can be identified with the canonical isomorphisms  (see \cite[Ex. 2.1]{TVS})
$$
\underline{\scc(|S_i\times T|,\Q_p)}\stackrel{\sim}{\to} \underline{\scc( T,\underline{\scc(|S_i\times T|,\Q_p)})}
$$
  
  To treat the compactly supported version ${\mathbb R}_{\proeet,c}(X_C,\Q_p)$, recall \index{BORD@\BORD}that
\begin{align}\label{kwak-kwak1}
\R\Gamma_{\proeet,c}(X_S,\Q_p) & =[\R\Gamma_{\proeet}(X_S,\Q_p(r))\to \R\Gamma_{\proeet}((\partial X)_S,\Q_p)],\\
\R\Gamma_{\proeet}((\partial X)_S,\Q_p) &=\colim_{Z\in\Phi_X}\R\Gamma_{\proeet}((X\moins Z)_S,\Q_p).\notag
\end{align}
This canonically induces  the  enrichment on the presheaf ${\mathbb R}_{\proeet,c}(X_C,\Q_p)$. 

   Claim (2), from the above argument, is clear for the usual cohomology.   Then it follows for compactly supported cohomology on the level of presheaves described by the algebraic version of \eqref{kwak-kwak1}  because the proof of the claim (3) below shows that these are actually sheaves. 
   
   For claim (3),     by Proposition \ref{recall1} and claim (2), it suffices to show that 
   $$
   \R\tau^{\prime}_*\se_{\proeet,?}(X_C,\Q_p) \simeq {\mathbb R}^{\rm alg}_{\proeet,?}(X_C,\Q_p),\quad ?=-,c.
   $$
   Note that, by definition, we have as  presheaves 
\begin{align*}
\R\tau^{\prime}_*\se_{\proeet,?}(X_C,\Q_p)  & =\{S\mapsto \R\Gamma(X_{\FF,S^{\flat}},\LL\hskip-.4mm f^*_S\se_{\proeet,?}(X_C,\Q_p)\},\\
{\mathbb R}^{\rm alg}_{\proeet,?}(X_C,\Q_p)&=\{S\mapsto \R\Gamma(X_{\FF,S^{\flat}},\se_{\proeet,?}(X_S,\Q_p)\}.
\end{align*}
Hence it suffices to refer to Lemma \ref{dziecko1} below.
\end{proof}
\begin{lemma}\label{dziecko1} 
On $X_{\FF,S^{\flat}}$ we have a natural quasi-isomorphism of solid quasi-coherent sheaves
\begin{align*}
\LL\hskip-.4mm f^*_S\se_{\proeet,?}(X_C,\Q_p) \simeq \se_{\proeet,?}(X_S,\Q_p).
\end{align*}
\end{lemma}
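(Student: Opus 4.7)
The plan is to descend the question to the level of the defining $\phi$-modules on the $Y_{\FF}$-chart, and then establish the base change statement there by pro-\'etale descent. Since the functor $\se_{\FF}$ is constructed from the $[u,v]$-data, and the pullback $\LL\hskip-.4mm f^*_S$ corresponds to the base change $(-)\otimes^{\LL_{\Box}}_{\B^{[u,v]}}\B^{[u,v]}_{S^\flat}$ at the level of $\phi$-complexes, it suffices to produce a natural, Frobenius-compatible quasi-isomorphism in $\sd(\B^{[u,v]}_{S^\flat,\Box})$
$$
\R\Gamma_{\proeet,?}(X_C,{\mathbb B}^{[u,v]})(r)\otimes^{\LL_{\Box}}_{\B^{[u,v]}}\B^{[u,v]}_{S^\flat}\stackrel{\sim}{\to}\R\Gamma_{\proeet,?}(X_S,{\mathbb B}^{[u,v]})(r),
$$
and analogously for $[u,v/p]$. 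One may ignore the Tate twist $(r)$ as it is a pure algebraic shift.

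For the usual cohomology, I would argue by pro-\'etale descent. Choosing an affinoid perfectoid cover $\{S_j\}_{j\in J}$ of $X_C$ in the pro-\'etale topology, the \v{C}ech-style presentation realizes $\R\Gamma_{\proeet}(X_C,{\mathbb B}^{[u,v]})$ as a derived limit of complexes whose terms are products of the form $\prod_{j}\B^{[u,v]}_{S_j^\flat}$ (using $\R\Gamma_{\proeet}(T,{\mathbb B}^{[u,v]})\simeq \B^{[u,v]}_{T^\flat}$ for $T\in {\rm Perf}_C$). The analogous description holds for $X_S$, with terms $\prod_j \B^{[u,v]}_{(S_j\times_C S)^\flat}$. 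Thus the problem reduces to two sublemmas: first, that $(-)\otimes^{\LL_{\Box}}_{\B^{[u,v]}}\B^{[u,v]}_{S^\flat}$ commutes with the derived products and limits in the \v{C}ech complex; second, that for each affinoid perfectoid $T$ over $C$ the natural map
$$
\B^{[u,v]}_{T^\flat}\otimes^{\LL_{\Box}}_{\B^{[u,v]}}\B^{[u,v]}_{S^\flat}\stackrel{\sim}{\to}\B^{[u,v]}_{(T\times_C S)^\flat}
$$
is a quasi-isomorphism. The first sublemma follows from the same argument already used in the proof of Lemma \ref{fun1-kol}: writing $\B^{[u,v]}_{S^\flat}\simeq \B^{[u,v]}\langle f\rangle$ for the analytic localization at a suitable $f$, the functor is computed as an internal $\R\uHom$ out of a compact object (via \cite[Prop.\,3.12]{And21}), hence commutes with derived limits. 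The second sublemma is a standard rational localization computation, using that $(T\times_C S)^\flat=T^\flat\wotimes_{C^\flat} S^\flat$ and that tensoring a rational localization with a base change gives the corresponding rational localization of the product.

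For the compactly supported variant, the identity
$$
\R\Gamma_{\proeet,c}(X_S,{\mathbb B}^{[u,v]})=\bigl[\R\Gamma_{\proeet}(X_S,{\mathbb B}^{[u,v]})\to \colim_{Z\in\Phi_X}\R\Gamma_{\proeet}((X\moins Z)_S,{\mathbb B}^{[u,v]})\bigr]
$$
together with the fact that $(-)\otimes^{\LL_{\Box}}_{\B^{[u,v]}}\B^{[u,v]}_{S^\flat}$ commutes with filtered colimits and finite (homotopy) limits reduces the claim to the case of the usual cohomology already handled. Naturality in $S$ is immediate from the construction, and Frobenius compatibility follows because all the above identifications are induced by the Frobenius on the ambient period sheaves and so match when one changes the interval from $[u,v]$ to $[u,v/p]$.

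The main obstacle will be the first sublemma: ensuring that the solid tensor product with $\B^{[u,v]}_{S^\flat}$ genuinely commutes with the potentially large derived products arising in pro-\'etale descent. This is delicate because interchange of $\otimes^{\LL_{\Box}}$ with arbitrary limits is not automatic; however, the explicit rational-localization presentation of $\B^{[u,v]}_{S^\flat}$ over $\B^{[u,v]}$ makes the functor a shift of internal $\R\uHom$ out of a compact projective solid module, which does commute with limits, so the obstruction can be resolved by the same device used elsewhere in the paper.
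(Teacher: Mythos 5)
The paper's proof is quite different from yours: after reducing to the usual (non-compactly-supported) cohomology, it does \emph{not} argue by pro-\'etale descent on $\mathbb{B}^{[u,v]}$-cohomology, but instead invokes the ${\mathbb B}^{[u,v]}$-comparison theorem (Theorem \ref{hot1}) to replace $\R\Gamma_{\proeet}(X_S,{\mathbb B}^{[u,v]})$ by $[\R\Gamma^{[u,v]}_{\hk}(X_S,r)\to\R\Gamma^{[u,v]}_{\dr}(X_S,r)]$, and then checks base change separately for the twisted Hyodo--Kato and de~Rham parts, where it can pass to cohomology groups (Hyodo--Kato groups are Fr\'echet hence solid-flat, de~Rham is built from $\Omega^i(X)\otimes^\Box_K(\B^+_{\dr}/t^s)$ where base change is transparent). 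That detour is not cosmetic; it is precisely what lets the authors avoid the problem your approach runs into.

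The gap in your argument is the ``first sublemma''. You claim that $(-)\otimes^{\LL_\Box}_{\B^{[u,v]}}\B^{[u,v]}_{S^\flat}$ commutes with the derived products/limits arising from pro-\'etale descent, by appealing to the trick in Lemma \ref{fun1-kol} and writing ``$\B^{[u,v]}_{S^\flat}\simeq\B^{[u,v]}\langle f\rangle$ for the analytic localization at a suitable $f$''. But this is false: the map $\B^{[u,v]}_{C^\flat}\to\B^{[u,v]}_{S^\flat}$ is a perfectoid base change (roughly, tensoring the tilt $C^\flat$ up to $S^\flat$), not a rational localization of adic spaces. The argument in Lemma \ref{fun1-kol} applies specifically to the inclusion $Y^{[u,v/p]}_{\FF,S}\hookrightarrow Y^{[u,v]}_{\FF,S}$, which \emph{is} a rational subset, making $\B^{[u,v/p]}_{S^\flat}=\B^{[u,v]}_{S^\flat}\langle f\rangle$ and allowing the reduction to $\R\uHom$ out of a compact projective via \cite[Prop.\,3.12]{And21}. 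The same mechanism is used in the unnamed lemma just before the definition of pro-\'etale modules, again only for $[u,v]\rightsquigarrow[u,v/p]$. For a genuine base change along $C\rightsquigarrow S$ there is no such presentation, and commutation of the solid tensor with the large derived products in the \v{C}ech complex is not automatic. Without this commutation your descent argument does not go through, which is why the paper routes the reduction through Hyodo--Kato/de~Rham cohomology instead.
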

\begin{proof}
It suffices to show that, for a compact rational interval $[u,v]\subset (0,\infty)$, 
the canonical map
$$\R\Gamma_{\proeet,?}(X_C,{\mathbb B}^{[u,v]})\otimes^{\LL_{\Box}}_{\B^{[u,v]}_{C^{\flat}}}
\B_{S^{\flat}}^{ [u,v]}{\to}\R\Gamma_{\proeet,?}(X_S,{\mathbb B}^{[u,v]})$$
is a quasi-isomorphism. Since tensor product commutes with colimits it suffices to do this for  the usual cohomology. 

  By  Theorem \ref{hot1}, it suffices to show the same for the twisted Hyodo-Kato and de Rham cohomologies. Assume thus that $r\geq 2d$
 and let us start  with the Hyodo-Kato cohomology. We want to show that  the  base change map 
(for $\B^{\prime}_S:=\B_{S^{\flat}}^{[u,v]}$)
 $$
 \R\Gamma_{\hk}^{{[u,v]}}(X_C,r)\otimes^{\LL_{\Box}}_{\B^{\prime}_C} \B^{\prime}_{S} \to \R\Gamma_{\hk}^{{[u,v]}}(X_S,r)
  $$
 is a quasi-isomorphism. 
For that,  we may pass to cohomology. Since $H^j_{\hk}(X_C)$ is Fr\'echet (hence flat for the solid tensor product over $\breve{C}$),  this reduces
  to checking that the base change, for $b\geq 0$, 
$$
  (H^b_{\hk}(X_C)\otimes^{\LL_{\Box}}_{\breve{C}}\B^{\prime}_{C,\log})^{N=0}\otimes^{\LL_{\Box}}_{\B^{\prime}_C} \B^{\prime}_S
  \to  (H^b_{\hk}(X_C)\otimes^{\LL_{\Box}}_{\breve{C}}\B^{\prime}_{S,\log})^{N=0}
  $$
  is an isomorphism in $\B^{\prime}_{S, \Box}$. 
  
    Now,
     using the exponential map as in the proof of Lemma \ref{kolobrzeg4a},  we can  reduce  to proving that the base change
  $$
 (H^b_{\hk}(X_C)\otimes^{\LL_{\Box}}_{\breve{C}}\B^{\prime}_C)\otimes^{\LL_{\Box}}_{\B^{\prime}_C} \B^{\prime}_S
  \to  H^b_{\hk}(X_C)\otimes^{\LL_{\Box}}_{\breve{C}}\B^{\prime}_S
  $$
  is an isomorphism in $\B^{\prime}_{S,\Box}$. But this is clear.

   We pass now to  the de Rham cohomology.  As above it suffices to show that the base change map
   $$
 \R\Gamma_{\dr}^{{[u,v]}}(X_C,r)\otimes^{\LL_{\Box}}_{\B^{\prime}_C} \B^{\prime}_{S} \to \R\Gamma_{\dr}^{{[u,v]}}(X_S,r)
  $$
 is a quasi-isomorphism. But this reduces to showing that the base change maps
 \begin{align*}
& (\Omega^i(X)\otimes^{\Box}_K(\B^+_{\dr}/t^s))
\otimes^{\LL_{\Box}}_{\B^{\prime}_C}\B^{\prime}_S\to  \Omega^i(X)\otimes^{\Box}_K(\B^+_{\dr}(S)/t^s)
 \end{align*}
 are isomorphisms. And this is clear.    
\end{proof}
\subsection{Topological Poincar\'e duality} 
Let $X$ be a partially proper smooth  variety over $K$ of dimension $d$.
Let $i, j\geq 0$.  We define  a pairing in $\underline{\sd}({\rm Spa}(C),\Q_{p,\Box})$: 
\begin{align}\label{kicius1}
 & {\mathbb R}_{\proeet}(X_C,\Q_p(i))\otimes^{\LL_{\Box}}_{\Q_p} {\mathbb R}_{\proeet,c}(X_C,\Q_p(j))\to \Q_p(i+j-d)[-2d]
\end{align}
by inducing it from   the compatible family of pairings in $\sd(\Q_p(S)_{\Box})$
\begin{align*}
 & \R\Gamma_{\proeet}(X_{S},\Q_p(i))\otimes^{\LL_{\Box}}_{\Q_p(S)} \R\Gamma_{\proeet,c}(X_{S},\Q_p(j))  \stackrel{\cup}{\to }\R\Gamma_{\proeet,c}(X_{S},\Q_p(i+j))\stackrel{{\rm Tr}_X}{\longrightarrow} {\Q}_p(S)(i+j-d)[-2d],
 \end{align*}
 where the trace map comes  from the trace maps \eqref{trace11et} via the fundamental exact sequence. 
 The fact that this pairing is compatible with the enrichments follows from the fact that it is induced by the algebraic pairing, we have Proposition \ref{comp-elgin},  and the projection functor $\R\pi_*$ is lax monoidal.

 The pairing in \eqref{kicius1}  induces a duality map in 
 $\underline{\sd}({\rm Spa}(C),\Q_{p,\Box})$
\begin{equation}\label{bonn10}
\gamma_{X_C}:\quad {\mathbb R}_{\proeet}(X_C,\Q_p)\to \R\Hhom_{\rm TVS}({\mathbb R}_{\proeet,c}(X_C,\Q_p(d))[2d],{\Q}_p).
\end{equation}

\begin{theorem}{\rm (Pro-\'etale  duality)}\label{VS-duality}
 Let $X$ be a smooth partially proper  rigid analytic variety over $K$ of dimension $d$.  The duality map
\eqref{bonn10} is a quasi-isomorphism. In particular,
we have a quasi-isomorphism in $\sd(\Q_{p,\Box})$
$$\gamma_{X_C}:\quad{\R}\Gamma_{\proeet}(X_C,\Q_p)\stackrel{\sim}{\to}\R\uHom_{\rm TVS}({\mathbb R}_{\proeet,c}(X_C,\Q_p(d))[2d],\Q_p).
$$
\end{theorem}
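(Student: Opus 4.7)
The plan is to descend the pro-\'etale Poincar\'e duality from the Fargues-Fontaine curve (Corollary~\ref{curve-duality-etale}) to the category of Topological Vector Spaces by applying the derived projection functor $\R\tau_*$. Concretely, start from the quasi-isomorphism
$$
\gamma_{X_C}:\quad \se_{\proeet}(X_C,\Q_p)\stackrel{\sim}{\to} {\mathbb D}(\se_{\proeet,c}(X_C,\Q_p(d))[2d],\so)
$$
in ${\rm QCoh}(X_{\FF})$ and apply $\R\tau_*$. On the left, Proposition~\ref{comp-elgin}(3) (together with its extension from Stein to general partially proper $X$ via an exhaustion argument, which is formal since the pro-\'etale cohomology presheaves are defined by the same colimit/fiber procedure as in \eqref{kwak-kwak1}) gives $\R\tau_*\se_{\proeet}(X_C,\Q_p)\simeq{\mathbb R}_{\proeet}(X_C,\Q_p)$. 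Since $\R\tau_*\so\simeq\underline{\Q}_p$ trivially, the theorem reduces to showing that the natural comparison map
$$
\R\tau_*\R\Hhom_{{\rm QCoh}(X_{\FF})}(\se_{\proeet,c}(X_C,\Q_p(d)),\so)\to \R\Hhom_{\rm TVS}(\R\tau_*\se_{\proeet,c}(X_C,\Q_p(d)),\R\tau_*\so)
$$
is a quasi-isomorphism, i.e.~a fully-faithfulness statement for $\R\tau_*$ applied to $\se_{\proeet,c}(X_C,\Q_p(d))$.

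The main obstacle is precisely this fully-faithfulness: Proposition~\ref{recall1}(2) provides it only for perfect complexes on $X_{\FF}$, while $\se_{\proeet,c}(X_C,\Q_p(d))$ is merely nuclear. To reduce to the perfect case, I would first replace pro-\'etale by syntomic via the comparison quasi-isomorphism $\se_{\proeet,c}(X_C,\Q_p(r))\simeq\se_{\synt,c}(X_C,\Q_p(r))$ for $r\geq 2d$ (Proposition~\ref{china1}), and then split syntomic into its Hyodo-Kato and de Rham constituents via the distinguished triangle~\eqref{def1}:
$$
\se_{\synt,c}(X_C,\Q_p(r))\to \se_{\hk,c}(X_C,r)\lomapr{\iota_{\hk}}\se_{\dr,c}(X_C,r).
$$
It then suffices to verify fully-faithfulness of $\R\tau_*$ on each of these pieces.

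For the Hyodo-Kato piece, I would exhaust $X$ by quasi-compact opens $\{U_n\}$ with $U_n\Subset U_{n+1}$; each $\R\Gamma_{\hk,c}(U_n)$ has finite rank cohomology over $\breve{C}$, so the associated $\phi$-modules over $\B^{\rm FF}$ yield \emph{perfect} sheaves on $X_{\FF}$, to which Proposition~\ref{recall1}(2) applies directly. Passing to the colimit is controlled because nuclearity is preserved under the relevant filtered colimits and $\R\Hhom(-,\so)$ transforms these into products. For the de Rham piece, the presentations \eqref{kolobrzeg1a} express the relevant cohomologies as $H^d_c(X,\Omega^i)\otimes^{\Box}_K({\mathbb B}^+_{\dr}/t^j)$; writing $H^d_c(X,\Omega^i)$ as a compact-type limit/colimit of finite-dimensional $K$-vector spaces, each term $V\otimes^{\Box}_K({\mathbb B}^+_{\dr}/t^j)$ with $V$ finite-dimensional corresponds to a perfect sheaf on $X_{\FF}$, where Proposition~\ref{recall1}(2) again applies; the ${\mathbb G}_a$-factors are handled exactly as in the argument sketched for Corollary~\ref{main-cor}, using that compact-type duals behave tensorially against constant objects.

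The hardest point will be the bookkeeping in this reduction: one must check that both $\R\tau_*$ and $\R\Hhom_{{\rm QCoh}(X_{\FF})}(-,\so)$ commute with the filtered colimits and countable limits used to build $\se_{\hk,c}$ and $\se_{\dr,c}$ from their finite-dimensional building blocks (equivalently, that the canonical maps between limits of internal Homs and internal Homs of colimits are quasi-isomorphisms in the nuclear setting), and then that the fully-faithfulness of $\R\tau_*$ passes through these assembly operations. Once this is done, combining the Hyodo-Kato and de Rham cases through the syntomic triangle and transporting back via Proposition~\ref{china1} yields the desired quasi-isomorphism, completing the proof of Theorem~\ref{VS-duality}.
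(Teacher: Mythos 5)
Your overall strategy is exactly the paper's: descend the Fargues--Fontaine duality of Corollary~\ref{curve-duality-etale} through $\R\tau_*$, identify $\R\tau_*\se_{\proeet,?}\simeq{\mathbb R}_{\proeet,?}$ via Proposition~\ref{comp-elgin}, reduce to fully-faithfulness of $\R\tau_*$ on $\se_{\proeet,c}$, pass to $\se_{\synt,c}$ by Proposition~\ref{china1}, and split through the triangle~\eqref{def1} into the Hyodo--Kato and de~Rham constituents. The Hyodo--Kato piece landing on perfect complexes (after reducing to finite-rank Hyodo--Kato cohomology) and then invoking Proposition~\ref{recall1}(2) is also what the paper does; the paper runs the finiteness reduction once at the outset (assume $\R\Gamma_{\dr}(X)$ of finite rank, treat the general case at the very end by a \v{C}ech hypercover and ${\rm Rlim}/\colim$ exchange), whereas you exhaust only at the Hyodo--Kato step, but that difference is cosmetic.

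The one place where your sketch is genuinely off is the de~Rham piece. You cannot write $H^d_c(X,\Omega^i)$ as a ``compact-type limit/colimit of finite-dimensional $K$-vector spaces'' in a way that reduces $i_{\infty,*}\bigl(H^d_c(X,\Omega^i)\otimes^{\Box}_K(\B^+_{\dr}/t^j)\bigr)$ to perfect sheaves and lets you invoke Proposition~\ref{recall1}(2): the building blocks that actually appear are Smith spaces, not finite-dimensional ones, and the resulting sheaves at $x_\infty$ are not perfect. The paper's argument is different in kind here: by devissage it reduces to $t^1$-torsion, writes $W$ as a compact colimit of Smith spaces, descends to $K=\Q_p$, and then establishes the fully-faithfulness claim directly through Lemma~\ref{frank2} and explicit computations with $\R\Hhom_{\rm TVS}(\mathbb{G}_a,\Q_p)\simeq\mathbb{G}_a(-1)[-1]$ together with the internal projectivity of Smith spaces. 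Your parenthetical remark that the $\mathbb{G}_a$-factors are handled ``as in the argument sketched for Corollary~\ref{main-cor}'' does point at the right mechanism (compact-type duals behaving tensorially against $\mathbb{G}_a$), but that is a separate, computation-based argument -- not another application of perfect-complex fully-faithfulness -- and the intervening sentence about finite-dimensional $V$ and perfect sheaves should be deleted, since that step would fail.
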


\begin{proof} 
($\bullet$)  Assume first that de Rham cohomology of $X$ has finite rank. It suffices to show that, for $r,r^{\prime}\geq 2d, s=r+r^{\prime}-d$,  the  pairing in \eqref{kicius1}  induces a duality map in 
 $\underline{\sd}({\rm Spa}(C),\Q_{p,\Box})$
$$
\gamma_{X_C}:\quad {\mathbb R}_{\proeet}(X_C,\Q_p(r))\to \R\Hhom_{\rm TVS}({\mathbb R}_{\proeet,c}(X_C,\Q_p(r^{\prime}))[2d],{\Q}_p(s)),
$$
which is a quasi-isomorphism. 
 But we have  the quasi-isomorphism  
$${\mathbb R}_{\proeet,?}(X_C,\Q_p(r))\simeq \R\tau_{*}\se_{\proeet,?}(X_C,\Q_p(r)) $$ 
from  Proposition \ref{comp-elgin} and the quasi-isomorphism $\R\tau_*\so(i,i)\simeq \Q_p(i)$, $i\geq 0$.
 Moreover, the functor $\R\tau_*$ is lax monoidal and compatible with pro-\'etale traces, hence it suffices 
 to show that the duality map 
$$
\gamma_{X_C}:\quad \R\tau_*\se_{\proeet}(X_C,\Q_p(r))\to \R\Hhom_{\rm TVS}(\R\tau_*\se_{\proeet,c}(X_C,\Q_p(r^{\prime}))[2d],\R\tau_*\so(s,s))
$$
is a quasi-isomorphism. 

   This map factorizes as 
$$
\xymatrix@C=-2.5cm{
 \R\tau_*\se_{\proeet}(X_C,\Q_p(r))\ar[rr] \ar[rd]^-{\R\tau_*\gamma_{X_C}}_{\sim}&  &\R\Hhom_{\rm TVS}(\R\tau_*\se_{\proeet,c}(X_C,\Q_p(r^{\prime}))[2d],\R\tau_*\so(s,s))\\
 & \R\tau_*{\mathbb D}(\se_{\proeet,c}(X_C,\Q_p(r^{\prime}))[2d],\so(s,s))\ar[ur]_-{\rm can}
}
$$
where we \index{D@\bD}set 
$${\mathbb D}(-,-):=\R\Hhom_{\rm QCoh(X_{\FF,C^{\flat}})}(-, -)$$
 The left slanted map is a quasi-isomorphism by the Poincar\'e duality on the Fargues -Fontaine curve from 
 Theorem \ref{curve-duality}. 
Hence it suffices to show that the canonical morphism
$$
 \R\tau_{*}{\mathbb D}(\se_{\proeet,c}(X_C,\Q_p(r^{\prime})),\so(s,s))\to \R\Hhom_{\rm TVS}(\R\tau_*\se_{\proeet,c}(X_C,\Q_p(r^{\prime})),\R\tau_*\so(s,s))
$$
is a quasi-isomorphism. Or, by Proposition \ref{china1}, that  so is the canonical morphism
$$
 \R\tau_{*}{\mathbb D}(\se_{\synt,c}(X_C,\Q_p(r^{\prime})),\so(s,s))\to \R\Hhom_{\rm TVS}(\R\tau_*\se_{\synt,c}(X_C,\Q_p(r^{\prime})),\R\tau_*\so(s,s)).
$$

     Applying $ \R\tau_{*}{\mathbb D}(-,\so(s,s))$ and  $ \R\Hhom_{\rm TVS}(\R\tau_*(-),\R\tau_*\so(s,s))$  to the distinguished triangle 
    \begin{equation}\label{def11} 
\se_{\synt,c}(X,\Q_p(r^{\prime}))\to\se_{\hk,c}(X,r^{\prime})\to \se_{\dr,c}(X,r^{\prime})
\end{equation}
and identifying $\R\tau_*\so(s,s)\simeq \Q_p(s)$, 
   we get compatible  distinguished triangles
\begin{align*}\label{triangle1}
& \R\tau_{*}{\mathbb D}(\se_{\synt,c}(X,\Q_p(r^{\prime})),\so(s,s))\leftarrow  \R\tau_{*}{\mathbb D}(\se_{\hk,c}(X,r^{\prime}),\so(s,s))\leftarrow 
  \R\tau_{*} {\mathbb D}(\se_{\dr,c}(X,r^{\prime}),\so(s,s)),\\
&\R\Hhom_{\rm TVS}(\R\tau_*\se_{\synt,c}(X,\Q_p(r^{\prime})),\Q_p(s))\leftarrow  \R\Hhom_{\rm TVS}(\R\tau_{*}\se_{\hk,c}(X,r^{\prime}),\Q_p(s))
\leftarrow \R\Hhom_{\rm TVS}(\R\tau_{*}\se_{\dr,c}(X,r^{\prime}),\Q_p(s)).
\end{align*}
It suffices thus to show that the canonical morphisms
\begin{align*}
   \R\tau_{*} {\mathbb D}(\se_{\hk,c}(X,r^{\prime}),\so(s,s))& \to  \R\Hhom_{\rm TVS}(\R\tau_{*}\se_{\hk,c}(X,r^{\prime}),\Q_p(s)),\\
  \R\tau_{*}{\mathbb D}(\se_{\dr,c}(X,r^{\prime}),\so(s,s))& \to  \R\Hhom_{\rm TVS}(\R\tau_{*}\se_{\dr,c}(X,r^{\prime}),\Q_p(s))
\end{align*}
are quasi-isomorphisms. 

  Since the solid quasi-coherent complexes $\se_{\hk,c}(X,r^{\prime})$ and $\so(s,s)$ are perfect, the first quasi-isomorphism follows from Proposition \ref{recall1}. 
For the second quasi-isomorphism, since $\se_{\dr,c}(X,r^{\prime})=i_{\infty,*}\R\Gamma_{\dr,c}(X_C/\B_{\dr}^+)/F^{r^{\prime}}$, it suffices to show that the natural morphism
  \begin{equation}\label{kolo241}
    \R\tau_{*} {\mathbb D}(\iota_{\infty,*}(W\otimes^{\LL_{\Box}}_{K}\B^+_{\dr}/t^i),\so(s,s))\to\R\Hhom_{\rm TVS}(\R\tau_{*}(\iota_{\infty,*}(W\otimes^{\LL_{\Box}}_{K}\B^+_{\dr}/t^i)),\R\tau_{*}\so(s,s))
  \end{equation}
  is a quasi-isomorphism, for  any  space of compact type $W\in C_K$. By devissage we may assume that $i=1$.  
Also, if we write  $W\simeq \colim_n W_n$ as a compact colimit of Smith spaces $W_n$ over $K$ we may assume that $W$ is a Smith space over $K$.  This is because 
$W^*$ is then a compact limit of Banach spaces and such limits commute with solid tensors with Banach spaces yielding that both sides of \eqref{kolo241} will change the colimit into a derived limit (see also the first claim in Lemma \ref{frank2}).  
Moreover, assuming that $W$ is a Smith space over $K$, we can write $W=W_{\Q_p}{\otimes^{\LL_{\Box}}_{\Q_{p}}}K$, 
for a Smith space $W_{\Q_p}$ over $\Q_p$. 
Then we have $W^*=W^*_{\Q_p}{\otimes^{\LL_{\Box}}_{\Q_{p}}}K$. 
 Hence we may assume that $K=\Q_p$ in \eqref{kolo241}. To sum up, we need to show that  the natural morphism
  \begin{equation}\label{kolo241a}
    \R\tau_{*} {\mathbb D}(\iota_{\infty,*}(W{\otimes^{\LL_{\Box}}_{\Q_{p}}}C),\so(s,s))\to\R\Hhom_{\rm TVS}(\R\tau_{*}(\iota_{\infty,*}(W{\otimes^{\LL_{\Box}}_{\Q_{p}}}C)),\R\tau_{*}\so(s,s))
  \end{equation}
  is a quasi-isomorphism, for  any  Smith space $W\in C_{\Q_p}$.

  We will need the following computation: 
  \begin{lemma} \label{frank2} Let $s\in\N$. Let $W\in C_{\Q_p}$ be  a Smith space. 
  \begin{enumerate}
  \item   The canonical morphism in ${\rm QCoh}(X_{\FF,C^{\flat}})$
      \begin{equation}\label{comp1}
   f_1:\quad W^*{\otimes^{\LL_{\Box}}_{\Q_{p}}}{\mathbb D}(\iota_{\infty,*}C,\so(s,s))\to  {\mathbb D}(\iota_{\infty,*}(W{\otimes^{\LL_{\Box}}_{\Q_{p}}}C),\so(s,s)) 
\end{equation}
is a quasi-isomorphism. 
\item  The canonical morphisms in $\underline{\sd}(C,\Q_{p,\Box})$
\begin{align*}
 & f_2:\quad W^*\otimes^{\LL_{\Box}}_{\Q_{p}} \R\tau_*{\mathbb D}(\iota_{\infty,*}C,\so(s,s))\to \R\tau_*(W^*{\otimes^{\LL_{\Box}}_{\Q_{p}}}{\mathbb D}(\iota_{\infty,*}C,\so(s,s))),\\
& f_3:\quad  W\otimes^{\LL_{\Box}}_{\Q_{p}}\R\tau_*\iota_{\infty,*}C\to \R\tau_*(W{\otimes^{\LL_{\Box}}_{\Q_{p}}}\iota_{\infty, *}C)
\end{align*}
are  quasi-isomorphisms.
\end{enumerate}
\end{lemma}
\begin{proof} The case of the morphism $f_3$ is clear.  The morphism $f_2$ can be written as the following composition of quasi-isomorphisms
  \begin{align*}
   &  \R\tau_{*} (W^*{\otimes^{\LL_{\Box}}_{\Q_{p}}}{\mathbb D}(\iota_{\infty,*}C,\so(s,s)))
  \simeq  \R\tau_{*} (W^*{\otimes^{\LL_{\Box}}_{\Q_{p}}}\iota_{\infty,*}C(s-1)[-1])\\
&\simeq  W^*\otimes^{\LL_{\Box}}_{\Q_{p}}{\mathbb G}_a(s-1)[-1]\simeq  W^*\otimes^{\LL_{\Box}}_{\Q_{p}}\R\tau_*{\mathbb D}(\iota_{\infty,*}C,\so(s,s)).
\end{align*}

 For the morphism $f_1$, 
  we can pass to the category $\sd(\B^{\FF}_{C^{\flat},\Box})$. Set 
$\B^{\prime}:=\B^{[u,v]}_{C^{\flat},{\rm an}}$. By Section \ref{monoidal},  it  suffices  to show that the canonical map 
  $$
    W^*{\otimes^{\LL}_{\Q_{p,\Box}}}\R\uHom_{\B^{\prime}}(C,\B^{\prime})\to   \R\uHom_{\B^{\prime}}(W{\otimes^{\LL}_{\Q_{p,\Box}}}C,\B^{\prime}) 
$$
is a quasi-isomorphism in $\sd(\B^{\prime})$.
Or, since 
\begin{equation}\label{kolo242}
\R\uHom_{\B^{\prime}}(C,\B^{\prime})\stackrel{\sim}{\leftarrow} C(-1)[-1],
\end{equation}
 that the composition in $\sd(\B^{\prime})$ 
\begin{equation}\label{kolo240}
W^*{\otimes^{\LL}_{\Q_{p,\Box}}}C(-1)[-1]\to   \R\uHom_{\B^{\prime}}(W{\otimes^{\LL}_{\Q_{p,\Box}}}C,\B^{\prime})
\end{equation}
is a quasi-isomorphism.
  
   For that, we write the map \eqref{kolo240} as a composition of quasi-isomorphisms in $\sd(\B^{\prime})$
\begin{align*}
 \R\uHom_{\B^{\prime}}(W{\otimes^{\LL}_{\Q_{p,\Box}}}C,\B^{\prime}) & \simeq  \R\uHom_{\B^{\prime}}(W{\otimes^{\LL}_{\Q_{p,\Box}}}\B^{\prime},\R\uHom_{\B^{\prime}}(C,\B^{\prime}) )\\ &  \stackrel{\sim}{\leftarrow} \R\uHom_{\B^{\prime}}(W{\otimes^{\LL}_{\Q_{p,\Box}}}\B^{\prime},C)(-1)[-1] 
   \simeq (W^*{\otimes^{\LL}_{\Q_{p,\Box}}}C)(-1)[-1].
\end{align*}
 Here, the first quasi-isomorphism is the internal tensor-hom adjunction; the second quasi-isomorphism follows from \eqref{kolo242}.  The last quasi-isomorphism is induced by   the following commutative diagram in $\sd(\Q_{p,\Box})$
\begin{equation}\label{kolo243}
\xymatrix@R4mm{
\R\uHom_{\B^{\prime}}(W{\otimes^{\LL}_{\Q_{p,\Box}}}\B^{\prime},C)\ar[r] & \uHom_{\B^{\prime}}(W{\otimes^{\LL}_{\Q_{p,\Box}}}\B^{\prime},C)\\
\R\uHom_{\Q_{p,\Box}}(W,C)\ar[u]^{\wr}\ar[r]^{\sim} & \uHom_{\Q_{p,\Box}}(W,C)\ar[u]^{\wr},
}
\end{equation}
where  we have used  that  $W$ is an internal  projective object in solid $\Q_p$-modules,  and the fact that ${\uHom}_{\Q_{p,\Box}}(W,V)\stackrel{\sim}{\leftarrow}W^*{\otimes^{\LL}_{\Q_{p,\Box}}}V$, for a Fr\'echet space $V$ over $\Q_p$ (see \cite[Th. 3.40]{RR1}). We note here that the  arrows in diagram \eqref{kolo243} are $\B^{\prime}$-linear. This finishes the proof of the first claim of the lemma.
 \end{proof}

     By Lemma \ref{frank2}, to show  that the morphism \eqref{kolo241a} is a quasi-isomorphism it suffices to show that so is the natural morphism
  \begin{equation}\label{frank1}
   W^*\otimes_{\Q_{p}}^{\LL_{\Box}}\R\tau_*{\mathbb D}(\iota_{\infty,*}C, \so(s,s))\to \R\Hhom_{\rm TVS}(W\otimes^{\LL_{\Box}}_{\Q_{p}}{\mathbb G}_a,\Q_p)
  \end{equation}
  is a quasi-isomorphism. But this morphism factors as the composition
  $$
  \xymatrix@C=-2cm{
  W^*\otimes_{\Q_{p}}^{\LL_{\Box}}\R\tau_*{\mathbb D}(\iota_{\infty,*}C, \so(s,s))\ar[rr] \ar[dr]^{\sim}& & \R\Hhom_{\rm TVS}(W\otimes^{\LL_{\Box}}_{\Q_{p}}{\mathbb G}_a,\Q_p(s))\\
  & W^*\otimes^{\LL_{\Box}}_{\Q_{p}} \R\Hhom_{\rm TVS}({\mathbb G}_a,\Q_p(s))\ar[ru]^{f_0},
  }
  $$
  where the left quasi-isomorphism follows from Proposition \ref{recall1}. 
  Hence it remains to show that the morphism $f_0$ above is a quasi-isomorphism. 
  
        Since, by \cite[Lemma 3.27]{TVS},  we have 
      $$
       \R\Hhom_{\rm TVS}(W\otimes^{\LL_{\Box}}_{\Q_{p}}{\mathbb G}_a,\Q_p) \simeq  \R\Hhom_{\rm TVS}(\underline{W},\R\Hhom_{\rm TVS}({\mathbb G}_a,\Q_p)),
      $$
      we can rewrite the morphism $f_0$ (untwisted by $s$) as the canonical morphism 
      $$
      W^*\otimes^{\LL_{\Box}}_{\Q_{p}} \R\Hhom_{\rm TVS}({\mathbb G}_a,\Q_p)\to \R\Hhom_{\rm TVS}(\underline{W},\R\Hhom_{\rm TVS}({\mathbb G}_a,\Q_p)).
      $$
      Or, since $\R\Hhom_{\rm TVS}({\mathbb G}_a,\Q_p)\simeq {\mathbb G}_a(-1)[-1]$, as (the shift of)  the canonical morphism
      $$
            W^*\otimes^{\LL_{\Box}}_{\Q_{p}} {\mathbb G}_a\to \R\Hhom_{\rm TVS}(\underline{W},{\mathbb G}_a).
 $$
 It remains thus to show that this morphism is a quasi-isomorphism. 
 
     Let $S\in {\rm sPerf}_C$.     We have quasi-isomorphisms 
 \begin{align*} & W^*\otimes^{\LL_{\Box}}_{\Q_{p}}{\mathbb G}_a(S)\stackrel{\sim}{\to} \R\uHom^{\Box}_{\Q_p}({W},{\mathbb G}_a(S)),\\
 &\R\Hhom_{\rm TVS}(\underline{W},{\mathbb G}_a)(S)\simeq \R\uHom_{\rm TVS}(\underline{W}\otimes^{\LL_{\Box}}_{\Q_{p}}\Q_p[h^{\rm top}_S]^{\Box},{\mathbb G}_a)
  \simeq  \R\uHom_{\rm TVS}(\underline{W},\R\Hhom_{\rm TVS}(\Q_p[h^{\rm top}_S]^{\Box}, {\mathbb G}_a)).
  \end{align*}
Here  the first quasi-isomorphism follows 
  from  the fact that $W$ is a Smith space (hence an internal projective object in solid modules). This reduces us to showing that the canonical morphism
  $$
  \R\uHom^{\Box}_{\Q_p}({W},{\mathbb G}_a(S))\to \R\uHom_{\rm TVS}(\underline{W},\R\Hhom_{\rm TVS}(\Q_p[h^{\rm top}_S]^{\Box}, {\mathbb G}_a))
  $$
  is a quasi-isomorphism. But, since
  $$\R\Gamma({\rm Spa}(C)^{\rm top}, \R\Hhom_{\rm TVS}(\Q_p[h^{\rm top}_S]^{\Box}, {\mathbb G}_a))\simeq \R\uHom_{\rm TVS}(\Q_p[h^{\rm top}_S]^{\Box}, {\mathbb G}_a)\simeq {\mathbb G}_a(S),
  $$ this follows from 
  \cite[Lemma 3.27]{TVS}. 
  
   \vskip2mm 
    ($\bullet$)  For a general smooth partially proper variety $X$,  we cover it with Stein varieties $\{X_i\}, i\in I$, such that de Rham cohomology of each $X_i$ is of finite rank. Then
    we consider the associated \v{C}ech hypercovering $Y_{\jcdot}$ of $X$ and we compute in $\underline{\sd}({\rm Spa}(C),\Q_{p,\Box})$
   \begin{align*}
{\mathbb R}_{\proeet}(X_C,\Q_p)& \simeq{\rm Rlim}_{n} {\mathbb R}_{\proeet}(Y_{n,C},\Q_p)\simeq{\rm Rlim}_{n}\R\Hhom_{\rm TVS}({\mathbb R}_{\proeet,c}(Y_{n,C},\Q_p(d))[2d],\Q_p)\\
& \simeq\R\Hhom_{\rm TVS}(\colim_{n}{\mathbb R}_{\proeet,c}(Y_{n,C},\Q_p(d))[2d],\Q_p)\\
 & \simeq\R\Hhom_{\rm TVS}({\mathbb R}_{\proeet,c}(Y_C,\Q_p(d))[2d],\Q_p),
   \end{align*}
as wanted. Here, the second quasi-isomorphism follows from the case of  duality already proven.
\end{proof}
\subsection{Algebraic Poincar\'e duality} The topological Poincar\'e duality from Theorem \ref{VS-duality} has an algebraic version, which we will now present. 
Let $X$ be a smooth partially proper  rigid analytic variety over $K$.
The algebraic analog of the pairing in \eqref{kicius1}  induces a duality map in 
 ${\sd}({\rm Spa}(C)_{\proeet},\Q_{p})$
\begin{equation}\label{bonn10A}
\gamma^{\rm alg}_{X_C}:\quad {\mathbb R}^{\rm alg}_{\proeet}(X_C,\Q_p)\to \R\Hhom_{\rm VS}({\mathbb R}^{\rm alg}_{\proeet,c}(X_C,\Q_p(d))[2d],{\Q}_p).
\end{equation}

\begin{corollary}{\rm (Algebraic pro-\'etale  duality)}\label{VSS-duality} Let $X$ be a smooth partially proper  rigid analytic variety over $K$ of dimension $d$.  The duality map
\eqref{bonn10A} is a quasi-isomorphism. In particular,
we have a quasi-isomorphism in $\sd(\Q_{p})$
\begin{equation}\label{deszcz111}
\gamma^{\rm alg}_{X_C}:\quad{\R}\Gamma_{\proeet}(X_C,\Q_p)\stackrel{\sim}{\to}\R\Hom_{\rm VS}({\mathbb R}^{\rm alg}_{\proeet,c}(X_C,\Q_p(d))[2d],\Q_p).
\end{equation}
\end{corollary}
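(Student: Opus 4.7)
The strategy is to descend the topological Poincar\'e duality from Theorem~\ref{VS-duality} to the algebraic level via the fully-faithfulness result of Proposition~\ref{recall1}(1). Concretely, I would verify that $\R\pi_*$ sends the algebraic duality map $\gamma^{\rm alg}_{X_C}$ of \eqref{bonn10A} to the topological one $\gamma_{X_C}$ of \eqref{bonn10}, and then conclude using conservativity of $\R\pi_*$.

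First, I would identify $\R\pi_*\gamma^{\rm alg}_{X_C}$ with $\gamma_{X_C}$. On the source, this uses Proposition~\ref{comp-elgin}(2):
$$\R\pi_*{\mathbb R}^{\rm alg}_{\proeet,?}(X_C,\Q_p) \simeq {\mathbb R}_{\proeet,?}(X_C,\Q_p), \quad ?=-,c.$$
For the target, set $\sff := {\mathbb R}^{\rm alg}_{\proeet,c}(X_C,\Q_p(d))[2d]$, which is bounded and whose image $\R\pi_*\sff$ lies in $\underline{\sd}^b({\rm Spa}(C),\Q_p)$ (since pro-\'etale cohomology is concentrated in degrees $[0,2d]$), and let $\sg := \Q_p$, which is bounded below. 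Proposition~\ref{recall1}(1) then gives
$$\R\pi_*\R\Hhom_{\rm VS}(\sff,\Q_p) \simeq \R\Hhom_{\rm TVS}(\R\pi_*\sff,\Q_p).$$
The compatibility of the two duality maps under these identifications follows from the lax monoidal structure of $\R\pi_*$ (which matches the algebraic and topological cup products) together with the fact that the trace map \eqref{trace11et} used in both pairings is inherited from the same morphism on the Fargues--Fontaine curve via $\R\tau'_*$ and $\R\tau_* = \R\pi_* \R\tau'_*$ (Proposition~\ref{recall1}(3)).

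Second, by Theorem~\ref{VS-duality}, $\gamma_{X_C}$ is a quasi-isomorphism, hence so is $\R\pi_*\gamma^{\rm alg}_{X_C}$. The functor $\R\pi_*$ is conservative: an object $\sff\in \sd({\rm Spa}(C)_{\proeet},\Q_p)$ vanishes iff its values on every perfectoid $S$ do, and these values agree with those of $\R\pi_*\sff$. Applying this to the cone of $\gamma^{\rm alg}_{X_C}$ gives the desired quasi-isomorphism in $\sd({\rm Spa}(C)_{\proeet},\Q_p)$. The global sections statement \eqref{deszcz111} then follows by taking $\R\Gamma({\rm Spa}(C),-)$ and using the identification $\R\Gamma({\rm Spa}(C),{\mathbb R}^{\rm alg}_{\proeet}(X_C,\Q_p)) \simeq \R\Gamma_{\proeet}(X_C,\Q_p)$.

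The most delicate step is the compatibility in the first paragraph. While the identification of cup products is formal from lax monoidality, matching the trace maps requires tracing the construction \eqref{trace11et} of ${\rm Tr}_X$ (which is built from the comparison $\se_{\proeet,c}\simeq \se_{\synt,c}$ and a syntomic trace descending from the Fargues--Fontaine curve) through both projection functors $\R\tau'_*$ and $\R\tau_*$. This is precisely where Proposition~\ref{recall1}(3) does the work of ensuring that the algebraic trace and the topological trace are two incarnations of a single pairing on the curve.
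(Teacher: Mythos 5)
Your proof is correct and follows essentially the same route as the paper: apply $\R\pi_*$, identify $\R\pi_*\gamma^{\rm alg}_{X_C}$ with $\gamma_{X_C}$ using Proposition~\ref{comp-elgin}(2) and Proposition~\ref{recall1}(1), invoke Theorem~\ref{VS-duality}, and descend back. Where you invoke conservativity of $\R\pi_*$ as a black box, the paper unwinds exactly this property by composing with the evaluation-at-$*$ functor $\R\eta_*$ (so that $\R\eta_*\R\pi_*\simeq\R\iota_*$) and then applying sheafification, but this is the same argument.
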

\begin{proof} Apply the projection functor $\R\pi_*$ to the map \eqref{bonn10A}. We obtain the horizontal map in the following commutative diagram
$$
\xymatrix{
 \R\pi_*{\mathbb R}^{\rm alg}_{\proeet}(X_C,\Q_p)\ar[r]^-{\R\pi_*\gamma^{\rm alg}_{X_C}} \ar[rd] \ar[d]^{\wr}& \R\pi_*\R\Hhom_{\rm VS}({\mathbb R}^{\rm alg}_{\proeet,c}(X_C,\Q_p(d))[2d],{\Q}_p)\ar[d]^{\wr}\\
{\mathbb R}_{\proeet}(X_C,\Q_p)\ar[rd]^-{\gamma_{X_C}}_{\sim} & \R\Hhom_{C^{\rm top}}(\R\pi_*{\mathbb R}^{\rm alg}_{\proeet,c}(X_C,\Q_p(d))[2d],\R\pi_*{\Q}_p)\ar[d]^{\wr}\\
 & \R\Hhom_{C^{\rm top}}({\mathbb R}_{\proeet,c}(X_C,\Q_p(d))[2d],{\Q}_p).
}
$$
The top right vertical arrow is a quasi-isomorphism by Proposition \ref{recall1}; the map $\gamma_{X_C}$ is a quasi-isomorphism by Theorem \ref{VS-duality}. It follows that so is the map $\R\pi_*\gamma^{\rm alg}_{X_C}$. 

  Now, we have a \index{rtau@\rtau}functor $\R\eta_*: \sd^{\rm psh}({\rm sPerf}_C,{\rm Mod}^{\rm cond}_{\Q_p})\to  \sd^{\rm psh}({\rm sPerf}_C,\Q_p)$ from topological presheaves to algebraic presheaves ("evaluation at $*$") such that $\R\eta_*\R\pi_*\simeq \R\iota_*$, the canonical forgetful functor from sheaves to presheaves (see \cite[Sec. 2.1.2]{TVS} for details).
 Applying $\R\eta_*$ to $\R\pi_*\gamma^{\rm alg}_{X_C}$ we get that the map $\R\iota_*\gamma^{\rm alg}_{X_C}$ is a quasi-isomorphism and hence so is the map $\gamma^{\rm alg}_{X_C}$ (after applying the sheafification functor), as wanted. 
\end{proof}
\subsection{Verdier exact sequence} In the Stein case, the duality \eqref{deszcz111} takes a simple form. 
\begin{corollary} \label{rainy-day} Let $X$ be a smooth Stein variety over $K$. Let $i\geq 0$. 
There exists a short exact sequence in {\rm TVS}'s
$$
0\to {\mathcal Ext}^1_{\rm TVS}({\mathbb H}^{2d-i+1}_{\proeet,c}(X_C,\Q_p(d)),\Q_p)\to {\mathbb H}^i_{\proeet}(X_C,\Q_p)\to{\mathcal Hom}_{\rm TVS}({\mathbb H}^{2d-i}_{\proeet,c}(X_C,\Q_p(d)),\Q_p)\to 0
$$
In particular, there exists a short exact sequence in $\Q_{p,\Box}$
$$
0\to\underline{\Ext}^1_{\rm TVS}({\mathbb H}^{2d-i+1}_{\proeet,c}(X_C,\Q_p(d)),\Q_p)\to {H}^i_{\proeet}(X_C,\Q_p)\to\uHom_{\rm TVS}({\mathbb H}^{2d-i}_{\proeet,c}(X_C,\Q_p(d)),\Q_p)\to 0
$$
\end{corollary}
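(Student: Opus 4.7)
The plan is to derive this short exact sequence from the topological Poincar\'e duality of Theorem~\ref{VS-duality} by applying the internal $\R\Hhom$-spectral sequence, and then establishing a vanishing of higher internal ${\mathcal Ext}$-TVS groups on the pro-\'etale compactly supported cohomology.

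Starting from the quasi-isomorphism
$${\mathbb R}_{\proeet}(X_C,\Q_p)\simeq\R\Hhom_{\rm TVS}\bigl({\mathbb R}_{\proeet,c}(X_C,\Q_p(d))[2d],\Q_p\bigr)$$
of Theorem~\ref{VS-duality}, passing to cohomology sheaves on the right yields a spectral sequence
$$E_2^{p,q}={\mathcal Ext}^p_{\rm TVS}\bigl({\mathbb H}^{2d-q}_{\proeet,c}(X_C,\Q_p(d)),\Q_p\bigr)\Longrightarrow{\mathbb H}^{p+q}_{\proeet}(X_C,\Q_p).$$
To collapse this onto the two columns $p=0,1$ and extract the displayed short exact sequence, it suffices to prove
$$(*)\qquad{\mathcal Ext}^p_{\rm TVS}\bigl({\mathbb H}^b_{\proeet,c}(X_C,\Q_p(d)),\Q_p\bigr)=0,\quad p\geq 2,\ b\geq 0.$$
The second displayed sequence in $\Q_{p,\Box}$ then follows by evaluation of the first.

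For $(*)$ I would follow the blueprint of Section~\ref{before}. Using the syntomic comparison (Theorem~\ref{comp01} and Proposition~\ref{china1}) at some $r\geq 2d$ together with Tate twisting, one gets a long exact sequence of {\rm TVS}'s linking ${\mathbb H}^b_{\proeet,c}(X_C,\Q_p(r))$ to the Hyodo-Kato and de Rham pieces $\mathbb{HK}^b_c(X_C,r)$ and $\mathbb{DR}^b_c(X_C,r)$ of~\eqref{hk-dr}. A boundary argument then reduces~$(*)$ to the corresponding vanishings for each of these two pieces. For the Hyodo-Kato piece, the Stein hypothesis ensures that $H^b_{\hk,c}(X_C)$ is a countable colimit of finite-rank $\phi$-isocrystals over $\breve C$, so that $\mathbb{HK}^b_c(X_C,r)$ is a colimit of $\mathrm{BC}$'s, and the vanishing then follows from the $\Ext$-vanishings for $\mathrm{BC}$'s recorded in~\eqref{BC}. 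For the de Rham piece, the Stein model~\eqref{kolobrzeg1a} reduces matters to summands of the form $H^d_c(X,\Omega^i)\otimes^{\Box}_K{\mathbb G}_a$ with $H^d_c(X,\Omega^i)$ of compact type; assuming $K=\Q_p$ for simplicity (the general $K$ reducing to this by base change), one has
$${\mathcal Ext}^p_{\rm TVS}\bigl(H^d_c(X,\Omega^i)\otimes^{\Box}_{\Q_p}{\mathbb G}_a,\Q_p\bigr)\simeq H^d_c(X,\Omega^i)^*\otimes^{\Box}_{\Q_p}{\mathcal Ext}^p_{\rm TVS}({\mathbb G}_a,\Q_p),$$
which vanishes for $p\geq 2$ again by~\eqref{BC}.

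The main obstacle will be the functional-analytic identification just displayed, namely pulling the compact-type space $H^d_c(X,\Omega^i)$ out of the internal ${\mathcal Ext}$. This is a close cousin of Lemma~\ref{frank2}, which played exactly the same role in the proof of Theorem~\ref{VS-duality}: write the compact-type space as a colimit of Smith spaces over $\Q_p$, swap this colimit into an ${\rm Rlim}$ on the dual side using that Smith-space duals are Banach and commute appropriately with solid tensor against nuclear objects, and thereby reduce to the elementary computation of ${\mathcal Ext}^p_{\rm TVS}({\mathbb G}_a,\Q_p)$ recorded in~\eqref{BC}. Once this is in place, the syntomic long exact sequence, the BC $\Ext$-vanishings, and the spectral-sequence collapse are routine within the framework developed in the rest of the paper.
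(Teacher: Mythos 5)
Your strategy --- the internal $\R\Hhom$-spectral sequence from Theorem~\ref{VS-duality} together with the vanishing of ${\mathcal Ext}^a_{\rm TVS}$ for $a\geq 2$ on compactly supported cohomology --- is exactly the paper's, as is the syntomic d\'evissage and the Lemma~\ref{frank2}-style reduction to Smith spaces. (The paper organizes the d\'evissage slightly differently from ``Hyodo--Kato and de Rham separately'': it passes through ${\mathbb A}^b:=\ker(\iota_b)$ and ${\mathbb E}^{b-1}:=\coker(\iota_{b-1})$ via its exact sequence~\eqref{presentation2}, then analyses ${\mathbb E}^{b-1}$ through the diagram~\eqref{air1} and the sequence~\eqref{wreszcie}; this is your ``boundary argument'' made precise.)

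There is, however, a real gap in your treatment of the colimit. You note that for a general Stein $X$ the group $H^b_{\hk,c}(X_C)$ is a countable colimit of finite-rank $\phi$-isocrystals, so $\mathbb{HK}^b_c(X_C,r)$ is a countable colimit of BC's, and then claim that the $\Ext$-vanishing for BC's gives the required vanishing. This step fails as stated: since
$\R\Hhom_{\rm TVS}(\colim_n W_n,\Q_p)\simeq\R\lim_n\R\Hhom_{\rm TVS}(W_n,\Q_p)$
and $\R\Hhom_{\rm TVS}(W_n,\Q_p)$ lives in degrees $0,1$ when $W_n$ is a BC, the derived limit can contribute a nontrivial $\R^1\lim$ in degree~$2$. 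The same issue arises on the de Rham side. The paper therefore proves the vanishing first for $X$ with finite-rank de Rham cohomology (where your sketch works essentially verbatim), and then handles the general Stein case by exhausting $X$ by Steins $X_n$ with finite-rank de Rham cohomology and explicitly verifying $\R^1\lim_n{\mathcal Ext}^1_{\rm TVS}(-,\Q_p)=0$: the BC contributions die by a Mittag--Leffler argument, while the compact-type-$\otimes\,\mathbb G_a$ contributions vanish after identifying ${\mathcal Ext}^1_{\rm TVS}(W^{\prime}_n\otimes^\Box_K\mathbb G_a,\Q_p)\simeq W^{\prime,*}_n\otimes^\Box_K\mathbb G_a$ and invoking Serre duality, which turns $\{W^{\prime,*}_n\}$ into a pro-system of Fr\'echet spaces with dense transition maps. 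Your proposal would need this limit step; as written it establishes the vanishing only when the de Rham cohomology of $X$ has finite rank.
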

\begin{proof} The second claim follows easily from the first claim. For the first claim, 
having Theorem \ref{VS-duality} and the spectral sequence 
$$
E^{a,b}_2={\mathcal Ext}^a_{\rm TVS}({\mathbb H}^{-b}_{\proeet,c}(X_C,\Q_p),\Q_p)\Rightarrow H^{a+b}(\R\Hhom_{\rm TVS}({\mathbb R}_{\proeet,c}(X_C,\Q_p),\Q_p))
$$
it suffices to show that
 \begin{equation} \label{vanishing}
{\mathcal Ext}^a_{\rm TVS}({\mathbb H}^{b}_{\proeet,c}(X_C,\Q_p),\Q_p)=0,\quad a\geq 2.
\end{equation}
   Let $r\geq 2d$. By Proposition \ref{china1}, we have the quasi-isomorphism
  $$
 \se_{\proeet,c}(X_C,\Q_p(r))\simeq \se_{\synt,c}(X_C,\Q_p(r)),
  $$
  which, by \eqref{kawa2}  and \eqref{kolobrzeg1a},   yields an exact sequence 
  \begin{equation}\label{presentation1}
\cdots\stackrel{\iota_{b-1}}{\to}{\mathbb  D}{\mathbb R}^{b-1}_c(X_C,r)  \stackrel{}{\to} {\mathbb H}^b_{\proeet,c}(X_C,\Q_p(r))\stackrel{}{\to} {\mathbb H}{\mathbb K}^b_c(X_C,r)\stackrel{\iota_b}{\to}{\mathbb  D}{\mathbb R}^{b}_c(X_C,r)\to\cdots
  \end{equation}
  where we set 
  \begin{align*}
  {\mathbb H}{\mathbb K}^b_c(X_C,r) & := (H^r_{\hk,c}(X_C)\otimes^{{\Box}}_{\breve{C}}\mathbb{B}^+_{\st})^{N=0,\phi=p^r}\\
  {\mathbb D}{\mathbb R}^b_c(X_C,r) & :=H^{b-d}(H^d_c(X,\so)\otimes^{{\Box}}_{K}(\mathbb{B}^+_{\dr}/t^{r})\to H^d_c(X,\Omega^1)\otimes^{{\Box}}_{K}(\mathbb{B}^+_{\dr}/t^{r-1})\to\cdots\to H^d_c(X,\Omega^d)\otimes^{{\Box}}_{K}(\mathbb{B}^+_{\dr}/t^{r-d})) 
  \end{align*}
  We used here the natural isomorphisms ($?=-, c$)
  $$H^b\R\tau_*\se_{\hk,?}(X_C,r)\simeq \mathbb{HK}^b_?(X_C,r), \quad H^b\R\tau_*\se_{\dr,?}(X_C,r)\simeq \mathbb{DR}^b_?(X_C,r),
  $$
  that are compatible with the Hyodo-Kato map and which 
    follow immediately from the computations in the proof of Proposition \ref{comp-elgin}.
 
     Let $i\geq 0$. We will need to understand the maps $\iota_{i}$ from \eqref{presentation1} better. We have a commutative diagram in ${\rm TVS}$, where we set $s=r-i+d-1$: 
  \begin{equation}\label{air1}
  \xymatrix@C=4mm@R=4mm{
        0\ar[r] & (H^i_{\hk,c}(X_C)\otimes^{{\Box}}_{\breve{C}}t^{s}\mathbb{B}^+_{\st})^{N=0,\phi=p^r}\ar[d]^{\iota_{\hk}} \ar[r] &   \mathbb{HK}^i_c(X_C,r)\ar[d]^{\iota_i}\ar[r]  & H^i_{\dr,c}(X)\otimes^{{\Box}}_{K}(\mathbb{B}^+_{\dr}/F^{s})\ar@{=}[d] \\
     0\ar[r] & (H^d_c(X,\Omega^{i-d})/{\rm Im}\,d)\otimes^{{\Box}}_{K}t^{s}\mathbb{G}_a \ar[d] \ar[r] & \mathbb{DR}^i_c(X_C,r)\ar[r] \ar[d]^{f_1} & H^i_{\dr,c}(X)\otimes^{{\Box}}_{K}(\mathbb{B}^+_{\dr}/F^{s})\ar[r] \ar[d]^{f_3} &0\\
         0\ar[r] &  \mathbb{E}^i_1\ar[r]^{f_2}\ar[d]  &  \mathbb{E}^i\ar[r] \ar[d] & \mathbb{E}^i_2\ar[r]  &0\\
         &  0 & 0
         }
  \end{equation}
 Here, The middle row comes  from \eqref{kolo10}; the top row from \cite[Lemma 8.1]{AGN}.  
We have defined  ${\mathbb E}^i:=\coker(\iota_i)$;
$ \mathbb{E}^i_1$  is the image of $(H^d_c(X,\Omega^{i-d})/{\rm Im}\,d)\otimes^{\Box}_{K}t^{s}\mathbb{G}_a$ under the map $f_1$,
  and $\mathbb{E}^i_2$ is the cokernel of the map $f_2$. The rows are exact;  so are the first and the second columns.      Moreover,  the map $\iota_{\hk}:  (H^i_{\hk,c}(X_C)\otimes^{{\Box}}_{\breve{C}}t^{s}\mathbb{B}^+_{\st})^{N=0,\phi=p^r}\to (H^d_c(X,\Omega^{i-d})/{\rm Im}\,d)\otimes^{{\Box}}_{K}t^s\mathbb{G}_a$ factors as 
  \begin{equation}\label{wreszcie0}
 (H^i_{\hk,c}(X_C)\otimes^{{\Box}}_{\breve{C}}t^{s}\mathbb{B}^+_{\st})^{N=0,\phi=p^r}\lomapr{\iota_{\hk}} 
H^i_{\dr,c}(X)\otimes^{{\Box}}_{K}t^s\mathbb{G}_a\lomapr{\can} (H^d_c(X,\Omega^{i-d})/{\rm Im}\,d)\otimes^{{\Box}}_{K}t^s\mathbb{G}_a,
  \end{equation}
  where the second map is an injection with quotient $(H^d_c(X,\Omega^{i-d})/{\rm Ker}\,d)\otimes^{{\Box}}_{K}t^s\mathbb{G}_a$.  It follows that we have an exact sequence
  \begin{equation}\label{wreszcie}
  0\to \mathbb{V}\to   \mathbb{E}^i_1 \to  (H^d_c(X,\Omega^{i-d})/{\rm Ker}\,d)\otimes^{\Box}_{K}t^s\mathbb{G}_a \to 0,
  \end{equation}
  where $\mathbb{V}$ is a (topological) BC.

\vskip2mm
($\bullet$) {\em Finite rank case.}  Assume first that the de Rham  cohomology has finite rank. 
  Let $
  {\mathbb A}^i:=\ker(\iota_i). 
  $ 
It is a   BC: this is because the map $\iota_i$ factors through de Rham cohomology, which is a  BC (see \eqref{wreszcie0}). 
  We have the exact sequence
  \begin{equation}\label{presentation2}
  {\mathcal Ext}^a_{\rm TVS}({\mathbb A}^b,\Q_p)\to {\mathcal Ext}^a_{\rm TVS}({\mathbb H}^{b}_{\proeet,c}(X_C,\Q_p),\Q_p)\to {\mathcal Ext}^a_{\rm TVS}({\mathbb E}^{b-1},\Q_p).
  \end{equation}
 Hence  it suffices to show that, for $a\geq 2$, we have 
 \begin{equation}\label{presentation3}
    {\mathcal Ext}^a_{\rm TVS}({\mathbb A}^b,\Q_p)=0, \quad   {\mathcal Ext}^a_{\rm TVS}({\mathbb E}^{b-1},\Q_p)=0. 
\end{equation}
  
    This is clear for ${\mathbb A}^b$ because it  is a BC and we have \cite[Ex. 4.29]{TVS}.  For ${\mathbb E}^{b-1}$,  we use the exact sequence \eqref{wreszcie}. It follows that
    it suffices  to show that, for $a\geq 2$, we have 
    $$
     {\mathcal Ext}^a_{\rm TVS}(\mathbb{V},\Q_p)=0,\quad  {\mathcal Ext}^a_{\rm TVS}(W\otimes^{\Box}_{K}\mathbb{G}_a,\Q_p)=0,
    $$
    where we set $W=H^d_c(X,\Omega^{b-1-d})/{\rm Ker}\,d$. 
    Since $\mathbb{V}$ is a BC, the first equality is clear. 
     For the second one,  if $K=\Q_p$, we have in $\Q_{p,\Box}$ (see \cite[Lemma 3.27]{TVS})
    $$
     {\mathcal Ext}^a_{\rm TVS}(W\otimes^{\Box}_{K}\mathbb{G}_a,\Q_p)\simeq W^*\otimes^{\Box}_{\Q_{p} }    {\mathcal Ext}^a_{\rm TVS}(\mathbb{G}_a,\Q_p)=0,
    $$
    as wanted.  We used here that $W$ is of compact type.  

      For a general $K$,  assume first that $W$ is a Smith space. Then we can write $W=W_{\Q_p}\otimes^{\LL}_{\Q_p,\Box}K$, for a Smith space $W_{\Q_p}$ over $\Q_p$.
   It follows that the canonical morphism
   $$
   W^*\otimes^{\Box}_K    {\mathcal Ext}^a_{\rm TVS}(\mathbb{G}_a,\Q_p)\to     {\mathcal Ext}^a_{\rm TVS}(W\otimes^{\Box}_{K}\mathbb{G}_a,\Q_p)
   $$
   is an isomorphism. 
   For a general $W$,  write $W\simeq \colim_n W_n$ as a compact colimit of Smith spaces $W_n$ over $K$ and note that 
   $${\mathcal Ext}^a_{\rm TVS}(W\otimes^{\Box}_{K}\mathbb{G}_a,\Q_p)\simeq \lim_n {\mathcal Ext}^a_{\rm TVS}(W_n\otimes^{\Box}_{K}\mathbb{G}_a,\Q_p).
   $$
    This is because $\R^1\lim_n{\mathcal Ext}^{a-1}_{\rm TVS}(W_n\otimes^{\Box}_{K}\mathbb{G}_a,\Q_p)=0$: this  follows from the Smith case for $a\geq 3$; for $a=2$, we have 
   \begin{align*}
   \R^1\lim_n{\mathcal Ext}^{1}_{\rm TVS}(W_n\otimes^{\Box}_{K}\mathbb{G}_a,\Q_p) &     \stackrel{\sim}{\leftarrow} \R^1\lim_n(W^*_n\otimes^{\Box}_K{\mathcal Ext}^{1}_{\rm TVS}(\mathbb{G}_a,\Q_p))\\
    & \simeq \R^1\lim_n(W^*_n\otimes^{\Box}_K\mathbb{G}_a)
     \simeq  (\R^1\lim_nW^*_n)\otimes^{\Box}_K\mathbb{G}_a=0.
   \end{align*}
   The last isomorphism follows from the fact that  $\{W_n^*\}$ is a pro-system of Banach spaces with dense transition maps. The penultimate isomorphism follows from the following fact\footnote{Probably well-known but we did not find a reference.}:
   \begin{lemma}
   Let $\{V_n\}$ be a set of Banach spaces over $K$ and let $V$ be a Banach space over $K$. Then the canonical map
   $$
   V\otimes^{\Box}_K\prod_nV_n\to \prod_n(V\otimes^{\Box}_KV_n)
   $$
   is an isomorphism.
   \end{lemma}
   \begin{proof}
   Write $V=\uHom(\Z[T],K)$, for a profinite set $T$. Then
   \begin{align*}
   V\otimes^{\Box}_K\prod_nV_n & \simeq \uHom(\Z[T],K)\otimes^{\Box}_K\prod_nV_n\simeq \uHom(\Z[T],\prod_nV_n)\\
    & \simeq \prod_n\uHom(\Z[T],V_n)\simeq 
   \prod_n(\uHom(\Z[T],K)\otimes^{\Box}_KV_n)\simeq \prod_n(V\otimes^{\Box}_KV_n).
   \end{align*}
   The second isomorphism follows from the fact that the product $\prod_nV_n$ is a nuclear $K$-vector space and the penultimate one from the fact that so is every $V_n$. 
   \end{proof}
   \vskip2mm
   ($\bullet$) {\em General case.}   For  a general smooth Stein variety $X$ over $K$, we cover $X$ with an exhaustive sequence $\{X_n\}$ of Stein varieties with finite dimensional de Rham cohomologies. We have
    \begin{align*}
{\mathcal Ext}^a_{\rm TVS}(\mathbb{H}^{b}_{\proeet,c}(X_C,\Q_p),\Q_p)\simeq {\mathcal Ext}^a_{\rm TVS}(\colim_n\mathbb{H}^{b}_{\proeet,c}(X_{n,C},\Q_p),\Q_p).
\end{align*}
This yields the exact sequence
$$
0\to  \R^1\lim_n{\mathcal Ext}^{a-1}_{\rm TVS}(\mathbb{H}^{b}_{\proeet,c}(X_{n,C},\Q_p),\Q_p)\to {\mathcal Ext}^a_{\rm TVS}(\mathbb{H}^{b}_{\proeet,c}(X_C,\Q_p),\Q_p)\to \lim_n{\mathcal Ext}^a_{\rm TVS}(\mathbb{H}^{b}_{\proeet,c}(X_{n,C},\Q_p),\Q_p)\to 0
$$
Hence, by the above, $ {\mathcal Ext}^a_{\rm TVS}(\mathbb{H}^{b}_{\proeet,c}(X_C,\Q_p),\Q_p)=0$, for $a\geq 3$. For $a=2$, we have
$$
\R^1\lim_n{\mathcal Ext}^{1}_{\rm TVS}(\mathbb{H}^{b}_{\proeet,c}(X_{n,C},\Q_p),\Q_p)\stackrel{\sim}{\to} {\mathcal Ext}^2_{\rm TVS}(\mathbb{H}^{b}_{\proeet,c}(X_C,\Q_p),\Q_p).
$$
It suffices thus to show that
$$
\R^1\lim_n{\mathcal Ext}^{1}_{\rm TVS}(\mathbb{H}^{b}_{\proeet,c}(X_{n,C},\Q_p),\Q_p)=0. 
$$
From the exact sequence \eqref{presentation2} and isomorphisms \eqref{presentation3}, we get
the exact sequence (for $X_n$)
$$
\Hhom_{\rm TVS}(\mathbb{E}^{b-1}_n,\Q_p) \to { \mathcal Ext}^1_{\rm TVS}(\mathbb{A}^b_n,\Q_p)\to {\mathcal Ext}^1_{\rm TVS}(\mathbb{H}^{b}_{\proeet,c}(X_{n,C},\Q_p),\Q_p)\to {\mathcal Ext}^1_{\rm TVS}(\mathbb{E}^{b-1}_n,\Q_p)\to 0
$$
This yields the exact sequence 
$$
\R^1\lim_n{\mathcal Ext}^{1}_{\rm TVS}({\mathbb{A}}^b_n,\Q_p)\to \R^1\lim_n{\mathcal Ext}^{1}_{\rm TVS}(\mathbb{H}^{b}_{\proeet,c}(X_{n,C},\Q_p),\Q_p)\to \R^1\lim_n {\mathcal Ext}^1_{\rm TVS}(\mathbb{E}^{b-1}_n,\Q_p)\to 0.
$$
Since ${\mathcal Ext}^{1}_{\rm TVS}({\mathbb{A}}^b_n,\Q_p)$ is a BC we have $\R^1\lim_n{\mathcal Ext}^{1}_{\rm TVS}({\mathbb{A}}^b_n,\Q_p)=0$ as  we have Mittag-Leffler in this setting. It suffices thus to show that  
$\R^1\lim_n {\mathcal Ext}^1_{\rm TVS}(\mathbb{E}^{b-1}_n,\Q_p)=0$.

    But from diagram  \eqref{air1}, we see that  it suffices to show that
 $$
 \R^1\lim_n {\mathcal Ext}^1_{\rm TVS}(\mathbb{E}^{b-1}_{i,n},\Q_p)=0,\quad i=1,2.
 $$
 For $i=2$ this is clear because ${\mathcal Ext}^1_{\rm TVS}(\mathbb{E}^{b-1}_{2,n},\Q_p)$ is a BC (because  $\mathbb{E}^{b-1}_{2,n}$ is the cokernel of the map $\mathbb{HK}^{b-1}_c(X_C,r) \to H^{b-1}_{\dr,c}(X)\otimes^{\Box}_K({\mathbb B}^+_{\dr}/F^s)$, hence a BC). It remains to treat the case $i=1$. For that we use the exact sequence \eqref{wreszcie}. Since $\mathbb{V}_n$ is a BC, it suffices to show that
 $$
  \R^1\lim_n {\mathcal Ext}^1_{\rm TVS}(W_n\otimes^{\Box}_{K}\mathbb{G}_a,\Q_p)=0,
 $$
 where $W_n=H^d_c(X_n,\Omega^{b-1-d})/{\rm Ker}\, d$. 
Note that $W_n\hookrightarrow H^d_c(X_n,\Omega^{b-d})=:W^{\prime}_n$.  Hence we have the surjection $$
  {\mathcal Ext}^1_{\rm TVS}(W^{\prime}_n\otimes^{\Box}_{K}\mathbb{G}_a,\Q_p)\twoheadrightarrow {\mathcal Ext}^1_{\rm TVS}(W_n\otimes^{\Box}_{K}\mathbb{G}_a,\Q_p).
$$
It suffices thus to show that
\begin{equation}\label{air2}
  \R^1\lim_n {\mathcal Ext}^1_{\rm TVS}(W^{\prime}_n\otimes^{\Box}_{K}\mathbb{G}_a,\Q_p)=0.
\end{equation}

      We claim that the canonical morphism
   $$
   W^{\prime,*}_n\otimes^{\Box}_{K}{\mathcal Ext}^1_{\rm TVS}(\mathbb{G}_a,\Q_p)\to {\mathcal Ext}^1_{\rm TVS}(W^{\prime}_n\otimes^{\Box}_{K}\mathbb{G}_a,\Q_p)
   $$
   is an isomorphism. Indeed, we can write $W^{\prime}_n=\colim_mW^{\prime}_{n,m}$ as a compact colimit of Smith spaces and argue as above using the fact that 
   $${\mathcal Hom}_{\rm TVS}(W^{\prime}_{n,m}\otimes^{\Box}_{K}\mathbb{G}_a,\Q_p)\simeq W^{\prime}_{n,m,*}\otimes^{\Box}_{K}
   {\mathcal Hom}_{\rm TVS}(\mathbb{G}_a,\Q_p)=0.
   $$
   Hence we have a functorial in $n$ isomorphism
   $$
    W^{\prime,*}_n\otimes^{\Box}_{K}{\mathbb G}_a\simeq  {\mathcal Ext}^1_{\rm TVS}(W^{\prime}_n\otimes^{\Box}_{K}\mathbb{G}_a,\Q_p).
   $$
   Since we have $W^{\prime,*}_n\simeq \Omega^{2d-b}(X_n)$,
 \eqref{air2} holds because the pro-system $\{\Omega^{2d-b}(X_{n})\}$, $n\in\N$, is equivalent to a pro-system of Banach spaces $\{V_n\}$ with  dense transition maps.
\end{proof}
\subsection{Examples} 
We will discuss in some detail here dualities for Stein curves and Drinfeld spaces. 
\subsubsection{Proper varieties} 
Let $X$ be a smooth proper rigid analytic variety over $K$. Then, since pro-\'etale cohomology of $X_C$  is finite dimensional over $\Q_p$,  Corollary \ref{rainy-day} yields a duality isomorphism 
$$
{H}^i_{\proeet}(X_C,\Q_p)\stackrel{\sim}{\to}
{H}^{2d-i}_{\proeet}(X_C,\Q_p(d))^*.
$$
A result known by the work of Zavyalov \cite{Zav} and Mann \cite{Mann}. 
\subsubsection{Stein curves}
 Let $X$ be a geometrically connected smooth Stein curve over $K$. 
 From comparison theorems (see \cite[Th. 6.14]{CN5}) we get the following isomorphism and a short exact sequence in $\Q_{p,\Box}$
 \begin{align}\label{leje1-25}
 & H^0_{\proeet}(X_C,\Q_p)\simeq \Q_p,\\
 0\to  \so(X_C)/C \to &  H^1_{\proeet}(X_C,\Q_p(1))\to
  (H^1_{\hk}(X_C)\otimes^{\Box}_{\breve{C}}\B^+_{\st})^{N=0,\phi=p}\to  0\notag
 \end{align}
 They lift to {\rm TVS}'s. Similarly, for compactly supported cohomology,  by \cite[Sec.\,7.2]{AGN}, we get the following isomorphism and a short exact sequence in $\Q_{p,\Box}$
 \begin{align*}
 & H^1_{\proeet,c}(X_C,\Q_p(1))\simeq (H^1_{\hk,c}(X_C)\otimes^{\Box}_{\breve{C}}\B^+_{\st})^{N=0,\phi=1},\\
& \to (H^1_{\hk,c}(X_C)\otimes^{\Box}_{\breve{C}}\B^+_{\st})^{N=0,\phi=p^2} \lomapr{\iota_{\hk}} H^1{\rm DR}_c(X_C,2)\to H^2_{\proeet,c}(X_C,\Q_p(2))\to \Q_p(1) \to 0,
 \end{align*}
  where we set ${\rm DR}_c(X_C,2):= \R\Gamma_{\dr,c}(X_C,\B^+_{\dr})/F^2$. We note that, if the de Rham cohomology of $X$ is of finite rank over $K$, then  $(H^1_{\hk,c}(X_C)\otimes^{\Box}_{\breve{C}}\B^+_{\st})^{N=0,\phi=1}$ is a finite rank $\Q_p$-vector space because the slopes of Frobenius
on $H^1_{\hk,c}(X_C)$ are~$\geq 0$ (see \cite[Remark 7.10]{AGN}). 
Again, everything lifts to {\rm TVS}'s. 
  
   By Corollary \ref{rainy-day} we get
   a short exact sequence in $\Q_{p,\Box}$
$$
0\to\underline{\Ext}^1_{\rm TVS}({\mathbb H}^{2d-i+1}_{\proeet,c}(X_C,\Q_p(d)),\Q_p)\to {H}^i_{\proeet}(X_C,\Q_p)\to
\uHom_{\rm TVS}({\mathbb H}^{2d-i}_{\proeet,c}(X_C,\Q_p(d)),\Q_p)\to 0
$$
Hence, using the above computations and $d=1$ since $X$ is a curve, 
we get  the following isomorphism and a short exact sequence in $\Q_{p,\Box}$
\begin{align}\label{leje2-25}
 & H^0_{\proeet}(X_C,\Q_p)\simeq \uHom_{\rm TVS}({\mathbb H}^{2}_{\proeet,c}(X_C,\Q_p(1)),\Q_p)\\
0\to\underline{\Ext}^1_{\rm TVS}({\mathbb H}^{2}_{\proeet,c}(X_C,& \Q_p(1)),\Q_p)\to {H}^1_{\proeet}(X_C,\Q_p)\to\uHom_{\rm TVS}({\mathbb H}^{1}_{\proeet,c}(X_C,\Q_p(1)),\Q_p)\to 0\notag
\end{align}
It is tempting to think that the exact sequence in \eqref{leje2-25} recovers the exact sequence in \eqref{leje1-25} but this is not the case: if the de Rham cohomology of $X$ is of finite rank over $K$, the term on the right in \eqref{leje2-25} is a finite rank $\Q_p$-vector space while the term on the right in \eqref{leje1-25} will, in general, have a nontrivial $C$-part.
\subsubsection{Drinfeld space}
 Let $K$ be a finite extension of $\Q_p$ and let $d\geq 1$. Let ${\mathcal H}^d_K$ be the Drinfeld space of dimension $d$ over $K$.
By \cite[Th. 1.3]{CDN3}, \cite[Lemma 8.13]{AGN}, we have exact sequences in $\Q_{p,\Box}$ ($i\geq 0$)
\begin{align}\label{leje3-25}
 0\to \Omega^{i-1}({\mathcal H}^d_C)/\ker d\to  & H^i_{\proeet}({\mathcal H}^d_C,\Q_p(i))\to {\rm Sp}_i(\Q_p)^*\to 0\\
 0\to H^d_c({\mathcal H}^d_C, \Omega^{i-d-1})/\ker d\to & H^i_{\proeet,c}({\mathcal H}^d_C,\Q_p(i-d))\to {\rm Sp}_{2d-i}(\Q_p)\to 0\notag
\end{align}
Here ${\rm Sp}_i(\Q_p)$ denotes the generalized locally constant Steinberg $\Q_p$-representation of ${\rm GL}_{d+1}(K)$ (see \cite[Sec.\,5.2.1]{CDN3} for a definition). Hence the terms on the right in \eqref{leje3-25} are nuclear Fr\'echet and of compact type over $\Q_p$, respectively. It follows that
\begin{align*}
 \uHom_{\rm TVS}({\mathbb H}^{2d-i}_{\proeet,c}({\mathcal H}_C,\Q_p(d)),\Q_p) & \simeq \uHom_{\rm TVS}({\rm Sp}_{i}(\Q_p)(i),\Q_p)\simeq {\rm Sp}_{i}(\Q_p)^*(-i),\\
 \underline{\Ext}^1_{\rm TVS}({\mathbb H}^{2d-i+1}_{\proeet,c}({\mathcal H}_C,\Q_p(d)),\Q_p) & \simeq \underline{\Ext}^1_{\rm TVS}( (H^d_c({\mathcal H}^d_C, \Omega^{d-i})/\ker d)(i-1),\Q_p)\\ & \simeq (H^d_c({\mathcal H}^d_C, \Omega^{d-i})/\ker d)^*(-i)\simeq
(\Omega^{i-1}({\mathcal H}^d_C)/\ker d)(-i).
\end{align*}
The last quasi-isomorphism uses Serre's duality (see \cite[Remark 8.11]{AGN} for details).

  Hence, in this example, the duality sequence from Corollary \ref{rainy-day} does transfer the compact support fundamental exact sequence into the usual fundamental exact sequence. 
  \begin{remark}
The case of affine spaces and tori is similar to the case of Drinfeld space 
but simpler since the Hyodo-Kato terms on the right in the fundamental exact sequences are actually finite dimensional over $\Q_p$. 
See \cite[Prop.\,4.17]{CDN3}, \cite[Sec.\,7.1]{AGN} for the shape of these fundamental exact sequences. 
  \end{remark}

{\Small
\printindex
}

\end{document}